\numberwithin{equation}{section}
\newtheorem{theorem}{Theorem}[section]
\newtheorem{proposition}[theorem]{Proposition}
\newtheorem{corollary}[theorem]{Corollary}
\newtheorem{lemma}[theorem]{Lemma}
\theoremstyle{definition}
\newtheorem{definition}[theorem]{Definition}
\newtheorem{remark}[theorem]{Remark}
\newtheorem{notation}[theorem]{Notation}
\newtheorem{example}[theorem]{Example}
\DeclareMathOperator*{\dprime}{\prime \prime}
\newcommand{\mn}{\mathcal{N}}
\newcommand{\mpp}{\mathcal{P}}
\newcommand{\mz}{\mathcal{Z}}
\newcommand{\mm}{\mathcal{M}}
\newcommand{\mq}{\mathcal{Q}}
\newcommand{\mh}{\mathcal{H}}
\newcommand{\tr}{\mathrm{tr}}
\newcommand\numberthis{\addtocounter{equation}{1}\tag{\theequation}}
\newcommand{\mr}{\mathcal{R}}
\newcommand{\C}{\mathbb{C}}
\newcommand{\rar}{\rightarrow}
\newcommand{\la}{\langle}
\newcommand{\ra}{\rangle}
\begin{document}
\title{Lattice of intermediate subalgebras}
\author[K C Bakshi]{Keshab Chandra Bakshi}
\address{Chennai Mathematical Institute, Chennai, INDIA}
\email{bakshi209@gmail.com,kcbakshi@cmi.ac.in}

\author[V P Gupta]{Ved Prakash Gupta} \address{School of Physical
  Sciences, Jawaharlal Nehru University, New Delhi, INDIA}
\email{vedgupta@mail.jnu.ac.in}

\subjclass[2010]{47L40, 46L05, 46L37} \keywords{Simple $C^*$-algebras,
  $C^*$-subalgebras, minimal conditional expectation, index, interior
  angle, biprojections, rotations, subfactors, lattice}
\date{May 03, 2020}
\begin{abstract}
Analogous to subfactor theory, employing Watatani's notions of index
and $C^*$-basic construction of certain inclusions of
$C^*$-algebras, (a) we develop a Fourier theory (consisting of Fourier
transforms, rotation maps and shift operators) on the relative
commutants of any inclusion of simple unital $C^*$-algebras with
finite Watatani index, and (b) we introduce the notions of interior
and exterior angles between intermediate $C^*$-subalgebras of any
inclusion of unital $C^*$-algebras admitting a finite index conditional
expectation. Then, on the lines of \cite{BDLR}, we apply these
concepts to obtain a bound for the cardinality of the lattice of
intermediate $C^*$-subalgebras of any irreducible inclusion as in (a),
and improve Longo's bound for the cardinality of intermediate
subfactors of an inclusion of type $III$ factors with
finite index. Moreover, we also show that for a fairly large class of
inclusions of finite von Neumann algebras, the lattice of intermediate
von Neumann subalgebras is always finite.
\end{abstract}

\dedicatory{Dedicated to V.~S.~Sunder}

\thanks{The first named author was supported partially by a
  postdoctoral fellowship of the National Board of Higher Mathematics
  (NBHM), India.}
  \maketitle
\tableofcontents

\section{Introduction}
Among the various significant themes of operators algebras, the theory
of {\it subfactors} has attracted a fair share of limelight during the
last three and a half decades because of the deep relationship and
implications it has exhibited to various other branches of Mathematics
as well as Theoretical Physics. The modern theory of {\it subfactors}
was initiated by Vaughan Jones in 1983 in his seminal work \cite{Jo},
wherein, among various deep and original ideas, he formalized the
notion of the index $[M:N]$ as the Murray-von Neumann's coupling
constant $\mathrm{dim}_NL^2(M)$, for any subfactor $N \subset M$ of
type $II_1$, and introduced the notion of the {\it basic construction}
for any unital inclusion of finite von Neumann algebras. Later, Kosaki
\cite{kosaki} generalized the notion of index and basic construction
in terms of suitable conditional expectations for subfactors of any
type. And, in 1990, Watatani \cite{Wa} generalized Jones' and Kosaki's
indices to the index of a conditional expectation associated to any
inclusion of $C^*$-algebras. In the same article, using the language
of Hilbert $C^*$-modules, Watatani also provided an analogue of their
notions of basic construction for any pair $B \subset A$ of unital
$C^*$-algebras with respect to  a {\it finite index} conditional expectation. Over
the years, many authors have used Watatani's notions of index and
$C^*$-basic construction to prove significant results in the theories
of $C^*$-algebras, von Neumann algebras and Hilbert $C^*$-modules -
see \cite{KaWa, I2, I3, KW, KPW1, Po4, I,  FL,  KPW2, IW}.

Since the basic flavour of the theory of subfactors revolves around
the analysis of the relative position of a subfactor inside an ambient
factor, it is a very natural and fundamental question to analyze the
{\it lattice} consisting of all intermediate subfactors. Needless to
mention, a substantial amount of work has been done in this direction
too. For instance, Bisch \cite{Bi} exhibited a dictionary between the
intermediate subfactors of a subfactor $N \subset M$ of type $II_1$
and the so-called {\it biprojections} in the relative commutant space
$N'\cap M_1$. See \cite{BJ} for some interesting results in this
direction.  The crucial ingredient in Bisch's {\it biprojection}
theory is the {\it Fourier theory} on the relative commutants
$N^{\prime}\cap M_k$ formulated by Ocneanu and Jones - see \cite{Oc,
  Bi,Jo2}. Furthermore,  subfactor theory has gained a
lot from the structures of Popa's {\it $\lambda$-lattice} (\cite{Po2}) and
Jones' {\it planar algebra} (\cite{Jo2}) on the {\it standard invariant} of
any subfactor of type $II_1$ with finite Jones index, both of which
were formulated by exploiting the techniques of {\it Fourier theory} quite heavily (see
\cite{Po1} for details).

On the other hand, the study of $C^*$-subalgebras of a given
$C^*$-algebra has also attracted good attention and that too from
different perspectives - see \cite{Ch1,Ch2, I, IW, Su} and the
references therein. In Section 2, after a quick recollection of
Watatani's notions of index and $C^*$-basic construction, and the
notion of {\it minimal conditional expectations} by Hiai, Kosaki and
Longo (\cite{H, KL, Lon1, Lon2, KaWa, Wa}), given any inclusion $B
\subset A$ of simple unital $C^*$-algebra with finite Watatani index,
we single out a sequence of consistent tracial states on the tower of
relative commutants, which then allows us to obtain a bound for the
dimension of each relative commutant $B'\cap A_k$.

Then, in Section 3, we provide a $C^*$-version of the Fourier theory
for any such pair of simple unital $C^*$-algebras.  The subtle
difference between our approach and that of Ocneanu and Jones
lies in the fact that, unlike for finite factors, we neither have a
tracial state on $A$ to begin with nor the `modular conjugation
operator' on the $L^2$-completion of $A$. As mentioned above, we found a
way around using the notion of {\it minimal conditional expectations}.
We provide a detailed theory of {\it Fourier transforms, rotation maps}
and {\it shift operators} on the {\it relative commutants} of
appropriate inclusions of $C^*$-algebras. 

In Section 4,  motivated by Bisch's characterization of intermediate
subfactors in terms of {\it biprojections}, we formulate the notions of
{\it biunitaries}, {\it bipartial isometries} and {\it biprojections} and their
behaviour under Fourier transforms and rotations. As the first
application of the $C^*$-Fourier theory, given any irreducible pair $B
\subset A$ of simple unital $C^*$-algebras with finite Watatani index
and a biprojection in $B'\cap A_1$, we provide a recipe to obtain an
intermediate $C^*$-subalgebra of the dual pair $A \subset A_1$ in
\Cref{bischprojection}.

Recently, the first named author along with Das, Liu and Ren, in
\cite{BDLR}, introduced the notions of interior and exterior angles
between intermediate subfactors of a subfactor of type $II_1$ to
understand the relative position of two intermediate
subfactors. Motivated by them, in Section 5, we begin with the
introduction of the notions of interior and exterior angles between
any two intermediate $C^*$-subalgebras of an inclusion $B \subset A$ of
unital $C^*$-algebras with a finite index conditional expectation, and
provide some useful expressions for the same.  Then, very much like the
minimal intermediate subfactors of a subfactor of type $II_1$ (as was
exhibited in \cite{BDLR}), we show in \Cref{m1} that, in terms of relative
positions, there is a certain rigidity observed by the minimal
intermediate $C^*$-subalgebras of an irreducible pair of simple
$C^*$-algebras in the sense that the interior angle between any two
such $C^*$-subalgebras is always greater than
$\pi/3$. The proof is based on the $C^*$-Fourier theory that we
develop.

On the other hand, Watatani in \cite{Wa2} (see also \cite{Po3}) and
then Teruya and Watatani in \cite{TW} showed that the lattice of
intermediate subfactors of an irreducible subfactor of type $II_1$ and
type $III$, respectively, is finite.  Then, Longo (in \cite{Lon})
proved that the number of intermediate subfactors of an irreducible
subfactor $N \subset M$ (of any type) with finite index is bounded by
$([M:N]^2)^{[M:N]^2}$ and had asked whether the bound could be
improved to $[M:N]^{[M:N]}$. The authors of \cite{BDLR} exploited the
notion of interior angle satisfactorily to answer this question and
showed that for an irreducible subfactor $N \subset M$ of type $II_1$
the bound can be improved significantly to $\min\{9^{[M:N]},
[M:N]^{[M:N]}\}$. However, the question for irreducible subfactors of
type $III$ remained unanswered. For $C^*$-algebras, Ino and Watatani
(in \cite[Corollary 3.9]{IW}) had shown that every irreducible pair $B
\subset A$ of simple unital $C^*$-algebras with a conditional
expectation of finite index has only finitely many intermediate
$C^*$-subalgebras. However, they did not provide any bound on the
number of such intermediate $C^*$-subalgebras.

As another useful application of the $C^*$-Fourier theory and the
notion of interior angle, on the lines of \cite{BDLR}, we
deduce (in \Cref{bound-thm-1}) that the number of intermediate
$C^*$-subalgebras of an irreducible pair $B \subset A$ of simple
unital $C^*$-algebras with finite Watatani index is bounded by $\min
\big\{9^{{[A:B]}_0^2},{\big({[A:B]}_0^2}\big)^{{[A:B]}_0^2}\big\}$,
where $[A:B]_0$ denotes the Watatani index of the pair $B \subset
A$. As was observed in \cite{BDLR}, the essence of this
  proof lies in the above mentioned rigidity phenomenon observed by
  the minimal intermediate $C^*$-subalgebras, which then allows one to
  deduce that the number of such intermediate subalgebras is bounded
  by the Kissing number ${\tau}_n$ of the $n$-dimensional sphere,
  where $n=\text{dim}_{\C}(B^{\prime}\cap A_1)$. The same tools allow
us to deduce (in \Cref{bound-thm-2}) that the improved bound obtained
in \cite{BDLR} holds even for the lattice of intermediate subfactors
of an irreducible $\sigma$-finite subfactor of type $III$ of finite
index, and thereby answers the question of Longo (\cite{Lon}) for the
type $III$ case as well.\smallskip

Finally, in the last section, using Christensen's perturbation
technique from \cite{Ch1} and Watatani's
compactness argument from \cite{Wa2}, we generalize the above
mentioned finiteness results of Watatani and Teruya by showing (in
\Cref{main}) that the lattice of intermediate von Neumann subalgebras
of an unital inclusion $\mn\subset \mm$ is finite if $\mm$ is a finite
von Neumann algebra with a normal tracial state $\tr$ on $\mm$ such
that the unique $\tr$-preserving conditional $E^{\mm}_{\mn}: \mm \rar
\mn$ has finite Watatani index, $\mz(\mn)$ is finite dimensional and
${\mn}^{\prime}\cap \mm$ equals either $\mz(\mn) $ or $\mz(\mm)$. We
conclude the paper with some nice corollaries.

\section{Inclusions of simple unital $C^*$-algebras}
Generalizing the notions of indices and basic constructions by Jones
\cite{Jo} and Kosaki \cite{kosaki}, Watatani, in \cite{Wa}, proposed the notion of
 a vector-valued index for  conditional expectations of inclusions of
$C^*$-algebras and the notion of basic construction of such inclusions. In this section,
we briefly recall the two notions and present some consequences which
will be used later and are of independent interest as well.

\subsection{Watatani index of conditional expectations}\( \)

 Given a pair $B \subset A$ of unital $C^*$-algebras (with a common
 identity), a conditional expectation $E:A \rightarrow B$ is a
 positive projection with norm one such that $E(axb)=aE(x)b$ for all
 $a, b\in B $ and $x\in A$. A conditional expectation $E:A \rightarrow
 B$ is said to have \textit{finite index} if there exists a finite set
 $\{\lambda_1,\ldots,\lambda_n\}\subset A$ such that $x=\sum_{i=1}^n
 E(x\lambda_i)\lambda^*_i=\sum_{i=1}^n \lambda_iE(\lambda^*_ix)$ for
 every $x\in A$. Such a set $\{\lambda_1,\cdots,\lambda_n\}$ is called
 a \textit{quasi-basis} for $E$. This is a generalization of the
 notion of Pimsner-Popa basis (\cite{PiPo1}) for a pair of von Neumann
 algebras with a conditional expectation. The Watatani index of $E$ is
 given by
 \[
 \mathrm{Ind} (E)= \sum_{i=1}^n \lambda_i\lambda^*_i, 
 \]
and is independent of the quasi-basis. Let $\mathcal{E}_0(A,B)$ denote
the set of all conditional expectations from $A$ onto $B$ of finite
index. 

In general, $\mathrm{Ind} (E)$ is not a scalar but it is an invertible
positive element of $\mz(A)$. Motivated by the values of Jones index
for subfactors (\cite{Jo}), Watatani showed the following:
\begin{theorem}\label{index rigidity}\cite{Wa}
Let $B\subset A$ be an  inclusion of unital $C^*$-algebras with a finite
index conditional expectation $E: A \rar B$.  If $\mathrm{Ind} (E)$ is
a scalar, then it takes values  in the set
\[
\left\{4{\cos}^2 \left(\frac{\pi}{n}\right),
n=3,4,5,\cdots\right\}\cup [4,\infty).
  \]
   In particular, if $B\subset A$ is an inclusion of simple
 unital $C^*$-algebras with $\mathrm{Ind} (E)<4,$ then there are no
 intermediate $C^*$-subalgebras of $B\subset A$.
\end{theorem}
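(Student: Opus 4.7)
The plan is to follow the Jones-Watatani strategy: build the Jones tower via iterated $C^*$-basic construction, produce Temperley-Lieb projections on top of which a faithful trace lives, and then apply the classical positivity argument that forces the scalar $\beta := \mathrm{Ind}(E)$ into the admissible set. For the ``in particular'' part, I would factor the conditional expectation through an alleged intermediate $C^*$-subalgebra and argue by multiplicativity of the (minimal) Watatani index.

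First, using Watatani's $C^*$-basic construction (already recalled in the paper), I would form $A_1 = \langle A, e_B\rangle$ with Jones projection $e_B$ satisfying $e_B a e_B = E(a) e_B$ for $a \in A$, and iterate to obtain the tower $B \subset A \subset A_1 \subset A_2 \subset \cdots$ together with projections $e_1 = e_B, e_2, e_3, \ldots$ which satisfy the familiar Temperley-Lieb relations
\[
e_i e_{i \pm 1} e_i = \beta^{-1} e_i, \qquad e_i e_j = e_j e_i \text{ if } |i-j| \geq 2.
\]
Since $\mathrm{Ind}(E)$ is the scalar $\beta$, the dual conditional expectations exist at each stage and the trace inherited from the tower is well defined and positive on the $*$-algebra $\mathrm{TL}$ generated by $\{e_i\}$; in particular the trace takes the value $\beta^{-k}$ on any reduced word $e_{i_1} \cdots e_{i_k}$ of the standard Jones form.

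Next, applying Jones' Gram matrix / Markov trace argument to $\mathrm{TL}$: positivity of the trace on the ``Jones projections'' $f_n$ built inductively from $e_1, \ldots, e_n$ (or equivalently, non-negativity of the Chebyshev-type polynomials evaluated at $\beta^{-1}$) forces $\beta$ to lie in $\{4\cos^2(\pi/n) : n \geq 3\} \cup [4, \infty)$. This is the classical calculation; the only adaptation required here is verifying that the Watatani trace really is faithful and tracial on the Temperley-Lieb subalgebra, which follows from the finite-index quasi-basis formula together with the standard expectation identities on the tower. I expect this verification of the Markov / positivity property of the trace in the $C^*$-setting (without a priori a tracial state on $A$) to be the most delicate step; it is handled using the minimal conditional expectations of Hiai-Kosaki-Longo, whose use is flagged by the authors in Section~2.

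For the ``in particular'' statement, suppose for contradiction that $B \subsetneq C \subsetneq A$ is an intermediate $C^*$-subalgebra. Since $B$ and $A$ (and hence, by Watatani, $C$) are simple, there exist minimal conditional expectations $E^A_C : A \to C$ and $E^C_B : C \to B$ with scalar Watatani indices $\beta_1, \beta_2 > 1$, and the indices are multiplicative: $\beta_1 \beta_2 = [A:B]_0 \leq \mathrm{Ind}(E) < 4$. By the first part of the theorem, each $\beta_i$ lies in $\{4\cos^2(\pi/n) : n \geq 3\} \cup [4,\infty)$, so, being strictly greater than $1 = 4\cos^2(\pi/3)$, each must be at least $4\cos^2(\pi/4) = 2$. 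Hence $\beta_1 \beta_2 \geq 4$, contradicting $\mathrm{Ind}(E) < 4$, and no such intermediate $C^*$-subalgebra can exist.
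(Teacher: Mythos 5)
The paper offers no proof of this statement --- it is quoted verbatim from Watatani's memoir \cite{Wa} --- so I can only judge your reconstruction on its own merits. Your first half is the standard and correct route: the Jones projections $e_1, e_2, \ldots$ of the iterated $C^*$-basic construction satisfy the Temperley--Lieb relations with scalar parameter $\beta^{-1}$ (the dual index equals $\beta$ at every stage because the index is scalar), and the composition of the successive (dual) conditional expectations restricted to the algebra generated by the $e_i$'s is a scalar-valued, unital, \emph{positive} Markov functional; positivity on the Jones--Wenzl projections then forces $\beta$ into $\{4\cos^2(\pi/n)\}\cup[4,\infty)$. Two small corrections: you do not need faithfulness of this functional (and should not try to prove it --- at the special values the Markov trace on abstract TL is genuinely degenerate), and you do not need minimal conditional expectations here at all; the Markov property of the composed dual expectations is automatic once the index is a scalar, and positivity of the Wenzl coefficients (equivalently, positivity of $e_{n+1}f_ne_{n+1}$ as an element of a $C^*$-algebra) already suffices without any trace. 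Also, the value of the functional on a reduced word of length $k$ is not $\beta^{-k}$ in general (e.g.\ it is $\beta^{-3}$ on $e_2e_1e_3e_2$), but this does not affect the argument.

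The genuine gap is in the ``in particular'' part: the parenthetical ``(and hence, by Watatani, $C$ is simple)'' is not a theorem in the generality you invoke it. An intermediate $C^*$-subalgebra of a finite-index inclusion of simple unital $C^*$-algebras need not be simple: $\C \subset \C\oplus\C \subset M_2(\C)$ with the normalized trace is an inclusion of simple algebras of index exactly $4$ with a non-simple intermediate algebra. Without simplicity of $C$ you have neither the minimal expectations $E^A_C$, $E^C_B$ nor the multiplicativity $[A:B]_0=[A:C]_0[C:B]_0$ (\Cref{min3} and \Cref{intermediate-ce} both assume $C$ simple), and Izumi's converse (\Cref{izumi}), which you need to get finite Watatani index for $E|_C:C\to B$ from the Pimsner--Popa inequality, also requires both algebras simple --- so the argument is circular as written. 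The fix is to use the hypothesis $\mathrm{Ind}(E)<4$ \emph{before} introducing $C$: since $[A:B]_0\le \mathrm{Ind}(E)<4$, the dimension estimate underlying \Cref{rel-comm-dimn} forces $B'\cap A=\C$, and for an \emph{irreducible} finite-index inclusion of simple unital $C^*$-algebras every intermediate $C^*$-subalgebra is simple and admits the required finite-index expectations (Izumi \cite{I}, Ino--Watatani \cite{IW}; this is exactly what the paper's \Cref{A-C-B-notation} takes for granted). With that supplied, your final step is correct: both factors of the index lie in the admissible set, exceed $1$ because the inclusions are proper, hence are at least $4\cos^2(\pi/4)=2$, and their product is at least $4$, contradicting $[A:B]_0<4$.
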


 \begin{remark}\label{remarks}
   \begin{enumerate}
     \item The assumption that the inclusion has common identity is redundant,
  because, if $E: B \rar A$ is a conditional expectation of
  finite index, and $B$ is also a unital $C^*$-algebra with unit
  $1_B$, then
  \[
  1_A = \sum_i E(1_A \lambda_i)\lambda_i^* =
  \sum_i E(\lambda_i)\lambda_i^* = \sum_i 1_B E( \lambda_i)\lambda_i^*
= \sum_i E(1_B \lambda_i)\lambda_i^* = 1_B.
\]
\item A conditional expectation of finite index is automatically
  faithful. (It follows from \Cref{pipo-inequality}.)
\end{enumerate}
   \end{remark}

 We now recall a useful result which  says
 that  a conditional expectation with finite Watatani index also has
 finite probabilistic index (of Pimsner and Popa - see
 \cite{PiPo1}).
\begin{proposition}
 \label{probabilisticindex}\cite{Wa}\label{pipo-inequality}
 Let $B\subset A$ be an inclusion of $C^*$-algebras with a
 conditional expectation $E$ of finite index. Then, there exists
 a $c>0$ such that
 \begin{equation}\label{inequality-pipo}
 E(x)\geq cx ~~~\textrm{for all}~~~x\in A_{+}.
 \end{equation}
\end{proposition}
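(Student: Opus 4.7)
The plan is to extract an explicit witness for $c$ directly from a quasi-basis, bypassing the basic construction. Fix a quasi-basis $\{\lambda_1,\ldots,\lambda_n\}$ for $E$: every $a\in A$ then has the expansion $a=\sum_i\lambda_i E(\lambda_i^*a)$, and $\mathrm{Ind}(E)=\sum_i\lambda_i\lambda_i^*$ is positive and invertible in $Z(A)$. I will establish the pointwise inequality $E(x)\geq\|\mathrm{Ind}(E)\|^{-2}\,x$ for $x\in A_+$, which yields \eqref{inequality-pipo} with $c=\|\mathrm{Ind}(E)\|^{-2}$.

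For the core argument, recast the quasi-basis expansion as a matrix factorization: set $\Lambda=(\lambda_1,\ldots,\lambda_n)\in M_{1,n}(A)$ and, for $a\in A$, let $v_a=(E(\lambda_i^*a))_i\in M_{n,1}(B)$, so that $a=\Lambda v_a$ and
\[
a^*a\;=\;v_a^*\,(\Lambda^*\Lambda)\,v_a.
\]
Since $\|\Lambda^*\Lambda\|=\|\Lambda\Lambda^*\|=\|\mathrm{Ind}(E)\|$, the inequality $\Lambda^*\Lambda\leq\|\mathrm{Ind}(E)\|\,I_n$ in $M_n(A)_+$, combined with squeezing $v_a$ on both sides, gives $a^*a\leq\|\mathrm{Ind}(E)\|\sum_i E(a^*\lambda_i)E(\lambda_i^*a)$. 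Applying the Kadison--Schwarz inequality (valid for any conditional expectation) to each summand yields $E(a^*\lambda_i)E(\lambda_i^*a)\leq E(a^*\lambda_i\lambda_i^*a)$; summing and using $\sum_i\lambda_i\lambda_i^*=\mathrm{Ind}(E)$ then produces $E(a^*\,\mathrm{Ind}(E)\,a)$. Finally, $\mathrm{Ind}(E)\in Z(A)_+$ satisfies $\mathrm{Ind}(E)\leq\|\mathrm{Ind}(E)\|\cdot 1_A$, so by positivity of $E$ this is in turn $\leq\|\mathrm{Ind}(E)\|\,E(a^*a)$. Chaining the three estimates gives $a^*a\leq\|\mathrm{Ind}(E)\|^2\,E(a^*a)$; specializing to $a=x^{1/2}$ for $x\in A_+$ completes the proof.

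The only mildly delicate step is the matrix squeezing: one needs that $M\leq\alpha I_n$ in $M_n(A)_+$ implies $w^*Mw\leq\alpha\, w^*w$ in $A$ for any column $w\in M_{n,1}(A)$, which is routine from factoring $\alpha I_n-M=C^*C$ in the $C^*$-algebra $M_n(A)$ and observing that $w^*C^*Cw=(Cw)^*(Cw)\geq 0$. An alternative route would work inside the $C^*$-basic construction $A_1$, exploiting the identity $\sum_i\lambda_i e_B\lambda_i^*=1_{A_1}$ together with the dual conditional expectation $E_1$, but the elementary matrix argument seems more efficient and avoids any reference to the basic construction at this stage.
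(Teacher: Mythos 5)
Your proof is correct. Note that the paper itself offers no argument here: the proposition is quoted from Watatani's memoir \cite{Wa} (Proposition~2.6.2 there), so there is no in-text proof to compare against. Your argument is a clean, self-contained substitute. The three steps all check out: the identity $a=\Lambda v_a$ is exactly the quasi-basis expansion; the squeezing $v_a^*(\Lambda^*\Lambda)v_a\leq \|\Lambda^*\Lambda\|\,v_a^*v_a$ is justified as you say by factoring the positive matrix $\|\Lambda^*\Lambda\|I_n-\Lambda^*\Lambda$, and $\|\Lambda^*\Lambda\|=\|\Lambda\Lambda^*\|=\|\mathrm{Ind}(E)\|$ is the $C^*$-identity; Kadison--Schwarz applied to each $y=\lambda_i^*a$ and the centrality/positivity of $\mathrm{Ind}(E)$ then give $v_a^*v_a\leq\|\mathrm{Ind}(E)\|\,E(a^*a)$. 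Specializing to $a=x^{1/2}$ yields $E(x)\geq\|\mathrm{Ind}(E)\|^{-2}x$. Compared with the usual textbook argument (which expands $a^*a$ and invokes an $n$-fold Cauchy--Schwarz, producing a constant depending on $n$ and $\max_i\|\lambda_i\|$), your matrix formulation is tidier and yields a constant depending only on $\|\mathrm{Ind}(E)\|$, independent of the chosen quasi-basis. The constant is not optimal --- the Pimsner--Popa/Watatani theory gives $c=\|\mathrm{Ind}(E)\|^{-1}$ in the scalar-index case --- but the proposition only asserts the existence of some $c>0$, so this is immaterial.
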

 Izumi showed that the converse also holds for inclusions of simple
unital $C^*$-inclusions.
\begin{theorem}\cite{I}\label{izumi}
 Let $B\subset A$ be an inclusion of simple unital $C^*$-algebras with
 a conditional expectation $E: A \rar B$. If $E$ satisfies the
 Pimsner-Popa inequality \eqref{inequality-pipo}, then $E$ has finite Watatani index.
  \end{theorem}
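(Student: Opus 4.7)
The plan is to construct a finite quasi-basis for $E$, since by Watatani's definition this is equivalent to $E$ having finite Watatani index. I would begin by endowing $A$ with the structure of a right pre-Hilbert $C^*$-module over $B$ with inner product $\langle x, y\rangle_B := E(x^* y)$. Applying the Pimsner-Popa hypothesis to $x^* x \in A_+$ immediately yields
\[
\|x\|^2 = \|x^* x\| \leq c^{-1}\|E(x^* x)\| = c^{-1}\|\langle x, x\rangle_B\|,
\]
so the Hilbert module norm is equivalent to the original $C^*$-norm and $A$ is automatically complete as a Hilbert $C^*$-module over $B$. Inside the adjointable operator algebra $\mathcal{L}_B(A)$ sit $A$, acting by left multiplication, and the Jones projection $e_B$ onto the submodule $B \subset A$, satisfying the familiar Jones relation $e_B x e_B = E(x) e_B$.

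The next step is to show that $1_A$ belongs to the $C^*$-subalgebra $\mathcal{K}_B(A) \subset \mathcal{L}_B(A)$ of $B$-compact operators. Writing $\theta_{x,y}(a) := x E(y^* a)$ for the elementary compact operators, one has $\theta_{x,y} = x e_B y^*$, so this amounts to exhibiting $\lambda_1,\dots,\lambda_n \in A$ with $\sum_i \lambda_i e_B \lambda_i^* = 1_A$, and such a family is precisely a quasi-basis for $E$. The Pimsner-Popa inequality now manifests itself as the statement that $e_B$ is a \emph{full} projection in $C^*(A, e_B) \subseteq \mathcal{L}_B(A)$: the estimate coming from $E(x^* x) \geq c\, x^* x$ combined with $e_B x e_B = E(x) e_B$ forces the closed two-sided ideal generated by $e_B$ to contain enough positive rank-one operators $x e_B x^*$ to approximate $1_A$ in norm.

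The crux is then to upgrade a norm-approximation $\sum_i x_i e_B y_i \approx 1_A$ to an exact finite identity $\sum_k \lambda_k e_B \lambda_k^* = 1_A$. My plan is to first symmetrise into a positive approximation $T := \sum_j \mu_j e_B \mu_j^*$ close to $1_A$, hence invertible in $\mathcal{L}_B(A)$, and then absorb the adjustment $T^{-1/2}$ (guaranteed to lie in a suitable completion) into the vectors $\mu_j$ via functional calculus. Simplicity of $A$ (and of $B$) enters at this point to rule out ideal-theoretic pathologies in $\mathcal{L}_B(A)$ that could cause the rectified operators to escape from $\mathcal{K}_B(A)$, thereby ensuring the resulting finite sum remains genuinely of the form $\sum_k \lambda_k e_B \lambda_k^*$ with $\lambda_k \in A$.

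I expect the hard step to be exactly this passage from an approximate to an exact quasi-basis. The Pimsner-Popa inequality by itself only yields equivalence of norms, and a general Hilbert $C^*$-module with equivalent norms need not be finitely generated and projective. The simplicity hypothesis is meant to substitute for the trace argument available in the type $II_1$ subfactor situation, providing the rigidity required to convert an $\varepsilon$-approximation of $1_A$ by elements of $A e_B A$ into an honest finite-rank identity and thereby furnishing the desired quasi-basis.
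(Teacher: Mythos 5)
The paper offers no proof of this statement---it is quoted verbatim from Izumi \cite{I}---so your proposal must stand on its own, and it does not. The framework you set up is the correct one: under the Pimsner--Popa inequality $A$ is already complete as a Hilbert $B$-module, a finite family $\{\lambda_i\}\subset A$ with $\sum_i\lambda_ie_B\lambda_i^*=1$ is the same thing as a quasi-basis (faithfulness of $E$, which follows from the inequality, gives the two-sided identity), and such a family exists iff $1\in\mathcal{K}_B(A)=\overline{\mathrm{span}}\,Ae_BA$. But the argument collapses exactly where the content of the theorem lies. Your claim that Pimsner--Popa ``manifests itself as the statement that $e_B$ is a full projection in $C^*(A,e_B)$'' is empty: $e_B$ generates the ideal $\overline{\mathrm{span}}\,Ae_BA$ tautologically, and what is needed is that the identity of $\mathcal{L}_B(A)$ lies in that ideal. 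You give no argument for why $E(x)\ge cx$ ``forces'' $1$ to be norm-approximated by finite sums $\sum_ix_ie_Bx_i^*$, and this cannot be a formal consequence of the inequality: take $A=\{f\in C([0,1],M_2):f(0)\ \text{diagonal}\}$, $B=C([0,1],D_2)$ and $E$ the pointwise diagonal compression, so that $E(x)\ge\tfrac12 x$ for $x\ge0$; testing a putative quasi-basis against $f=\bigl(\begin{smallmatrix}0&g\\0&0\end{smallmatrix}\bigr)$ forces $\sum_i|(\lambda_i)_{12}(t)|^2=1$ for $t>0$ while $(\lambda_i)_{12}(0)=0$, which is impossible. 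So simplicity must be used precisely in producing the approximant of $1$, not afterwards.

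Correspondingly, you have located the difficulty in the wrong step. Once one has \emph{any} $T=\sum_j\mu_je_B\mu_j^*$ with $\|1-T\|<1$, the upgrade to an exact quasi-basis is routine and uses no simplicity: $\overline{\mathrm{span}}\,Ae_BA$ is a closed two-sided ideal of $\mathcal{L}_B(A)$, so $1=T\cdot T^{-1}$ already lies in it; $T\ge 1-\|1-T\|>0$ is invertible, $1=\sum_j\theta_{T^{-1/2}\mu_j,\,T^{-1/2}\mu_j}$, and $T^{-1/2}\iota(\mu_j)$ is an honest element of the module, which \emph{is} $A$ because $A$ is complete---there is no ``escape from $\mathcal{K}_B(A)$'' for simplicity to prevent. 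The genuinely hard step, which your sketch does not attempt, is to combine simplicity with the inequality to exclude the alternative $A\cap\overline{\mathrm{span}}\,Ae_BA=\{0\}$ (by simplicity this intersection, a closed ideal of $A$, is either $\{0\}$ or all of $A$, and the second case is exactly the conclusion). Excluding the first case is the substance of Izumi's theorem (related to the Frank--Kirchberg analysis of probabilistic versus algebraic index, which fails for general $C^*$-algebras as the example above shows). As written, your proposal is a correct reduction of the theorem to its crux, not a proof of it.
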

For  more on Watatani index, we suggest the reader to see \cite{Wa}.

\subsubsection{Minimal conditional expectations}\( \)

 Recall that if $B \subset A$ is an inclusion of unital $C^*$-algebras
 such that $\mathcal{Z}(A) = \C$, then every finite index conditional
 expectation has scalar index and a conditional expectation $E_0\in
 \mathcal{E}_0( A , B)$ is said to be minimal if it satisfies
 $\mathrm{Ind}(E_0) \leq \mathrm{Ind}(E)$ for all $E \in
 \mathcal{E}_0(A, B)$.  See \cite{H,Lon1,Lon2,Wa,KaWa} for details.

 Under some hypothesis, there exists only one conditional expectation.
\begin{theorem}\cite[Corollary 1.4.3]{Wa}\label{wata1}
Let $B\subset A$ be an inclusion of unital $C^*$-algebras and $E\in
\mathcal{E}_0(A,B)$. If $B^{\prime}\cap A \subseteq B$, then $E$ is
the unique conditional expectation from $A$ onto $B$.

 In particular, $E$ is a minimal conditional expectation from $A$
 onto $B$.
\end{theorem}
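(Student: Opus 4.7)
The plan is to establish a Radon--Nikodym--type representation for conditional expectations of finite index: given the distinguished $E \in \mathcal{E}_0(A,B)$, I expect every other $F \in \mathcal{E}_0(A,B)$ to have the form $F(x) = E(xh)$ for some $h \in B'\cap A$. The hypothesis $B'\cap A \subseteq B$ will then collapse $h$ to the identity, forcing $F = E$.

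To carry this out, fix a quasi-basis $\{\lambda_1,\ldots,\lambda_n\}$ for $E$ and, for an arbitrary $F \in \mathcal{E}_0(A,B)$, set $h := \sum_i \lambda_i\, F(\lambda_i^*) \in A$. Applying $F$ to the expansion $x = \sum_i E(x\lambda_i)\lambda_i^*$ and using $B$-bimodularity of $F$ (together with $E(x\lambda_i) \in B$) immediately yields
\[
F(x) = \sum_i E(x\lambda_i)\, F(\lambda_i^*) = E(xh).
\]
Taking $x = 1$ gives $E(h) = F(1) = 1$.

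The key step, which I expect to be the main obstacle, is verifying that $h$ actually lies in the relative commutant. For $b \in B$, I would use the second quasi-basis identity $b\lambda_i = \sum_j \lambda_j\, E(\lambda_j^* b \lambda_i)$ to expand
\[
bh = \sum_{i,j} \lambda_j\, E(\lambda_j^* b \lambda_i)\, F(\lambda_i^*),
\]
and use the first quasi-basis identity on $\lambda_i^* b$ inside $F(\lambda_i^*)b = F(\lambda_i^* b)$ to expand
\[
hb = \sum_{i,j} \lambda_i\, E(\lambda_i^* b \lambda_j)\, F(\lambda_j^*).
\]
A relabeling $i \leftrightarrow j$ in one of the two double sums shows they coincide, so $bh = hb$ and $h \in B'\cap A$. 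Invoking the hypothesis, $h \in B$, whence $h = E(h) = 1$, and therefore $F(x) = E(x \cdot 1) = E(x)$ for every $x \in A$. Uniqueness is thus established, and minimality is automatic since $\mathcal{E}_0(A,B) = \{E\}$ trivially satisfies any index inequality.
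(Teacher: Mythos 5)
Your proof is correct and is exactly the standard Radon--Nikodym-type argument behind Watatani's Corollary~1.4.3, which the paper cites without reproducing: one forms $h=\sum_i\lambda_i F(\lambda_i^*)$, shows $F(\cdot)=E(\cdot\,h)$ with $h\in B'\cap A$ and $E(h)=1$, and lets the hypothesis collapse $h$ to $1$. The only adjustment needed is to drop the restriction $F\in\mathcal{E}_0(A,B)$: the theorem asserts uniqueness among \emph{all} conditional expectations onto $B$, and your computation never uses a quasi-basis for $F$ --- only $B$-bimodularity, $F|_B=\mathrm{id}$, and the quasi-basis for $E$ --- so the same argument already delivers the full statement once $F$ is taken to be an arbitrary conditional expectation.
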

Interestingly, when the $C^*$-algebras are simple, then we have a unique
minimal conditional expectation.
\begin{theorem}\cite[Theorem 2.12.3]{Wa}\label{min1}
Let $B \subset A$ be an inclusion of simple unital $C^*$-algebras such that 
$\, \mathcal{E}_0(A,B) \neq \emptyset$. Then, there exists a
unique minimal conditional expectation from $A$ onto $ B$ (which will
be denoted by $E^A_B$).
\end{theorem}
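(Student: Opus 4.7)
The plan is to reduce the statement to a finite-dimensional optimization problem parametrized by positive invertible elements of $B'\cap A$ and then argue that the objective function (the scalar index) attains a unique minimum. First, since $A$ is simple we have $\mathcal{Z}(A)=\C$, so $\mathrm{Ind}(E)$ is a positive scalar for every $E\in \mathcal{E}_0(A,B)$; thus ``minimal'' literally means minimizing a real number, and the infimum $c_0 := \inf_{E\in \mathcal{E}_0(A,B)}\mathrm{Ind}(E)$ is a well-defined positive real by \Cref{index rigidity} (in particular $c_0\geq 1$).

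Next, fix any $E_1\in \mathcal{E}_0(A,B)$ (which exists by hypothesis). I would first show that $B'\cap A$ is finite-dimensional: indeed, the $B$-module map $\Phi: B'\cap A \to B'\cap A_1$ combined with the quasi-basis of $E_1$ forces $B'\cap A$ to be a finite-dimensional $C^*$-algebra (this is standard from Watatani's basic construction). Then I would establish the Radon--Nikodym type parametrization: every $E\in \mathcal{E}_0(A,B)$ is of the form
\[
E_h(x)=E_1\bigl(h^{1/2}\,x\,h^{1/2}\bigr)
\]
for a unique positive invertible $h\in B'\cap A$ normalized by $E_1(h)=1$ (the normalization is a scalar condition because $B$ is simple, so $E_1(h)\in \mathcal{Z}(B)=\C$). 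Existence of such an $h$ follows from identifying $\mathcal{E}_0(A,B)$ with a convex subset of positive elements in the finite-dimensional algebra $B'\cap A$ by restricting expectations to $B'\cap A$ and inverting via the faithful expectation $E_1$. On this parametrization, a computation using the quasi-basis gives a closed-form expression for $\mathrm{Ind}(E_h)$ in terms of $h^{-1}$ and the canonical trace of $B'\cap A$, which is smooth and strictly convex in the variable $h$.

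For the existence of a minimum, I would run a direct compactness argument: the set $\{h\in B'\cap A:\ h>0,\ E_1(h)=1\}$ is an open convex subset of a finite-dimensional real affine space. The function $h\mapsto \mathrm{Ind}(E_h)$ is continuous and \emph{coercive} at the boundary, because as $h$ approaches a non-invertible positive element (smallest eigenvalue $\to 0$), the contribution of $h^{-1}$ blows up, forcing $\mathrm{Ind}(E_h)\to \infty$. Hence the infimum is attained at an interior point $h_0$, producing a minimal conditional expectation $E^A_B := E_{h_0}$. For uniqueness, I would exploit strict convexity of the functional $h\mapsto \mathrm{Ind}(E_h)$ on the (convex) parameter space; since a strictly convex function on a convex open set has at most one minimizer, the minimal expectation is unique. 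An alternative, and perhaps cleaner, route for uniqueness is to identify $E^A_B$ intrinsically as the unique $E\in \mathcal{E}_0(A,B)$ whose restriction to $B'\cap A$ is (proportional to) the canonical trace of $B'\cap A$, and to check that a minimizer must have this property by a variational argument (the derivative of $h\mapsto \mathrm{Ind}(E_h)$ vanishing at $h_0$).

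The main obstacle is verifying the Radon--Nikodym parametrization and the explicit formula for $\mathrm{Ind}(E_h)$ in the $C^*$-setting without the tracial $L^2$-structure available for $II_1$-factors; one must pass through the basic construction $A_1$, use finiteness of $B'\cap A$, and carefully check that the formulas for dual expectations and indices (as in \cite{Wa}) carry over. Once this piece is in place, the existence-by-coercivity and uniqueness-by-strict-convexity arguments are essentially elementary convex analysis on the finite-dimensional $C^*$-algebra $B'\cap A$.
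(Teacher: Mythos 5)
This statement is imported verbatim from Watatani (\cite[Theorem 2.12.3]{Wa}); the paper offers no proof of it, so there is nothing internal to compare against. Your outline is, in substance, a reconstruction of the Hiai--Watatani argument (\cite{H,Wa}): parametrize $\mathcal{E}_0(A,B)$ by normalized positive invertible elements $h$ of the finite-dimensional algebra $B'\cap A$ via $E_h=E_1(h^{1/2}\,\cdot\,h^{1/2})$, express $\mathrm{Ind}(E_h)$ as a faithful positive functional applied to $h^{-1}$ (in the toy case $\C\subset M_n(\C)$ this is $\mathrm{Tr}(\rho^{-1})$ for the density matrix $\rho$), and minimize using coercivity at the boundary and strict convexity of $h\mapsto h^{-1}$. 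The two points you flag as the main obstacle are indeed where all the work lives: (i) that a finite-index conditional expectation is determined by, and recoverable from, its restriction to $B'\cap A$ (so that the Radon--Nikodym parametrization is surjective), and (ii) the closed-form index formula for $E_h$; both are established in \cite[\S 2.10--2.12]{Wa} and are not shortcuts one can wave at. Note also that the variational characterization of the minimizer you propose at the end --- the vanishing-derivative condition forcing $E|_{B'\cap A}$ to be the appropriate trace with $H_E=\mathrm{Ind}(E)\,E|_{B'\cap A}$ --- is exactly the criterion the paper does record as \Cref{min5}, so your ``alternative route for uniqueness'' coincides with the standard one. I see no error in the outline, only the acknowledged deferrals.
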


\begin{definition}\cite{Wa}
Let $B \subset A$ be an inclusion of simple unital $C^*$-algebras such that
$\, \mathcal{E}_0(A,B) \neq \emptyset$. Then, its Watatani index is
defined as
\[
[A:B]_0:=\mathrm{Ind}(E^A_B).
\]
  \end{definition}

We now list two useful results related to composition of minimal
conditional expectations and multiplicativity of index.
\begin{theorem}\cite{KaWa}\label{min3}
Let $B\subset A$ be as in \Cref{min1}, $C$ be an intermediate simple
$C^*$-subalgebra of $B \subset A$, $F \in \mathcal{E}_0(A,C)$ and $E
\in \mathcal{E}_0(C,B)$. Then, $E\circ F$ is minimal if and only if
both $E$ and $F$ are minimal.

Moreover, the Watatani index is multiplicative, that is
${[A:B]}_0={[C:B]}_0{[A:C]}_0.$
\end{theorem}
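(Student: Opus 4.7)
The plan is to handle the multiplicativity of the Watatani index first, since the equivalence of minimalities will then fall out cleanly from the uniqueness statement of \Cref{min1}. The core computation is to show that a product of quasi-bases is a quasi-basis for the composition. Let $\{\mu_j\}_{j=1}^m \subset A$ be a quasi-basis for $F$ and $\{\lambda_i\}_{i=1}^n \subset C$ be a quasi-basis for $E$. I expect $\{\mu_j\lambda_i\}_{i,j} \subset A$ to be a quasi-basis for $E\circ F$, which I would verify by a direct bimodule computation: for $x \in A$, pull $\lambda_i \in C$ inside $F$, apply the $E$-reconstruction identity to $F(x\mu_j) \in C$, and finally apply the $F$-reconstruction identity to recover $x$. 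The symmetric identity follows by the same manoeuvre on the other side.

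With the quasi-basis in hand, the index of the composition becomes
\[
\mathrm{Ind}(E\circ F) \;=\; \sum_{i,j}\mu_j\lambda_i\lambda_i^*\mu_j^* \;=\; \sum_j \mu_j\,\mathrm{Ind}(E)\,\mu_j^*.
\]
Since $C$ is simple, $\mathrm{Ind}(E) \in \mz(C) = \C$ is a scalar; pulling it out of the sum yields $\mathrm{Ind}(E\circ F) = \mathrm{Ind}(E)\,\mathrm{Ind}(F)$. Applied to the distinguished pair $E = E^C_B$, $F = E^A_C$, this identity combined with the minimality of $E^A_B$ immediately gives the inequality $[A:B]_0 \leq [C:B]_0\,[A:C]_0$.

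The hard part is the reverse inequality $[A:B]_0 \geq [C:B]_0\,[A:C]_0$. This amounts to showing that the unique minimal expectation $E^A_B$ admits a factorization $E^A_B = E'\circ F'$ for some $E' \in \mathcal{E}_0(C,B)$ and $F' \in \mathcal{E}_0(A,C)$, for then the index formula of the first step forces $[A:B]_0 = \mathrm{Ind}(E')\,\mathrm{Ind}(F') \geq [C:B]_0\,[A:C]_0$. My plan here is to pass to Watatani's $C^*$-basic construction $A_1 = \langle A, e_B\rangle$ of $B \subset A$ with respect to $E^A_B$, identify the projection in $A_1$ canonically attached to the intermediate subalgebra $C$, and use it to produce the desired factors by an averaging argument on quasi-bases. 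This is the step that crucially uses simplicity of $B$, $C$ and $A$, so that the minimal expectations are unique (\Cref{min1}) and their indices are scalars.

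Once multiplicativity is established, the equivalence of minimalities is short bookkeeping. For arbitrary $E \in \mathcal{E}_0(C,B)$ and $F \in \mathcal{E}_0(A,C)$,
\[
\mathrm{Ind}(E\circ F) \;=\; \mathrm{Ind}(E)\,\mathrm{Ind}(F) \;\geq\; [C:B]_0\,[A:C]_0 \;=\; [A:B]_0,
\]
with equality if and only if $\mathrm{Ind}(E) = [C:B]_0$ and $\mathrm{Ind}(F) = [A:C]_0$ simultaneously. Since $E\circ F$ is minimal exactly when $\mathrm{Ind}(E\circ F) = [A:B]_0$, the ``if'' direction follows at once from the chain of equalities, while the ``only if'' direction follows from the same chain together with the uniqueness assertion of \Cref{min1}.
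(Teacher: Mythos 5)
The paper does not actually prove this statement; it is imported wholesale from \cite{KaWa}, so there is no internal proof to compare against. Judged on its own terms, your proposal is correct and standard up to a point, but it has a genuine gap precisely at the step you yourself flag as ``the hard part.'' The quasi-basis computation, the identity $\mathrm{Ind}(E\circ F)=\mathrm{Ind}(E)\,\mathrm{Ind}(F)$ (which uses simplicity of $C$ to make $\mathrm{Ind}(E)$ a scalar; this is essentially \cite[Proposition 1.7.1]{Wa}), the resulting inequality $[A:B]_0\leq [C:B]_0\,[A:C]_0$, and the final bookkeeping deducing the equivalence of minimalities from multiplicativity are all fine. But the reverse inequality $[A:B]_0\geq [C:B]_0\,[A:C]_0$ is the entire content of the Kawakami--Watatani theorem, and you only announce a plan for it --- ``pass to the basic construction, identify the projection attached to $C$, averaging argument'' --- without executing any of it. A plan is not a proof, and here the plan is also at risk of circularity: the canonical projection $e_C\in A_1$ attached to $C$ is defined via a conditional expectation $F':A\rightarrow C$ satisfying $E^A_B=(E^A_B)|_C\circ F'$, and the existence of exactly such a factorization of the minimal expectation is what your argument needs as input. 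Note that in the present paper the factorization $E^A_B=E^C_B\circ E^A_C$ (\Cref{intermediate-ce}) is \emph{deduced from} \Cref{min3}, not available before it; and for a general finite-index conditional expectation $A\rightarrow B$ there is no reason it should factor through $C$ at all, so the factorization of the minimal one requires a genuine argument.

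The route taken in \cite{KaWa} avoids this issue by arguing in the other direction: one shows directly that the composition $E^C_B\circ E^A_C$ of the two minimal expectations is itself minimal, using the scalar-weight characterization of minimality recorded here as \Cref{min5} (namely $H_{E}=c\,E|_{B'\cap A}$), together with the local behaviour of the index on minimal projections of the relative commutants (compare \Cref{local}). Once $E^C_B\circ E^A_C$ is known to be minimal, multiplicativity is immediate from your index identity, and your final paragraph then correctly yields both directions of the equivalence. If you want to salvage your own route, you must either carry out the basic-construction argument in full --- in particular, produce $F'\in\mathcal{E}_0(A,C)$ with $E^A_B=(E^A_B)|_C\circ F'$ without presupposing it --- or switch to the $H_E$-based verification of minimality of the composition.
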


\begin{lemma}\cite{Wa, I, IW}\label{intermediate-ce}
  Let $B \subset A$ and $E^A_B$ be as in \Cref{min1} and $C$ be an
  intermediate simple $C^*$-subalgebra of $B \subset A$. Then, there exist
  unique minimal conditional expectations $E^C_B: C \rar B$ and
  $E^A_C: A \rar C$, and they satisfy the relation $E^C_B\circ
  E^A_C=E^A_B$.

  Moreover, if $B \subset A$ is irreducible, i.e., $B'\cap A = \C$,
  then $E^C_B = {E^A_B}_{|_C}$.
\end{lemma}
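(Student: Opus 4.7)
The plan is to establish the existence and uniqueness of the minimal conditional expectations $E^C_B$ and $E^A_C$ in turn, then deduce the identity $E^C_B \circ E^A_C = E^A_B$ from the uniqueness statement of \Cref{min1}, and finally handle the irreducible case via \Cref{wata1}.

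First, I would show that $E^A_B|_C : C \rar B$ lies in $\mathcal{E}_0(C,B)$. It is a conditional expectation onto $B$ since $B \subseteq C \subseteq A$. By \Cref{probabilisticindex}, there exists $c > 0$ with $E^A_B(x) \geq cx$ for all $x \in A_+$; restricting to $x \in C_+$ yields the Pimsner--Popa inequality for $E^A_B|_C$. Since $B$ and $C$ are simple unital $C^*$-algebras, Izumi's \Cref{izumi} then forces $E^A_B|_C$ to have finite Watatani index. In particular $\mathcal{E}_0(C,B) \neq \emptyset$, and so by \Cref{min1} there is a unique minimal conditional expectation $E^C_B : C \rar B$.

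Next, I must show $\mathcal{E}_0(A,C) \neq \emptyset$; this is the main technical obstacle. The natural approach is to pass to the $C^*$-basic construction $\la A, e_B \ra$ of $B \subset A$ and identify a projection associated with the intermediate $C^*$-subalgebra $C$, from which a finite-index conditional expectation $A \rar C$ can be extracted (this is the route taken in \cite{Wa, I, IW}). Alternatively, one may combine a Watatani quasi-basis $\{\lambda_i\} \subset A$ for $E^A_B$ with a quasi-basis $\{\mu_j\} \subset C$ for $E^A_B|_C$ (produced in the previous step) to construct a candidate conditional expectation $A \rar C$ directly, and then verify its positivity together with the Pimsner--Popa inequality to conclude finite Watatani index by \Cref{izumi}. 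Once $\mathcal{E}_0(A,C) \neq \emptyset$, \Cref{min1} supplies the unique minimal $E^A_C : A \rar C$.

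With both minimal conditional expectations in hand, \Cref{min3} shows that $E^C_B \circ E^A_C : A \rar B$ is itself a minimal conditional expectation. But by \Cref{min1} the minimal conditional expectation from $A$ onto $B$ is unique, whence $E^C_B \circ E^A_C = E^A_B$. For the irreducible case $B' \cap A = \C$, observe that $B' \cap C \subseteq B' \cap A = \C \subseteq B$, so by \Cref{wata1} any element of $\mathcal{E}_0(C,B)$ is unique. Since both $E^C_B$ and $E^A_B|_C$ lie in this nonempty set by the first step, they must agree.
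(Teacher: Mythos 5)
Your proposal follows the paper's argument essentially step for step: restrict $E^A_B$ to $C$, verify the Pimsner--Popa inequality via \Cref{pipo-inequality}, invoke \Cref{izumi} to get $\mathcal{E}_0(C,B)\neq\emptyset$, then use \Cref{min1} and \Cref{min3} for the composition identity and \Cref{wata1} for the irreducible case. The one step you flag as the main obstacle, namely $\mathcal{E}_0(A,C)\neq\emptyset$, is handled in the paper simply by citing \cite[Proposition 6.1]{I} (your first suggested route, via the basic construction, is indeed how that result is obtained); your second suggestion of building the expectation directly from quasi-bases is not obviously workable and is best dropped in favour of the citation.
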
 
\begin{proof}
 Let $E:={E^A_B}_{|_C}$. Then, by \Cref{pipo-inequality}, $E$
 satisfies the Pimsner-Popa inequality. Hence, by \Cref{izumi}, $E \in
 \mathcal{E}_0(C,A)$. On the other hand, by \cite[Proposition
   6.1]{I}, there exists an $F \in \mathcal{E}_0(A,C)$.  Thus,
 $\mathcal{E}_0(C,A) \neq \emptyset \neq \mathcal{E}_0(A,C)$. So, by
 \Cref{min1}, there exist unique minimal conditional expectations
 $E^C_B: C \rar B$ and $E^A_C: A \rar C$. Then, by \Cref{min3}, $E^C_B
 \circ E^A_C: A \rar B$ is a minimal conditional expectation. Hence,
 by \Cref{min1} again, we must have $E^C_B \circ E^A_C = E^A_B$.

And, when $B \subset A$ is irreducible, then by \Cref{wata1}, we have
$E^C_B = {E^A_B}_{|_C}$.\end{proof}

\begin{example}\cite{Wa}
 Consider a unital simple $C^*$-algebra $B$ with a finite group $G$
 acting outerly on $B$ and consider the $C^*$-crossed product
 $ B\rtimes G$. Then, the canonical conditional expectation $E: B
 \rtimes G \rar B$ given by $E\big(\sum_g x_gu_g\big)=x_e$ is minimal,
 the proof of which can be read off \cite{Wa} and we omit the
 necessary details.
\end{example}

\subsection{Watatani's $C^*$-basic construction}\( \)

We now briefly recall the theory of \textit{$C^*$-basic construction}
introduced by Watatani in \cite{Wa}. Let $A$ be a  $C^*$-algebra
and $\mathcal{H}$ be a pre-Hilbert $A$-module. Recall that the map

\[
\mh \ni x \mapsto {\lVert
  x\rVert}_{\mh}:=\lVert \langle x,x\rangle_A \rVert ^{1/2} \in [0,
  \infty)
  \]
  is a norm on $\mh$; and that, $\mh$ is called a Hilbert $A$-module
  if it is complete with respect to this norm. For details about the theory of
  Hilbert $C^*$-modules, we refer the reader to \cite{lance}.

Now, suppose $B \subset A$ is a unital inclusion of 
$C^*$-algebras with a faithful conditional expectation $E_B$ from $A$
onto $B$. Then, $A$ becomes a pre-Hilbert $B$-module with respect to
the $B$-valued inner product given by
  \begin{equation}\label{B-valued}
  \langle x, y\rangle_{B}=E_B(x^*y) \text{ for all } x, y\in A.
  \end{equation}
 Here we follow the Physicists' convention of keeping conjugate
 linearity in the first coordinate.  Let $\mathfrak{A}$ denote the
 Hilbert $B$-module completion of $A$ and $\iota:A\rightarrow
 \mathfrak{A}$ denote the isometric inclusion map. Recall that the
 space $\mathcal{L}_{B}(\mathfrak{A})$ consisting of adjointable
 $B$-linear maps on $\mathfrak{A}$ is a $C^*$-algebra.

  For each $a \in A$, consider $\lambda(a)\in \mathcal{L}_B(\mathfrak{A})$
  given by $\lambda(a)\big(\iota(x)\big)=\iota(ax)$ for $x\in A$. The
  map $\iota (A) \ni \iota(x) \mapsto \iota (E_B(x))\in \iota (A)$
  extends to an adjointable projection on $\mathfrak{A}$,
  and is denoted by $e_B\in \mathcal{L}_B(\mathfrak{A})$. The
  projection $e_B$ is called the Jones projection for the pair $B
  \subset A$; thus, $e_B(\iota(x))=\iota(E_B(x))$ for all $x \in
  A$. The $C^*$-basic construction $C^*\langle A,e_B\rangle$ is
  defined to be the $C^*$-subalgebra generated by $\{\lambda(A),
  e_B\}$ in $\mathcal{L}_B(\mathfrak{A})$. It turns out that
  $C^*\langle A, e_B \rangle$ equals the closure of the linear span of
  $\{\lambda(x)e_B\lambda(y):x,y\in A\}$ in the $C^*$-algebra $
  \mathcal{L}_B(\mathfrak{A})$; $\lambda$ is an injective
  $*$-homomorphism and thus we can consider $A$ as a $C^*$-subalgebra of
  $C^*\la A, e_B\ra$. The following inequality, known as
  the Kadison-Schwarz inequality, holds:
  \begin{equation}\label{KS-inequality}
 E_B(x)^*E_B(x)\leq E_B(x^*x) \text{ for all }x\in A.
\end{equation}
Interestingly, when the conditional expectation has finite index then
$A$ turns out to be complete with respect to the above norm as we show
below.
\begin{lemma}\label{Aisahilbertmodule}\cite{Wa}
Let $B \subset A$ be a unital inclusion of  $C^*$-algebras and
$E_B \in \mathcal{E}_0(A,B)$.  Then, $A$ is a Hilbert $B$-module with
respect to the $B$-valued inner product given as in \Cref{B-valued}.
\end{lemma}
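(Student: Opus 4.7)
The plan is to show that the Hilbert module norm $\|\cdot\|_{\mathcal{H}}$ induced by the $B$-valued inner product is equivalent to the original $C^*$-norm on $A$. Once this is established, completeness of $A$ in its $C^*$-norm transfers automatically to completeness in $\|\cdot\|_{\mathcal{H}}$, which is precisely what it means for $A$ to be a Hilbert $B$-module.

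For the upper bound, I would use the fact that every conditional expectation has norm one, hence for any $x \in A$,
\[
\|x\|_{\mathcal{H}}^2 = \|E_B(x^*x)\| \leq \|x^*x\| = \|x\|^2.
\]
For the lower bound, this is exactly where the finite index hypothesis is used. By \Cref{probabilisticindex}, since $E_B \in \mathcal{E}_0(A,B)$, there exists $c > 0$ such that $E_B(y) \geq c\, y$ for every $y \in A_+$. Applying this to the positive element $x^*x$, and using that the $C^*$-norm is monotone on positive elements, we obtain
\[
\|x\|_{\mathcal{H}}^2 = \|E_B(x^*x)\| \geq c\,\|x^*x\| = c\,\|x\|^2.
\]

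Combining these gives $\sqrt{c}\,\|x\| \leq \|x\|_{\mathcal{H}} \leq \|x\|$ for all $x \in A$, so the two norms are equivalent. Since $A$ is a $C^*$-algebra, it is complete in $\|\cdot\|$, and hence complete in $\|\cdot\|_{\mathcal{H}}$ as well. Therefore the natural inclusion $A \hookrightarrow \mathfrak{A}$ into its Hilbert $B$-module completion is surjective, which means $A$ itself is already a Hilbert $B$-module. There is no genuine obstacle here; the content of the lemma is entirely encoded in the Pimsner–Popa type inequality of \Cref{probabilisticindex}, and the proof is a one-line consequence once that inequality is invoked.
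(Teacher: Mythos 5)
Your proposal is correct and follows essentially the same route as the paper: both arguments rest entirely on the Pimsner--Popa inequality of \Cref{probabilisticindex} to bound $\lVert x\rVert_{\mathcal{H}}$ below by a multiple of $\lVert x\rVert$, and on the norm-one property of $E_B$ for the upper bound, so that completeness in the $C^*$-norm transfers to the module norm. The paper phrases this by transferring a Cauchy sequence back and forth rather than by stating the norm equivalence outright, but the content is identical (and the faithfulness of $E_B$, which the paper mentions to get a pre-Hilbert module, is in any case implied by your lower bound).
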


\begin{proof}
 Since a conditional expectation with finite index is faithful
 (\Cref{remarks}), $A$ is a pre-Hilbert $B$-module. By
 \Cref{pipo-inequality}, we have \( E_B(x^*x)\geq L\, x^*x \text{ for
   every } x\in A\) for some positive constant $L$. Therefore,
 ${\lVert x\rVert}_{A}\geq L \lVert x \rVert$ for all $x \in A$. In
 particular, if $\{x_n\}$ is a Cauchy sequence in $A$ with respect to
 ${\lVert.\rVert}_{A}$, then so is it with respect to $\lVert.\rVert$
 and, therefore, converges to some element $x\in A$. On the other
 hand, $\|y \|_A^2 = \|E_B(yy^*)\| \leq \|y\|^2 $ for all $y \in
 A$. So, $\{x_n\}$ converges to $x$ with respect to
 ${\lVert.\rVert}_{A}$ as well. Thus, $A$ is complete with respect to
 ${\lVert . \rVert}_{A}.$
\end{proof}

A simple algebraic calculation yields the following useful and
standard equality, and is left to the reader.
\begin{proposition}\label{support}\label{basis} Let $A, B$
  and $E_B$ be as in \Cref{Aisahilbertmodule} and $\{\lambda_i: 1 \leq i \leq n\}$ be a
  quasi-basis for $E_B$. Then,
\[ \sum_{i=1}^n \lambda_i e_B\lambda_i^* = 1. \]
\end{proposition}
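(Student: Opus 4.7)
The plan is to verify the identity $\sum_{i=1}^n \lambda_i e_B \lambda_i^*=1$ by pairing both sides against vectors in the dense subspace $\iota(A) \subset \mathfrak{A}$ and using a density/continuity argument to upgrade to all of $\mathfrak{A}$. Concretely, I will show that the two adjointable operators agree on every $\iota(x)$ with $x \in A$ and then conclude by the fact that bounded operators on a Hilbert $C^*$-module are determined by their action on a dense subset.

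First I would unravel the three ingredients used in the computation. The operator $\lambda_i$, when viewed as an element of $C^*\langle A, e_B\rangle \subset \mathcal{L}_B(\mathfrak{A})$, acts on $\iota(A)$ by $\lambda(\lambda_i)(\iota(y)) = \iota(\lambda_i y)$. The Jones projection satisfies $e_B(\iota(y)) = \iota(E_B(y))$ by definition. Finally, the defining property of a quasi-basis gives $x = \sum_i \lambda_i E_B(\lambda_i^* x)$ for every $x \in A$.

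Combining these, for each $x \in A$ I would compute
\begin{align*}
\Bigl(\sum_{i=1}^n \lambda_i e_B \lambda_i^*\Bigr)(\iota(x))
 &= \sum_{i=1}^n \lambda(\lambda_i)\, e_B\bigl(\iota(\lambda_i^* x)\bigr) \\
 &= \sum_{i=1}^n \lambda(\lambda_i)\, \iota\bigl(E_B(\lambda_i^* x)\bigr) \\
 &= \iota\Bigl(\sum_{i=1}^n \lambda_i E_B(\lambda_i^* x)\Bigr) = \iota(x).
\end{align*}
Thus $\sum_i \lambda_i e_B \lambda_i^*$ acts as the identity on the dense submodule $\iota(A)$ of $\mathfrak{A}$. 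Since both this finite sum and the identity operator lie in $\mathcal{L}_B(\mathfrak{A})$ and hence are bounded $B$-linear maps, they must agree on all of $\mathfrak{A}$, giving the claim.

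There is essentially no hard step here; the only point requiring a little care is making sure the density argument is justified, which follows at once from \Cref{Aisahilbertmodule} (ensuring that $\mathfrak{A}$ is the completion of $\iota(A)$ in the $B$-module norm and that the $\lambda_i$'s together with $e_B$ extend continuously). Once this is in place, the identity is an immediate consequence of the defining property of the quasi-basis.
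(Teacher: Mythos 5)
Your proof is correct and is precisely the ``simple algebraic calculation'' the paper leaves to the reader: one checks that $\sum_i\lambda_i e_B\lambda_i^*$ acts as the identity on the dense submodule $\iota(A)$ via the quasi-basis identity $x=\sum_i\lambda_iE_B(\lambda_i^*x)$, and extends by continuity of adjointable operators. Nothing further is needed.
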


\begin{theorem}\cite{Wa, KW} \label{dual-ce}\label{w}
  Let $A, B$ and $E_B$ be as in \Cref{Aisahilbertmodule} and let $A_1$
  denote the $C^*$-basic construction of $B \subset A$ with respect to
  $E_B $. Then, we have the following:
   \begin{enumerate}
\item 
  There exists a unique finite index conditional expectation
  $\widetilde{E}_B: A_1 \rar A$ satisfying
  \[
  \widetilde{E}_B\big(\lambda(x)e_B \lambda(y)\big) =
  \lambda(x)\lambda\big(\mathrm{Ind}(E_B)^{-1}\big) \lambda(y) =
  \lambda\big(\mathrm{Ind}(E_B)^{-1} x y \big) 
  \]
 for all $x, y \in A$. ($\widetilde{E}_B$ is called the dual conditional expectation of
 $E_B$.)\\  (\cite[Proposition 1.6.1]{Wa})

\item If $A$ and $B$ are both simple, then $A_1$ is also simple and if
  $E_0:A \rightarrow B$ denotes the unique minimal conditional
  expectation, then the dual conditional expectation $\widetilde{E}_0:
  A_1 \rar A$ is minimal as well and $\mathrm{Ind} (E_0) =
  \mathrm{Ind} (\widetilde{E}_0)$.\hspace*{10mm}  (\cite[2.2.14 and 2.3.4]{Wa},
  \cite{KW})\\
  \end{enumerate}
  \end{theorem}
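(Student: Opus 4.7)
The plan is to treat the two parts in sequence, with explicit manipulations of a quasi-basis $\{\lambda_1,\ldots,\lambda_n\}$ of $E_B$ providing the main computational engine throughout.

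For part (1), I would define $\widetilde{E}_B$ on the dense $*$-subalgebra $\mathcal{A}_1^0 := \mathrm{span}\{\lambda(x) e_B \lambda(y) : x, y \in A\}$ of $A_1$ by the prescribed formula. The chief obstacle is well-definedness, since $\mathcal{A}_1^0$ admits many different presentations of the same element. I would sidestep this by simultaneously exhibiting a quasi-basis for the candidate $\widetilde{E}_B$: setting
\[
\mu_i := \lambda(\lambda_i)\, e_B\, \lambda(\mathrm{Ind}(E_B)^{1/2}),
\]
the identity $e_B \lambda(a) e_B = \lambda(E_B(a)) e_B$ combined with $\sum_i \lambda_i E_B(\lambda_i^* x) = x$ yields the reconstruction $\sum_i \mu_i \widetilde{E}_B(\mu_i^* z) = z$ for every elementary generator $z = \lambda(x) e_B \lambda(y)$. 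This single computation simultaneously delivers well-definedness, boundedness, the $A$-bimodule property, and finite index of $\widetilde{E}_B$, together with the identity $\mathrm{Ind}(\widetilde{E}_B) = \sum_i \mu_i \mu_i^* = \lambda(\mathrm{Ind}(E_B))$, where the last step uses \Cref{basis} and the centrality of $\mathrm{Ind}(E_B)$. Positivity follows by writing positive elements of $\mathcal{A}_1^0$ as $z z^*$, and uniqueness follows because the $A$-bimodule property together with $e_B \lambda(a) e_B = \lambda(E_B(a)) e_B$ forces any conditional expectation to send $e_B$ to $\lambda(\mathrm{Ind}(E_B)^{-1})$.

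For part (2), simplicity of $A_1$ drops out of part (1): if $J \lhd A_1$ is a non-zero closed two-sided ideal, then by the Pimsner--Popa inequality (\Cref{probabilisticindex}) applied to the finite-index expectation $\widetilde{E}_B$, any $0 \neq z \in J_+$ satisfies $\widetilde{E}_B(z) \geq c z > 0$ with $\widetilde{E}_B(z) \in J \cap A$; simplicity of $A$ then forces $1 \in J$, so $J = A_1$. For the minimality of $\widetilde{E}_0$, I would invoke \Cref{min1} for the simple pair $A \subset A_1$ to obtain a unique minimal $F \in \mathcal{E}_0(A_1, A)$, and aim to identify $F$ with $\widetilde{E}_0$ via \Cref{min3}: composing with $E_0$, that theorem tells us that $E_0 \circ F$ is the unique minimal element of $\mathcal{E}_0(A_1, B)$. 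The crux is to prove that $E_0 \circ \widetilde{E}_0$ is also minimal in $\mathcal{E}_0(A_1, B)$; once this is in hand, uniqueness forces $E_0 \circ \widetilde{E}_0 = E_0 \circ F$, and faithfulness of $E_0$ together with the $A$-bimodule property yields $\widetilde{E}_0 = F$. I expect this last step to be the main obstacle, and I would tackle it by exploiting the symmetry of the basic-construction tower: the Jones projection $e_B$ sets up an identification between $B' \cap A_1$ and $B' \cap A$ through which minimality of $E_0$ transports to minimality of $E_0 \circ \widetilde{E}_0$. The scalar identity $\mathrm{Ind}(\widetilde{E}_0) = \mathrm{Ind}(E_0)$ then falls out directly from the quasi-basis computation of part (1).
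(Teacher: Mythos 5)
This theorem is not proved in the paper at all --- it is quoted verbatim from Watatani \cite[Propositions 1.6.1, 2.2.14, 2.3.4]{Wa} and from \cite{KW}, so there is no in-paper argument to measure your proposal against. Judged on its own terms, your outline has genuine gaps at each of the three substantive points. First, well-definedness of $\widetilde{E}_B$ is not ``delivered'' by the reconstruction identity $\sum_i \mu_i \widetilde{E}_B(\mu_i^* z)=z$: that computation already presupposes that $\widetilde{E}_B$ is a well-defined map on $\mathrm{span}\{\lambda(x)e_B\lambda(y)\}$, whereas the whole difficulty is that a single element of $A_1$ has many such presentations. Watatani avoids this by defining the map globally on $\mathcal{L}_B(\mathfrak{A})$ using the Hilbert-module structure (essentially $\widetilde{E}_B(T)$ is built from the matrix coefficients $\langle \lambda_i, T\lambda_j\rangle_B$), from which positivity and the bimodule property are automatic; your plan of ``writing positive elements of the dense subalgebra as $zz^*$'' does not address boundedness or positivity on the closure.

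Second, your simplicity argument hinges on the assertion that $\widetilde{E}_B(z)\in J$ for $z\in J_+$, and finite-index conditional expectations simply do not map closed ideals into themselves: for $\mathbb{C}\subset\mathbb{C}^2$ with the averaging expectation (quasi-basis $\{\sqrt2(1,0),\sqrt2(0,1)\}$) and $J=\mathbb{C}\oplus 0$ one has $E(1,0)=(\tfrac12,\tfrac12)\notin J$. The standard route is different: $e_BA_1e_B=\lambda(B)e_B\cong B$, so a nonzero closed ideal $J$ either meets $\lambda(B)e_B$ nontrivially (whence $e_B\in J$ by simplicity of $B$ and $J\supseteq \overline{Ae_BA}=A_1$), or satisfies $e_B\lambda(a)^*z\lambda(a)e_B=0$ for all $a$, forcing $z=0$. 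Third, the minimality of $\widetilde{E}_0$ is the actual content of \cite{KW} and your sketch of it rests on a claimed ``identification between $B'\cap A_1$ and $B'\cap A$ set up by $e_B$'' that does not exist: $B'\cap A_1$ contains $e_B$ and is in general strictly larger than $B'\cap A$ (its dimension can be as large as $[A:B]_0^2$ by \Cref{rel-comm-dimn}). The proof in the literature instead verifies the minimality criterion of \Cref{min5} (that $H_{\widetilde E_0}$ is a scalar multiple of $\widetilde E_0$ on $A'\cap A_1$) by a direct quasi-basis computation; some such argument is unavoidable and is missing here. The one piece that does go through as you describe is the identity $\mathrm{Ind}(\widetilde{E}_B)=\sum_i\mu_i\mu_i^*=\mathrm{Ind}(E_B)$ once the quasi-basis $\{\mu_i\}$ is legitimately established.
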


 \begin{remark}\label{dual-index}
 By \Cref{w}, we have $[A_1:A]_0 = [A:B]_0$.
  \end{remark}
The following lemma is an extremely useful observation and is a direct
adaptation of the so called ``Push-down Lemma'' from \cite{PiPo1}.
\begin{lemma}\label{pushdown}   Let $A, B$, $E_B$, $A_1$ and $\widetilde{E}_B$ be as in \Cref{w}.
 If $x_1\in A_1$, then there exists a unique  $x_0\in A$ such
 that $x_1e_1=x_0e_1$; this element is given by
 $[A:B]_0 \widetilde{E}_B(x_1e_1)$.
\end{lemma}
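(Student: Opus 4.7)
The strategy is to first verify the formula on the norm-dense $\ast$-subalgebra $A_0 := \mathrm{span}\{xe_B y : x,y \in A\}$ of $A_1$ and then extend by continuity, with uniqueness obtained from a one-line application of $\widetilde{E}_B$. On a typical element $z_1 = \sum_i x_i e_B y_i \in A_0$, the fundamental Jones-projection relation $e_B\, y\, e_B = E_B(y) e_B$ (which reads off at once from $e_B(\iota(ay)) = \iota(a E_B(y))$ for $a,y \in A$) collapses $z_1 e_B$ to a single-projection form:
\[
z_1 e_B = \sum_i x_i e_B y_i e_B = \Big( \sum_i x_i E_B(y_i) \Big) e_B = z_0\, e_B, \qquad z_0 := \sum_i x_i E_B(y_i) \in A.
\]

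The next step is to identify this $z_0$ as $[A{:}B]_0\, \widetilde{E}_B(z_1 e_B)$. Specializing \Cref{w}(1) with $y=1$ gives the key identity $\widetilde{E}_B(w e_B) = \mathrm{Ind}(E_B)^{-1} w$ for every $w \in A$. Under the simplicity hypothesis, $\mathrm{Ind}(E_B) = [A{:}B]_0$ is a positive scalar, so substituting $w = z_0$ gives $z_0 = [A{:}B]_0\, \widetilde{E}_B(z_0 e_B) = [A{:}B]_0\, \widetilde{E}_B(z_1 e_B)$; hence the formula holds on $A_0$. For an arbitrary $x_1 \in A_1$ choose $z_1^{(n)} \in A_0$ with $z_1^{(n)} \to x_1$; the norm-continuity of right-multiplication by $e_B$ together with contractivity of $\widetilde{E}_B$ shows that $z_0^{(n)} := [A{:}B]_0\, \widetilde{E}_B(z_1^{(n)} e_B) \in A$ converges in $A$ (closed in $A_1$) to $x_0 := [A{:}B]_0\, \widetilde{E}_B(x_1 e_B) \in A$, and passing to the limit in $z_0^{(n)} e_B = z_1^{(n)} e_B$ yields $x_0 e_B = x_1 e_B$.

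Uniqueness is immediate from the same identity: if $x_0, x_0' \in A$ both satisfy $x_0 e_B = x_0' e_B$, then applying $\widetilde{E}_B$ to $(x_0 - x_0') e_B = 0$ and invoking $\widetilde{E}_B(w e_B) = \mathrm{Ind}(E_B)^{-1} w$ together with the invertibility of $\mathrm{Ind}(E_B)$ in $\mz(A)$ forces $x_0 = x_0'$. There is no genuine obstacle in any step; the only point meriting mild care is the limit argument, where one must use that $\widetilde{E}_B$ takes values in $A$ (by \Cref{w}(1)) so that the candidate $x_0 = [A{:}B]_0\, \widetilde{E}_B(x_1 e_B)$ truly sits inside $A$ rather than merely in $A_1$.
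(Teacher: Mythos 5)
Your proof is correct and follows essentially the same route as the paper's: both reduce to elements of the form $ae_1b$ via the Jones relation $e_1be_1=E_B(b)e_1$ together with the formula $\widetilde{E}_B(we_1)=\mathrm{Ind}(E_B)^{-1}w$, and both get uniqueness by applying $\widetilde{E}_B$ and invoking invertibility of the index. The only divergence is that you pass from the dense subalgebra $\mathrm{span}\{ae_1b\}$ to all of $A_1$ by a norm-limit argument, whereas the paper notes that $\{[A:B]_0^{1/2}\lambda_i e_1\}$ is a quasi-basis for $\widetilde{E}_B$, so $A_1=Ae_1A$ holds as an exact algebraic identity and no continuity step is needed; both versions are valid.
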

\begin{proof}
 Uniqueness is trivial. Suppose $\{\lambda_i:1\leq i\leq n\}$ is a
 quasi-basis for $E_B$. Then, by \cite[ Proposition 1.6.6]{Wa},
 $\{{{[A:B]}_0}^{1/2}\lambda_ie_1: 1 \leq i \leq n\}$ is a quasi-basis for
 $\widetilde{E}_B$. Therefore, $A_1=Ae_1A$ $:=\text{span}\{ae_1b: a, b
 \in A\}$; and, it is easy to see that
 ${{[A:B]}_0}\widetilde{E}_B(ae_1be_1)e_1=ae_1be_1$ for all $a, b \in
 A$. This completes the proof.
\end{proof}

\subsection{Iterated $C^*$-basic constructions and the relative commutants}\( \)

Throughout this subsection, $B \subset A$ will
denote a fixed pair of simple unital $C^*$-algebras such that
$\mathcal{E}_0(A,B)\neq \emptyset$; and $\tau:={{[A:B]}_0}^{-1}.$

From \Cref{w}, we know that $A \subset A_1$ is also a pair of simple
unital $C^*$-algebras (with common identity) and that $[A_1: A]_0 =
[A:B]_0$. Thus, like Jones' tower of basic constructions of a finite
index subfactor of type $II_1$, we can repeat the process of
$C^*$-basic construction to obtain a tower of simple unital
$C^*$-algebras
\begin{equation}\label{b-c-tower}
B \subset A \subset A_1 \subset A_2 \subset \cdots \subset A_k \subset \cdots
\end{equation}
with unique (dual) minimal conditional expectations $E_{k}: A_{k} \rar
A_{k-1}$, $k \geq 0$, with the convention that $A_{-1}:=B$ and
$A_0:=A$. We shall call this tower {\it the tower of $C^*$-basic
  constructions} of the inclusion $B \subset A$. For each $k \geq 1$,
let $e_k$ denote the Jones projection in $A_k$ which implements the
$C^*$-basic construction of the inclusion $A_{k-2}\subset A_{k-1}$
with respect to the (minimal) conditional expectation $E_{k-1}:
A_{k-1} \rar A_{k - 2}$. For each $k \geq 0$, the relative commutants
of $B$ in $A_k$ is given by
\begin{equation}
B'\cap A_k=\{ x \in A_k: x b = b x \text{ for all } b \in B\}.
\end{equation}

  \begin{proposition}\cite{Wa}\label{rel-comm}
    $B'\cap A_k$ is finite dimensional for all $k \geq 0$.
  \end{proposition}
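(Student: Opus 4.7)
The plan is to produce a faithful tracial state on each relative commutant $B'\cap A_k$ using the minimal conditional expectations of the tower, and then to combine this trace with the Pimsner-Popa inequality to force finite-dimensionality, exactly as suggested in the introduction.

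First I would observe that, by iterating \Cref{w}(2) and the multiplicativity of index in \Cref{min3}, $B \subset A_k$ is itself an inclusion of simple unital $C^*$-algebras, and the composition $\mathcal{E}_k := E_0 \circ E_1 \circ \cdots \circ E_k : A_k \to B$ is the unique minimal conditional expectation, with scalar index $\tau^{-(k+1)}$. Since $Z(B) = \mathbb{C}$ (because $B$ is simple), the $B$-bimodularity of $\mathcal{E}_k$ forces $\mathcal{E}_k(x) \in \mathbb{C}$ for every $x \in B' \cap A_k$: indeed, $b\,\mathcal{E}_k(x) = \mathcal{E}_k(bx) = \mathcal{E}_k(xb) = \mathcal{E}_k(x)\,b$ for all $b \in B$. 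Consequently, $\phi_k := \mathcal{E}_k\big|_{B' \cap A_k}$ is a state, faithful by \Cref{remarks}(2). Invoking the classical Hiai--Kosaki--Longo characterization of the minimal conditional expectation, namely that $\mathcal{E}_k(xy) = \mathcal{E}_k(yx)$ whenever $x \in A_k$ and $y \in B' \cap A_k$, one concludes that $\phi_k$ is a tracial state on $B'\cap A_k$.

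Next I would feed in the Pimsner-Popa inequality (\Cref{pipo-inequality}), which furnishes a constant $c > 0$ with $\mathcal{E}_k(z^*z) \geq c\,z^*z$ for every $z \in A_k$. For $z \in B' \cap A_k$ the left-hand side is the scalar $\phi_k(z^*z) \cdot 1$, so one obtains $\phi_k(z^*z) \geq c\,\|z\|^2$; combined with the trivial $\phi_k(z^*z) \leq \|z\|^2$, the $L^2$-norm $\|\cdot\|_{\phi_k}$ and the operator norm are equivalent on $B'\cap A_k$. Hence $B'\cap A_k$, already complete in the operator norm, is also complete in the $L^2$-norm and therefore coincides with its own GNS Hilbert space. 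In this GNS representation, a standard argument exploiting the tracial property of $\phi_k$ (and the resulting commuting right action) identifies $B' \cap A_k$ with the finite von Neumann algebra $(B' \cap A_k)''$, on which $\phi_k$ is a faithful normal tracial state satisfying $\phi_k(p) \geq c$ for every non-zero projection $p$. This uniform lower bound rules out any type $II_1$ summand and further bounds both the dimension of the center and the matrix sizes of the type-$I$ summands, so $B'\cap A_k$ is finite-dimensional.

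The main technical obstacle is the tracial property of $\phi_k$ in the first step, which is not automatic and rests on the nontrivial Hiai--Kosaki--Longo characterization of minimality of conditional expectations; once that is in hand, the rest is a standard argument combining the Pimsner-Popa inequality with the GNS construction.
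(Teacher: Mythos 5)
Your argument is correct, but it takes a different route from the paper: the paper disposes of this proposition in one line by citing Watatani's general result \cite[Proposition 2.7.3]{Wa} (after noting that $E_0\circ\cdots\circ E_k$ has finite index by \cite[Proposition 1.7.1]{Wa}), whereas you reconstruct a self-contained proof. Your state $\phi_k$ is precisely the trace $\tr_k$ that the paper only introduces afterwards in \Cref{f1}, and its traciality indeed rests on \Cref{min5} (you do not need the stronger bimodule-trace property $\mathcal{E}_k(xy)=\mathcal{E}_k(yx)$ for $x\in A_k$; the restriction of \Cref{min5} to $y\in B'\cap A_k$ suffices, and that is exactly what the paper records). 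The remaining steps are sound: the Pimsner--Popa inequality gives $\phi_k(z^*z)\geq c\lVert z\rVert^2$ on $B'\cap A_k$, the equivalence of $\lVert\cdot\rVert_2$ with the operator norm makes $B'\cap A_k$ coincide with its GNS space and hence with its own bicommutant there (the usual $x\hat a = Ja^*J x\hat 1$ argument), and the uniform lower bound $\phi_k(p)\geq c$ on non-zero projections caps the size of any orthogonal family by $1/c$, killing type $II_1$ pieces and forcing a finite direct sum of finite matrix algebras. What your approach buys is transparency and independence from the black-box citation; what it costs is a bound in terms of the Pimsner--Popa constant $c$ rather than the sharper index bound $\dim_{\C}(B'\cap A_k)\leq [A:B]_0^{k+1}$, which the paper obtains separately in \Cref{rel-comm-dimn} by a different (local index) argument. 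There is no circularity in your use of \Cref{min5} and \Cref{f1}-type facts, since those are imported from \cite{Wa} and do not depend on the present proposition.
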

  \begin{proof}
  Since $A_k$ is simple and the conditional expectation
$E_{0} \circ E_1 \circ \cdots \circ E_{k-1} \circ E_k : A_k \rar B$
has finite index (by \cite[Proposition 1.7.1]{Wa}), it follows from
\cite[Proposition 2.7.3]{Wa} that $B'\cap A_k$ is finite dimensional.
\end{proof}
We shall provide a bound for the dimension of $B'\cap A_k$ in terms of
index of $B \subset A$ in the next subsection.
 \subsubsection*{$k$-step $C^*$-basic construction}\( \)

The multi-step basic construction holds exactly like in
\cite{PiPo1}. See \cite{Bak} for an easier proof. Out here, we use the
characterization of $C^*$-basic construction given by Watatani in
\cite[Proposition 2.2.11]{Wa}.

\begin{proposition}\cite{PiPo1}\label{iteratingbasicconstruction}
For each $n\geq 1$, the tower $B\subset A_n \subset A_{2n+1}$ is an
instance of $C^*$-basic construction with the corresponding Jones
projection  given by
\[
e_{[-1,2n+1]}:={{\tau}^{-\frac{n(n+1)}{2}}}(e_{n+1}e_n\cdots e_1)
(e_{n+2}e_{n+1}\cdots e_2)\cdots(e_{2n+1}e_{2n}\cdots e_{n+1}).
\]
\end{proposition}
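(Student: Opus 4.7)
The plan is to proceed by induction on $n$ and, at each step, verify Watatani's characterization of the $C^*$-basic construction (\cite[Proposition 2.2.11]{Wa}). Concretely, writing $p_n := e_{[-1,2n+1]}$ and $F_n := E^{A_n}_B = E_0\circ E_1\circ\cdots \circ E_{n-1}\circ E_n$ for the (minimal) conditional expectation from $A_n$ onto $B$, what must be checked is:
\begin{enumerate}[label=(\alph*)]
\item $p_n$ is a projection in $A_{2n+1}$;
\item $p_n\, x\, p_n = F_n(x)\, p_n$ for every $x \in A_n$;
\item the map $B \ni b \mapsto b\, p_n \in A_{2n+1}$ is injective;
\item $A_{2n+1}$ equals the norm-closure of the linear span of $\{x\, p_n\, y : x, y \in A_n\}$.
\end{enumerate}
Once these are established, by Watatani's characterization together with \Cref{min1}, the inclusion $B \subset A_n \subset A_{2n+1}$ is an instance of $C^*$-basic construction with Jones projection $p_n$.

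The verification would lean on the standard Jones-type relations among $e_1, e_2, \ldots, e_{2n+1}$, namely $e_i e_j = e_j e_i$ for $|i - j| \geq 2$, the Temperley--Lieb identity $e_i e_{i\pm 1} e_i = \tau e_i$, and the pull-through rule $e_{i+1} x e_{i+1} = E_i(x) e_{i+1}$ for $x \in A_i$ (equivalently, $E_i$ coincides with the compression by $e_{i+1}$ up to scaling). These hold in our setting because $E_{k-1} : A_{k-1} \to A_{k-2}$ is the minimal conditional expectation implementing the $C^*$-basic construction $A_{k-2} \subset A_{k-1} \subset A_k$, and the scalar $\tau$ appears uniformly since, by \Cref{w} and \Cref{dual-index}, $[A_{k-1}:A_{k-2}]_0 = [A:B]_0 = \tau^{-1}$ for every $k$. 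The base case $n = 1$, where $p_1 = \tau^{-1}(e_2 e_1)(e_3 e_2)$, is verified by a direct computation: using $e_3 e_2 e_3 = \tau e_3$ and $e_2 e_1 e_2 = \tau e_2$ one shows $p_1^2 = p_1$ and $p_1^* = p_1$, and then (b) follows by pushing the middle $A_1$-element through the sandwich of $e$'s using the pull-through rule applied twice.

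For the inductive step, writing $p_{n+1}$ as $p_n$ conjugated and multiplied by the next ``diagonal'' block $\tau^{-(n+1)}(e_{n+2}e_{n+1}\cdots e_1)(e_{2n+3} e_{2n+2}\cdots e_{n+2})$ (or, equivalently, rearranging the product using the commutation rules to isolate $p_n$), one uses the inductive hypothesis (b) applied to the $A_n$-factor sitting inside $A_{n+1}$ to reduce the identity (b) at level $n+1$ to a statement about a single extra pair of basic constructions $A_n \subset A_{n+1} \subset A_{n+2}$ and $A_{2n+1} \subset A_{2n+2} \subset A_{2n+3}$, which is again handled by the pull-through rule. Item (a) follows from the same manipulations. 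Item (c) is then automatic from (b): if $b \in B$ satisfies $b p_n = 0$, then $b p_n b^* = 0$, and by (b) applied at $x = b^*b$ (which lies in $A_n$ since $B \subset A_n$) together with $F_n|_B = \mathrm{id}_B$, we get $b^* b\, p_n = 0$, whence $\|b\|^2$ times an injective map kills $b$; alternatively one uses that $F_n$ is faithful (by \Cref{remarks}) together with the fact that the map $b \mapsto bp_n$ from $B$ is a $*$-homomorphism-like embedding. For (d), one observes that the algebra $\mathcal{D} := \overline{\mathrm{span}}\, A_n p_n A_n$ is closed under multiplication by $A_n$ on both sides and contains $p_n$, and each individual $e_i$ ($1\leq i \leq 2n+1$) can be expressed as a linear combination of elements of $\mathcal{D}$ via repeated applications of the pull-through rule and the Temperley--Lieb identity; since $A_{2n+1}$ is generated as a $C^*$-algebra by $A$ together with $e_1, \ldots, e_{2n+1}$ (hence by $A_n$ and $e_{n+1}, \ldots, e_{2n+1}$), this yields (d).

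The main obstacle is purely bookkeeping: showing that the intricate product defining $p_n$ actually telescopes correctly under the Jones relations to implement the compression $F_n$. This is precisely where it is convenient to organise the computation inductively, peeling off the outermost ``diagonal'' $(e_{2n+1}e_{2n}\cdots e_{n+1})$ on the right and the innermost $(e_{n+1}e_n\cdots e_1)$ on the left to reduce to the analogous statement at level $n-1$, at which point the pull-through rule $e_{i+1} x e_{i+1} = E_i(x) e_{i+1}$ produces exactly one additional factor of $E_k$ in $F_n = E_0\circ \cdots \circ E_n$ per ``layer'' of the product, while the factors of $\tau$ absorbed from $e_i e_{i\pm 1} e_i = \tau e_i$ account precisely for the prefactor $\tau^{-n(n+1)/2}$.
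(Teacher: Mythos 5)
Your proposal takes essentially the same route the paper does: the paper offers no proof of its own, deferring to \cite{PiPo1} and \cite{Bak} and noting only that the verification proceeds via Watatani's characterization of the $C^*$-basic construction \cite[Proposition 2.2.11]{Wa} --- precisely the four conditions (a)--(d) you set out to check, with the Temperley--Lieb and pull-through relations doing the work. The one step of your sketch that needs tightening is (d): concluding $\mathcal{D}=A_{2n+1}$ from ``each $e_i$ lies in $\mathcal{D}$'' tacitly requires $1\in\mathcal{D}$ (equivalently $A_n\subseteq\mathcal{D}$), which is most cleanly obtained either from $e_{n+1}\in\mathcal{D}$ together with a quasi-basis identity $\sum_k\mu_k e_{n+1}\mu_k^*=1$ for $E_n$, or more directly by iterating $A_k=\overline{\mathrm{span}}\,A_{k-1}e_kA_{k-1}$ and commuting the lower-level coefficients past the higher Jones projections so that a generic spanning element of $A_{2n+1}$ collapses onto $A_n\,e_{[-1,2n+1]}\,A_n$.
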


\begin{proposition}\cite{JS, Wa}\label{multibasis}
 Let $\{\lambda_i:1 \leq i\leq m \}$ be a quasi-basis for $E_0$. Then, for each
 $n \geq
 1$, the collection
 \[
 \bigg\{{\tau}^{-\frac{n(n+1)}{4}}\lambda_{i_{n}}(e_1e_2\cdots
 e_{n-1}e_n)\lambda_{i_{n-1}}(e_1e_2\cdots e_{n-2}e_{n-1})\cdots
 \lambda_{i_2}(e_1e_2)\lambda_{i_1}e_1: 1\leq i_1, \ldots, i_n\leq m\bigg\}
 \]
 is a quasi-basis for the minimal conditional expectation $E_0\circ
 E_1\circ \cdots E_{n-1}\circ E_n: A_n \rar B$.
 \end{proposition}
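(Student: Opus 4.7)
The plan is to prove this by induction on $n$, using Watatani's formula for the quasi-basis of a dual conditional expectation together with the standard composition rule for quasi-bases of composite conditional expectations.

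First, by a short auxiliary induction on $k$, I would establish that
\[
\{\tau^{-k/2}\, \lambda_i\, e_1 e_2 \cdots e_k : 1 \leq i \leq m\}
\]
is a quasi-basis for the dual minimal conditional expectation $E_k : A_k \rar A_{k-1}$, for every $k \geq 1$. The base case $k=1$ is the Watatani dual formula (\cite[Proposition 1.6.6]{Wa}) already invoked in the proof of \Cref{pushdown}: starting from the quasi-basis $\{\lambda_i\}$ for $E_0$, the set $\{\tau^{-1/2}\lambda_i e_1\}$ is a quasi-basis for $E_1$. For the inductive step from $k-1$ to $k$, I apply the same Watatani formula to the quasi-basis $\{\tau^{-(k-1)/2}\lambda_i e_1 \cdots e_{k-1}\}$ of $E_{k-1}$; the prescription $\mu \mapsto \tau^{-1/2}\mu e_k$ then directly produces the claimed quasi-basis for $E_k$.

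The main induction on $n$ then rests on the following elementary composition lemma, verified by a direct algebraic calculation using the $X$-bilinearity of $F$ and the defining identities of both quasi-bases: if $\{\alpha_j\}$ is a quasi-basis for $E : X \rar Y$ and $\{\beta_l\}$ is a quasi-basis for $F : Z \rar X$, then $\{\beta_l \alpha_j : j, l\}$ is a quasi-basis for $E \circ F : Z \rar Y$. I would apply this to the factorization of $E_0 \circ E_1 \circ \cdots \circ E_n$ as the outer expectation $E_0 \circ E_1 \circ \cdots \circ E_{n-1}: A_{n-1} \rar B$ pre-composed with $E_n : A_n \rar A_{n-1}$. Combining the inductive hypothesis (providing a quasi-basis for the outer part with coefficient $\tau^{-(n-1)n/4}$) with the quasi-basis $\{\tau^{-n/2}\lambda_{i_n} e_1 \cdots e_n\}$ for $E_n$ from the auxiliary step, and concatenating in the order $\beta \cdot \alpha$, yields precisely the displayed template: the outermost chunk $\lambda_{i_n}(e_1 \cdots e_n)$ comes from the $k=n$ stage while the innermost chunk $\lambda_{i_1} e_1$ is inherited from the base case. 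The $\tau$-exponents add up as $-n/2 \;-\; (n-1)n/4 \;=\; -n(n+1)/4$, matching the stated coefficient.

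The main obstacle is simply careful bookkeeping of $\tau$-exponents and the precise ordering of the $\lambda_{i_k}$'s and $e_j$'s in the iterated product; there is no real conceptual difficulty beyond correctly tracking the $A_{j}$-bilinearity of each intermediate minimal conditional expectation, and the argument is essentially the $C^*$-algebraic analogue of the classical iterated basis construction of \cite{JS} for type $II_1$ subfactors.
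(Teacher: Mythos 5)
Your overall strategy --- Watatani's dual quasi-basis formula, the auxiliary induction giving $\{\tau^{-k/2}\lambda_i e_1\cdots e_k\}$ as a quasi-basis for $E_k:A_k\to A_{k-1}$, and the composition rule for quasi-bases of $E\circ F$ --- is the standard route (the paper offers no proof of this proposition, only the citation to \cite{JS,Wa}), and both of those ingredients are correct as you state them. The gap is precisely in the bookkeeping you dismiss at the end. The composition $E_0\circ E_1\circ\cdots\circ E_n$ involves $n+1$ conditional expectations, so your composition lemma, iterated from the honest base case $n=0$ (quasi-basis $\{\lambda_{i_0}\}$ for $E_0:A\to B$), necessarily produces $n+1$ indices, with an extra rightmost factor $\lambda_{i_0}$ that is absent from the displayed template. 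If you instead anchor the main induction at the literal $n=1$ display --- as your remark that ``the innermost chunk $\lambda_{i_1}e_1$ is inherited from the base case'' indicates --- then that base case is false: $\{\tau^{-1/2}\lambda_i e_1\}$ is a quasi-basis for $E_1:A_1\to A$ but not for $E_0\circ E_1:A_1\to B$. Indeed, applying the reconstruction identity to $x=1$ gives
\[
\sum_i\tau^{-1}\,E_0\circ E_1(\lambda_ie_1)\,e_1\lambda_i^*=\sum_i E_0(\lambda_i)e_1\lambda_i^*=e_1\Big(\sum_i E_0(\lambda_i)\lambda_i^*\Big)=e_1\neq 1,
\]
and likewise $\sum_i\tau^{-1}\lambda_ie_1e_1\lambda_i^*=\tau^{-1}=[A:B]_0$ by \Cref{support}, whereas $\mathrm{Ind}(E_0\circ E_1)=[A:B]_0^2$.

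What your induction actually establishes is that the displayed family is a quasi-basis for $E_1\circ\cdots\circ E_n:A_n\to A$, and that a quasi-basis for $E_0\circ\cdots\circ E_n:A_n\to B$ is obtained by appending one further factor $\lambda_{i_0}$ on the right (the $\tau$-exponent is unchanged, since the quasi-basis of $E_0$ carries no normalisation). That corrected form is the one the paper actually invokes later, e.g.\ $\{\tau^{-1/2}\lambda_ie_1\lambda_j\}$ as a quasi-basis for $E_0\circ E_1$ in the proofs of \Cref{r2} and \Cref{shift}, so the display in \Cref{multibasis} is evidently missing the trailing factor. Your argument goes through essentially verbatim for that corrected statement, but as written the induction does not close on the formula you claim to reproduce, and the assertion that the exponents and the template ``match'' is exactly the point at which this should have been caught.
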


\subsection{Tracial states on the relative commutants}\( \)

Like in the preceding subsection, $B \subset A$ will again denote a fixed pair
of simple unital $C^*$-algebras such that $\mathcal{E}_0(A,B)\neq
\emptyset$; and $\tau:={{[A:B]}_0}^{-1}.$

Being finite dimensional, the higher relative commutants $B'\cap A_k$
admit numerous tracial states. However, using the minimal conditional
expectations, we can single out a consistent Markov type trace, which
then allows one to talk about the ``standard invariant'' and the
``principal graph'' of such an inclusion, as is done for any finite
index subfactor. Izumi has also mentioned about this aspect in
\cite{I}. But we are not aware of any literature in this
direction. However, we will not delve into these topics in this
paper.\smallskip

First, we recall two auxiliary results from \cite{Wa} that will be
fundamental in obtaining the tracial states of our choice.
\begin{proposition}\cite{Wa}\label{min4}
 Let $E\in \mathcal{E}_0(A, B)$ and $\{\lambda_i:1\leq i\leq n \}$ be
 a quasi-basis for $E$. Consider the map $H_E: B'\cap A \rar A$ given
 by
\[
H_E(x)=\sum_{i}
\lambda_ix\lambda^*_i,\ x\in B^{\prime}\cap
A.
\]
Then, $H_E$ maps $B^{\prime}\cap A$ onto $\mathcal{Z}(A)$ and does not
depend on the choice of the quasi-basis.

Moreover, the map $G_E: B^{\prime}\cap A \rar \mathcal{Z}(A)$  given by 
\[
G_E(x)=\frac{1}{\mathrm{Ind}(E)}\sum_{i}
\lambda_ix\lambda^*_i,\ x \in B'\cap A
\]
is a conditional expectation.
\color{black}
\end{proposition}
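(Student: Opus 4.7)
The proof breaks into four steps, each resting on the two reconstruction identities $a=\sum_j \lambda_j E(\lambda_j^* a)=\sum_j E(a\lambda_j)\lambda_j^*$ that characterise a quasi-basis, together with the fact that for $x\in B'\cap A$ and $b\in B$ we have $xb=bx$.

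First, I would verify that $H_E(x)\in\mz(A)$ for every $x\in B'\cap A$. Given $a\in A$, I rewrite each $a\lambda_i$ appearing in $aH_E(x)=\sum_i (a\lambda_i)x\lambda_i^*$ via the reconstruction identity $a\lambda_i=\sum_j \lambda_j E(\lambda_j^* a\lambda_i)$. Since $E(\lambda_j^* a\lambda_i)\in B$ and $x\in B'\cap A$, this scalar commutes with $x$; collapsing the inner sum $\sum_i E(\lambda_j^* a\lambda_i)\lambda_i^*$ through the second reconstruction identity applied to $\lambda_j^* a$ yields $aH_E(x)=\sum_j \lambda_j x\lambda_j^* a=H_E(x)\,a$.

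Next, I would show that $H_E$ does not depend on the choice of quasi-basis. Given a second quasi-basis $\{\mu_j\}$, I expand $\mu_j=\sum_i \lambda_i E(\lambda_i^*\mu_j)$ and $\mu_j^*=\sum_k E(\mu_j^*\lambda_k)\lambda_k^*$, insert into $\sum_j \mu_j x\mu_j^*$, push $x$ past the $B$-valued coefficients, and simplify the middle sum via
\[
\sum_j E(\lambda_i^*\mu_j)\,E(\mu_j^*\lambda_k)=E\Bigl(\lambda_i^*\sum_j \mu_j E(\mu_j^*\lambda_k)\Bigr)=E(\lambda_i^*\lambda_k).
\]
A final reconstruction-identity collapse (over $i$, say) reduces the resulting double sum to $\sum_k \lambda_k x\lambda_k^*$, matching $H_E$ in the $\lambda$-basis.

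Third, I would verify that $H_E$ maps onto $\mz(A)$. For $z\in\mz(A)\subseteq B'\cap A$, centrality of $z$ gives $H_E(z)=z\sum_i \lambda_i\lambda_i^*=z\,\mathrm{Ind}(E)$; since $\mathrm{Ind}(E)$ is an invertible positive element of $\mz(A)$, every $w\in\mz(A)$ equals $H_E(\mathrm{Ind}(E)^{-1} w)$. Finally, for the ``moreover'' part, I would check that $G_E=\mathrm{Ind}(E)^{-1} H_E$ is a conditional expectation onto $\mz(A)$: positivity follows from $\lambda_i x^* x\lambda_i^*=(x\lambda_i^*)^*(x\lambda_i^*)$ combined with positivity of $\mathrm{Ind}(E)^{-1}$; unitality from $G_E(1)=\mathrm{Ind}(E)^{-1}\sum_i\lambda_i\lambda_i^*=1$; the $\mz(A)$-bimodule identity $G_E(zxz')=zG_E(x)z'$ is immediate because central elements commute with every $\lambda_i$; and the projection property $G_E|_{\mz(A)}=\mathrm{id}$ is the step-three computation divided through by $\mathrm{Ind}(E)$. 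Norm one then follows from positivity and unitality (or directly from Tomiyama's theorem). The main obstacle is the quasi-basis independence in step two: it is the only part that requires an iterated use of \emph{both} reconstruction identities, and one must keep careful track of which $B$-valued coefficients are allowed to commute past $x$.
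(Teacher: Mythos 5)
Your proposal is correct and complete. The paper itself gives no proof of this proposition (it is quoted from Watatani's memoir), but your centrality computation in Step 1 is exactly the double reconstruction-identity argument the paper reproduces in the proof of Lemma \ref{f2}, and the remaining steps (basis-independence via $\sum_j E(\lambda_i^*\mu_j)E(\mu_j^*\lambda_k)=E(\lambda_i^*\lambda_k)$, surjectivity via invertibility of $\mathrm{Ind}(E)$ in $\mathcal{Z}(A)$, and the conditional-expectation axioms for $G_E$) are the standard ones and are carried out correctly.
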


\begin{theorem}\cite{Wa}\label{min5}
Let $E\in \mathcal{E}_0(A,B)$. Then, the
following are equivalent:
\begin{enumerate}
 \item $E$ is minimal.
 \item $E_{|_{B^{\prime}\cap A}}$ (resp., $H_E$) is a tracial state
   (resp., tracial map) on $B^{\prime}\cap A$ and
 $$H_E= \mathrm{Ind}(E)\ E|_{B^{\prime}\cap A}.$$
 \item $H_E=c \, E|_{B^{\prime}\cap A}$ for some constant $c$.
\end{enumerate}
\end{theorem}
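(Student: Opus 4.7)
The plan is to prove the cycle $(2)\Rightarrow(3)\Rightarrow(1)\Rightarrow(2)$. The implication $(2)\Rightarrow(3)$ is immediate with $c=\mathrm{Ind}(E)$. Since $A$ and $B$ are simple, $\mathcal{Z}(A)=\mathcal{Z}(B)=\mathbb{C}$, so $\mathrm{Ind}(E)$ is a positive scalar and both $H_E$ and $E|_{B'\cap A}$ take values in $\mathbb{C}$ (by \Cref{min4} and simplicity of $B$, respectively). Setting $x=1_A$ in (3) forces $c = H_E(1_A) = \sum_i\lambda_i\lambda_i^* = \mathrm{Ind}(E)$, so (3) reduces to the equality of the two states $G_E := H_E/\mathrm{Ind}(E)$ and $E|_{B'\cap A}$ on the finite-dimensional $C^*$-algebra $B'\cap A$.

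For $(3)\Rightarrow(1)$, I would show that any $E$ satisfying (3) realizes the minimum of $\mathrm{Ind}(\cdot)$ over $\mathcal{E}_0(A,B)$ and conclude by uniqueness of the minimal CE (\Cref{min1}). The key input is a Radon--Nikodym-type representation from \cite{Wa} expressing any other $F\in\mathcal{E}_0(A,B)$ in terms of $E$ and a positive invertible density $\alpha\in B'\cap A$ with $E(\alpha)=1$. A direct calculation relating a quasi-basis of $F$ to that of $E$ via $\alpha^{-1/2}$ then produces an estimate of the form $\mathrm{Ind}(F)\geq H_E(\alpha^{-1})$, which by (3) equals $\mathrm{Ind}(E)\cdot E(\alpha^{-1})$. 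The operator-convexity of $t\mapsto t^{-1}$ (Kadison's inequality applied to the state $E|_{B'\cap A}$) then gives $E(\alpha^{-1})\geq E(\alpha)^{-1}=1$, with equality iff $\alpha$ is scalar. Hence $\mathrm{Ind}(F)\geq\mathrm{Ind}(E)$, with equality forcing $F=E$.

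For $(1)\Rightarrow(2)$, the identity $H_{E_0}=\mathrm{Ind}(E_0)\cdot E_0|_{B'\cap A}$ emerges as the first-order Euler--Lagrange condition at the minimizer $\alpha=1$ of the functional considered above. The substantive obstacle---and the main difficulty of the whole argument---is the traciality of $E_0|_{B'\cap A}$, which is \emph{not} implied by the scalar identity alone. I would exploit the dual CE $\widetilde{E_0}:A_1\to A$ from \Cref{w}, which is itself minimal by the same theorem. Using the push-down lemma (\Cref{pushdown}) and the identity $\sum_i\lambda_i e_B\lambda_i^*=1$ of \Cref{basis}, one sets up a pairing between $B'\cap A$ and $A'\cap A_1$ under which the restrictions of $E_0$ and $\widetilde{E_0}$ intertwine; the symmetry inherent in this dual picture forces the cyclic identity $E_0(xy)=E_0(yx)$ for $x,y\in B'\cap A$. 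Verifying this symmetry cleanly in the purely $C^*$-algebraic setting---without the modular theory available for von Neumann algebras---is the main technical step.
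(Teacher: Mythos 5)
First, a point of reference: the paper does not prove Theorem \ref{min5} at all --- it is quoted from Watatani's memoir \cite{Wa} --- so there is no in-paper argument to compare against, and your proposal has to stand on its own. Judged that way, the cycle $(2)\Rightarrow(3)\Rightarrow(1)$ and the scalar identity in $(1)\Rightarrow(2)$ are essentially the standard variational proof and check out: the normalization $c=H_E(1)=\mathrm{Ind}(E)$; the Radon--Nikodym parametrization $F=E(\alpha^{1/2}\,\cdot\,\alpha^{1/2})$ with $\alpha\in(B'\cap A)_+$ invertible and $E(\alpha)=1$; the quasi-basis $\{\lambda_i\alpha^{-1/2}\}$ for $F$, which in fact gives the exact identity $\mathrm{Ind}(F)=H_E(\alpha^{-1})$ rather than just an estimate; and the inequality $E(\alpha^{-1})\geq E(\alpha)^{-1}$ with equality iff $\alpha$ is scalar (Cauchy--Schwarz for the faithful state $E|_{B'\cap A}$, faithfulness coming from \Cref{remarks}(2)). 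The Euler--Lagrange derivation of $H_{E_0}=\mathrm{Ind}(E_0)\,E_0|_{B'\cap A}$ at the minimizer is likewise fine.

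The genuine gap is exactly where you flag it: the traciality of $E_0|_{B'\cap A}$ (and of $H_{E_0}$) in $(1)\Rightarrow(2)$. What you offer there --- a pairing between $B'\cap A$ and $A'\cap A_1$ whose ``symmetry forces the cyclic identity'' --- is a plan, not an argument: the pairing is never specified, and it is not clear it can be made to work without redoing the entire minimality analysis on the dual side. Note also that traciality cannot be extracted from the scalar identity (3) by formal manipulation alone, since $H_E$ is not tracial for general $E$: for $B=\C\subset A=M_2(\C)$ and $E=\mathrm{Tr}(\,\cdot\,d)$ with $d$ positive, non-scalar, one computes $H_E=\mathrm{Tr}(\,\cdot\,d^{-1})$, which is not a trace. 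The gap closes in a few lines using the uniqueness of the minimal conditional expectation (\Cref{min1}), which the paper already has in hand: for a unitary $u\in B'\cap A$, the map $E^u:=E(u\,\cdot\,u^*)$ is again a conditional expectation onto $B$ with quasi-basis $\{u^*\lambda_iu\}$, so $\mathrm{Ind}(E^u)=u^*\,\mathrm{Ind}(E)\,u=\mathrm{Ind}(E)$; if $E$ is minimal then so is $E^u$, whence $E^u=E$, i.e.\ $E(uau^*)=E(a)$ for all $a\in A$. Taking $a=vu$ with $v\in B'\cap A$ gives $E(uv)=E(vu)$, and linear combinations of unitaries span $B'\cap A$, so $E|_{B'\cap A}$ is tracial; since $H_E$ is independent of the quasi-basis (\Cref{min4}), the same conjugation gives $H_E(uxu^*)=H_E(x)$ and hence traciality of $H_E$. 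I would replace the duality sketch by this argument (or simply cite \cite{Wa}, as the paper does).
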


 \begin{proposition}\label{f1}
 For each $k \geq 0$, $B^{\prime}\cap A_k$ admits a faithful tracial
 state $\mathrm{tr}_k$ such that
  \begin{equation}\label{tr-xe}
    \mathrm{tr}_k(xe_k)=\tau
    \mathrm{tr}_k(x) \text{ for all } x\in B^{\prime}\cap A_{k-1},
    \end{equation}
and ${\tr_k}_{|_{B'\cap A_{k-1}}} = \tr_{k-1}$ for all $ k \geq
1$. (We will drop $k$ and  denote $\tr_k$ simply  by $\tr$.)
 \end{proposition}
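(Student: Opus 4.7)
The plan is to construct each $\tr_k$ as the restriction to the relative commutant $B'\cap A_k$ of the iterated composition of minimal conditional expectations $F_k := E_0\circ E_1\circ \cdots \circ E_k: A_k \rar B$. By repeated application of \Cref{min3}, each $F_k$ is itself a minimal conditional expectation. To see that the restriction $F_k|_{B'\cap A_k}$ lands in $\C$ (rather than only in $B$), observe that for $x\in B'\cap A_k$ and $b\in B$ one has $b F_k(x) = F_k(bx) = F_k(xb) = F_k(x) b$, so $F_k(x)\in \mz(B) = \C$ since $B$ is simple. \Cref{min5} then immediately gives that $\tr_k := F_k|_{B'\cap A_k}$ is a tracial state.

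Faithfulness of $\tr_k$ follows directly from faithfulness of each $E_i$ (\Cref{remarks}(2)), since a composition of faithful positive maps is faithful. For the consistency relation $\tr_k|_{B'\cap A_{k-1}} = \tr_{k-1}$, note that if $x\in B'\cap A_{k-1}$ then $E_k(x) = x$, and hence $F_k(x) = F_{k-1}(E_k(x)) = F_{k-1}(x)$.

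The Markov-type identity $\tr_k(xe_k) = \tau\, \tr_k(x)$ for $x\in B'\cap A_{k-1}$ is then a direct consequence of the defining property of the dual conditional expectation (\Cref{w}(1)) together with \Cref{dual-index}: since $E_k$ is the dual of $E_{k-1}$ and all the indices collapse to $[A:B]_0$, one has $E_k(a e_k b) = \tau\, ab$ for all $a,b\in A_{k-1}$. Specialising to $a = x$ and $b = 1$ yields $E_k(xe_k) = \tau x \in B'\cap A_{k-1}$, whence
\[
\tr_k(xe_k) = F_{k-1}(\tau x) = \tau\, \tr_{k-1}(x) = \tau\, \tr_k(x),
\]
using consistency in the last step.

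There is no serious obstacle here; the statement is essentially a packaging of the tools already developed in \Cref{min3,min5,w}. The only point worth spelling out carefully is the one noted above, namely that simplicity of $B$ forces $F_k(B'\cap A_k)\subseteq \C$, which is what upgrades the naively $B$-valued minimal expectation to an honest scalar-valued trace on the higher relative commutants.
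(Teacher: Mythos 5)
Your proposal is correct and takes essentially the same route as the paper: both define $\tr_k$ as the restriction of $E_0\circ E_1\circ\cdots\circ E_k$ to $B'\cap A_k$, invoke \Cref{min3}, \Cref{min5} and \Cref{remarks}(2) for traciality and faithfulness, and obtain the Markov identity from the fact that the (dual) conditional expectation sends $xe_k$ to $\tau x$. Your explicit observation that simplicity of $B$ forces the restriction to be scalar-valued is a detail the paper leaves implicit (it is already built into the statement of \Cref{min5}), not a genuinely different argument.
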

 \begin{proof}
Define $\tr_k: B'\cap A_k\rar \C$ as $\tr_k = (E_0\circ E_1\circ\cdots
\circ E_k)_{|_{B^{\prime}\cap A_k}}$. Then, by \Cref{min3},
\Cref{min5} and \Cref{remarks}(2), $\tr_k$ is a faithful tracial state
and, by definition, ${\tr_k}_{|_{B'\cap A_{k-1}}} = \tr_{k-1}$ for all
$ k \geq 1$.

   We prove the Markov type property only for $k=1$. Other cases follow
   similarly. We have
 \[
 \mathrm{tr}(xe_1)=E_0\circ E_1(xe_1)= E_0\big(xE_1(e_1)\big)=\tau
 E_0(x)=\tau \mathrm{tr}(x)
 \]
 for all $x \in B'\cap A$.
 \end{proof}

\begin{remark}
Denote the minimal conditional expectation $E_0\circ E_1\circ\cdots
\circ E_k$ simply by $F_k$. Then, in view of Theorem \ref{min5},
$H_{F_k}={\tau}^{-k} \mathrm{tr}_k$ on $B'\cap A_k$.
\end{remark}

\begin{lemma}\label{f2}
Let $\{\lambda_i:1\leq i\leq n\}\subset A$ be a quasi-basis for the
minimal conditional expectation $E_0$. Then, the
$\mathrm{tr}$-preserving conditional expectation from $B^{\prime}\cap
A_k$ onto $A^{\prime}\cap A_k$ is given by
\[
E^{B^{\prime}\cap A_k}_{A^{\prime}\cap A_k}(x)=
\frac{1}{[A:B]_0}\sum_{i}\lambda_ix\lambda^*_i,\  x \in B'\cap A_k.
\]
 \end{lemma}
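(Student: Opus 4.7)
My plan is to define $\Phi(x):=\frac{1}{[A:B]_0}\sum_i \lambda_i x\lambda_i^*$ on all of $A_k$ and then verify, when restricted to $B'\cap A_k$, that it satisfies the four defining properties of the $\tr$-preserving conditional expectation onto $A'\cap A_k$: it lands in $A'\cap A_k$, is the identity on $A'\cap A_k$, is an $(A'\cap A_k)$-bilinear positive unital map, and is $\tr$-preserving.

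For the first two properties, fix $x\in B'\cap A_k$ and $a\in A$, and apply the quasi-basis identities $a\lambda_i=\sum_j\lambda_j E_0(\lambda_j^*a\lambda_i)$ and $\sum_i E_0(\lambda_j^*a\lambda_i)\lambda_i^*=\lambda_j^*a$. Since each $E_0(\lambda_j^*a\lambda_i)$ lies in $B$ and hence commutes with $x$, a brief manipulation yields $a\Phi(x)=\Phi(x)a$, so $\Phi(x)\in A'\cap A_k$. Conversely, if $x\in A'\cap A_k$, then $x$ commutes with each $\lambda_i$, so $\Phi(x)=x\cdot\frac{1}{[A:B]_0}\sum_i\lambda_i\lambda_i^*=x$, using $\sum_i\lambda_i\lambda_i^*=\mathrm{Ind}(E_0)=[A:B]_0$. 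The $(A'\cap A_k)$-bilinearity is immediate since elements of $A'\cap A_k$ commute with $\lambda_i\in A$, unitality is the calculation just given, and positivity follows from writing $\Phi(y^*y)=\frac{1}{[A:B]_0}\sum_i(y\lambda_i^*)^*(y\lambda_i^*)$. Together these facts show that $\Phi|_{B'\cap A_k}$ is a conditional expectation onto $A'\cap A_k$.

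The substantive step is $\tr$-preservation. Using the description $\tr=(E_0\circ E_1\circ\cdots\circ E_k)|_{B'\cap A_k}$ from \Cref{f1}, one computes
\[
\tr(\Phi(x))=\frac{1}{[A:B]_0}\sum_i E_0\circ E_1\circ\cdots\circ E_k(\lambda_i x\lambda_i^*).
\]
Each $E_j$ with $j\geq 1$ is $A_{j-1}$-bilinear and hence $A$-bilinear, so $\lambda_i$ and $\lambda_i^*$ pull out of $E_1\circ\cdots\circ E_k$. Setting $y:=E_1\circ\cdots\circ E_k(x)$, which lies in $B'\cap A$ by the $B$-bilinearity of that composition, the right-hand side becomes $\frac{1}{[A:B]_0}E_0\bigl(H_{E_0}(y)\bigr)$. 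Now \Cref{min5} supplies $H_{E_0}(y)=[A:B]_0\,E_0(y)$, and this lies in $\mathcal{Z}(A)=\C$ since $A$ is simple, so $E_0$ acts trivially on it. Hence $\tr(\Phi(x))=E_0(y)=\tr(x)$, and $\Phi|_{B'\cap A_k}$ is the (unique, by faithfulness of $\tr$) $\tr$-preserving conditional expectation $E^{B'\cap A_k}_{A'\cap A_k}$.

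The main obstacle is precisely this last step: the individual summands $\lambda_i x\lambda_i^*$ need not lie in $B'\cap A_k$, so the trace property of $\tr$ cannot be invoked termwise. The trick is to push the $\lambda_i$'s past all the higher $E_j$'s via $A$-bilinearity to land inside $B'\cap A$, where \Cref{min5} furnishes the identity $H_{E_0}=[A:B]_0\,E_0|_{B'\cap A}$ that collapses the sum to a scalar.
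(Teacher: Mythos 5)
Your proof is correct and follows essentially the same route as the paper's: both show that $\Phi(x)=\tau\sum_i\lambda_i x\lambda_i^*$ lands in $A'\cap A_k$ via the quasi-basis identities, and both reduce the trace computation to the minimality identity $H_{E_0}=[A:B]_0\,E_0|_{B'\cap A}$ of \Cref{min5} after pulling the $\lambda_i$'s through $E_1\circ\cdots\circ E_k$ by $A$-bimodularity. The only cosmetic difference is that the paper verifies $\tr(\Phi(x)y)=\tr(xy)$ for all $y\in A'\cap A_k$ directly, whereas you check the conditional-expectation axioms together with $\tr\circ\Phi=\tr$ and then invoke uniqueness; the two characterizations are equivalent.
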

\begin{proof}
   Consider $G_{E_0}: B'\cap A_k \rar A_k$ given by
  \[
G_{E_0}(x)=\tau \sum_{i}\lambda_i x \lambda^*_i, x \in B'\cap A_k.
  \]
We assert that $G_{E_0}(x)\in A^{\prime}\cap A_k$. Indeed, for any
$a\in A$ and $x\in B'\cap A_k$, as in \cite[ Proposition 1.2.9 ]{Wa},
we observe that
\begin{align*}
  G_{E_0}(x)a & = \tau\sum_{i}\lambda_ix\lambda^*_ia\\ &= \tau
  \sum_{i}\lambda_ix\bigg(\sum_{j\in
    J}E_0(\lambda^*_ia\lambda_j)\lambda^*_j\bigg)\\ &= \tau \sum_{i,j}
  \lambda_iE_0(\lambda^*_ia\lambda_j)x\lambda^*_j\\ &=a\sum_{j}
  \lambda_jx\lambda^*_j\\ &= a\, G_{E_0}(x).
 \end{align*}
  Now, we show that $G_{E_0}$ is the $\mathrm{tr}$-preserving
  conditional expectation $E^{B^{\prime}\cap A_k}_{A^{\prime}\cap
    A_k}$. Let $x \in B'\cap A_k$. Then, by the definition of $\tr$, for
  any $y\in A^{\prime}\cap A_k$, we have
  \[
  \mathrm{tr}\left(\sum_i\lambda_ix\lambda^*_iy\right)=F_k\left(\sum_i
 \lambda_ix\lambda^*_iy\right)=F_k\left(\sum_i \lambda_ixy\lambda^*_i\right).
 \]
 Clearly, $E_1\circ \cdots \circ E_k(xy)\in B^{\prime}\cap A$. 
 Hence, in view of Theorem \ref{min5}, we observe that  $H_{E_0}\big(E_1\circ
 \cdots \circ E_k(xy)\big)\in A^{\prime}\cap A=\C$ .
 Thus,
 \[
 F_k\left(\sum_i
 \lambda_ixy\lambda^*_i\right)=E_0\bigg(\sum_i\lambda_i\big(E_1\circ \cdots
 \circ E_k(xy)\big)\lambda^*_i\bigg)=H_{E_0}\bigg(E_1\circ \cdots
 \circ E_k(xy)\bigg).
 \]
 Therefore,  $\mathrm{tr}(\sum_i\lambda_ix\lambda^*_iy)=
       {\tau}^{-1}F_k(xy)={\tau}^{-1}\mathrm{tr}(xy)$,  again by Theorem
       \ref{min5}.  This proves that $G_{E_0}= E^{B^{\prime}\cap
         A_k}_{A^{\prime}\cap A_k}.$
\end{proof}
In view of \Cref{support}, we have:
\begin{corollary}\label{E-e}
 \(
  E^{B^{\prime}\cap A_1}_{A^{\prime}\cap
    A_1}(e_1)= \tau.
  \)
    \end{corollary}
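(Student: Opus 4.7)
The proof is essentially a direct computation combining the two preceding results. The plan is to apply \Cref{f2} with $k = 1$ and $x = e_1$, and then invoke \Cref{support} to evaluate the resulting sum.

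First, one should check that $e_1$ indeed lies in $B' \cap A_1$, so that the expectation $E^{B'\cap A_1}_{A'\cap A_1}$ can be applied to it; this is immediate from the defining property $e_1 \iota(x) = \iota(E_B(x))$ of the Jones projection, which gives $e_1 \lambda(b) = \lambda(b) e_1$ for every $b \in B$. Once this is in hand, \Cref{f2} with a fixed quasi-basis $\{\lambda_i : 1 \leq i \leq n\}$ for $E_0$ yields
\[
E^{B'\cap A_1}_{A'\cap A_1}(e_1) \;=\; \frac{1}{[A:B]_0} \sum_{i=1}^n \lambda_i\, e_1\, \lambda_i^*.
\]

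Next, by \Cref{support} (equivalently, \Cref{basis}), the sum $\sum_{i=1}^n \lambda_i e_1 \lambda_i^*$ collapses to the identity of $A_1$. Substituting this into the displayed formula produces
\[
E^{B'\cap A_1}_{A'\cap A_1}(e_1) \;=\; \frac{1}{[A:B]_0}\, 1 \;=\; \tau,
\]
which is the desired identity. There is no real obstacle here: once \Cref{f2} and \Cref{support} are available, the corollary is a one-line calculation, and the only point worth mentioning explicitly is the membership $e_1 \in B' \cap A_1$ that legitimizes the application of \Cref{f2}.
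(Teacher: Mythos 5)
Your proposal is correct and is exactly the paper's argument: the corollary is stated immediately after \Cref{f2} with the remark ``in view of \Cref{support}'', i.e.\ one applies the formula $E^{B'\cap A_1}_{A'\cap A_1}(e_1)=\tau\sum_i\lambda_i e_1\lambda_i^*$ and collapses the sum to $1$ via \Cref{support}. The extra observation that $e_1\in B'\cap A_1$ is a harmless (and correct) bit of bookkeeping.
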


We now provide a bound for the dimension of each relative commutant
$B'\cap A_k$, whose proof is motivated by that of \cite[Lemma
  3.6.2(b)]{GHJ} (see \cite{Lon1} for the type $III$ case). We will need the following observation related to the 
local behaviour of conditional expectation and index.

\begin{proposition}\label{local}
  For each non-zero projection $p$ in $B'\cap A$, consider the $C^*$-inclusion
  $pBp \subset pAp$ and the map $E_p: pAp \rar pBp$ given by
\[
E_p(x) = \frac{E_0(x)p}{\tr (p)},\ x \in pAp.
\]
Then, the following hold:
  \begin{enumerate}
\item The pair $pBp\subset pAp$ is an inclusion of simple unital
  $C^*$-algebras with common identity $p$.
    \item $E_p$ is a conditional expectation of finite index and, for
      any quasi-basis $\{\lambda_i: 1 \leq i \leq n\}$ of $E_0$,
      $\{\sqrt{\tr(p)}~ p\lambda_ip: 1 \leq i \leq n\}$ is a
      quasi-basis for $E_p$.
\item $E_p$ is the unique minimal conditional expectation from $pAp$
    onto $pBp$.
\item  ${[pAp:pBp]}_0={\tr(p)}^2 {[A:B]}_0\, p$.
\item Suppose $\{p_i\}_{i=1}^n$ is a partition of identity consisting
  of projections in $B^{\prime}\cap A.$ Then,
  ${{[A:B]}_0}^{1/2}=\sum_{i=1}^n {\big \lVert
    {[p_iAp_i:Bp_i]}_0\big\rVert}^{1/2}.$
\end{enumerate}
  \end{proposition}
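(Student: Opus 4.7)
My plan is to derive the entire proposition from one scalar identity, namely $E_0(p)=\tr(p)\,1_A$. Since $p\in B'\cap A$, for every $b\in B$ we have $E_0(p)b=E_0(pb)=E_0(bp)=bE_0(p)$, so $E_0(p)\in Z(B)=\C\cdot 1_B$ by simplicity of $B$; and the value of this scalar is $\tr(p)$ by the definition of $\tr=E_0|_{B'\cap A}$ from \Cref{f1}. Given this, part (1) is immediate: $pBp=Bp$ because $p\in B'$, and $b\mapsto bp$ is a $*$-homomorphism whose kernel is an ideal of the simple algebra $B$, so $Bp\cong B$ is simple; the corner $pAp$ is a hereditary $C^*$-subalgebra of the simple $C^*$-algebra $A$, hence also simple, with evident unit $p$.

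For part (2), positivity, $pBp$-bimodularity and the projection property of $E_p$ reduce to the corresponding properties of $E_0$ together with the scalar identity (which gives $E_p(pbp)=E_p(bp)=bE_0(p)\,p/\tr(p)=bp$). To verify that $\{u_i\}=\{\sqrt{\tr(p)}\,p\lambda_ip\}$ is a quasi-basis, I would compute, for $y\in pAp$,
\[
\sum_i u_i E_p(u_i^*y)=\sum_i p\lambda_i p\cdot p E_0(\lambda_i^*y)=p\Bigl(\sum_i\lambda_i E_0(\lambda_i^*y)\Bigr)p=pyp=y,
\]
using $py=yp=y$, the fact that $p\in B'$ allows $p$ to pass through $E_0(\lambda_i^*y)\in B$, and the quasi-basis identity of $\{\lambda_i\}$ for $E_0$; the dual form $y=\sum_i E_p(y u_i)u_i^*$ is handled by a mirror computation, and reading off $\mathrm{Ind}(E_p)=\sum_i u_iu_i^*$ will re-confirm the value obtained below.

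The crux is (3) and (4). First I would identify $(pBp)'\cap(pAp)=p(B'\cap A)p$: the inclusion $\supseteq$ is immediate from $p\in B'$, while for the reverse one uses $xp=px=x$ for $x\in pAp$ to reduce the commutation condition $x(bp)=(bp)x$ to $xb=bx$. Next, for $y\in p(B'\cap A)p$, the quasi-basis from (2) yields
\[
H_{E_p}(y)=\tr(p)\sum_i p\lambda_i y\lambda_i^* p=\tr(p)\,p\,H_{E_0}(y)\,p.
\]
Since $H_{E_0}(y)\in Z(A)=\C$ by \Cref{min4} and $H_{E_0}=[A:B]_0\,E_0|_{B'\cap A}$ by \Cref{min5}, substitution collapses this to $H_{E_p}(y)=\tr(p)^2[A:B]_0\,E_p(y)$. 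The implication (3)$\Rightarrow$(1) of \Cref{min5} then certifies $E_p$ as minimal, uniqueness is guaranteed by \Cref{min1} applied to the simple pair $pBp\subset pAp$, which settles (3); reading off the scaling constant gives $[pAp:pBp]_0=\tr(p)^2[A:B]_0\,p$, which is (4). Part (5) then follows by taking norms: $\|[p_iAp_i:Bp_i]_0\|^{1/2}=\tr(p_i)[A:B]_0^{1/2}$ (since $\tr(p_i)\ge 0$), and $\sum_i\tr(p_i)=\tr(1)=1$ closes the sum. The only delicate point in the whole plan is the quasi-basis verification in (2), where $p$ must be pushed past the $\lambda_i$'s using only its commutation with $B$; once that is in place, the remaining claims unfold mechanically from the characterizations of minimality in \Cref{min5}.
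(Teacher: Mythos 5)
Your overall strategy is the same as the paper's: localize a quasi-basis of $E_0$ to $\{\sqrt{\tr(p)}\,p\lambda_i p\}$, compute $H_{E_p}$ in terms of $H_{E_0}$, and invoke \Cref{min4} and \Cref{min5} to obtain minimality and the value of the index; parts (1), (3), (4) and (5) go through as you describe, and extracting the constant $\tr(p)^2[A:B]_0$ in one pass from $H_{E_p}(y)=\tr(p)\,p\,H_{E_0}(y)\,p$ is if anything slightly tidier than the paper's two-step computation.

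There is, however, a genuine gap in the quasi-basis verification in (2), at exactly the point you single out as ``the only delicate point''. Unwinding your first equality, with $u_i=\sqrt{\tr(p)}\,p\lambda_i p$ and $y\in pAp$ one gets
\[
\sum_i u_i E_p(u_i^*y)=\sum_i p\lambda_i\, E_0(p\lambda_i^*y)\,p,
\]
so what must be shown is $E_0(p\lambda_i^*y)=E_0(\lambda_i^*y)$, i.e.\ the $p$ sitting \emph{inside} the argument of $E_0$ has to be removed. The two facts you cite --- $py=yp=y$ and that $p$ commutes with $E_0(\lambda_i^*y)\in B$ --- do not accomplish this: $E_0$ is only a $B$-bimodule map and $p\notin B$, so there is no bimodularity available to pull $p$ out of $E_0(p\,\cdot\,)$, and for a general finite-index conditional expectation the identity $E_0(pa)=E_0(ap)$ ($a\in A$, $p\in B'\cap A$) is false. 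What rescues the computation is that $E_0$ is the \emph{minimal} conditional expectation, for which one has the tracial property $E_0(xa)=E_0(ax)$ for all $a\in A$ and $x\in B'\cap A$ (\cite[Lemma 3.11]{KW}); then $E_0(p\lambda_i^*y)=E_0(\lambda_i^*yp)=E_0(\lambda_i^*y)$ and your chain of equalities closes (the mirror computation for $\sum_i E_p(yu_i)u_i^*=y$ needs the same lemma). The paper invokes precisely this lemma at precisely this step. So the identity you need is true, but minimality of $E_0$ --- not merely $p\in B'\cap A$ --- is what makes the localized family a quasi-basis, and your proposal as written does not supply that ingredient.
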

\begin{proof}
(1): It is well known that $pAp$ is a hereditary $C^*$-subalgebra of
  $A$.  Since any hereditary $C^*$-subalgebra of a simple
  $C^*$-algebra is simple, $pAp$ is simple. On the other hand, since
  $p \in B'\cap A$, $B \cong Bp$; so, $pBp$ is also simple.  Hence,
  $pBp \subset pAp$ is an inclusion of simple $C^*$-algebras with
  common identity $p$.
\smallskip

(2): Clearly, $E_p$ is a conditional expectation. And, for any $x\in pAp$,
we observe that
 \begin{align*}
   \sum_i E_p(xp\lambda_ip)p\lambda^*_ip &=\frac{1}{
     \tr(p)}\sum_i
  E_0(xp\lambda_ip)p\lambda^*_ip\\ &= \frac{1}{\tr (p)}\sum_i
  E_0(x\lambda_ip)p\lambda^*_ip\\ &= \frac{1}{\tr (p)}\sum_i
  E_0(px\lambda_i)p\lambda^*_ip \qquad \qquad \text{ (by \cite[Lemma
      3.11]{KW})}\\ &= \frac{1}{\tr (p)}\,  p
  \sum_i E_0(x\lambda_i)\lambda^*_ip \\
  & = \frac{1}{\tr (p)}\, x.
 \end{align*}
Hence, $E_p$ has finite index with a quasi-basis $\{\sqrt{\tr (p)}~
p\lambda_ip: 1 \leq i \leq n\}$.\smallskip

  (3): For each $x\in (pBp)^{\prime}\cap pAp$, we have
 \begin{align*}
  H_{E_p(x)} 
  &= \sum_i \tr(p)\,  p\lambda_ip x p\lambda^*_ip\\
  &= \tr(p)\,  p\big(\sum_i \lambda_i pxp\lambda^*_i\big)p\\
  &= \tr(p)\,  pH_E(x)p\\
  &= c\, E_p(x). \qquad\qquad \qquad (\text{for some constant } c)
 \end{align*}
Therefore, from \Cref{min5}, we conclude that $E_p$ is a minimal
conditional expectation. Uniqueness follows from \Cref{min1}. \smallskip

(4): We have 
 \begin{align*}
 {[pAp:pBp]}_0 &= \mathrm{Ind}(E_p)\\ & = \tr(p) \sum_i p\lambda_ip\lambda^*_ip\\ &= \tr(p)\, 
 p \big(\sum_i \lambda_ip\lambda^*_i\big)p\\ &= \tr(p)\,  pH_{E_0}(p) p\\ &=
 \tr(p)^2\, {[A:B]}_0\, p, \,
 \end{align*}
 where the last equality follows from the fact that $H_{E_0}(p)=\mathrm{Ind}(E_0)\, E_0(p)=
\mathrm{Ind}(E_0)\, \tr(p),$ by  \Cref{min5}. \smallskip

(5) follows   readily  from Item (4).
\end{proof}

\begin{proposition}\label{rel-comm-dimn}
We have
 \begin{equation}\label{rel-dim-k}
    \mathrm{dim}_{\C} (B^{\prime}\cap A_{k})\leq  {[A:B]_0}^{k+1}
  \end{equation}
  for all $k\geq 0$. 
 \end{proposition}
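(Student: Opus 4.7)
My plan is to reduce the estimate to a ``one-summand'' bound by compressing with the minimal central projections of $B^\prime \cap A_k$. Since $B^\prime \cap A_k$ is finite dimensional by \Cref{rel-comm}, I write it as $\bigoplus_{i=1}^m M_{n_i}(\C)$, with minimal central projections $p_1, \dots, p_m$ satisfying $(B^\prime \cap A_k) p_i \cong M_{n_i}(\C)$, so that $\dim_\C(B^\prime \cap A_k) = \sum_{i=1}^m n_i^2$. Setting $t_i := \tr(p_i)$, faithfulness of $\tr$ from \Cref{f1} gives $0 < t_i \leq 1$, and $\sum_i t_i = 1$.

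The first step is a compression argument. By multiplicativity of the Watatani index (\Cref{min3}), $[A_k:B]_0 = [A:B]_0^{k+1}$. Applying \Cref{local} to the inclusion $B \subset A_k$ with its minimal conditional expectation $F_k = E_0 \circ \cdots \circ E_k$ and to each projection $p_i \in B^\prime \cap A_k$, I obtain an inclusion of simple unital $C^*$-algebras $Bp_i \subset p_iA_kp_i$ of minimal Watatani index $[p_iA_kp_i : Bp_i]_0 = t_i^{\,2}\,[A:B]_0^{k+1}$. Since $p_i$ is central in $B^\prime \cap A_k$, a direct check shows $(Bp_i)^\prime \cap p_iA_kp_i = (B^\prime \cap A_k) p_i \cong M_{n_i}(\C)$.

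The heart of the argument is the one-summand estimate $n_i^2 \leq [p_iA_kp_i : Bp_i]_0$. Since $(B^\prime\cap A_k)p_i \cong M_{n_i}(\C)$ commutes with the simple algebra $Bp_i$ inside $p_iA_kp_i$, both being simple, the natural map $Bp_i \otimes M_{n_i}(\C) \to p_iA_kp_i$ sending $a\otimes c$ to $ac$ is an injective $*$-homomorphism whose image is a $C^*$-subalgebra intermediate between $Bp_i$ and $p_iA_kp_i$. By \Cref{intermediate-ce} and the multiplicativity of index (\Cref{min3}),
\[
t_i^{\,2}\,[A:B]_0^{k+1} \;=\; [p_iA_kp_i : Bp_i]_0 \;=\; [p_iA_kp_i : Bp_i\otimes M_{n_i}(\C)]_0 \,\cdot\, [Bp_i \otimes M_{n_i}(\C) : Bp_i]_0 \;\geq\; n_i^2,
\]
where the first factor is $\geq 1$ by \Cref{index rigidity}, and the second equals $n_i^2$, as the unique minimal conditional expectation of the inclusion $Bp_i \subset Bp_i \otimes M_{n_i}(\C)$ is $\mathrm{id} \otimes \tr_{n_i}$, which admits the scaled matrix-unit quasi-basis of index $n_i^2$.

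Combining gives $n_i^2 \leq t_i^{\,2}\,[A:B]_0^{k+1}$ for each $i$. Since $0 < t_i \leq 1$, $t_i^{\,2} \leq t_i$, so summing over $i$ yields
\[
\dim_\C(B^\prime \cap A_k) \;=\; \sum_{i=1}^m n_i^2 \;\leq\; [A:B]_0^{k+1} \sum_{i=1}^m t_i^{\,2} \;\leq\; [A:B]_0^{k+1} \sum_{i=1}^m t_i \;=\; [A:B]_0^{k+1}.
\]
The main subtlety is the exact computation $[Bp_i \otimes M_{n_i}(\C) : Bp_i]_0 = n_i^2$, which requires noting that every finite-index conditional expectation $Bp_i \otimes M_{n_i}(\C) \to Bp_i$ factors as $\mathrm{id} \otimes \phi$ for a faithful state $\phi$ on $M_{n_i}(\C)$, with Watatani index $\mathrm{Ind}(\phi)$ minimized precisely at $\phi = \tr_{n_i}$, where it equals $n_i^2$.
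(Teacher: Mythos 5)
Your proof is correct, but the decisive step is genuinely different from the paper's. The paper reduces to $k=0$ via multiplicativity and compresses by a maximal family of mutually orthogonal \emph{minimal} (not minimal central) projections $\{p_i\}_{i=1}^m$ of $B'\cap A$; there it needs only the trivial lower bound $[p_iAp_i:Bp_i]_0\geq p_i$ (i.e.\ that a Watatani index is always $\geq 1$) to get $\tr(p_i)\,[A:B]_0\geq 1/\tr(p_i)$, and then concludes with $[A:B]_0\geq\sum_i 1/\tr(p_i)\geq m^2\geq \dim_{\C}(B'\cap A)$, the middle inequality being Cauchy--Schwarz. You instead compress by the minimal \emph{central} projections and establish the sharper local estimate $n_i^2\leq [p_iA_kp_i:Bp_i]_0$ by exhibiting the intermediate simple subalgebra generated by $Bp_i$ and $(B'\cap A_k)p_i$, identifying it with $Bp_i\otimes M_{n_i}(\C)$, and invoking \Cref{intermediate-ce}, the multiplicativity of the minimal index (\Cref{min3}), and the computation $[Bp_i\otimes M_{n_i}(\C):Bp_i]_0=n_i^2$. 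Your route costs more bookkeeping --- one must check that the generated subalgebra really is the (simple) tensor product so that \Cref{min3} applies, and that $\mathrm{id}\otimes\tr_{n_i}$ is the minimal expectation (which your factorization argument, or the criterion of \Cref{min5}, does deliver) --- but it buys the structurally finer inequality $n_i^2\leq t_i^{\,2}[A:B]_0^{k+1}$ blockwise, and hence $\sum_i n_i^2\leq [A:B]_0^{k+1}\sum_i t_i^{\,2}$, which keeps track of each matrix summand against the trace of its central support; the paper's argument is more elementary, using nothing beyond $\mathrm{Ind}\geq 1$ and an arithmetic inequality. Both yield the stated bound.
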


\begin{proof}
  Since Watatani index is multiplicative (\Cref{dual-index}), it
  suffices to prove \eqref{rel-dim-k} for $k = 0$.

 Let $\{p_i:1\leq i\leq m\}$ be a maximal family of mutually orthogonal
minimal projections in $B^{\prime}\cap A$ such that $\sum_{i=1}^m p_i=1$.

Note that, for each projection $p$ in $B'\cap A$, by \Cref{local} and
\cite[Lemma 2.3.1]{Wa}, we have
\[
 {[A:B]}_0\, p = \frac{{[pAp:pBp]}_0}{{\tr(p)}^2}\geq
 \frac{p}{{\tr(p)}^2};
 \]
 so that $\tr(p) {[A:B]}_0\geq \frac{1}{\tr(p)}$.  Thus,
 ${[A:B]}_0\geq \sum_{i=1}^m \frac{1}{tr(p_i)}.$ Since $\sum_{i=1}^m
 \tr(p_i)=1$, it follows that $\sum_{i=1}^m \frac{1}{\tr(p_i)}\geq
 m^2$. Hence,
\[
\text{dim}_{\C} (B^{\prime}\cap
A) \leq m^2\leq \sum_{i=1}^m\frac{1}{\tr(p_i)}\leq {[A:B]}_0.
\]
This completes the proof.\smallskip
\end{proof}

\begin{corollary}
 If ${[A:B]}_0<4$, then $B\subset A$ is irreducible.
\end{corollary}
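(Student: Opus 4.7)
The plan is to refine the dimensional bound from the proof of \Cref{rel-comm-dimn} rather than invoke the stated inequality directly. Recall that in that proof we selected a maximal family $\{p_i: 1\le i\le m\}$ of mutually orthogonal minimal projections in $B^{\prime}\cap A$ summing to $1$, and the chain of inequalities produced the estimate
\[
[A:B]_0 \;\geq\; \sum_{i=1}^m \frac{1}{\tr(p_i)} \;\geq\; m^2,
\]
the last step coming from Cauchy--Schwarz together with $\sum_i \tr(p_i)=1$. So the first step is simply to extract this stronger estimate $m^2 \leq [A:B]_0$.

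Under the hypothesis $[A:B]_0<4$, this immediately gives $m^2<4$, hence $m=1$. It remains to translate $m=1$ into $B^{\prime}\cap A = \C$. For this I would appeal to \Cref{rel-comm}, which guarantees that $B^{\prime}\cap A$ is finite dimensional, hence decomposes as $\bigoplus_{j=1}^{\ell} M_{n_j}(\C)$; in any such decomposition, a maximal family of mutually orthogonal minimal projections summing to the identity has exactly $\sum_{j=1}^{\ell} n_j$ elements. Therefore $m=1$ forces $\ell=1$ and $n_1=1$, i.e., $B^{\prime}\cap A = \C$, which is by definition the irreducibility of $B \subset A$.

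There is essentially no obstacle; the proof is a one-line consequence of the work already carried out. The only point requiring mild care is to note that the weaker statement $\dim_{\C}(B^{\prime}\cap A)\leq [A:B]_0<4$ as written in \Cref{rel-comm-dimn} is not by itself enough: it only rules out $M_n(\C)$-summands for $n\geq 2$ (since $\dim M_n(\C) = n^2 \ge 4$), leaving open the possibility that $B^{\prime}\cap A \cong \C^2$ or $\C^3$. It is the stronger quadratic bound $m^2 \leq [A:B]_0$ buried inside the proof of \Cref{rel-comm-dimn} that closes this gap.
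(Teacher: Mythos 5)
Your proposal is correct and follows essentially the same route as the paper: both extract the quadratic estimate $m^2 \leq [A:B]_0$ from the proof of \Cref{rel-comm-dimn} and conclude $m=1$, hence $B'\cap A=\C$. Your closing remark that the stated dimension bound $\dim_\C(B'\cap A)\le [A:B]_0$ alone would not suffice is a valid and worthwhile observation, but it does not change the argument.
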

\begin{proof}
 From the proof of \Cref{rel-comm-dimn} it follows that if $\{p_i:1\leq i\leq m\}$ is a maximal family of mutually orthogonal
minimal projections in $B^{\prime}\cap A$ such that $\sum_{i=1}^m p_i=1$, then 
$m^2\leq {[A:B]}_0.$ Thus, if ${[A:B]}_0< 4$, then  we must  have $m=1.$ This completes the proof.
\end{proof}

\section{Fourier theory for inclusions of
  simple unital $C^*$-algebras}

In the theory of subfactors, some of the crucial tools include certain
naturally occurring operations on the higher relative commutants,
namely, the so-called {\it Fourier transforms, shift operators} and
{\it rotation maps}. These were introduced by Ocneanu (see \cite{Oc})
and played a major role in the development of the subject. Details may
be found in \cite{Bi,Bi2,BJ}. A significant application of the Fourier
theory has been that the {\it rotation} maps on the higher relative
commutants were highly instrumental in the formalism of the structure
of Jones' {\it planar algebra} {on the} {\it standard invariant} of
any {\it extremal} subfactor (see \cite{Jo}). According to Jones (see
\cite{Jo2}), {the rotation operator is `the most interesting algebraic
  ingredient of a subfactor seen from the planar point of view'.} The
formulation of {\it Fourier transforms} and {\it rotation maps} for a
subfactor $N \subset M$ depends heavily on the unique tracial state on
the $II_1$ factor $M$ and the modular conjugation operator $J$ on
$L^2(M)$. Needless to mention, both of these tools are absent for
general inclusions of simple unital $C^*$-algebras. Still, given any
pair $B \subset A$ of simple unital $C^*$-algebras, based on the
consistent tracial states on the relative commutants that we obtained
in the preceding section, we will show that an analogous Fourier
theory can be developed.

As an application, we shall provide bounds for the cardinality of the
lattice of intermediate subalgebras of such pairs of $C^*$-algebras as
well as of subfactors of type $III$. We believe that, very much like
$II_1$-factors and their subfactors, the $C^*$-Fourier theory will
also have a significant say in the understanding of simple unital
$C^*$-algebras and their $C^*$-subalgebras.  \color{black}
\bigskip

As in the preceding section, throughout this section too, $B \subset
A$ will denote a fixed pair of simple unital $C^*$-algebras such that
$\mathcal{E}_0(A, B) \neq \emptyset$; and $\tau:=[A:B]_0^{-1}$.

\subsection{Fourier transforms}\( \) 

\begin{definition}
For each $k\geq 0$, the Fourier transform $\mathcal{F}_k:
B^{\prime}\cap A_k \rar A^{\prime}\cap A_{k+1}$ is defined as 
\[
\mathcal{F}_k(x)={\tau}^{-\frac{k+2}{2}}E^{B^{\prime}\cap
  A_{k+1}}_{A^{\prime}\cap A_{k+1}}(xe_{k+1}e_k\cdots e_2 e_1), \, x \in B'\cap A_k.
\]
And,   the inverse Fourier transform 
 ${\mathcal{F}}^{-1}_k: A^{\prime}\cap A_{k+1} \rightarrow
 B^{\prime}\cap A_k$ is defined as 
 \[
   {\mathcal{F}}^{-1}_k(x)= {\tau}^{-\frac{k+2}{2}}E_{k+1}(xe_1e_2\cdots e_ke_{k+1}),\, x \in A'\cap A_{k+1}.
   \]
 \end{definition}

\noindent The usage of the word ``inverse'' in the
  preceding definition is justified by the following:
  \begin{proposition} \label{inverse}
    We have
    \[
 \mathcal{F}_k \circ {\mathcal{F}}^{-1}_k= \mathrm{Id}_{A'\cap
   A_{k+1}} \text{ and } {\mathcal{F}}^{-1}_k\circ \mathcal{F}_k =
 \mathrm{Id}_{B'\cap A_k}
 \]
 for all $ k \geq 0$. In particular, if $B\subset A$ is irreducible,
 then so is $A \subset A_1$.
\end{proposition}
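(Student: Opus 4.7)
The plan is to verify each composition directly from the definitions. The key ingredients are: (i) the explicit expression $E^{B'\cap A_{k+1}}_{A'\cap A_{k+1}}(z) = \tau \sum_i \lambda_i z \lambda_i^*$ from \Cref{f2}, (ii) the standard Jones identity $e_1 a e_1 = E_0(a)\, e_1$ for $a\in A$, (iii) the Temperley--Lieb collapse relations $e_j e_{j\pm1} e_j = \tau\, e_j$ together with the fact that $e_j$ commutes with $A_{j-2}$ and with $e_i$ for $|i-j|\geq 2$, (iv) the quasi-basis identity $\sum_i \lambda_i E_0(\lambda_i^*) = 1$ obtained by plugging $x=1$, (v) the identity $\sum_i \lambda_i e_1 \lambda_i^* = 1$ from \Cref{support}, (vi) the push-down \Cref{pushdown}, and (vii) $E_{k+1}(e_{k+1}) = \tau$.

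For $\mathcal{F}_k^{-1}\circ\mathcal{F}_k(x)$ with $x\in B'\cap A_k$, I would apply (i) to write $\mathcal{F}_k(x) = \tau^{-k/2}\sum_i \lambda_i x\, e_{k+1}\cdots e_2 e_1 \lambda_i^*$ and substitute into $\mathcal{F}_k^{-1}$. In the resulting word $e_{k+1}\cdots e_2 e_1\lambda_i^* e_1 e_2\cdots e_{k+1}$, the element $\lambda_i^*\in A$ commutes with $e_2,\ldots,e_{k+1}$, so (ii) contracts the centre to $E_0(\lambda_i^*)e_1$. Since $E_0(\lambda_i^*)\in B$ commutes with every $e_j$, it can be pulled outside the string of projections, leaving $e_{k+1}\cdots e_2 e_1 e_2\cdots e_{k+1}$, which a short induction using (iii) telescopes to $\tau^k e_{k+1}$. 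Applying $E_{k+1}$ via (vii) and using that $x$ commutes with $E_0(\lambda_i^*)\in B$, the quasi-basis identity (iv) collapses the sum to $x$.

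For $\mathcal{F}_k\circ\mathcal{F}_k^{-1}(y)$ with $y\in A'\cap A_{k+1}$, the push-down (vi) applied to $y e_1\cdots e_k \in A_{k+1}$ yields $y e_1 \cdots e_{k+1} = z_0\, e_{k+1}$ with $z_0 \in A_k$ and $E_{k+1}(y e_1\cdots e_{k+1}) = \tau z_0$. Multiplying on the right by $e_{k+1}e_k\cdots e_1$ and using $e_{k+1}^2 = e_{k+1}$, one sees that
\[
E^{B'\cap A_{k+1}}_{A'\cap A_{k+1}}\bigl(E_{k+1}(y e_1\cdots e_{k+1})\, e_{k+1}\cdots e_1\bigr) = \tau\, E^{B'\cap A_{k+1}}_{A'\cap A_{k+1}}\bigl(y\, e_1 e_2\cdots e_{k+1} e_k\cdots e_2 e_1\bigr).
\]
A symmetric Temperley--Lieb telescope using (iii) reduces $e_1 e_2\cdots e_{k+1} e_k\cdots e_2 e_1$ inductively to $\tau^k e_1$, so the whole expression becomes $\tau^{-1} E^{B'\cap A_{k+1}}_{A'\cap A_{k+1}}(y e_1)$. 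Expanding once more via (i), using that $y\in A'\cap A_{k+1}$ commutes with each $\lambda_i\in A$, and applying (v), the sum assembles into $\tau y$, producing $y$.

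The main obstacle is simply the bookkeeping with the long strings of Jones projections: identifying which $e_j$'s commute with which algebra elements and setting up the two telescopings in the correct order. Once this setup is fixed, all the arithmetic is mechanical. The irreducibility consequence is then immediate: $\mathcal{F}_0$ is a $\C$-linear bijection between $B'\cap A$ and $A'\cap A_1$, so if $B'\cap A = \C$ then $A'\cap A_1 = \C$, i.e., $A\subset A_1$ is irreducible as well.
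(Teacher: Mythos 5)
Your proposal is correct and follows essentially the same route as the paper's own proof: expanding the conditional expectation onto $A'\cap A_{k+1}$ via \Cref{f2}, contracting the strings of Jones projections through the identities $(e_{k+1}\cdots e_1)a(e_1\cdots e_{k+1})=\tau^k E_0(a)e_{k+1}$ and $(e_1\cdots e_{k+1})(e_{k+1}\cdots e_1)=\tau^k e_1$, invoking the push-down lemma for the other composition, and finishing with the quasi-basis identities. The irreducibility consequence is drawn exactly as in the paper, from $\mathcal{F}_0$ being a linear bijection of $B'\cap A$ onto $A'\cap A_1$.
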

\begin{proof}
 First,  observe that,
for any $a\in A$, we have
 \begin{equation}\label{one}
  (e_{k+1}e_k\cdots e_2e_1) a (e_1e_2\cdots e_ke_{k+1})={\tau}^kE_0(a)e_{k+1}.
 \end{equation}
 Similarly, it is easy to see that 
 \begin{equation}\label{three}
 (e_1e_2\cdots e_ke_{k+1})(e_{k+1}e_k\cdots e_2e_1)={\tau}^k e_1.
  \end{equation}
Also, notice that if $\{\lambda_i:1\leq i\leq n\}$ is a
quasi-basis for $E_0$, then using Lemma \ref{f2}, we readily
obtain
 \begin{equation}\label{two}
   E^{B^{\prime}\cap A_{k+1}}_{A^{\prime}\cap
     A_{k+1}}(xe_{k+1}e_k\cdots e_2e_1)=\tau \sum_i \lambda_i
   xe_{k+1}e_k\cdots e_2e_1\lambda^*_i
    \end{equation}
 for all $x \in B'\cap A_{k+1}
   $. Then, for any $x\in B^{\prime}\cap A_k$,  we have
\begin{align*}
 {\mathcal{F}}^{-1}_k\mathcal{F}_k(x) &= {\tau}^{-(k+2)}
 E_{k+1}\big(E^{B^{\prime}\cap A_{k+1}}_{A^{\prime}\cap
   A_{k+1}}(xe_{k+1}e_k\cdots e_2e_1)e_1e_2\cdots e_ke_{k+1}\big)\\ &=
 {\tau}^{-(k+1)} E_{k+1}\big(\sum_i \lambda_i xe_{k+1}e_k\cdots
 e_2e_1\lambda^*_ie_1e_2\cdots e_ke_{k+1}\big) \hspace*{15mm} \textrm{(by
     Eq.}~~\eqref{two})\\ &= {\tau}^{-(k+1)}{\tau}^k \sum_i \lambda_ix
 E_{k+1}\big(E_0(\lambda^*_i)e_{k+1}\big)  \hspace*{30mm} \textrm{(by
     Eq.}~~\ref{one})\\ &= {\tau}^{-(k+1)}{\tau}^k\tau \sum_i
 \lambda_i E_0(\lambda^*_i)x   \hspace*{20mm} \textrm{(since}~~x\in
   B^{\prime}~~~~\textrm{and}~~E_{k+1}(e_{k+1})=\tau)\\ &=
 x.
\end{align*}
On the other hand, for any $y\in A^{\prime}\cap A_{k+1}$, we see that
\begin{align*}
 \mathcal{F}_k {\mathcal{F}}^{-1}_k(y) &= {\tau}^{-(k+2)}
 E^{B^{\prime}\cap A_{k+1}}_{A^{\prime}\cap
   A_{k+1}}\big(E_{k+1}(ye_1e_2\cdots e_ke_{k+1})e_{k+1}e_k\cdots
 e_2e_1\big)\\ &= {\tau}^{-(k+1)} E^{B^{\prime}\cap
   A_{k+1}}_{A^{\prime}\cap A_{k+1}}\big(y(e_1e_2\cdots
 e_ke_{k+1})(e_{k+1}e_k\cdots e_2e_1)\big) \hspace*{10mm} \textrm{(by
   Lemma}~~\ref{pushdown})\\ &= {\tau}^{-1} E^{B^{\prime}\cap
   A_{k+1}}_{A^{\prime}\cap A_{k+1}}(ye_1) \hspace*{65mm}\textrm{(by
   Eq.}~~\eqref{three})\\ &= y.
 \end{align*}
This completes the proof.
\end{proof}

We now proceed to show that the Fourier transform $\mathcal{F}_1$ and
its inverse are both isometries. First, a lemma that will be required.

\begin{lemma}
 \label{f3}\label{f4}
Let $\{\lambda_i:1\leq i\leq n\}$ be a
quasi-basis for $E_0$. Then, for any two elements $x,y\in B^{\prime}\cap A_1$, we have
 \begin{enumerate}
   \item $\sum_iE_0(\lambda_i)E_1(y^*\lambda^*_ix)=E_1(y^*x)$; and
   \item $\sum_i \lambda_ie_1E_1(y^*\lambda^*_ix)$ is independent of
     the quasi-basis for $E_0$ and belongs to $ B^{\prime}\cap A_1.$
     \end{enumerate}
 \end{lemma}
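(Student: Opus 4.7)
The plan is to exploit two features throughout: the $A$-bimodularity of $E_1 \colon A_1 \to A$ (so we may freely move any element of $A$ into or out of $E_1$), together with the fact that $y \in B' \cap A_1$ allows us to commute any element of $B$ past $y^*$. All manipulations will be driven by the two quasi-basis identities $a = \sum_i E_0(a\lambda_i) \lambda_i^* = \sum_i \lambda_i E_0(\lambda_i^* a)$ for $a \in A$, and by the fact that the Jones projection $e_1$ commutes with $B$.

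For part (1), since $E_0(\lambda_i) \in B \subset A$ and $E_1$ is left $A$-linear, I would write $E_0(\lambda_i) E_1(y^* \lambda_i^* x) = E_1(E_0(\lambda_i)\, y^* \lambda_i^* x)$, then swap $E_0(\lambda_i)$ past $y^*$ (legal because $y^* \in B'\cap A_1$ and $E_0(\lambda_i) \in B$), giving $E_1(y^* E_0(\lambda_i)\lambda_i^* x)$. Summing over $i$ and applying the quasi-basis identity $\sum_i E_0(\lambda_i)\lambda_i^* = 1$ (the relation $a = \sum_i E_0(a\lambda_i)\lambda_i^*$ with $a = 1$) collapses the expression to $E_1(y^* x)$.

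For part (2), independence of the quasi-basis and membership in $B'\cap A_1$ will both follow from the same kind of manipulation. For independence, given a second quasi-basis $\{\mu_j\}$, I would substitute $\lambda_i = \sum_j \mu_j E_0(\mu_j^* \lambda_i)$ into $T := \sum_i \lambda_i e_1 E_1(y^* \lambda_i^* x)$, move each $B$-valued coefficient $E_0(\mu_j^* \lambda_i)$ across $e_1$ (since $e_1$ commutes with $B$), push it through $E_1$ and past $y^*$ as in (1), and then recombine using $\sum_i E_0(\mu_j^* \lambda_i)\lambda_i^* = \mu_j^*$ to conclude $T = \sum_j \mu_j e_1 E_1(y^* \mu_j^* x)$.

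For the relative-commutant claim, I plan to verify $T b = b T$ for an arbitrary $b \in B$. On the side $Tb$, since $x \in B'\cap A_1$ I rewrite $E_1(y^*\lambda_i^* x) b = E_1(y^* \lambda_i^* b x)$, then expand $\lambda_i^* b = \sum_k E_0(\lambda_i^* b \lambda_k) \lambda_k^*$, move each $B$-valued factor $E_0(\lambda_i^* b \lambda_k)$ past $y^*$, out of $E_1$, and across $e_1$, and finally resum over $i$ using $\sum_i \lambda_i E_0(\lambda_i^* b \lambda_k) = b \lambda_k$ to recover $bT$. The main delicate point throughout is bookkeeping: $y^*$ lies in $A_1$ but not in $A$, so it can never be factored out of $E_1$; only $B$-valued (hence $A$-valued) elements may traverse $E_1$, and this is exactly what makes the $B$-valued conditional expectations $E_0(\cdot)$ the right vehicle for all the rearrangements. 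Once this is kept straight, each claim drops out by two applications of the quasi-basis expansion.
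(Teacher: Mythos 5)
Your proposal is correct, and for part (1) and the quasi-basis independence in part (2) it follows the paper's argument line for line: push the $B$-valued factors $E_0(\cdot)$ into $E_1$ by $A$-bimodularity, commute them past $y^*$ using $y\in B'\cap A_1$, and collapse with the quasi-basis identities. The only divergence is in proving $T:=\sum_i\lambda_ie_1E_1(y^*\lambda_i^*x)\in B'\cap A_1$: you verify $Tb=bT$ for an arbitrary $b\in B$ directly, via the expansion $\lambda_i^*b=\sum_kE_0(\lambda_i^*b\lambda_k)\lambda_k^*$ and a resummation, whereas the paper instead observes that for any unitary $u\in\mathcal{U}(B)$ the family $\{u\lambda_i\}$ is again a quasi-basis, so the independence statement just proved yields $T=uTu^*$, and commuting with all unitaries of $B$ gives $T\in B'$. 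Your direct computation is correct (the bookkeeping you flag -- that only $A$-valued elements may traverse $E_1$, never $y^*$ itself -- is exactly the point, and you respect it), and it has the small advantage of not routing through the independence claim; the paper's unitary trick is shorter but logically depends on having established independence first. Either way the lemma is proved.
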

\begin{proof} (1): We have
 \begin{align*}
  \sum_iE_0(\lambda_i)E_1(y^*\lambda^*_ix) &=
  \sum_iE_1\big(E_0(\lambda_i)y^*\lambda^*_ix\big)\\ &= \sum_i
  E_1\big(y^*E_0(\lambda_i)\lambda^*_ix\big)~~~~~\hspace{25mm}
  (\textrm{since}~~y\in B^{\prime})\\ &=
  E_1\Big(y^*\big(\sum_iE_0(\lambda_i)\lambda^*_i\big)x\Big)\\ &=E_1(y^*x).
 \end{align*}

(2): First, we show that the operator $t:= \sum_i
 \lambda_ie_1E_1(y^*\lambda^*_ix)$ is independent of the quasi-basis
 for $E_0$. Suppose $\{\mu_j:1 \leq j\leq m\}$ is some other quasi-basis for
 $E_0$. Then,
 \begin{align*}
 \sum_i \lambda_ie_1E_1(y^*\lambda^*_ix) &= \sum_{j,i}
 \mu_jE_0(\mu^*_j\lambda_i)e_1E_1(y^*\lambda^*_ix)\\ &= \sum_{j,i}
 \mu_je_1E_0(\mu^*_j\lambda_i)E_1(y^*\lambda^*_ix)\\ &= \sum_{j,i}
 \mu_je_1E_1\big(E_0(\mu^*_j\lambda_i)y^*\lambda^*_ix\big)\\ &=
 \sum_{j,i}
 \mu_je_1E_1\big(y^*E_0(\mu^*_j\lambda_i)\lambda^*_ix\big)~~~~~\hspace{25mm}
    (\textrm{since}~~y\in B^{\prime})\\ &= \sum_{j}
    \mu_je_1E_1\Big(y^*\big(\sum_i
    E_0(\mu^*_j\lambda_i)\lambda^*_i\big)x\Big)\\ &=
    \sum_j\mu_je_1E_1(y^*\mu^*_jx).
 \end{align*}
Now, fix an $u\in \mathcal{U}(B)$. Then, clearly,  $\{u\lambda_i\}$ is also a quasi-basis for $E_0$. Therefore, 
\[
t=\sum_i u \lambda_ie_1E_1(y^*\lambda^*_iu^*x)=u \sum_i \lambda_ie_1E_1(y^*\lambda^*_ix)u^*=utu^*.
\]
Since $u$ was fixed arbitrarily,  $t\in B^{\prime}\cap A_1.$ This completes the proof.
\end{proof}

\begin{notation}
For simplicity,  we denote the Fourier transform $\mathcal{F}_1$ by $\mathcal{F}.$
\end{notation}

\begin{theorem}\label{F}
 $\mathcal{F}$ and
${\mathcal{F}}^{-1}$ are isometries with respect to the norm given by $\|x\|_2 = \mathrm{tr}(x^*x)$.
\end{theorem}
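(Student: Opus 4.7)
The plan is to establish that $\mathcal{F}$ and $\mathcal{F}^{-1}$ are mutually adjoint with respect to the inner product $\langle a, b\rangle := \tr(a^* b)$ on the relevant relative commutants. Once this is known, combining it with the identity $\mathcal{F}^{-1}\mathcal{F} = \mathrm{Id}_{B'\cap A_1}$ from \Cref{inverse} gives $\mathcal{F}^*\mathcal{F} = \mathrm{Id}$, so that $\|\mathcal{F}(x)\|_2^2 = \langle \mathcal{F}(x), \mathcal{F}(x)\rangle = \langle \mathcal{F}^*\mathcal{F}(x), x\rangle = \|x\|_2^2$; and the identity $\mathcal{F}\mathcal{F}^{-1} = \mathrm{Id}_{A'\cap A_2}$ then yields the isometry property for $\mathcal{F}^{-1}$ symmetrically.

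Thus the core computation is to verify, for every $x \in B'\cap A_1$ and $y \in A'\cap A_2$, the duality relation
\[
\tr\bigl(y^*\,\mathcal{F}(x)\bigr) \;=\; \tr\bigl(\mathcal{F}^{-1}(y)^*\, x\bigr).
\]
For the left-hand side, $y^* \in A'\cap A_2$ is fixed by the tr-preserving conditional expectation $E^{B'\cap A_2}_{A'\cap A_2}$ (from \Cref{f2}), so $(A'\cap A_2)$-bimodularity together with tr-preservation reduces it to $\tau^{-3/2}\tr(y^* x\, e_2 e_1)$. For the right-hand side, write $\mathcal{F}^{-1}(y)^* = \tau^{-3/2}\, E_2(e_2 e_1 y^*)$ and use the $A_1$-bimodularity of $E_2$ (together with $x \in A_1$) to absorb $x$ inside, producing $\mathcal{F}^{-1}(y)^* x = \tau^{-3/2}\, E_2(e_2 e_1 y^* x)$. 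Because $\tr = E_0\circ E_1 \circ E_2$ on $B'\cap A_2$ and $\tr = E_0\circ E_1$ on $B'\cap A_1$ by \Cref{f1}, we have $\tr\circ E_2 = \tr$ on $B'\cap A_2$, yielding $\tau^{-3/2}\tr(e_2 e_1 y^* x)$. The traciality of $\tr$ closes the identity.

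The main obstacle is not conceptual but bookkeeping: at each step one must check that the intermediate expressions ($y^*x$, $x e_2 e_1$, $e_2 e_1 y^* x$, and the images of the two conditional expectations) all lie in the appropriate relative commutant $B'\cap A_k$ so that $\tr$ is defined and tracial there, and that $E_2$ maps $B'\cap A_2$ into $B'\cap A_1$ so that the consistency of $\{\tr_k\}$ can be invoked. These containments follow routinely from the facts that $e_1, e_2 \in B'$ (since the Jones projections commute with $B$), that $y^*$ commutes with all of $A \supset B$, and that $E_2$ is $B$-bimodular; so these verifications fit comfortably into the argument without requiring any additional machinery.
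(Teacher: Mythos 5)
Your argument is correct, and it takes a genuinely different route from the paper's. The paper proves the statement by a direct computation of $\langle \mathcal{F}(x),\mathcal{F}(y)\rangle$: it expands the conditional expectation $E^{B'\cap A_2}_{A'\cap A_2}$ via a quasi-basis (Lemma \ref{f2}), and then relies on the auxiliary Lemma \ref{f3} (the identities $\sum_i E_0(\lambda_i)E_1(y^*\lambda_i^*x)=E_1(y^*x)$ and the fact that $\sum_i\lambda_ie_1E_1(y^*\lambda_i^*x)\in B'\cap A_1$) together with the Markov property of \Cref{f1} to collapse the resulting sum to $\tr(x^*y)$. You instead prove the duality relation $\tr\bigl(y^*\mathcal{F}(x)\bigr)=\tr\bigl(\mathcal{F}^{-1}(y)^*x\bigr)$, i.e.\ that $\mathcal{F}$ and $\mathcal{F}^{-1}$ are mutually adjoint, and then invoke \Cref{inverse} to conclude unitarity; the computation uses only the $(A'\cap A_2)$-bimodularity and $\tr$-preservation of $E^{B'\cap A_2}_{A'\cap A_2}$, the $A_1$-bimodularity of $E_2$, the compatibility $\tr\circ E_2=\tr$ built into \Cref{f1}, and traciality. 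I checked the key steps: taking $y=\mathcal{F}(x')$ in your duality relation does give $\tr(\mathcal{F}(x')^*\mathcal{F}(x))=\tr((x')^*x)$ via $\mathcal{F}^{-1}\mathcal{F}=\mathrm{Id}$, and all the containments you flag ($e_2e_1y^*x\in B'\cap A_2$, $E_2(B'\cap A_2)\subseteq B'\cap A_1$, etc.) hold for the reasons you state. What your approach buys is the complete avoidance of quasi-basis manipulations and of Lemma \ref{f3} altogether, and it makes the extension to general $k$ (which the paper leaves as a remark about ``book keeping'') essentially automatic, since the same bimodularity and trace-compatibility arguments apply verbatim with $e_{k+1}e_k\cdots e_1$ in place of $e_2e_1$. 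The paper's computation, by contrast, is self-contained at the level of explicit formulas and exhibits the intermediate identities that are reused in its rotation-map calculations.
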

\begin{proof}
 Since $\mathcal{F}$ is a linear isomorphism, it suffices to show that
 one of them is an isometry.  Let $\{\lambda_i:1\leq i\leq n\}$ be a
 quasi-basis for $E_0$ and $x,y\in B^{\prime}\cap A_1$. Then,
\begin{align*}
 \big \langle \mathcal{F}(x),\mathcal{F}(y)\big \rangle &=
 \mathrm{tr}\Big(\big(\mathcal{F}(x)\big)^*\mathcal{F}(y)\Big)\\
 &=       {\tau}^{-3} \mathrm{tr}\Big(E^{B^{\prime}\cap
          A_2}_{A^{\prime}\cap A_2}(e_1e_2x^*)E^{B^{\prime}\cap
          A_2}_{A^{\prime}\cap A_2}(ye_2e_1)\Big)\\
        &=
        {\tau}^{-3}\mathrm{tr}\Big(E^{B^{\prime}\cap
          A_2}_{A^{\prime}\cap A_2}\big(E^{B^{\prime}\cap
          A_2}_{A^{\prime}\cap A_2}(e_1e_2x^*)ye_2e_1\big)\Big)\\
        &=
        {\tau}^{-3} \mathrm{tr}\Big(E^{B^{\prime}\cap
          A_2}_{A^{\prime}\cap
          A_2}(e_1e_2x^*)ye_2e_1\Big)\\
        &={\tau}^{-2} \mathrm{tr}
        \Big(\sum_i \lambda_ie_1e_2x^*\lambda^*_iye_2e_1\Big)
        ~~\hspace{27mm} (\textrm{by Lemma}~~\ref{f2})\\
        &={\tau}^{-2}
        \mathrm{tr}\Big(e_1\big(\sum_i
        \lambda_ie_1E_1(x^*\lambda^*_iy)\big)e_2\Big)\\
        &={\tau}^{-1}
        \mathrm{tr}\Big(e_1\sum_i
        \lambda_ie_1E_1(x^*\lambda^*_iy)\Big) ~~\hspace{20mm}
        (\textrm{by Lemmas}~~\ref{f4}~~\textrm{and}~~\ref{f1})\\ &=
               {\tau}^{-1} \mathrm{tr} \Big(e_1\sum_i
               E_0(\lambda_i)E_1(x^*\lambda^*_iy)\Big)\\ &=
               \mathrm{tr} \big(E_1(x^*y)\big)~~~~~~~\hspace{55mm}
               (\textrm{by \Cref{f3}})\\
                 &=\mathrm{tr}(x^*y)\\
                 &=\langle x,
                      y\rangle.
\end{align*}
This completes the proof.
\end{proof}
\begin{remark}
An astute reader must have noted that the analogues of \Cref{f3} and \Cref{F} are in fact
true for all $k \geq 0$. Some amount of book keeping will do the job. Since we need it only for $k=1$, we
leave the details to the reader.
\end{remark}
\subsection{Rotation maps and shift operators}\( \)

 Following \cite{Bi}, we show that the relative
 commutants can be endowed with certain \textit{rotation} maps.
Throughout this subsection, $\{\lambda_i: 1 \leq i \leq n\}$ will
  denote a fixed quasi-basis for the minimal conditional expectation $E_0: A
  \rar B$.

  \begin{definition}
    For each $k \geq 0$, the rotation map ${\rho}^{B\subset
      A}_k:B^{\prime}\cap A_k\rightarrow B^{\prime}\cap A_k$ is defined as
    \begin{equation}
      \rho^{B\subset A}_k(x)={\Bigg({\mathcal{F}}^{-1}_k\big({\mathcal{F}_k(x)
        }^*\big)\Bigg)}^*,\ x \in B'\cap A_k.
      \end{equation}
\end{definition}

 \begin{remark}\label{formulaforrotations}
By Lemma \ref{f2}, it is easily  seen that
\begin{equation}
\rho^{B\subset A}_k(x)={\tau}^{-k} \sum_i E_k(e_ke_{k-1}\cdots e_2e_1\lambda_ix)e_ke_{k-1}\cdots e_2e_1\lambda^*_i
\end{equation}
for all  $x \in B'\cap A_k$. 
 \end{remark}
For simplicity, we will focus mainly on $k=0$ and $k=1$ only. The higher
cases are straightforward generalizations and left to the reader.
First, we show that a certain square root exists for $\rho^{B\subset
  A_1}_1$.

\begin{proposition}\label{squareroot}
${\big(\rho^{B\subset A}_{3}\big)}^{2}= \rho^{B\subset A_{1}}_1.$
\end{proposition}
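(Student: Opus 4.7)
The plan is to expand both sides of the proposed identity using the closed-form expression from \Cref{formulaforrotations} and to reduce each to a common form via the Jones-type relations among the projections $e_k$.

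For the left hand side, fix a quasi-basis $\{\lambda_i\}$ for $E_0$. Applying the formula twice, for $x \in B'\cap A_3$,
$$(\rho^{B \subset A}_3)^2(x) = \tau^{-6} \sum_{i,j} E_3\!\Big(e_3 e_2 e_1 \lambda_j\; E_3(e_3 e_2 e_1 \lambda_i x)\; e_3 e_2 e_1 \lambda_i^*\Big)\, e_3 e_2 e_1 \lambda_j^*.$$
Since $E_3(e_3 e_2 e_1 \lambda_i x) \in A_2$, the $A_2$-bimodule property of $E_3$, the commutation $e_k a = a e_k$ for $a \in A_{k-2}$, and the Jones relation $e_k e_{k\pm 1} e_k = \tau e_k$ can be used to thread the inner expectation through the outer one in a controlled way.

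For the right hand side, I would first identify $B \subset A_1 \subset A_3$ as an instance of $C^*$-basic construction via \Cref{iteratingbasicconstruction} (with $n=1$), so the relevant Jones projection is
$$f := \tau^{-1}(e_2 e_1)(e_3 e_2) \in A_3.$$
By \Cref{min3}, the minimal conditional expectation $E^{A_3}_{A_1}$ equals $E_2 \circ E_3$, and the inverse index for the pair $B \subset A_1$ is $\tau_1 = \tau^2$. For a quasi-basis of $E^{A_1}_B = E_0 \circ E_1$, I would compose $\{\tau^{-1/2} \lambda_j e_1\}$ (a quasi-basis for $E_1 = \widetilde{E}_0$ recorded in the proof of \Cref{pushdown}) with $\{\lambda_i\}$ for $E_0$, obtaining $\{\tau^{-1/2} \lambda_j e_1 \lambda_i : i, j\}$; a short check using \Cref{support} confirms $\sum_{i,j} \eta_{ij} \eta_{ij}^* = \tau^{-2} = \mathrm{Ind}(E^{A_1}_B)$. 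Substituting these ingredients into \Cref{formulaforrotations} gives
$$\rho^{B \subset A_1}_1(x) = \tau^{-5} \sum_{i,j} (E_2 \circ E_3)\!\big(e_2 e_1 e_3 e_2 \lambda_j e_1 \lambda_i x\big)\, e_2 e_1 e_3 e_2 \lambda_i^* e_1 \lambda_j^*.$$

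The final step is a direct comparison. Using $e_2 \lambda = \lambda e_2$ and $e_3 \lambda = \lambda e_3$ for $\lambda \in A$, together with $e_1 \lambda e_1 = E_0(\lambda) e_1$, the Jones collapses $e_k e_{k\pm 1} e_k = \tau e_k$, and the quasi-basis identities $\sum_i \lambda_i E_0(\lambda_i^* \cdot) = \mathrm{id}$ and $\sum_i \lambda_i e_1 \lambda_i^* = 1$ from \Cref{support}, both expressions should telescope down to a common double sum. The main obstacle is precisely this computational bookkeeping: no conceptually new input beyond the Fourier machinery is required, but considerable care is needed to ensure that each projection $e_k$ threads past the correct $\lambda$'s and that the inner conditional expectations are extracted at exactly the right moments for the quasi-basis identities to apply.
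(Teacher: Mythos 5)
Your proposal follows exactly the route the paper takes: expand $(\rho^{B\subset A}_3)^2$ by iterating \Cref{formulaforrotations}, realize $B\subset A_1\subset A_3$ as a basic construction via \Cref{iteratingbasicconstruction} with Jones projection $\tau^{-1}(e_2e_1)(e_3e_2)$ and composite quasi-basis $\{\tau^{-1/2}\lambda_je_1\lambda_i\}$ for $E_0\circ E_1$, and compare the two resulting double sums. All of your setup is correct, including the normalization $\tau^{-5}$ and the index placement $\lambda_i^*e_1\lambda_j^*$ on the right-hand side. The one step you defer as ``computational bookkeeping'' is, however, the entire content of the paper's proof, and as written you assert rather than verify that the two sums coincide. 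For the record, the comparison closes in three short moves and needs less than you list: writing $a:=e_2e_1\lambda_j\,E_3(e_3e_2e_1\lambda_i x)\in A_2$, apply $e_3ae_3=E_2(a)e_3$ inside the outer $E_3$; then $A_2$-bilinearity of $E_3$ together with $E_3(e_3)=\tau$ turns the summand into $\tau\,E_2\circ E_3\big(e_2e_1\lambda_je_3e_2e_1\lambda_ix\big)\,e_2e_1\lambda_i^*e_3e_2e_1\lambda_j^*$, converting $\tau^{-6}$ to $\tau^{-5}$; finally, since $e_2$ and $e_3$ commute with $A$, one may slide each $\lambda\in A$ past $e_3e_2$ to obtain precisely $\tau^{-5}\sum_{i,j}E_2\circ E_3\big(e_2e_1e_3e_2\lambda_je_1\lambda_ix\big)e_2e_1e_3e_2\lambda_i^*e_1\lambda_j^*$, which is your expression for $\rho^{B\subset A_1}_1(x)$ term by term. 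In particular the Jones collapses $e_ke_{k\pm1}e_k=\tau e_k$, the relation $e_1\lambda e_1=E_0(\lambda)e_1$ and the quasi-basis identities of \Cref{support} that you invoke for the final comparison are not actually needed there; supplying the three moves above completes the argument and reproduces the paper's proof.
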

\begin{proof}
We have $\rho^{B\subset A}_{3}(x)={\tau}^{-3}\sum_i
E_3(e_3e_2e_1\lambda_ix)e_3e_2e_1\lambda^*_i$ for all $x \in B'\cap
A_3$.  On the other hand, by \Cref{iteratingbasicconstruction}, we
know that $B \subset A_1 \subset A_3$ is an instance of basic
construction.  So, from \Cref{formulaforrotations} and
\Cref{multibasis}, we have
$$\rho^{B\subset A_1}_1(x)={\tau}^{-5} \sum_{i,j} E_2\circ
E_3(e_2e_1e_3e_2\lambda_ie_1\lambda_jx)e_2e_1e_3e_2\lambda^*_je_1\lambda^*_i$$
for all $x \in B'\cap A_3$. Thus, for any $x\in B^{\prime}\cap A_3$, we obtain
\begin{align*}
 \big(\rho^{B\subset A}_3(x)\big)^2 
 &= \left({\tau}^{-3}\sum_i E_3\big(e_3e_2e_1\lambda_i \rho^{B\subset A}_3(x)\big)e_3e_2e_1\lambda^*_i\right)^2\\
 &= {\tau}^{-6} \sum_{i,j} E_3\bigg(e_3e_2e_1\lambda_iE_3(e_3e_2e_1\lambda_jx)e_3e_2e_1\lambda^*_j\bigg)e_3e_2e_1\lambda^*_i\\
 &= {\tau}^{-6} \sum_{i,j}E_3\bigg(E_2\big(e_2e_1\lambda_iE_3(e_3e_2e_1\lambda_jx)\big)e_3e_2e_1\lambda^*_j\bigg)e_3e_2e_1\lambda^*_i\\
 &= {\tau}^{-6} \sum_{i,j} E_2\big(e_2e_1\lambda_iE_3(e_3e_2e_1\lambda_jx)\big)E_3(e_3)e_2e_1\lambda^*_je_3e_2e_1\lambda^*_i\\
 &= {\tau}^{-5} \sum_{i,j} E_2\circ E_3\big(e_2e_1\lambda_ie_3e_2e_1\lambda_jx\big)e_2e_1\lambda^*_je_3e_2e_1\lambda^*_i\\
 &= \rho^{B\subset A_{1}}_1.
\end{align*}
This completes the proof.
\end{proof}

\begin{notation}
Following \cite{Oc} (see also \cite{Bi}), we denote the rotations
$\rho^{B\subset A}_{1}$ and $\rho^{B\subset A_{1}}_1$, respectively, by
${\gamma}_0$ and ${\gamma}_1$.
\end{notation}
Ocneanu called them
\textit{mirrorings} ({for the $II_1$-factor case}). These will prove to be very important in what
follows.

\begin{remark}\label{formulaforrotations2}
 Following Remark \ref{formulaforrotations} and the above notation, we
 see that
\begin{equation}
  {\gamma}_0(x)={\tau}^{-1} \sum_i E_1(e_1\lambda_ix)e_1\lambda^*_i,
   \end{equation}
 for every $x\in B^{\prime}\cap A_1$. Similarly, using Propositions \ref{iteratingbasicconstruction} and \ref{multibasis},  we have
 \begin{equation}
 \gamma_1(y)={\tau}^{-5}\sum_{i,j}E_2\circ E_3(e_2e_1e_3e_2\lambda_ie_1\lambda_jy)e_2e_1e_3e_2\lambda^*_je_1\lambda^*_i,
  \end{equation}
 for every $y\in B^{\prime}\cap A_3$.
\end{remark}

 We next show that $\gamma_0$ and $\gamma_1$ are both
 $*$-preserving anti-automorphisms.  This requires
some work. We break the proof into various steps.  

\begin{lemma}
 \label{r1}
 $\gamma_{k}$, for $k\in \{0,1\}$, is a $*$-preserving map.
\end{lemma}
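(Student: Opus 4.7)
The plan is to verify the claim by direct computation using the explicit formulas in \Cref{formulaforrotations2}. By $\C$-linearity, it suffices to show $\gamma_k(x^*)=\gamma_k(x)^*$ for every $x \in B^{\prime}\cap A_k$ (with $k=1$ for $\gamma_0$ and $k=3$ for $\gamma_1$).

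I will treat $\gamma_0 = \rho_1^{B\subset A}$ first. Taking the adjoint of the identity $\gamma_0(x)=\tau^{-1}\sum_i E_1(e_1\lambda_i x) e_1\lambda_i^*$ term by term (using that $E_1$ is $*$-preserving) yields
\[
\gamma_0(x)^*=\tau^{-1}\sum_i \lambda_i e_1 E_1(x^* \lambda_i^* e_1),
\]
while applying the same formula to $x^*$ directly gives
\[
\gamma_0(x^*)=\tau^{-1}\sum_i E_1(e_1\lambda_i x^*) e_1 \lambda_i^*.
\]
The equality of these two expressions reduces to an algebraic identity which I would establish by writing $y := x^* = \sum_k a_k e_1 b_k$ with $a_k, b_k \in A$ (using $A_1 = \overline{A e_1 A}$) and simplifying both sides using (i) the standard identities $E_1(e_1 c) = \tau c = E_1(c e_1)$ and $e_1 c e_1 = E_0(c) e_1$ for $c \in A$, (ii) the quasi-basis relations $\sum_i \lambda_i E_0(\lambda_i^*) = 1 = \sum_i E_0(\lambda_i) \lambda_i^*$, and (iii) crucially the $B$-centrality of $y$, which is needed to commute the central elements of $B$ arising in the calculation past the $a_k$'s and $b_k$'s at the appropriate places.

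For $\gamma_1 = \rho_1^{B\subset A_1}$, I would avoid repeating the longer higher-level calculation and instead invoke \Cref{squareroot}, which identifies $\gamma_1$ with $(\rho_3^{B\subset A})^2$. A completely analogous argument applied to the formula $\rho_3^{B\subset A}(y)=\tau^{-3}\sum_i E_3(e_3 e_2 e_1 \lambda_i y)\, e_3 e_2 e_1 \lambda_i^*$ from \Cref{formulaforrotations} shows that $\rho_3^{B\subset A}$ is $*$-preserving; since a composition of $*$-preserving maps is $*$-preserving, so is $\gamma_1$.

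The main obstacle is the verification step---the equality of the two sums for $\gamma_0$ (and its higher analogue for $\rho_3^{B\subset A}$). It is not a literal equality of summands, and the $B$-centrality hypothesis on $y$ enters essentially: the identity fails for arbitrary $y \in A_1 \setminus (B^{\prime}\cap A_1)$. A useful sanity check is that both expressions agree on the natural generators $1$ and $e_1$, for which one easily computes $\gamma_0(1) = 1$ and $\gamma_0(e_1) = e_1$.
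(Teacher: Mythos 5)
Your reduction is correct as far as it goes: $\gamma_0(x)^*=\tau^{-1}\sum_i\lambda_i e_1E_1(x^*\lambda_i^*e_1)$ is the right expression, and the lemma does come down to the identity $\sum_i\lambda_i e_1E_1(y\lambda_i^*e_1)=\sum_iE_1(e_1\lambda_i y)e_1\lambda_i^*$ for $y=x^*\in B^{\prime}\cap A_1$. But that identity --- which you yourself flag as ``the main obstacle'' --- is the entire content of the lemma, and your proposal does not prove it. The plan of writing $y=\sum_k a_ke_1b_k$ and simplifying both sides stalls at a real difficulty: the routine reductions leave you comparing $\tau\sum_{i,k}\lambda_i e_1a_kE_0(b_k\lambda_i^*)$ with $\tau\sum_{i,k}E_0(\lambda_i a_k)b_ke_1\lambda_i^*$, and the hypothesis $y\in B^{\prime}\cap A_1$ is a condition on the whole sum over $k$, not on the individual coefficients $a_k,b_k$, so there is no evident way to ``commute central elements of $B$ past the $a_k$'s and $b_k$'s'' termwise. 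As written, the argument has a genuine gap at its central step.

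The paper avoids this algebra entirely: it verifies adjointability weakly on the Hilbert $B$-module, computing $\langle\gamma_0(x)(a),b\rangle_B=\tau^{-1}E_0\big(E_1(x^*a^*e_1)b\big)$ and $\langle a,\gamma_0(x^*)(b)\rangle_B=\tau^{-1}E_0\big(a^*E_1(e_1bx^*)\big)$ for $a,b\in A$ (the quasi-basis relation collapses the sum over $i$), and then invoking the commutation property $E_0\circ E_1(x^*a^*e_1b)=E_0\circ E_1(a^*e_1bx^*)$ of the minimal conditional expectation with respect to elements of $B^{\prime}\cap A_1$, i.e.\ \cite[Lemma 3.11]{KW}. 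That single cited fact is where the membership of $x$ in the relative commutant does all the work, cleanly. If you want to keep your direct route, you should reduce your unproved identity to that same trace-type property rather than to a termwise manipulation. Your treatment of $\gamma_1$ via \Cref{squareroot} is logically fine but inherits the same gap; the paper's route --- applying the $k=0$ argument verbatim to the inclusion $B\subset A_1$ with minimal expectation $E_0\circ E_1$, since $\gamma_1=\rho^{B\subset A_1}_1$ --- is also more economical than passing through $\rho^{B\subset A}_3$.
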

\begin{proof}
We prove only for $k=0$. The proof for $\gamma_1$ will follow once we
apply the same technique for the inclusion $B\subset A_1$ with the
minimal conditional expectation $E_0\circ E_1$, since
$\gamma_1=\rho^{B\subset A_1}_1.$
\smallskip

\noindent Let $x \in B'\cap A_1$. Then, for $a,b\in A$, we have
 \begin{align*}
  \big \langle {\gamma}_0(x) (a) , b\big\rangle_B &=
       {\tau}^{-1}\big\langle \sum_i
       E_1(e_1\lambda_ix)E_0(\lambda^*_ia),b\big\rangle_B\\ &=
       {\tau}^{-1} \big\langle \sum_i
       E_1\big(e_1\lambda_ixE_0(\lambda^*_ia)\big),b\big\rangle_B\\ &=
       {\tau}^{-1}\big\langle E_1\bigg(e_1\big(\sum_i
       \lambda_iE_0(\lambda^*_ia)\big)x\bigg),b\big\rangle_B
       ~~~~~\hspace{12mm} (\textrm{since}~~x\in B^{\prime})\\ &=
              {\tau}^{-1}\big\langle
              E_1(e_1ax),b\big\rangle_B\\ &={\tau}^{-1}E_0\big(E_1(x^*a^*e_1)b\big) \end{align*}
and, on similar lines, 
\begin{align*}
 \big\langle a,\gamma_0(x^*)(b)\big\rangle_B 
 &= {\tau}^{-1}\big\langle a,\sum_iE_1(e_1\lambda_ix^*)e_1\lambda^*_ib\big\rangle_B\\
 &={\tau}^{-1} \big\langle a,E_1(e_1bx^*)\big\rangle_B\\
 &= {\tau}^{-1}E_0\big(a^*E_1(e_1bx^*)\big).
\end{align*}
Since $x \in B'\cap A_1$ and $E_0 \circ E_1: A_1 \rar B$ is the
minimal conditional expectation, thanks to \cite[Lemma 3.11]{KW}, we
have $ E_0\circ E_1(x^*a^*e_1b) = E_0\circ E_1(a^*e_1bx^*).$ Hence,
$\gamma_0(x)$ is adjointable with
${\big({\gamma}_0(x)\big)}^*=\gamma_0(x^*).$ \color{black}
\end{proof}

\begin{remark}\label{adjointoffourier}
 From Lemma \ref{r1}, it is obvious that
 $\big(\mathcal{F}_k(x)\big)^*=\mathcal{F}_k\big(\gamma_{k-1}(x^*)\big)$
 for $k\in \{1,2\}.$
\end{remark}

 \begin{lemma}
 \label{r2}
 ${{\gamma}}^2_{k}=\mathrm{Id}$ for $k\in \{0,1\}$.
\end{lemma}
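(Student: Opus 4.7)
The plan is to prove $\gamma_0^2 = \mathrm{Id}$ directly from the defining identity $\gamma_0(x) = \big(\mathcal{F}_1^{-1}(\mathcal{F}_1(x)^*)\big)^*$ combined with the $*$-preserving property from Lemma \ref{r1}, and then to deduce $\gamma_1^2 = \mathrm{Id}$ by applying the same argument verbatim to the inclusion $B \subset A_1$.

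First I unwind the definition of $\gamma_0$ to obtain $\mathcal{F}_1(\gamma_0(x)^*) = \mathcal{F}_1(x)^*$ for every $x \in B^{\prime}\cap A_1$; by Lemma \ref{r1}, $\gamma_0(x)^* = \gamma_0(x^*)$, so this becomes the identity $\mathcal{F}_1(\gamma_0(x^*)) = \mathcal{F}_1(x)^*$ (which is exactly Remark \ref{adjointoffourier}). Substituting $x \mapsto x^*$ rewrites it equivalently as $\mathcal{F}_1(\gamma_0(x)) = \mathcal{F}_1(x^*)^*$. With this in hand, I apply the latter identity with $x$ replaced by $\gamma_0(x)$ and then invoke Lemma \ref{r1} and the original identity once more to get
\[
\mathcal{F}_1(\gamma_0^2(x)) \;=\; \mathcal{F}_1(\gamma_0(x)^*)^* \;=\; \mathcal{F}_1(\gamma_0(x^*))^* \;=\; \big(\mathcal{F}_1(x)^*\big)^* \;=\; \mathcal{F}_1(x).
\]
Since $\mathcal{F}_1$ is a linear bijection by Proposition \ref{inverse}, this forces $\gamma_0^2(x) = x$.

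For $\gamma_1 = \rho^{B \subset A_1}_1$, I note that $B \subset A_1$ is itself an inclusion of simple unital $C^*$-algebras with $\mathcal{E}_0(A_1,B) \neq \emptyset$: indeed, the composition $E_0 \circ \widetilde{E}_0: A_1 \rar B$ is minimal by Theorem \ref{min3}. Hence $\gamma_1$ is precisely the $\gamma_0$-rotation for this new inclusion (for which $(A_1)_1 = A_3$ by Proposition \ref{iteratingbasicconstruction}), so the same two-step argument carries over word for word to yield $\gamma_1^2 = \mathrm{Id}$.

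I do not anticipate any real obstacle; the argument is essentially formal once Lemma \ref{r1} and the bijectivity of $\mathcal{F}_1$ are in hand. The only delicate point is keeping track of where Lemma \ref{r1} is used to convert between $\gamma_0(\cdot)^*$ and $\gamma_0(\cdot^*)$ at each invocation. A direct computation from Remark \ref{formulaforrotations2} using the quasi-basis $\{\lambda_i\}$ would also yield the result, but would be considerably more laborious, requiring several applications of Lemma \ref{f3} together with the relation $e_1 a e_1 = E_0(a)e_1$ to collapse the resulting double sums.
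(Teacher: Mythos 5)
Your argument is correct, and it takes a genuinely different route from the paper. The paper proves \Cref{r2} by brute force: it expands $\gamma_0^2(x)$ using the quasi-basis formula of \Cref{formulaforrotations2}, collapses the double sum via the quasi-basis $\{\tau^{-1/2}\lambda_i e_1 \lambda_j\}$ for $E_0\circ E_1$ from \Cref{multibasis} together with the trace-commutation identity of \cite[Lemma 3.11]{KW}, and repeats the computation for $\gamma_1$ using \Cref{iteratingbasicconstruction}. You instead observe that involutivity is a purely formal consequence of the definition $\gamma_0(x)=\big(\mathcal{F}_1^{-1}(\mathcal{F}_1(x)^*)\big)^*$ once one knows that $\mathcal{F}_1$ is a linear bijection (\Cref{inverse}) and that $\gamma_0$ is $*$-preserving (\Cref{r1}): the two identities $\mathcal{F}_1(\gamma_0(x)^*)=\mathcal{F}_1(x)^*$ and $\mathcal{F}_1(\gamma_0(x))=\mathcal{F}_1(x^*)^*$ (the latter being a restatement of \Cref{adjointoffourier}) compose to give $\mathcal{F}_1(\gamma_0^2(x))=\mathcal{F}_1(x)$, whence $\gamma_0^2=\mathrm{Id}$ by injectivity. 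Your chain of equalities is valid, there is no circularity (the paper's proof of \Cref{r1} does not invoke \Cref{r2}), and the reduction of $\gamma_1$ to the $\gamma_0$-case for the inclusion $B\subset A_1$ with $(A_1)_1=A_3$ is exactly how the paper itself handles $k=1$ in the surrounding lemmas. What your approach buys is economy and a conceptual point --- that $\gamma_k^2=\mathrm{Id}$ is automatic for any rotation of this form as soon as it is $*$-preserving --- while the paper's computation is self-contained at the level of operator identities and incidentally rehearses manipulations (push-down, quasi-basis collapsing) that are reused elsewhere in the section.
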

\begin{proof}
Observe that for $x\in B^{\prime}\cap A_1$, we have
 \begin{align*}
  {{\gamma}_0}^2(x)
  &={\tau}^{-1} \sum_i E_1\big(e_1\lambda_i\gamma_0(x)\big)e_1\lambda^*_i\\
  &={\tau}^{-2}\sum_{i,j} E_1\big(e_1\lambda_iE_1(e_1\lambda_jx)e_1\lambda^*_j\big)e_1\lambda^*_i\\
  &={\tau}^{-2}\sum_{i,j} E_1\bigg(E_0\big(\lambda_iE_1(e_1\lambda_jx)\big)e_1\lambda^*_j\bigg)e_1\lambda^*_i\\
  &={\tau}^{-2} \sum_{i,j}E_1\bigg(E_0\big(\lambda_iE_1(e_1\lambda_jx)\big)e_1\bigg)\lambda^*_je_1\lambda^*_i\\
  &= {\tau}^{-2}\sum_{i,j} E_0\big(\lambda_iE_1(e_1\lambda_jx)\big)E_1(e_1)\lambda^*_je_1\lambda^*_i\\
  &={\tau}^{-1} \sum_{i,j} E_0\circ E_1(\lambda_ie_1\lambda_jx)\lambda^*_je_1\lambda^*_i\\
  &= {\tau}^{-1}\sum_{i,j} E_0\circ E_1(x\lambda_ie_1\lambda^*_j)\lambda^*_je_1\lambda^*_i    \hspace{8mm} \textrm{\big(by \cite[Lemma 3.11]{KW}\big)}\\
  &=x.
\end{align*}
In the last equality, we have used the fact  (\Cref{multibasis}) that the collection
$\{{\tau}^{-1/2}\lambda_ie_1\lambda_j:1\leq i,j\leq n\}$ is a quasi-basis for the
minimal conditional expectation $E_0\circ E_1$.
\smallskip

\noindent That ${\gamma}^2_1=\mathrm{Id}$ follows once we repeat the
same procedure as above using Propositions
\ref{iteratingbasicconstruction} and \ref{multibasis}. This completes
the proof.\end{proof}

\begin{lemma}
 \label{r3}
$\gamma_{k}$, for $k\in \{0,1\}$, is an anti-homomorphism.
\end{lemma}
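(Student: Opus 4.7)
The plan is to prove the $k=0$ case by direct computation, using the explicit formula
$$\gamma_0(z) = \tau^{-1} \sum_i E_1(e_1 \lambda_i z) e_1 \lambda_i^*$$
from \Cref{formulaforrotations2}; the case $k=1$ then follows by an identical argument applied to the inclusion $B \subset A_1$, using the basic construction $B \subset A_1 \subset A_3$ of \Cref{iteratingbasicconstruction}, the associated quasi-basis of \Cref{multibasis}, and $E_0 \circ E_1$ in place of $E_0$. So fix $x,y \in B' \cap A_1$ and expand
$$\gamma_0(y)\gamma_0(x) = \tau^{-2}\sum_{i,j} E_1(e_1 \lambda_i y)\, e_1 \lambda_i^*\, E_1(e_1 \lambda_j x)\, e_1 \lambda_j^*.$$

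The argument then proceeds in three steps. First, since $\lambda_i^*\, E_1(e_1 \lambda_j x) \in A$, I collapse the inner $e_1 (\,\cdot\,) e_1$ block via the standard Jones-projection identity $e_1 a e_1 = E_0(a) e_1$ for $a \in A$, reducing the expression to
$$\tau^{-2}\sum_{i,j} E_1(e_1 \lambda_i y)\, E_0\!\big(\lambda_i^* E_1(e_1 \lambda_j x)\big)\, e_1 \lambda_j^*.$$

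Second, I evaluate the $i$-sum for fixed $j$. The element $E_0(\lambda_i^* E_1(e_1 \lambda_j x))$ lies in $B$ and therefore commutes with $y \in B' \cap A_1$; combined with the $A$-bimodule property of $E_1$ and the quasi-basis identity $\sum_i \lambda_i E_0(\lambda_i^* a) = a$ applied to $a := E_1(e_1 \lambda_j x) \in A$, this collapses the $i$-sum to $E_1\!\big(e_1\, E_1(e_1 \lambda_j x)\, y\big)$. Third, I apply the adjoint form $e_1 z = \tau^{-1} e_1 E_1(e_1 z)$ of the push-down \Cref{pushdown} to $z = \lambda_j x \in A_1$, which yields $e_1 E_1(e_1 \lambda_j x) = \tau\, e_1 \lambda_j x$. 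Substituting and absorbing the factor $\tau$ gives
$$\gamma_0(y)\gamma_0(x) = \tau^{-1}\sum_j E_1\!\big(e_1 \lambda_j (xy)\big)\, e_1 \lambda_j^* = \gamma_0(xy),$$
as required.

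The only genuine obstacle is the bookkeeping in the second step: one must be careful about which factors live in $A$, which in $B$, and which in $A_1$, in order to justify sliding the $B$-valued piece past $y$ and invoking the bimodule property of $E_1$ at the right moment. Every tool needed—the Jones-projection identity, the bimodule property of $E_1$, the quasi-basis completeness, and the push-down lemma—has been established earlier in the paper, so no new ingredients are required.
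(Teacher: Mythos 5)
Your proposal is correct and follows essentially the same route as the paper's proof: collapse $e_1(\cdot)e_1$ via $e_1ae_1=E_0(a)e_1$, slide the resulting $B$-valued factor past the relative-commutant element, contract the quasi-basis sum, and finish with the push-down lemma (the paper merely writes the computation as $\gamma_0(x)\gamma_0(y)=\gamma_0(yx)$, which is the same identity with the roles of $x$ and $y$ swapped). The treatment of $k=1$ by running the identical argument for $B\subset A_1$ with $E_0\circ E_1$ also matches the paper.
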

\begin{proof}
 For $x,y\in B^{\prime}\cap A_1$, we have
 \begin{align*}
  \gamma_0(x)\gamma_0(y) &= {\tau}^{-2} \sum_{i,j}
  E_1(e_1\lambda_ix)e_1\lambda^*_iE_1(e_1\lambda_jy)e_1\lambda^*_j\\ &={\tau}^{-2}
  \sum_{i,j}
  E_1(e_1\lambda_ix)E_0\big(\lambda^*_iE_1(e_1\lambda_jy)\big)e_1\lambda^*_j\\ &=
  {\tau}^{-2} \sum_{i,j}
  E_1\bigg(e_1\lambda_ixE_0\big(\lambda^*_iE_1(e_1\lambda_jy)\big)\bigg)e_1\lambda^*_j\\ &=
  {\tau}^{-2} \sum_{i,j}
  E_1\bigg(e_1\lambda_iE_0\big(\lambda^*_iE_1(e_1\lambda_jy)\big)x\bigg)e_1\lambda^*_j \hspace*{25mm} \big(\text{since}
  ~x\in B^{\prime}\big)\\ &={\tau}^{-2}
  \sum_{j}E_1\big(e_1E_1(e_1\lambda_jy)x\big)e_1\lambda^*_j\\ &=
      {\tau}^{-1}
      \sum_jE_1(e_1\lambda_jyx)e_1\lambda^*_j~~~~\hspace{48mm}
      \big(\textrm{by
        Lemma}~~~\ref{pushdown}\big)\\ &=\gamma_0(yx).
 \end{align*}
 The proof for $\gamma_1$ is similar and is left to the reader. \end{proof}

We have thus proved the following:
 \begin{theorem}\label{antiauto}
The rotation map  $\gamma_{k}:B^{\prime}\cap A_{2k+1}\rightarrow
B^{\prime}\cap A_{2k+1}$, for $k\in \{0,1\}$, is a $*$-preserving
anti-automorphism.
 \end{theorem}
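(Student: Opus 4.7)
The plan is to observe that the theorem is precisely the conjunction of three separate properties — $*$-preservation, involutivity, and anti-multiplicativity — each of which I would establish via a direct computation using the explicit formulas for $\gamma_0$ and $\gamma_1$ recorded in Remark \ref{formulaforrotations2}. Once these three are in hand, the theorem follows immediately, because an involutive anti-homomorphism is automatically bijective, hence an anti-automorphism.

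For the bulk of the work I would focus on $k=0$ and then reduce $k=1$ to this case by applying the same reasoning to the inclusion $B \subset A_1$ with minimal conditional expectation $E_0\circ E_1$, invoking \Cref{iteratingbasicconstruction} and \Cref{multibasis} to identify $\gamma_1$ as an instance of the first-level rotation construction. The $k=0$ proofs all proceed by the same recipe: expand $\gamma_0$ (or $\gamma_0^2$, or $\gamma_0(x)\gamma_0(y)$) via the quasi-basis formula, push the central $e_1$'s around using the Pimsner–Popa relations $E_0(\lambda_i)\lambda_i^*$-type identities and the module property of the conditional expectations, and reassemble. The indispensable tool in each case is the trace-like identity $E_0\circ E_1(ab)=E_0\circ E_1(ba)$ whenever one of $a,b$ lies in $B'\cap A_1$, which comes from \cite[Lemma 3.11]{KW} together with minimality.

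For the $*$-preservation step, I would compute $\langle \gamma_0(x)a,b\rangle_B$ and $\langle a,\gamma_0(x^*)b\rangle_B$ for $a,b\in A$ and match them via the trace-like identity; this simultaneously shows adjointability of $\gamma_0(x)$ and gives $\gamma_0(x)^*=\gamma_0(x^*)$. For involutivity, I would expand $\gamma_0^2(x)$ and, after shuffling the Jones projection using the module property of $E_1$, recognize the resulting sum as a contraction against a quasi-basis of the form $\{\tau^{-1/2}\lambda_i e_1\lambda_j\}$ for $E_0\circ E_1$ (again by \Cref{multibasis}), yielding $x$. For anti-multiplicativity, expand $\gamma_0(x)\gamma_0(y)$ and apply the Push-Down Lemma (\Cref{pushdown}) combined with the Pimsner–Popa completeness relation to collapse one of the two inner sums.

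The main obstacle is the bookkeeping: keeping track of which $e_j$'s collapse, which $\lambda_i$'s pair up with which conditional expectation, and where $x \in B'$ can be commuted past a $\lambda_i$ or $E_0(\lambda_i^*a)$. The $k=1$ case aggravates this since the relevant quasi-basis involves products $\lambda_i e_1 e_2 \lambda_j e_1\lambda_l$ and the central ``Jones projection'' is the telescoping product $e_2 e_1 e_3 e_2$ of \Cref{iteratingbasicconstruction}, so the computations are longer but structurally identical. No genuinely new idea is needed beyond the first-level arguments once one has the iterated basic construction package already in place.
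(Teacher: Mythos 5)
Your proposal is correct and follows essentially the same route as the paper, which likewise splits the statement into $*$-preservation, involutivity, and anti-multiplicativity (Lemmas \ref{r1}, \ref{r2}, \ref{r3}), proves each for $k=0$ by the quasi-basis expansion together with \cite[Lemma 3.11]{KW}, the Push-down Lemma, and the quasi-basis $\{\tau^{-1/2}\lambda_i e_1\lambda_j\}$ for $E_0\circ E_1$, and handles $k=1$ by applying the same argument to $B\subset A_1$ via \Cref{iteratingbasicconstruction} and \Cref{multibasis}. No substantive difference from the paper's proof.
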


 When $B \subset A$ is irreducible, then it turns out that $\gamma_0$ preserves trace as well.
\begin{lemma}\label{gamma0istracepreserving}
If $B \subset A$ is irreducible, then $\gamma_0$ is a $\tr$-preserving
map on $B'\cap A_1$.
\end{lemma}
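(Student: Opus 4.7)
The plan is to directly expand $\tr(\gamma_0(x)) = (E_0 \circ E_1)(\gamma_0(x))$ using the formula for $\gamma_0$ from \Cref{formulaforrotations2}, then exploit the $A$-bimodule property of $E_1$ together with $E_1(e_1)=\tau$, and finally invoke irreducibility to collapse $\sum_i \lambda_i x\lambda_i^*$ to a scalar multiple of~$1$.

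First, I would substitute $\gamma_0(x)=\tau^{-1}\sum_i E_1(e_1\lambda_i x)e_1\lambda_i^*$ into $E_1(\gamma_0(x))$ and treat each summand. Since $E_1(e_1\lambda_i x)\in A$ and $\lambda_i^*\in A$, the $A$-bimodularity of $E_1$ gives
\[
E_1\bigl(E_1(e_1\lambda_i x)\,e_1\,\lambda_i^*\bigr) \;=\; E_1(e_1\lambda_i x)\cdot E_1(e_1)\cdot \lambda_i^* \;=\; \tau\,E_1(e_1\lambda_i x\lambda_i^*),
\]
where in the last step I reabsorb $\lambda_i^*$ inside $E_1$ by the same bimodule property. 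Summing over $i$ and factoring $e_1$ to the left of the sum (again since $\lambda_i,\lambda_i^*\in A$), one arrives at
\[
E_1(\gamma_0(x)) \;=\; E_1\Bigl(e_1\,\sum_i \lambda_i x\lambda_i^*\Bigr).
\]

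Next, I would invoke irreducibility: the hypothesis $B'\cap A=\C$ together with \Cref{inverse} forces $A'\cap A_1=\C$. Hence the $\tr$-preserving conditional expectation $E^{B'\cap A_1}_{A'\cap A_1}$ of \Cref{f2} is a scalar-valued state on $B'\cap A_1$ and therefore must coincide with $\tr(\,\cdot\,)\,1$. Rearranging the explicit formula from \Cref{f2} then yields $\sum_i \lambda_i x\lambda_i^* = \tau^{-1}\tr(x)\,1$. Plugging this into the previous display and using $E_1(e_1)=\tau$ gives $E_1(\gamma_0(x)) = \tr(x)\cdot 1 \in A$, after which applying $E_0$ produces
\[
\tr(\gamma_0(x)) \;=\; E_0\bigl(E_1(\gamma_0(x))\bigr) \;=\; \tr(x)\cdot E_0(1) \;=\; \tr(x).
\]

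The only conceptual ingredient is the observation that irreducibility is precisely what converts the $A$-valued expression $\sum_i \lambda_i x\lambda_i^*$ into a scalar multiple of $1$; everything else is careful bookkeeping with the bimodule property of $E_1$ and the Markov-type identity $E_1(e_1)=\tau$, so I do not anticipate any substantive obstacle. Note that a purely abstract argument via ``$\tr\circ\gamma_0$ is also a tracial state'' is insufficient here, because $B'\cap A_1$ need not be a factor and its tracial states need not be unique, so the explicit computation above is unavoidable.
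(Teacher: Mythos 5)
Your proposal is correct and follows essentially the same route as the paper's own proof: expand $\gamma_0(x)$ via the quasi-basis formula, use the $A$-bimodularity of $E_1$ together with $E_1(e_1)=\tau$ to reduce to $E_0\circ E_1\bigl(e_1\sum_i\lambda_i x\lambda_i^*\bigr)$, and then use irreducibility (via \Cref{inverse} and \Cref{f2}) to identify $\sum_i\lambda_i x\lambda_i^*$ with $\tau^{-1}\tr(x)$. Your explicit remark that the conditional expectation onto $A'\cap A_1=\C$ must equal $\tr(\cdot)1$ is exactly the mechanism the paper invokes implicitly.
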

\begin{proof}
For $x\in B^{\prime}\cap A_1$, we have
 \begin{align*}
  \tr\big(\gamma_0(x)\big)
  &= {\tau}^{-1} E_0\circ E_1\bigg(\sum_iE_1(e_1\lambda_ix)e_1\lambda^*_i\bigg)\\
  &= \sum_iE_0\bigg(E_1(e_1\lambda_ix)\lambda^*_i\bigg)\\
  &= E_0\circ E_1\bigg(e_1\big(\sum_i\lambda_ix\lambda^*_i\big)\bigg).
 \end{align*}
 Thanks to \Cref{min4} we see that $\sum_i\lambda_ix\lambda^*_i\in
 A^{\prime}\cap A_1.$ Since $B^{\prime}\cap A=\C$, using
   \Cref{inverse} and \Cref{f2}, we see that
 $\sum_i\lambda_ix\lambda^*_i={\tau}^{-1}\tr(x).$ Thus,
 $\tr(\gamma_0(x))=\tr(x).$ This completes the proof.
\end{proof}

We can now talk about the shift operator $ \gamma_1\gamma_0$ from $B'\cap A_1$ onto $A'\cap A_3$.
\begin{theorem}\label{shift}
$\gamma_1\gamma_0$ is a $\mathrm{tr}$-preserving $*$-isomorphism from
  $B^{\prime}\cap A_1$ onto $A^{\prime}_1\cap A_3$ and its
    inverse is the map $\gamma_0 \gamma_1$.
\end{theorem}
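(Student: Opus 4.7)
\smallskip

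The plan is to verify four assertions separately: (I) $\gamma_1\gamma_0$ is a well-defined $*$-preserving homomorphism, (II) its image is contained in $A_1'\cap A_3$, (III) the candidate $\gamma_0\gamma_1$ is a two-sided inverse, forcing surjectivity, and (IV) the trace is preserved. Step (I) is essentially free: by \Cref{antiauto} both $\gamma_0$ and $\gamma_1$ are $*$-preserving anti-automorphisms of $B'\cap A_1$ and $B'\cap A_3$ respectively, and since $B'\cap A_1\subseteq B'\cap A_3$, their composition is a well-defined $*$-preserving homomorphism from $B'\cap A_1$ into $B'\cap A_3$ (anti $\circ$ anti $=$ hom, $*$-preservation is closed under composition).

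For (II), since $\gamma_0$ is an automorphism of $B'\cap A_1$, it is enough to show $\gamma_1(B'\cap A_1)\subseteq A_1'\cap A_3$. Since $A_1=C^*\langle A,e_1\rangle$, this amounts to verifying, for each $y\in B'\cap A_1$, that $\gamma_1(y)$ commutes with every $a\in A$ and with $e_1$. Using the explicit formula from \Cref{formulaforrotations2},
\[
\gamma_1(y)={\tau}^{-5}\sum_{i,j}E_2\circ E_3(e_2e_1e_3e_2\lambda_ie_1\lambda_jy)\,e_2e_1e_3e_2\lambda^*_je_1\lambda^*_i,
\]
commutation with $a\in A$ proceeds by the quasi-basis reshuffling technique used in the proof of \Cref{f4}(2) (expanding $a=\sum_k\lambda_kE_0(\lambda_k^*a)$ on one side and absorbing $E_0(\lambda_k^*a)\in B$ past $y$). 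Commutation with $e_1$ is the main technical obstacle and will be established using the Pimsner--Popa-type braid relations $e_1e_3=e_3e_1$, $e_ie_{i\pm1}e_i=\tau e_i$, together with the identity $E_1(xe_1)=\tau x$ for $x\in A$ supplied by \Cref{pushdown}, and the fact that $y\in B'$.

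For (III), exactly the same computation, applied to $z\in A_1'\cap A_3$ (which now also commutes with $e_1$, giving extra simplifications), yields $\gamma_1(A_1'\cap A_3)\subseteq B'\cap A_1$. Applying $\gamma_1$ again and invoking $\gamma_1^2=\mathrm{Id}$ from \Cref{r2} converts this into the reverse inclusion $A_1'\cap A_3\subseteq\gamma_1(B'\cap A_1)$, so combined with (II) we get $\gamma_1(B'\cap A_1)=A_1'\cap A_3$. Hence $\gamma_0\gamma_1$ is well-defined from $A_1'\cap A_3$ to $B'\cap A_1$, and using $\gamma_0^2=\gamma_1^2=\mathrm{Id}$ on their respective domains,
\[
(\gamma_0\gamma_1)(\gamma_1\gamma_0)=\gamma_0\gamma_1^2\gamma_0=\gamma_0^2=\mathrm{Id}_{B'\cap A_1},\qquad
(\gamma_1\gamma_0)(\gamma_0\gamma_1)=\gamma_1\gamma_0^2\gamma_1=\gamma_1^2=\mathrm{Id}_{A_1'\cap A_3}.
\]
This gives bijectivity and identifies the inverse as $\gamma_0\gamma_1$. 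Finally, for (IV), $\gamma_0$ is trace-preserving on $B'\cap A_1$ by \Cref{gamma0istracepreserving}, and applying the same lemma to the irreducible inclusion $B\subset A_1$ shows that $\gamma_1=\rho^{B\subset A_1}_1$ is trace-preserving on $B'\cap A_3$; the composition inherits this property. The heart of the proof is the bookkeeping needed to check commutation of $\gamma_1(y)$ with $e_1$ in step (II), which is routine once one systematically applies the Jones-projection relations, but constitutes the only genuinely non-formal input.
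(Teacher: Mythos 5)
Your overall skeleton is the same as the paper's: reduce everything to the two containments $\gamma_1(B'\cap A_1)\subseteq A_1'\cap A_3$ and $\gamma_1(A_1'\cap A_3)\subseteq B'\cap A_1$, then obtain bijectivity and identify the inverse from $\gamma_0^2=\gamma_1^2=\mathrm{Id}$. But the proof has a genuine gap at its centre: the one step you yourself call ``the main technical obstacle'' and ``the only genuinely non-formal input'' --- that $\gamma_1(y)$ commutes with $e_1$ for $y\in B'\cap A_1$ --- is never carried out. It is not routine from the raw formula $\gamma_1(y)={\tau}^{-5}\sum_{i,j}E_2\circ E_3(e_2e_1e_3e_2\lambda_ie_1\lambda_jy)e_2e_1e_3e_2\lambda^*_je_1\lambda^*_i$: the coefficients $E_2\circ E_3(\cdots)$ lie in $A_1$ and do \emph{not} commute with $e_1$, so multiplying by $e_1$ on the left scrambles every summand and one must re-sum over the quasi-basis; it is far from clear that the relations you list close the computation. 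The paper sidesteps the generator-by-generator check entirely: writing $\lambda_{ij}=\tau^{-1/2}\lambda_ie_1\lambda_j$ for the quasi-basis of $E_0\circ E_1$ (\Cref{multibasis}) and $e_{[-1,1]}$ for the Jones projection of the two-step basic construction $B\subset A_1\subset A_3$ (\Cref{iteratingbasicconstruction}), one uses that $E_2\circ E_3$ is an $A_1$-bimodule map to get $E_2\circ E_3(e_{[-1,1]}\lambda_{ij}y)=\tau^2\lambda_{ij}y$ for $y\in A_1$, whence $\gamma_1(y)=\sum_{i,j}\lambda_{ij}\,ye_{[-1,1]}\,\lambda^*_{ij}={\tau}^{-2}E^{B'\cap A_3}_{A_1'\cap A_3}(ye_{[-1,1]})$ by \Cref{f2} applied to $B\subset A_1$ --- manifestly in $A_1'\cap A_3$. (A similar remark applies to your step (III): ``$\gamma_1(z)\in A_1$ for $z\in A_1'\cap A_3$'' is a membership statement, not a commutation statement, so it is not ``exactly the same computation''; the paper proves it by a separate explicit calculation giving $\gamma_1(z)={\tau}^{-3}E_2\circ E_3(e_2e_1e_3e_2z)$.)

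Step (IV) is also wrong as written. You apply \Cref{gamma0istracepreserving} to ``the irreducible inclusion $B\subset A_1$'' to conclude that $\gamma_1$ is trace-preserving, but $B\subset A_1$ is essentially never irreducible: $e_1$ is a non-scalar element of $B'\cap A_1$ whenever $[A:B]_0>1$, so the hypothesis of that lemma fails and the argument collapses. Trace preservation of $\gamma_1$ on $B'\cap A_1$ (which is all that is needed) comes instead from the identity $\gamma_1(y)={\tau}^{-2}E^{B'\cap A_3}_{A_1'\cap A_3}(ye_{[-1,1]})$ together with the Markov property $\tr(ye_{[-1,1]})=\tau^2\tr(y)$ from \Cref{f1} --- another payoff of the formula your route bypasses. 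To repair the proposal you must either supply the full $e_1$-commutation computation and a correct trace argument, or derive the conditional-expectation expression for $\gamma_1$ restricted to $B'\cap A_1$, which delivers both claims at once.
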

\begin{proof}
 We first prove that $\gamma_1$ maps $ B^{\prime}\cap A_1$ into
 ${A_1}^{\prime}\cap A_3.$
 
   Suppose $\{\lambda_i:1 \leq i\leq n\}$ is a quasi-basis for $E_0$. Put
  $\lambda_{ij}={\tau}^{-\frac{1}{2}} \lambda_ie_1\lambda_j.$ Then, by
\Cref{multibasis}, it follows that $\{\lambda_{ij}:1\leq i,j\leq n
  \}$ is a quasi-basis for $E_0\circ E_1.$ Therefore, for any $y\in
  B^{\prime}\cap A_3$, by Remark \ref{formulaforrotations2}, we have
  $$\gamma_1(y)={\tau}^{-2}\sum_{i,j} E_2\circ
  E_3\big(e_{[-1,1]}\lambda_{ij}y\big)e_{[-1,1]}\lambda^*_{ij},$$
 where $e_{[-1, 1]}$ is as in
    \Cref{iteratingbasicconstruction}. In particular, for any $x\in
    B^{\prime}\cap A_1$, we obtain
 \begin{align*}
  \gamma_1(x) &={\tau}^{-2}\sum_{i,j} E_2\circ
  E_3\big(e_{[-1,1]}\lambda_{ij}x\big)e_{[-1,1]}\lambda^*_{ij}\\ &=
  {\tau}^{-2}\sum_{i,j} E_2\circ
  E_3\big(e_{[-1,1]}\big)\lambda_{ij}xe_{[-1,1]}\lambda^*_{ij}\\ &=\sum_{ij}\lambda_{ij}xe_{[-1,1]}\lambda^*_{ij}\\ &=
  {\tau}^{-2} E^{B^{\prime}\cap A_3}_{{A_1}^{\prime}\cap
    A_3}\big(xe_{[-1,1]}\big). \numberthis \label{importantequation1}~~~~~~~\hspace{25mm}\textrm{(by
    \Cref{f2}})
 \end{align*}
This proves that $\gamma_1(x)\in {A_1}^{\prime}\cap A_3$ for
all $x \in B'\cap A_1$.  Thus, by \Cref{antiauto}, $\gamma_1\gamma_0$
is an injective $*$-homomorphism from $B^{\prime}\cap A_1$ into
${A_1}^{\prime}\cap A_3$.\smallskip

In order to show that $\gamma_0 \gamma_1$ is the inverse of
  $\gamma_1 \gamma_0$, we first show that $\gamma_1$ maps
  ${{A_1}^{\prime}\cap A_3}$ into $ B^{\prime}\cap A_1$. Let $z \in
  A_1'\cap A_3$. Then,
\begin{align*}
 \gamma_1(z) &= {\tau}^{-5}\sum_{i,j}E_2\circ
 E_3(e_2e_1e_3e_2\lambda_ie_1\lambda_jz)e_2e_1e_3e_2\lambda^*_je_1\lambda^*_i\\ &=
 {\tau}^{-5}\sum_{i,j}E_2\circ
 E_3(e_2e_1e_3e_2z\lambda_ie_1\lambda_j)e_2e_1e_3e_2\lambda^*_je_1\lambda^*_i\\ &=
 {\tau}^{-5}\sum_{i,j}E_2\circ
 E_3(e_2e_1e_3e_2z)\lambda_ie_1\lambda_je_2e_1e_3e_2\lambda^*_je_1\lambda^*_i\\ &=
 {\tau}^{-5}\sum_{i,j}E_2\circ
 E_3(e_2e_1e_3e_2z)\lambda_ie_1e_2\lambda_je_1\lambda^*_je_3e_2e_1\lambda^*_i\\ &=
 {\tau}^{-5}\sum_{i}E_2\circ
 E_3(e_2e_1e_3e_2z)\lambda_ie_1e_2e_3e_2e_1\lambda^*_i
 ~~~~~~~~~\hspace{5mm}(\textrm{by \Cref{basis}})\\ &={\tau}^{-3}\sum_i
 E_2\circ E_3(e_2e_1e_3e_2z)\lambda_ie\lambda^*_i\\ &=
 {\tau}^{-3}E_2\circ E_3(e_2e_1e_3e_2z) \in B^{\prime}\cap A_1.
\end{align*}
Therefore, $\gamma_0\gamma_1: A^{\prime}_1\cap A_3
  \rightarrow B^{\prime}\cap A_1$ is a well-defined $*$-homomorphism
  and we have $(\gamma_1\gamma_0)(\gamma_0\gamma_1)=\mathrm{Id}$, by \Cref{r2}.  In
  particular, this proves that $\gamma_1\gamma_0$ and
  $\gamma_0\gamma_1$ are inverses of each other.

To see that $\gamma_1\gamma_0$ is $tr$-preserving we use
Eq.\eqref{importantequation1} to note that for any $x\in
B^{\prime}\cap A_1$, we have
\begin{align*}
 \mathrm{tr}\big(\gamma_1\gamma_0(x)\big)
 &=\mathrm{tr}\bigg({\tau}^{-2}E^{B^{\prime}\cap A_3}_{{A_1}^{\prime}\cap A_3}(xe_{[-1,1]})\bigg)\\
 &= {\tau}^{-2}\mathrm{tr}\big(xe_{[-1,1]}\big)\\
 &= \mathrm{tr}(x). ~~~~~\hspace{50mm}\textrm{(by \Cref{f1})}
\end{align*}
This completes the proof of the theorem.
\end{proof}

\subsection{Coproduct on the relative commutants} \(\)

Each higher relative commutant comes equipped with another product,
the so-called \textit{coproduct} (Ocneanu called it `convolution'),
as defined below.
\begin{definition}\label{coproduct}
The coproduct of any two elements $x$ and $y$ of $B^{\prime}\cap A_k$,
denoted by $ x \circ y$, is defined  as
\[
x\circ
y={\mathcal{F}}^{-1}_k\big(\mathcal{F}_k(y)\mathcal{F}_k(x)\big).
\]
\end{definition}

\begin{lemma}\label{associativity}
  The coproduct $`\circ$' is associative.
\end{lemma}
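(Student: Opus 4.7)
The proof should be essentially a one-line unwinding of the definition, using \Cref{inverse}. My plan is to fix $x, y, z \in B^{\prime}\cap A_k$ and compute both $(x\circ y)\circ z$ and $x\circ (y\circ z)$ directly, showing that each collapses to the same expression $\mathcal{F}_k^{-1}\bigl(\mathcal{F}_k(z)\mathcal{F}_k(y)\mathcal{F}_k(x)\bigr)$ in $B^{\prime}\cap A_k$.

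First I would expand $(x\circ y)\circ z$ according to \Cref{coproduct}, obtaining
\[
(x\circ y)\circ z \;=\; \mathcal{F}_k^{-1}\!\bigl(\mathcal{F}_k(z)\, \mathcal{F}_k(x\circ y)\bigr)
\;=\; \mathcal{F}_k^{-1}\!\Bigl(\mathcal{F}_k(z)\, \mathcal{F}_k\bigl(\mathcal{F}_k^{-1}(\mathcal{F}_k(y)\mathcal{F}_k(x))\bigr)\Bigr).
\]
By \Cref{inverse}, $\mathcal{F}_k\circ \mathcal{F}_k^{-1} = \mathrm{Id}_{A^{\prime}\cap A_{k+1}}$, so the inner composition cancels to leave $\mathcal{F}_k(y)\mathcal{F}_k(x)$, and hence
\[
(x\circ y)\circ z \;=\; \mathcal{F}_k^{-1}\bigl(\mathcal{F}_k(z)\mathcal{F}_k(y)\mathcal{F}_k(x)\bigr).
\]
Performing the analogous expansion on the other side yields
\[
x\circ(y\circ z) \;=\; \mathcal{F}_k^{-1}\bigl(\mathcal{F}_k(y\circ z)\mathcal{F}_k(x)\bigr)
\;=\; \mathcal{F}_k^{-1}\bigl(\mathcal{F}_k(z)\mathcal{F}_k(y)\mathcal{F}_k(x)\bigr),
\]
again by \Cref{inverse}. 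The two expressions agree because the ordinary multiplication in the $C^\ast$-algebra $A^{\prime}\cap A_{k+1}$ is associative, so the triple product $\mathcal{F}_k(z)\mathcal{F}_k(y)\mathcal{F}_k(x)$ is unambiguous.

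There is no substantive obstacle: the definition of the coproduct is engineered precisely so that $\mathcal{F}_k$ intertwines $\circ$ on $B^{\prime}\cap A_k$ with the opposite multiplication on $A^{\prime}\cap A_{k+1}$, and associativity transports across this bijection for free. The only thing to be careful about is the order reversal $\mathcal{F}_k(y)\mathcal{F}_k(x)$ in the definition, which forces the triple product above to appear in the order $\mathcal{F}_k(z)\mathcal{F}_k(y)\mathcal{F}_k(x)$ on both sides; once that bookkeeping is done, the proof is complete.
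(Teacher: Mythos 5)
Your proof is correct and captures exactly the point of the paper's argument: associativity follows once one knows $\mathcal{F}_k(x\circ y)=\mathcal{F}_k(y)\mathcal{F}_k(x)$, which is immediate from $\mathcal{F}_k\circ\mathcal{F}_k^{-1}=\mathrm{Id}_{A'\cap A_{k+1}}$ (\Cref{inverse}) together with the observation that $A'\cap A_{k+1}$ is a subalgebra, so the product $\mathcal{F}_k(y)\mathcal{F}_k(x)$ does lie in the domain of $\mathcal{F}_k^{-1}$. The paper reaches the same identity by expanding $E^{B^{\prime}\cap A_{k+1}}_{A^{\prime}\cap A_{k+1}}\big((x\circ y)e_{k+1}e_k\cdots e_1\big)$ via the push-down lemma and the relation $(e_1\cdots e_{k+1})(e_{k+1}\cdots e_1)=\tau^k e_1$, i.e., it re-derives the needed instance of the inversion formula inline instead of citing \Cref{inverse}; your shortcut is legitimate and cleaner.
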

\begin{proof}
The proof is basically a simple book keeping exercise.  Consider $x,y$
and $z$ in $B^{\prime}\cap A_k$.  Let us agree to denote
$e_{k+1}e_k\cdots e_2e_1$ by $v_{k+1}.$ From the definition, it follows
that $ x\circ y$ equals \begin{equation*}
  {\tau}^{-\frac{3(k+2)}{2}}E_{k+1}\bigg(E^{B^{\prime}\cap
    A_{k+1}}_{A^{\prime}\cap A_{k+1}}\big(ye_{k+1}e_k\cdots
  e_2e_1\big) E^{B^{\prime}\cap A_{k+1}}_{A^{\prime}\cap
    A_{k+1}}(xe_{k+1}e_k\cdots e_2e_1\big)e_1e_2\cdots
  e_ke_{k+1}\bigg).
                 \end{equation*}
Therefore,
 \begin{eqnarray*}
\lefteqn{  E^{B^{\prime}\cap A_{k+1}}_{A^{\prime}\cap A_{k+1}}\bigg((x\circ
  y)v_{k+1}\bigg)} \\& = & {\tau}^{-\frac{3(k+2)}{2}}E^{B^{\prime}\cap
    A_{k+1}}_{A^{\prime}\cap
    A_{k+1}}\Bigg(E_{k+1}\bigg(E^{B^{\prime}\cap
    A_{k+1}}_{A^{\prime}\cap A_{k+1}}\big(ye_{k+1}e_k\cdots
  e_2e_1\big)\\ & & E^{B^{\prime}\cap A_{k+1}}_{A^{\prime}\cap
    A_{k+1}}(xe_{k+1}e_k\cdots e_2e_1\big)e_1e_2\cdots
  e_ke_{k+1}\bigg)e_{k+1}e_k\cdots e_2e_1\Bigg)\\ &= &
  {\tau}^{-\frac{3(k+2)}{2}}\tau E^{B^{\prime}\cap
    A_{k+1}}_{A^{\prime}\cap A_{k+1}}\bigg(E^{B^{\prime}\cap
    A_{k+1}}_{A^{\prime}\cap A_{k+1}}\big(ye_{k+1}e_k\cdots
  e_2e_1\big)\\ & & E^{B^{\prime}\cap A_{k+1}}_{A^{\prime}\cap
    A_{k+1}}(xe_{k+1}e_k\cdots e_2e_1\big) e_1e_2\cdots
  e_ke_{k+1}e_k\cdots e_2e_1\bigg) \text{\hspace*{10mm} (by
    \Cref{pushdown})}\\ &= & {\tau}^{-\frac{(k+2)}{2}} E^{B^{\prime}\cap
    A_{k+1}}_{A^{\prime}\cap A_{k+1}}(yv_{k+1})E^{B^{\prime}\cap
    A_{k+1}}_{A^{\prime}\cap A_{k+1}}(xv_{k+1}).  \text{\hspace*{30mm}
    (by Eq. \eqref{three})}
 \end{eqnarray*}
Thus,
\begin{align*}
 (x\circ y)\circ z &\\ &=
  {\tau}^{-\frac{3(k+2)}{2}}E_{k+1}\Bigg(E^{B^{\prime}\cap
    A_{k+1}}_{A^{\prime}\cap A_{k+1}}(zv_{k+1})E^{B^{\prime}\cap
    A_{k+1}}_{A^{\prime}\cap A_{k+1}}\big((x\circ
  y)v_{k+1}\big)v^*_{k+1}\Bigg)\\ &=
  {\tau}^{-2(k+2)}E_{k+1}\Bigg(E^{B^{\prime}\cap
    A_{k+1}}_{A^{\prime}\cap A_{k+1}}(zv_{k+1})E^{B^{\prime}\cap
    A_{k+1}}_{A^{\prime}\cap A_{k+1}}(yv_{k+1})E^{B^{\prime}\cap
    A_{k+1}}_{A^{\prime}\cap A_{k+1}}(xv_{k+1})v^*_{k+1}\Bigg)\\ &=
  {\tau}^{-\frac{3(k+2)}{2}}E_{k+1}\Bigg(E^{B^{\prime}\cap
    A_{k+1}}_{A^{\prime}\cap A_{k+1}}\big((y\circ
  z)v_{k+1}\big)E^{B^{\prime}\cap A_{k+1}}_{A^{\prime}\cap
    A_{k+1}}\big(xv_{k+1}\big)v^*_{k+1}\Bigg)\\ &= x\circ (y\circ z).
\end{align*}
This completes the proof of associativity.
\end{proof}
\noindent Below we determine the identity element with respect to the
coproduct.
\begin{proposition}
 For every $x\in B^{\prime}\cap A_k$, we have
 \[
 x\circ ({\tau}^{-k/2} e_1e_2\cdots e_k)= x =  ({\tau}^{-k/2}e_1e_2\cdots e_k)\circ x.
 \]
 \end{proposition}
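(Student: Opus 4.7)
The plan is to exhibit the identity explicitly and push everything through the Fourier transform. Set $e := \tau^{-k/2} e_1 e_2 \cdots e_k$. I would first observe that $e \in B^{\prime} \cap A_k$: each Jones projection $e_i$ implements the $C^*$-basic construction of $A_{i-2} \subset A_{i-1}$, so $e_i \in A_{i-2}^{\prime} \cap A_i$ and in particular commutes with $B = A_{-1}$ for every $i \geq 1$. The entire claim then reduces to the single identity $\mathcal{F}_k(e) = 1 \in A^{\prime} \cap A_{k+1}$: granted this, the definition of the coproduct together with \Cref{inverse} yields
\[
x \circ e \;=\; \mathcal{F}_k^{-1}\big(\mathcal{F}_k(e)\mathcal{F}_k(x)\big) \;=\; \mathcal{F}_k^{-1}\big(\mathcal{F}_k(x)\big) \;=\; x,
\]
and symmetrically $e \circ x = x$.

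To compute $\mathcal{F}_k(e)$, I would expand from the definition to get
\[
\mathcal{F}_k(e) \;=\; \tau^{-(k+2)/2}\,\tau^{-k/2}\, E^{B^{\prime} \cap A_{k+1}}_{A^{\prime} \cap A_{k+1}}\big(e_1 e_2 \cdots e_k\, e_{k+1}\, e_k \cdots e_2 e_1\big).
\]
The heart of the calculation is the Temperley--Lieb collapse of the long product using $e_i e_{i\pm 1} e_i = \tau e_i$, which holds in Watatani's $C^*$-framework as a consequence of \Cref{w}(1) (giving $E_k(e_k) = \tau$) together with \Cref{pushdown}. Peeling inward from $e_k e_{k+1} e_k = \tau e_k$, then $e_{k-1} e_k e_{k-1} = \tau e_{k-1}$, and so on, after $k$ applications the long product collapses to $\tau^k e_1$. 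Since $e_1 \in B^{\prime} \cap A_{k+1}$, combining this with the formula of \Cref{f2} and with \Cref{support} to evaluate
\[
E^{B^{\prime} \cap A_{k+1}}_{A^{\prime} \cap A_{k+1}}(e_1) \;=\; \tau \sum_i \lambda_i e_1 \lambda_i^* \;=\; \tau,
\]
a straightforward exponent count yields $\mathcal{F}_k(e) = \tau^{-(k+2)/2 - k/2 + k + 1} = \tau^0 = 1$, as required.

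The only genuine technical hurdle is the Temperley--Lieb collapse itself, but in the subfactor setting this is routine, and its $C^*$-analogue follows in the same way from Watatani's basic construction; beyond that, the proof is essentially a careful accounting of powers of $\tau$ together with the two identities $\sum_i \lambda_i e_1 \lambda_i^* = 1$ and $E_k(e_k) = \tau$.
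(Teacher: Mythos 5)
Your proof is correct and follows essentially the same route as the paper: both compute $\mathcal{F}_k(e_1\cdots e_k)$ by collapsing $(e_1\cdots e_k)(e_{k+1}e_k\cdots e_1)$ to $\tau^k e_1$ (the paper cites this as its Eq.~\eqref{three}, which you re-derive via the Temperley--Lieb relations), evaluate $E^{B'\cap A_{k+1}}_{A'\cap A_{k+1}}(e_1)=\tau$ via \Cref{f2} and \Cref{support}, and conclude from \Cref{inverse}. The exponent bookkeeping checks out.
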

\begin{proof}
By \Cref{three}, we have $(e_1e_2\cdots e_{k-1}e_k)(e_{k+1}e_k\cdots
e_2e_1)= {\tau}^k e_1$. So,
 \begin{equation}\label{Sanat}
 \mathcal{F}_k(e_1e_2\cdots
 e_k)={\tau}^{-\frac{k+2}{2}}{\tau}^kE^{B^{\prime}\cap
   A_{k+1}}_{A^{\prime}\cap A_{k+1}}(e_1)={\tau}^{k/2}.
 \end{equation}
\color{black} Therefore, $x\circ (e_1e_2\cdots
e_k)={\mathcal{F}}_k^{-1}\bigg(\mathcal{F}_k(e_1e_2\cdots
e_k)\mathcal{F}_k(x)\bigg)={\tau}^{k/2}x$ by \Cref{inverse}.
Similarly, $(e_1e_2\cdots e_k)\circ x={\tau}^{k/2} x.$ This completes
the proof.
\end{proof}
\color{black}

 \section{Biprojections and intermediate $C^*$-subalgebras}

 In this section, we show how we can apply the results of the previous
 sections to understand the intermediate $C^*$-subalgebras of an
 irreducible inclusion of simple unital $C^*$-algebras. When the
 inclusion is a subfactor of $II_1$, a result by Bisch (\cite{Bi2})
 shows that the intermediate subfactors are in bijective
 correspondence with certain projections in the relative commutant,
 the so-called {\it biprojections}. Later, Bisch, Jones and Landau
 found a nice pictorial description of the {\it biprojections} in the
 planar algebraic language (see \cite{La}).  We obtain similar results
 for irreducible inclusions of simple unital $C^*$-algebras and we
 obtain an analogue of the Bisch's characterization.\smallskip

Throughout this section, $B \subset A$ will denote a fixed irreducible
pair (i.e., $B^{\prime}\cap A=\C$) of simple unital $C^*$-algebras
such that $\mathcal{E}_0(A,B) \neq \emptyset$.

\subsection{Intermediate $C^*$-subalgebras}\( \)

\begin{notation}\label{A-C-B-notation}
Let $C$ be an intermediate $C^*$-subalgebra of $B \subset A$. Clearly,
$C$ is also  simple  and there exists a unique
(minimal) condition $E^A_C$ from $A$ onto $C$.
  \begin{enumerate}
\item We denote the $C^*$-basic construction of the irreducible pair
  $C\subset A$ by $C_1$ with Jones projection $e_C$ corresponding to
  the minimal conditional expectation $E_C^A$, i.e., $C_1= C^*\la C,
  e_C\ra$.
 \item  We denote   ${[A:C]}^{-1}_0$ by ${\tau}_C$; so that 
  ${[C:B]}^{-1}_0=\tau/{\tau}_C.$
    
\end{enumerate}
  \end{notation}

We first provide few useful observations which will be used ahead.
\begin{lemma}\label{A-C-B}
Let $B \subset C \subset A$ be as in \Cref{A-C-B-notation}.  Then, we
have the following:
  \begin{enumerate}
\item $C_1$ is simple and unital.
  \item $C_1 \subset A_1$ is an irreducible pair with common identity.
\item   The unique (minimal) conditional expectations $E^{A_1}_A$ and $E^{C_1}_A$ satisfy 
  ${E^{A_1}_{A}}_{|_{C_1}} = E^{C_1}_{A}$. 
    \item ${E^{A_1}_{C_1}}_{|_{B'\cap A_1}}$ is the unique
      $\mathrm{tr}$-preserving conditional expectation from $B'\cap
      A_1$ onto $B'\cap C_1$.
  \item The tracial state on $C^{\prime}\cap C_1$
induced by the inclusion $C \subset A \subset C_1$ (as in \Cref{f1}) is same as the
restriction of the tracial state of $B^{\prime}\cap A_1.$
\end{enumerate}
  \end{lemma}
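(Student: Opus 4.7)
\emph{Part (1)} is a direct invocation of Watatani's theory: by \Cref{intermediate-ce}, the intermediate inclusion $C \subset A$ carries the unique minimal conditional expectation $E^A_C$ of finite index, and since both $C$ and $A$ are simple, \Cref{w}(2) applied to $C \subset A$ gives that $C_1 = C^*\langle A, e_C \rangle$ is simple; unitality follows because $C_1$ is a $C^*$-subalgebra of $\mathcal{L}_C(\mathfrak{A})$ containing $A$, with unit inherited from $A$. For the identity claims in \emph{Part (2)}, the Jones projection $e_C$ extends to an element of $A_1$ acting on $\mathfrak{A}_B$ (using that $E^A_B = E^C_B \circ E^A_C$ makes the $C$-module projection contractive for the $B$-inner product), so $C_1 \subset A_1$, and the common identity $1_A$ for $A \subset C_1 \subset A_1$ follows from \Cref{remarks}(1). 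Irreducibility is then immediate: since $B \subset A$ is irreducible, \Cref{inverse} gives $A' \cap A_1 = \C$, and as $A \subset C_1$, we get $C_1' \cap A_1 \subset A' \cap A_1 = \C$.

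\emph{Part (3)} follows by applying \Cref{intermediate-ce} to the chain $A \subset C_1 \subset A_1$, which is irreducible at the outer end (because $A \subset A_1$ is), together with the uniqueness of minimal conditional expectations. Indeed, \Cref{intermediate-ce} yields $E^{A_1}_A = E^{C_1}_A \circ E^{A_1}_{C_1}$, and its ``Moreover'' clause, applied to the irreducible inclusion $A \subset A_1$ with intermediate simple algebra $C_1$, gives exactly $E^{A_1}_A|_{C_1} = E^{C_1}_A$.

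\emph{Part (4)} requires first verifying that $E := E^{A_1}_{C_1}$ maps $B' \cap A_1$ into $B' \cap C_1$, which is a one-line $C_1$-bimodule computation using $B \subset C \subset C_1$. To check $\tr$-preservation, I would recall (from \Cref{f1} and the remark that $\tr$ is built from minimal conditional expectations) that $\tr|_{B' \cap A_1} = E^A_B \circ E^{A_1}_A$ and, applying part (3) and uniqueness of minimal conditional expectations, $\tr|_{B' \cap A_1} = E^A_B \circ E^{C_1}_A \circ E^{A_1}_{C_1} = E^{C_1}_B \circ E^{A_1}_{C_1}$. Next, I want to identify the intrinsic trace on $B' \cap C_1$ (defined via the tower $B \subset C \subset C_1$) with $E^{C_1}_B|_{B' \cap C_1}$; this again comes from \Cref{intermediate-ce} applied to $B \subset C \subset C_1$, which gives $E^{C_1}_B = E^C_B \circ E^{C_1}_C$. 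Combining these equalities gives $\tr \circ E = \tr$ on $B' \cap A_1$. Uniqueness is standard, since $B'\cap A_1$ is finite-dimensional (\Cref{rel-comm}) with a faithful tracial state.

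\emph{Part (5)} is a short corollary: for $x \in C' \cap C_1$, simplicity of $C$ forces $E^{C_1}_C(x) \in C' \cap C = \C$, so $E^C_B \circ E^{C_1}_C(x) = E^{C_1}_C(x)$. The restriction of $\tr$ (on $B' \cap A_1$) to $C' \cap C_1$ therefore coincides with $E^{C_1}_C|_{C' \cap C_1}$, which is precisely the tracial state on $C' \cap C_1$ prescribed by \Cref{f1} for the pair $C \subset A$ with basic construction $C_1$. The only real subtlety throughout is keeping track of which tower defines ``$\tr$'' on each relative commutant and using \Cref{min1} and \Cref{intermediate-ce} to collapse the various compositions of minimal conditional expectations into single ones; this compatibility is the genuine content of the lemma, everything else being bookkeeping.
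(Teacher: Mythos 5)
Your plan is correct and, for parts (1), (3), (4) and (5), follows essentially the same route as the paper: (1) is Watatani's simplicity of the basic construction (\Cref{dual-ce}); (3) is \Cref{intermediate-ce} applied to the chain $A\subset C_1\subset A_1$, whose outer inclusion is irreducible by \Cref{inverse}; (4) reduces to $\tr=E^A_B\circ E^{A_1}_A=E^A_B\circ E^{C_1}_A\circ E^{A_1}_{C_1}$ on $B'\cap A_1$ via (3); and (5) hinges on $E^A_C\circ E^{C_1}_A(z)\in C'\cap A=\mathcal{Z}(C)=\C$, so that the further application of $E^C_B$ does nothing. One small point in (4): what you actually verify is $\tr\circ E^{A_1}_{C_1}=\tr$, whereas the defining property of the $\tr$-preserving conditional expectation is $\tr\bigl(E^{A_1}_{C_1}(x)y\bigr)=\tr(xy)$ for all $y\in B'\cap C_1$; this does follow, because $E^{A_1}_{C_1}$ is a $C_1$-bimodule map and $B'\cap C_1\subset C_1$, but you should say so explicitly (the paper runs the computation directly in this bimodule form).

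The one step that genuinely needs repair is the containment $C_1\subset A_1$ in part (2). Your contractivity remark shows only that $a\mapsto E^A_C(a)$ extends to an adjointable projection in $\mathcal{L}_{B}(\mathfrak{A})$; it does not show that this projection lies in $A_1=\overline{\mathrm{span}}\{ae_Bb: a,b\in A\}$, which is what "$e_C$ extends to an element of $A_1$" asserts. The missing ingredient is either the identity $e_C=\sum_j\gamma_je_B\gamma_j^*$ for a quasi-basis $\{\gamma_j\}$ of $E^C_B$ (proved later in the paper as \Cref{intermediatebasis}), or the paper's direct computation: using $e_Ce_B=e_B$ one gets $(x_1e_Cx_2)(y_1e_By_2)=x_1E^A_C(x_2y_1)e_By_2\in A_1$, hence $C_1A_1\subseteq A_1$, and since $1\in A_1$ (\Cref{support}) this gives $C_1=C_1\cdot 1\subseteq A_1$. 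Once that containment is in place, your deduction of irreducibility from $C_1'\cap A_1\subseteq A'\cap A_1=\C$ is exactly the intended argument.
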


  \begin{proof}
    (1) follows from \Cref{dual-ce}. \smallskip

    (2): First note that
   $e_C\circ e_B=e_B$. Indeed,  for an arbitrary
   $a\in A$ we have $e_C\circ
    e_B(a)=e_C\big(E^A_B(a)\big)=E^A_C\big(E^A_B(a)\big)=E^A_B(a)=e_B(a)$.

    Now, recall from \cite[Proposition 1.6.6]{Wa} that
    $C_1=\text{span}\{x_1e_Cx_2:x_1,x_2\in A\}$ and $
    A_1=\text{span}\{y_1e_By_2:y_1,y_2\in A\}.$ Since $A_1$ is unital, it suffices to show  that
    $C_1 A_1\subset A_1$, which  is rather
    trivial as 
    $(x_1e_Cx_2)(y_1e_By_2)=x_1e_Cx_2y_1e_Ce_By_2=x_1E_C(x_2y_1)e_By_2\in
    A_1$ for all  $x_i, y_i\in A$, $i = 1, 2$. \smallskip

    (3) is now immediate from \Cref{intermediate-ce}
    because $A \subset A_1$ is also an irreducible pair of simple
    unital $C^*$-algebras, by \Cref{inverse}.\smallskip

    (4):  Let  $x\in
   B^{\prime}\cap A_1$. Clearly, $E^{A_1}_{C_1}(x)\in
   B^{\prime}\cap C_1$ and, by (3), we have  $E^{A_1}_A\circ E^{A_1}_{C_1}(xy)=E^{C_1}_A\circ
   E^{A_1}_{C_1}(xy)$ for every $y \in B'\cap C_1$. Hence,
   \[
   \tr\big(E^{A_1}_{C_1}(x)y\big)=E^A_B\circ E^{C_1}_A\circ
   E^{A_1}_{C_1}(xy)= E^A_B\circ E^{A_1}_A(xy)=\tr(xy)
   \]
   for every $y \in B'\cap C_1$.
       \smallskip

(5): It suffices to show that
       \[
       E^A_B\circ E^{A_1}_A|_{C^{\prime}\cap C_1}= E^A_C\circ
       E^{C_1}_A|_{C^{\prime}\cap C_1}.
       \]
       For any $z\in C^{\prime}\cap
   C_1$, we see that $E^A_B\circ E^{A_1}_A(z)=E^C_B\circ E^A_C\circ
   E^{C_1}_A(z)$, by (3). But $E^{C_1}_A(z)\in C^{\prime}\cap A$ and so
   $E^A_C\circ E^{C_1}_A(z)\in \mathcal{Z}(C)=\C$.  Therefore,
   $E^A_B\circ E^{A_1}_A(z)=E^C_B\big(E^A_C\circ
   E^{C_1}_A(z)\big)=E^A_C\circ E^{C_1}_A(z).$ This completes the
   proof.
  \end{proof}
\color{black}
  \begin{notation}\label{C-2-C-3}
    Let $A \subset C \subset B$ be as in \Cref{A-C-B}. \begin{enumerate}
\item Let $C_2$ denote
the $C^*$-basic construction of $C_1 \subset A_1$ with Jones
projection denoted by $e_{C_1}$ corresponding to the unique minimal
conditional expectation $E^{A_1}_{C_1}$.
    \item  As in \Cref{A-C-B}, $C_2 \subset A_2$ is a simple unital irreducible
    inclusion. Let $C_3$ denote its $C^*$-basic construction with
    Jones projection denoted by $e_{C_2}$.
\end{enumerate}
    \end{notation}

\begin{lemma} \label{gamma0ec}\label{e1 and ec}
   Let $B\subset C \subset A$ be  as in \Cref{A-C-B}. Then,
\begin{enumerate}
\item  $\gamma_0(e_C)=e_C.$ In particular,
  $\gamma_0(e_1)=e_1.$
\item  $E^{A_1}_{C_1}(e_1)=\frac{1}{[C:B]}_0 e_C.$
\end{enumerate}
\end{lemma}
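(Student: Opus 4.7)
The plan is to prove both assertions by first establishing the decomposition
\[
e_C = \sum_j \mu_j e_1 \mu_j^*,
\]
where $\{\mu_j: 1 \leq j \leq m\}$ is any quasi-basis for the minimal conditional expectation $E^C_B: C \rar B$. This is verified by showing that both sides act identically on $A$ regarded as a $B$-Hilbert module: the right-hand side sends $x \in A$ to $\sum_j \mu_j E^A_B(\mu_j^* x)$, and using $E^A_B = E^C_B\circ E^A_C$ together with the $C$-bilinearity of $E^A_C$, this collapses to $\sum_j \mu_j E^C_B(\mu_j^* E^A_C(x)) = E^A_C(x) = e_C(x)$ by the quasi-basis property applied to $E^A_C(x) \in C$.

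For (1), I plug into the formula for $\gamma_0$ from \Cref{formulaforrotations2} using a quasi-basis of $E_0$ of product form $\{\lambda_i = \nu_k \mu_j\}$, where $\{\nu_k\}$ is a quasi-basis for $E^A_C$. Since $\mu_j \in C$ commutes with $e_C$, $E_1$ is right $A$-linear, and $\sum_j \mu_j e_1 \mu_j^* = e_C$, the expression collapses to
\[
\gamma_0(e_C) = \tau^{-1}\sum_k E_1(e_1 \nu_k e_C)\, e_C\, \nu_k^*.
\]
A second application of the decomposition combined with $e_1 a e_1 = E^A_B(a) e_1$ and $E_1(e_1) = \tau$ yields $E_1(e_1 \nu_k e_C) = \tau E^A_C(\nu_k)$; the quasi-basis identity $\sum_k E^A_C(\nu_k)\nu_k^* = 1$ then finishes the computation. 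The special case $C = B$ (so $e_C = e_1$) immediately gives $\gamma_0(e_1) = e_1$.

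For (2), I will first pin down the form of $E^{A_1}_{C_1}(e_1)$ and then determine the scalar. The relation $E^A_B = E^C_B \circ E^A_C$ yields $e_1 e_C = e_C e_1 = e_1$, so $E^{A_1}_{C_1}(e_1) \in e_C C_1 e_C$. Using $C_1 = \text{span}\{a e_C b: a, b \in A\}$ together with $e_C a e_C = E^A_C(a) e_C$, one finds $e_C C_1 e_C = C e_C$; since $E^{A_1}_{C_1}(e_1)$ also commutes with $B$, it lies in $(B' \cap C) e_C$, and the irreducibility $B' \cap A = \C$ forces $B' \cap C = \C$. Hence $E^{A_1}_{C_1}(e_1) = \alpha e_C$ for some scalar $\alpha$. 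To determine $\alpha$, apply the faithful trace from \Cref{f1} and use that $E^{A_1}_{C_1}|_{B' \cap A_1}$ is trace-preserving by \Cref{A-C-B}(4): $\alpha\, \tr(e_C) = \tr(e_1) = \tau$. Finally, the decomposition of $e_C$ gives $E^{A_1}_A(e_C) = \tau \sum_j \mu_j \mu_j^* = \tau [C:B]_0 = \tau_C$, so $\tr(e_C) = \tau_C$ and $\alpha = \tau/\tau_C = 1/[C:B]_0$. The main obstacle of the whole argument is the step that pins $E^{A_1}_{C_1}(e_1)$ down to a scalar multiple of $e_C$, which is where the irreducibility hypothesis is indispensable.
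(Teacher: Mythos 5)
Your proof is correct; both parts go through, but the organization differs from the paper's in a way worth recording. Your engine is the identity $e_C=\sum_j\mu_j e_1\mu_j^*$ for a quasi-basis $\{\mu_j\}$ of $E^C_B$ — this is exactly the paper's \Cref{intermediatebasis}, which the authors only prove later (in Section 5) and do \emph{not} use here. For part (1) the paper instead works with a single arbitrary quasi-basis $\{\lambda_i\}$ of $E_0$ and collapses $e_1\lambda_i e_C=e_1e_C\lambda_ie_C=e_1E^A_C(\lambda_i)e_C$ before invoking $\sum_i\lambda_ie_1\lambda_i^*=1$; your product quasi-basis $\{\nu_k\mu_j\}$ reaches the same endpoint with one extra layer of bookkeeping, so the two computations are close cousins. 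Part (2) is where you genuinely diverge: the paper pairs $e_1$ and $e_C$ against arbitrary elements $w=\sum_i x_ie_Cy_i$ of $C_1$ under the trace and reads off the conditional expectation from the resulting identity $\tr(e_Cw)=[C:B]_0\tr(e_1w)$, whereas you first prove structurally that $E^{A_1}_{C_1}(e_1)$ lives in the corner $e_CC_1e_C=Ce_C$ and commutes with $B$, so that irreducibility pins it to $\C e_C$, and only then evaluate one trace to fix the scalar (your computation $E^{A_1}_A(e_C)=\tau\sum_j\mu_j\mu_j^*=[A:C]_0^{-1}$ also supplies a proof of $\tr(e_C)=\tau_C$, which the paper asserts without comment in \eqref{traceofec}). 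Your route makes the role of $B'\cap A=\C$ explicit and is arguably more conceptual; the paper's trace-pairing argument is shorter and does not need to identify the corner. Both are valid under the standing irreducibility hypothesis of this section.
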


\begin{proof}
(1): Observe that $e_1e_C=e_Ce_1=e_1.$ Fix a quasi-basis
    $\{\lambda_i\}$ for $E_0$. Then,
 \begin{align*}
  \gamma_0(e_C) &= {\tau}^{-1}\sum_i
  E_1(e_1\lambda_ie_C)e_1\lambda^*_i\\  &= {\tau}^{-1} \sum_i
  E_1(e_1e_C\lambda_ie_C)e_1\lambda^*_i\\ &=
  {\tau}^{-1} \sum_i
  E_1\big(e_1E^A_C(\lambda_i)e_C\big)e_1\lambda^*_i\\ &=\sum_iE^A_C(\lambda_i)e_1\lambda^*_i\\ &=\sum_iE^A_C(\lambda_i)e_Ce_1\lambda^*_i\\ &=
  \sum_i
  e_C\lambda_ie_Ce_1\lambda^*_i\\ &=e_C\big(\sum_i\lambda_ie_1\lambda^*_i)\\ &=
  e_C. \qquad \qquad \qquad \qquad \qquad \text{(by \Cref{support})}
 \end{align*}

 (2): Let $w = \sum_i x_i e_C y_i \in C'\cap C_1$ with $x_i, y_i \in
 A$ for all $i$. By \Cref{A-C-B}, the tracial state on $C'\cap C_1$ is the
 restriction of the tracial state on $B'\cap A_1$. So, we have
 \begin{align*}
 \mathrm{tr}(e_C w) &= \mathrm{tr}\big(\sum_i e_C x_ie_Cy_i\big)
 \\ &= E^A_B\circ E^{A_1}_A\big(\sum_i e_CE^A_C(x_i)y_i\big)\\ &=
    {[A:C]}^{-1}_0E^A_B\big(\sum_iE^A_C(x_i)y_i\big). \qquad \qquad
    \textrm{(since } E^{A_1}_A(e_C)={[A:C]}^{-1}_0)
  \end{align*}
And, on the other hand,
\begin{align*}
 \mathrm{tr}(e_1w) &= \mathrm{tr}(\sum_i e_1x_ie_Cy_i)\\ &=
 E^A_B\big(\sum_i e_1E^A_C(x_i)y_i\big)\qquad \qquad \qquad (\text{since } e_1e_C =
 e_C e_1 = e_1)\\ &= {\tau}  E^A_B \big(\sum_i
 E^A_C(x_i)y_i\big).
\end{align*}
Thus, $\mathrm{tr}(e_Cw)={[C:B]}_0 \mathrm{tr}(e_1w)$ for all $w \in
B'\cap C_1$; so that $E^{B'\cap A_1}_{C'\cap
  C_1}(e_1)=\frac{1}{[C:B]}_0 e_C.$ Then, by \Cref{A-C-B}, we deduce
that $E^{A_1}_{C_1}(e_1)=\frac{1}{[C:B]}_0 e_C.$
\end{proof}

A more general version  of the following lemma will be established in the next section.
\begin{lemma}\label{ece2ec}
Let $B\subset C \subset A$ be  as in \Cref{A-C-B}.   Then, 
\[
e_Ce_2e_C={[A:C]}^{-1}_0 e_Ce_{C_1} \text{ and } e_{C_1}e_3e_{C_1}={[C:B]}^{-1}_0 e_{C_1}e_{C_2}.
\]
\end{lemma}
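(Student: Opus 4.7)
The plan is to prove the first identity $e_C e_2 e_C = \tau_C\, e_C e_{C_1}$ by exhibiting both sides as the same projection in $A_2$, and then obtain the second identity by reapplying the first one step higher in the tower.

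First, I would verify that $q := \tau_C^{-1} e_C e_2 e_C$ is a projection. Self-adjointness is immediate. For idempotency, the key computation is $e_2 e_C e_2 = E^{A_1}_A(e_C)\, e_2 = \tau_C\, e_2$, where the scalar value $E^{A_1}_A(e_C) = \tau_C$ comes from \Cref{A-C-B}(3) (giving $E^{A_1}_A|_{C_1} = E^{C_1}_A$) combined with the dual-conditional-expectation identity $E^{C_1}_A(e_C) = [A\!:\!C]_0^{-1}$ from \Cref{w}(1). Then $q^2 = \tau_C^{-2} e_C (e_2 e_C e_2) e_C = \tau_C^{-1} e_C e_2 e_C = q$. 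On the other side, since $e_C \in C_1$ and $e_{C_1}$ centralizes $C_1$, the projections $e_C$ and $e_{C_1}$ commute, so $e_C e_{C_1}$ is itself a projection, lying in $C_2 \subset A_2$.

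Next, I would prove $q \leq e_C e_{C_1}$. The inequality $q \leq e_C$ is obvious from $q e_C = q$. For $q \leq e_{C_1}$, I would invoke the relation $e_2 e_{C_1} = e_{C_1} e_2 = e_2$ in $A_2$, which reflects that in the Hilbert $A$-module picture underpinning the basic construction $A_1 \subset A_2$, the projection $e_2$ projects onto $\iota(A)$, while $e_{C_1}$ (viewed inside $A_2$ via the inclusion $C_2 \subset A_2$ furnished by applying \Cref{A-C-B} one level up) restricts to the identity on the larger subspace $\iota(C_1) \supset \iota(A)$. Granted this, $q e_{C_1} = \tau_C^{-1} e_C(e_2 e_{C_1}) e_C = q$; taking adjoints, $e_{C_1} q = q$, so $q \leq e_C \wedge e_{C_1} = e_C e_{C_1}$.

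To upgrade this inequality to equality, I would apply the minimal conditional expectation $E^{A_2}_{C_2}: A_2 \to C_2$. Applying \Cref{e1 and ec}(2) to the simple unital irreducible inclusion $A \subset C_1 \subset A_1$ (irreducible by \Cref{inverse}) under the renaming $B \leftrightarrow A$, $C \leftrightarrow C_1$, $A \leftrightarrow A_1$, and using $[C_1\!:\!A]_0 = [A\!:\!C]_0 = \tau_C^{-1}$ from \Cref{dual-index}, yields $E^{A_2}_{C_2}(e_2) = \tau_C\, e_{C_1}$. Since $e_C \in C_1 \subset C_2$, it follows that $E^{A_2}_{C_2}(q) = \tau_C^{-1} e_C E^{A_2}_{C_2}(e_2) e_C = e_C e_{C_1}$, while $E^{A_2}_{C_2}(e_C e_{C_1}) = e_C e_{C_1}$ trivially. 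Hence $e_C e_{C_1} - q \geq 0$ is annihilated by $E^{A_2}_{C_2}$; as this conditional expectation is faithful (by \Cref{remarks}(2)), we conclude $q = e_C e_{C_1}$, which is the first identity.

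The second identity is obtained by applying the first identity verbatim to the shifted simple unital irreducible tower $A \subset C_1 \subset A_1$. Under this renaming the Jones projections $e_C, e_2, e_{C_1}$ become $e_{C_1}, e_3, e_{C_2}$, and the scalar $\tau_C$ becomes $[A_1\!:\!C_1]_0^{-1} = [C\!:\!B]_0^{-1}$ (via multiplicativity of the Watatani index in \Cref{min3} combined with \Cref{dual-index}). The main subtlety in the whole argument is the verification of $e_2 \leq e_{C_1}$ in $A_2$, which rests on the compatibility of the embedding $C_2 \subset A_2$ with the natural Hilbert-module representation from Watatani's basic construction.
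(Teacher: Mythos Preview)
Your argument is correct and follows the same overall skeleton as the paper's proof: show that $q:=\tau_C^{-1}e_Ce_2e_C$ is a projection via $e_2e_Ce_2=\tau_Ce_2$, compare it with the projection $e_Ce_{C_1}$, and then force equality by a faithful positive map. The one substantive difference lies in the last step. The paper does not invoke $E^{A_2}_{C_2}$ or the shifted version of \Cref{e1 and ec}(2); instead it computes traces directly: $\tr(q)=\tau_C^{-1}\tr(e_Ce_2)=\tau_C^{-1}\tau\,\tr(e_C)=\tau$ by the Markov property of \Cref{f1}, and separately $\tr(e_Ce_{C_1})=\tau$ from $E^{C_2}_{A_1}(e_{C_1})=[A_1\!:\!C_1]_0^{-1}=[C\!:\!B]_0^{-1}$, so faithfulness of $\tr$ on the finite-dimensional relative commutant gives $q=e_Ce_{C_1}$. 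Your route through $E^{A_2}_{C_2}(e_2)=\tau_Ce_{C_1}$ is a perfectly valid alternative and has the virtue of making the inequality $q\leq e_Ce_{C_1}$ and the equality step use the same ingredient (the relation $e_2\leq e_{C_1}$, which is exactly the one-level-up analogue of the relation $e_1\leq e_C$ used in the proof of \Cref{A-C-B}(2)); the paper's trace computation, on the other hand, is slightly lighter in that it avoids appealing to \Cref{e1 and ec}(2) at the shifted level.
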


\begin{proof}
This proof is essentially borrowed from \cite{Bi2,BJ,GJ}. We first show that
 \begin{equation}
  \label{traceofec}
   \mathrm{tr}(e_C)={\tau}_c ~~~\text{and}~~~    \mathrm{tr}(e_Ce_{C_1})=\tau.
\end{equation}
\color{black}
 The first equality follows trivially from the definition. To see the second equation note that 
\begin{align*}
    \mathrm{tr}(e_Ce_{C_1})
 & =E^A_C\circ E^{C_1}_A\circ E^{A_1}_{C_1}\circ E^{C_2}_{A_1}(e_Ce_{C_1})\\
 &= E^A_C\circ E^{C_1}_A\bigg(e_C E^{A_1}_{C_1}\circ E^{C_2}_{A_1}\big(e_{C_1}\big)\bigg)\\
 &= E^A_C\circ E^{C_1}_A\bigg(e_C{{[A_1:C_1]}_0}^{-1}\bigg)\\
 &=\tau,
 \end{align*}
where the last equality follows from the derivation
\[
  {[A_1:C_1]}_0=\frac{{[A_1:A]}_0}{{[C_1:A]}_0}=\frac{{[A:B]}_0}{{[A:C]}_0}={[C:B]}_0.\quad  (\text{by Theorem } \ref{min3})
  \]

  Now, $e_2e_Ce_2=E^{A_1}_A(e_C)e_2={\tau}_ce_2.$ So,
  $v:=\frac{1}{\sqrt{{\tau}_c}}e_2e_C$ is a partial isometry and hence
  $v^*v=\frac{1}{{\tau}_c}e_Ce_2e_C$ is a projection, say,
  $q$. Clearly, $q$ majorizes the projection $e_Ce_{C_1}$. And, on the
  other hand, by \Cref{f1}, we have $\tr(q)=\frac{1}{{\tau}_c}
  \tr(e_Ce_2)=\frac{\tau}{{\tau}_c} tr(e_C)=\tau.$ Therefore, by
  \Cref{traceofec}, $q =e_Ce_{C_1}.$ This proves that
  $e_Ce_2e_C={[A:C]}^{-1}_0 e_Ce_{C_1}$.  The  other
  implication follows similarly and we omit the details.
\end{proof}

\begin{proposition} \label{fouriertransformofec1}
  Let $B \subset C \subset A$ be as in \Cref{A-C-B}. Then,
 \begin{equation}\label{f-ec}
   \mathcal{F}_1(e_C)=\displaystyle \frac{\sqrt{{[A:B]}_0}}{{[A:C]}_0}
   e_{C_1}.
 \end{equation}
 In particular, $\mathcal{F}_1(e_1)=\frac{1}{\sqrt{{[A:B]}_0}} 1_{A_1}.$
\end{proposition}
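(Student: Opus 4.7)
The approach is to unfold the definition $\mathcal{F}_1(e_C)=\tau^{-3/2}\,E^{B^{\prime}\cap A_2}_{A^{\prime}\cap A_2}(e_Ce_2e_1)$ and simplify the argument \emph{before} applying the conditional expectation. The key ingredients are \Cref{ece2ec} (the product relation $e_Ce_2e_C=\tau_C\, e_Ce_{C_1}$), the formula for the trace-preserving conditional expectation from \Cref{f2}, namely $E^{B^{\prime}\cap A_2}_{A^{\prime}\cap A_2}(\,\cdot\,)=\tau\sum_i\lambda_i(\,\cdot\,)\lambda_i^*$ for a quasi-basis $\{\lambda_i\}$ of $E_0$, and \Cref{support}, which gives $\sum_i\lambda_ie_1\lambda_i^*=1$.

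The crucial algebraic identity is $e_Ce_1=e_1$ (equivalently $e_1\le e_C$ in $A_1$). This holds because, under the embedding $C_1\hookrightarrow A_1$ from \Cref{A-C-B}, $e_C$ acts on the Hilbert $B$-module $\mathfrak{A}$ as the $B$-linear extension of $\iota(a)\mapsto\iota(E^A_C(a))$, and since $E^A_B(a)\in B\subset C$ one has $e_Ce_1\iota(a)=\iota(E^A_B(a))=e_1\iota(a)$. Granting this, the computation runs as follows:
\[
e_Ce_2e_1 \;=\; e_Ce_2e_Ce_1 \;=\; \tau_C\,e_Ce_{C_1}e_1 \;=\; \tau_C\,e_{C_1}e_1,
\]
where the middle equality is \Cref{ece2ec} and the last uses that $e_{C_1}\in C_1^{\prime}\cap A_2$ commutes with $e_C\in C_1$, together with $e_Ce_1=e_1$. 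Since $A\subset C_1$, the projection $e_{C_1}$ also commutes with each $\lambda_i\in A$, so \Cref{f2} and \Cref{support} yield
\[
E^{B^{\prime}\cap A_2}_{A^{\prime}\cap A_2}(e_{C_1}e_1) \;=\; \tau\, e_{C_1}\sum_i\lambda_ie_1\lambda_i^* \;=\; \tau\, e_{C_1}.
\]
Assembling the pieces gives $\mathcal{F}_1(e_C)=\tau^{-3/2}\cdot\tau_C\tau\,e_{C_1}=\tau^{-1/2}\tau_C\,e_{C_1}=\tfrac{\sqrt{[A:B]_0}}{[A:C]_0}\,e_{C_1}$.

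For the ``in particular'' assertion I would give a direct computation: the Jones-type relation $e_1e_2e_1=\tau e_1$ is immediate from Eq.~\eqref{three} with $k=1$, so by \Cref{f2} and \Cref{support},
\[
\mathcal{F}_1(e_1) \;=\; \tau^{-3/2}\cdot\tau\cdot E^{B^{\prime}\cap A_2}_{A^{\prime}\cap A_2}(e_1) \;=\; \tau^{-1/2}\cdot\tau\cdot\Big(\sum_i\lambda_ie_1\lambda_i^*\Big) \;=\; \tfrac{1}{\sqrt{[A:B]_0}}\,1_{A_1}.
\]
The only mildly subtle step in the whole argument is justifying $e_Ce_1=e_1$; once that is in hand, the proof is a short manipulation combining \Cref{ece2ec}, \Cref{support}, and the commutation $[e_{C_1},A]=0$.
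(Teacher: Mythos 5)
Your proposal is correct and follows essentially the same route as the paper: unfold the definition, insert $e_C$ via the identity $e_Ce_1=e_1$ (which the paper records in the proofs of \Cref{A-C-B}(2) and \Cref{gamma0ec}), apply \Cref{ece2ec}, and evaluate the conditional expectation using \Cref{f2} and \Cref{support}. The only differences are cosmetic — you make explicit the commutations $[e_{C_1},e_C]=0$ and $[e_{C_1},A]=0$ that the paper leaves implicit, and you verify the ``in particular'' claim by a direct computation rather than as the special case $C=B$.
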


\begin{proof}
 This follows immediately from the preceding lemma as follows:
 \begin{align*}
  {\mathcal{F}}_1(e_C) &= {\tau}^{-3/2} E^{B^{\prime}\cap
    A_2}_{A^{\prime}\cap A_2}\big(e_Ce_2e_1\big)\\ &={\tau}^{-3/2}
  E^{B^{\prime}\cap A_2}_{A^{\prime}\cap
    A_2}\big(e_Ce_2e_Ce_1\big)\\ &= {\tau}^{-3/2} E^{B^{\prime}\cap
    A_2}_{A^{\prime}\cap A_2}\big({\tau}_ce_{C_1}e_1\big)
  \text{\hspace*{23mm} (by \Cref{ece2ec})}\\ &={\tau}_c{\tau}^{-1/2}
  e_{C_1}. \text{\hspace*{40mm} (by \Cref{f2})}
 \end{align*}
\end{proof}

\begin{remark} \label{fouriertransformofec2}
Similar to \Cref{fouriertransformofec1}, we also have the following expression:
\[
\mathcal{F}^{A\subset A_1}_1(e_{C_1})=\displaystyle
\frac{\sqrt{{[A:B]}_0}}{{[C:B]}_0} e_{C_2},
\]
where $\mathcal{F}^{A\subset A_1}_1$ is the Fourier transform from
$A^{\prime}\cap A_2$ onto $A^{\prime}_1\cap A_3$ defined by
$$ \mathcal{F}^{A\subset A_1}_1(x)={\tau}^{-3/2}E^{A^{\prime}\cap
  A_3}_{A^{\prime}_1\cap A_3}(xe_3e_2).$$
\end{remark}

\bigskip

\noindent The map $\gamma_1\gamma_0$ in \Cref{shift} has the following
special property.
\begin{proposition}
 \label{gamma1gamma0ec}
 $\gamma_1\gamma_0(e_C)=e_{C_2}.$
\end{proposition}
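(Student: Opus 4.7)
The plan is to first apply Lemma \ref{gamma0ec}(1) to reduce to computing $\gamma_1(e_C)$, then use the explicit formula from the proof of Theorem \ref{shift} (Eq.\,\eqref{importantequation1}) combined with Lemma \ref{ece2ec} to rewrite $\gamma_1(e_C)$ as a conditional expectation of a short word in the Jones projections, and finally identify the result with $e_{C_2}$ via Remark \ref{fouriertransformofec2}. The central technical step is an application of the $(A^{\prime}\cap A_3)$-bimodule property of the conditional expectation $E^{B^{\prime}\cap A_3}_{A^{\prime}\cap A_3}$, made available by the observations that $e_{C_1}$ commutes with $A\subset C_1$ and $e_3e_2\in A^{\prime}\cap A_3$.

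By Lemma \ref{gamma0ec}(1) we have $\gamma_0(e_C)=e_C$, so it suffices to show $\gamma_1(e_C)=e_{C_2}$. Using Eq.\,\eqref{importantequation1} together with $e_{[-1,1]}=\tau^{-1}e_2e_1e_3e_2$ from Proposition \ref{iteratingbasicconstruction}, one obtains $\gamma_1(e_C) = \tau^{-3}\,E^{B^{\prime}\cap A_3}_{A_1^{\prime}\cap A_3}(e_Ce_2e_1e_3e_2)$. Inserting $e_C$ via $e_Ce_1=e_1$, applying $e_Ce_2e_C=\tau_C\,e_Ce_{C_1}$ from Lemma \ref{ece2ec}, and using $[e_C,e_{C_1}]=0$ (which holds since $e_{C_1}$ commutes with $C_1\ni e_C$), the expression simplifies to $e_Ce_2e_1e_3e_2=\tau_C\,e_{C_1}e_1e_3e_2$, and hence $\gamma_1(e_C) = \tau^{-3}\tau_C\,E^{B^{\prime}\cap A_3}_{A_1^{\prime}\cap A_3}(e_{C_1}e_1e_3e_2)$.

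With $e_{C_1},\,e_3e_2\in A^{\prime}\cap A_3$ in hand, and $E^{B^{\prime}\cap A_3}_{A^{\prime}\cap A_3}(e_1)=\tau\sum_i\lambda_ie_1\lambda_i^*=\tau$ (immediate from Lemma \ref{f2} and Proposition \ref{basis}), the bimodule property of $E^{B^{\prime}\cap A_3}_{A^{\prime}\cap A_3}$ yields $E^{B^{\prime}\cap A_3}_{A^{\prime}\cap A_3}(e_{C_1}e_1e_3e_2) = \tau\,e_{C_1}e_3e_2$. The tower identity $E^{B^{\prime}\cap A_3}_{A_1^{\prime}\cap A_3}=E^{A^{\prime}\cap A_3}_{A_1^{\prime}\cap A_3}\circ E^{B^{\prime}\cap A_3}_{A^{\prime}\cap A_3}$ (by uniqueness of the trace-preserving conditional expectation on the finite-dimensional relative commutants) then gives $\gamma_1(e_C) = \tau^{-2}\tau_C\,E^{A^{\prime}\cap A_3}_{A_1^{\prime}\cap A_3}(e_{C_1}e_3e_2)$. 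Finally, by Remark \ref{fouriertransformofec2}, the latter equals $\tau^{3/2}\,\mathcal{F}^{A\subset A_1}_1(e_{C_1}) = (\tau^2/\tau_C)\,e_{C_2}$, where the factor $\tau^{1/2}/\tau_C$ in the Fourier value comes from $\sqrt{[A:B]_0}/[C:B]_0$ via the index multiplicativity $[A:B]_0=[A:C]_0[C:B]_0$. Substituting yields $\gamma_1(e_C) = \tau^{-2}\tau_C\cdot(\tau^2/\tau_C)\,e_{C_2}=e_{C_2}$. The main technical hurdle is the bimodule identification in this last paragraph, which crucially rests on the observation that $e_{C_1}\in A^{\prime}$.
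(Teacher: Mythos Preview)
Your proof is correct. The first half, reducing $\gamma_1(e_C)$ to $\tau^{-3}\tau_C\,E^{B'\cap A_3}_{A_1'\cap A_3}(e_{C_1}e_1e_3e_2)$, is identical to the paper's argument. For the second half, the paper expands this conditional expectation directly via the quasi-basis $\{\tau^{-1/2}\lambda_i e_1\lambda_j\}$ for $E_0\circ E_1$ (\Cref{f2} and \Cref{multibasis}), collapses the inner sum using $\sum_j\lambda_j e_1\lambda_j^*=1$, and then applies the second relation of \Cref{ece2ec}, $e_{C_1}e_3e_{C_1}=[C:B]_0^{-1}e_{C_1}e_{C_2}$, to produce $e_{C_2}$. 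Your route instead factors $E^{B'\cap A_3}_{A_1'\cap A_3}=E^{A'\cap A_3}_{A_1'\cap A_3}\circ E^{B'\cap A_3}_{A'\cap A_3}$, uses the $(A'\cap A_3)$-bimodule property (legitimate since $e_{C_1}\in C_1'\subset A'$ and $e_3e_2\in A'$) together with $E^{B'\cap A_3}_{A'\cap A_3}(e_1)=\tau$, and then recognizes $\tau^{-3/2}E^{A'\cap A_3}_{A_1'\cap A_3}(e_{C_1}e_3e_2)$ as the shifted Fourier transform $\mathcal{F}^{A\subset A_1}_1(e_{C_1})$, whose value is supplied by \Cref{fouriertransformofec2}. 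Your approach is a bit more structural: it avoids the explicit double-indexed basis computation and makes transparent that the result is really the dual instance of \Cref{fouriertransformofec1}. The paper's approach is more self-contained, relying only on the basic relations already established for the $e_k$'s and $e_{C_k}$'s.
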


\begin{proof}
First, note that, $\gamma_1\gamma_0(e_C)=\gamma_1(e_C)$, by \Cref{gamma0ec}. And then, we have
 \begin{align*}
  \gamma_1(e_C)
  &= {\tau}^{-2} E^{B^{\prime}\cap A_3}_{A^{\prime}_1\cap A_3}\big(e_Ce_{[-1,1]}\big)\\
  &={\tau}^{-3}E^{B^{\prime}\cap A_3}_{A^{\prime}_1\cap A_3}\big(e_Ce_2e_C.e_1e_3e_2\big)\\
  &= {\tau}^{-3}{\tau}_c E^{B^{\prime}\cap A_3}_{A^{\prime}_1\cap A_3}\big(e_{C_1}e_1e_3e_2\big) \text{\hspace*{30mm} (by \Cref{ece2ec})}\\
  &= {\tau}_c{\tau}^{-1}\sum_{i,j} \lambda_ie_1\lambda_je_{C_1}e_1e_3e_2\lambda^*_je_1\lambda^*_i \text{\hspace*{18mm} (by \Cref{f2})}\\
  &={\tau}_c{\tau}^{-2}\sum_{i}\lambda_ie_1e_{C_1}\big(\sum_j \lambda_je_1\lambda^*_j\big)e_3e_2e_1\lambda^*_i\\
  &={\tau}_c{\tau}^{-2}\sum_i\lambda_ie_1e_{C_1}e_3e_{C_2}e_2e_1\lambda^*_i\\
  &=e_{C_2}. \text{\hspace*{60mm} (by \Cref{ece2ec} again)}
 \end{align*}
This finishes the proof.
 \end{proof}

The following Proposition will be used in the last section.
 \begin{proposition}\label{useful}   Let $B\subset C \subset A$ be  as in \Cref{A-C-B}.  Then, 
   \[
   \gamma_0(B^{\prime}\cap C_1)=C^{\prime}\cap A_1 \text{ and } \gamma_0(C^{\prime}\cap A_1)=B^{\prime}\cap C_1.
   \]
 \end{proposition}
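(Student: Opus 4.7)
The plan is to exploit that $\gamma_0$ is a $*$-preserving involutive anti-automorphism of $B'\cap A_1$ (from \Cref{antiauto} and Lemma \ref{r2}). Since $\gamma_0^2=\mathrm{Id}$, the two stated equalities are equivalent and both reduce to establishing the two inclusions
\[
\gamma_0(B'\cap C_1)\subseteq C'\cap A_1 \quad\text{and}\quad \gamma_0(C'\cap A_1)\subseteq B'\cap C_1,
\]
the reverse inclusions then following by applying $\gamma_0$ to both sides.

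Both verifications are facilitated by a quasi-basis for $E_0=E^A_B$ adapted to the intermediate subalgebra $C$. Choosing quasi-bases $\{\mu_j\}$ for $E^C_B:C\to B$ and $\{\nu_k\}$ for $E^A_C:A\to C$, one checks via the $C$-bimodularity of $E^A_C$ and the factorization $E^A_B=E^C_B\circ E^A_C$ from \Cref{intermediate-ce} that the products $\{\nu_k\mu_j\}$ form a quasi-basis for $E^A_B$. For the first inclusion, take $x\in B'\cap C_1$ and expand $x=\sum_s a_se_Cb_s$ with $a_s,b_s\in A$, which is possible since $C_1=\overline{\mathrm{span}}\{A\,e_C\,A\}$. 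Substitute into $\gamma_0(x)=\tau^{-1}\sum_iE_1(e_1\lambda_ix)e_1\lambda_i^*$ and simplify using $e_1=e_1e_C$, the characteristic identity $e_Cze_C=E^A_C(z)e_C$ for $z\in A$, and the push-down $E_1(e_1z)=\tau z$ for $z\in A$ (a consequence of \Cref{dual-ce}(1)). The formula collapses to
\[
\gamma_0(x)=\sum_{i,s}E^A_C(\lambda_ia_s)\,b_s\,e_1\lambda_i^*,
\]
and one verifies that the right-hand side commutes with every $c\in C$ using $C$-bimodularity of $E^A_C$ and the quasi-basis identity $y=\sum_i\lambda_iE^A_B(\lambda_i^*y)$ for $y\in A$.

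For the second inclusion, take $y\in C'\cap A_1$. Using the adapted quasi-basis, the commutation $y\mu_j=\mu_jy$, and $A$-bimodularity of $E_1$, one rearranges
\[
\gamma_0(y)=\tau^{-1}\sum_kE_1(e_1\nu_ky)\Big(\sum_j\mu_je_1\mu_j^*\Big)\nu_k^*.
\]
The inner sum $p:=\sum_j\mu_je_1\mu_j^*$ is a projection, and by Watatani's characterization of the $C^*$-basic construction applied to $B\subset C$ (with Jones projection naturally identified with $e_1$, since $E^A_B|_C=E^C_B$ by \Cref{intermediate-ce}), it is the unit of the basic construction algebra $C^*\langle C,e_1\rangle$ sitting inside $A_1$.

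The main obstacle is then showing $\gamma_0(y)\in C_1$: while the expression lies in $A\cdot C^*\langle C,e_1\rangle\cdot A$, containment in $C_1$ is not immediate. A clean way to bypass this identification is to work with conditional expectations directly. By \Cref{A-C-B}(4), $E^{A_1}_{C_1}|_{B'\cap A_1}$ is the unique trace-preserving conditional expectation from $B'\cap A_1$ onto $B'\cap C_1$. On the other hand, an argument parallel to \Cref{f2} shows that $\frac{1}{[C:B]_0}\sum_j\mu_j(\cdot)\mu_j^*$ is the unique trace-preserving conditional expectation from $B'\cap A_1$ onto $C'\cap A_1$. Using the trace-preservation of $\gamma_0$ from \Cref{gamma0istracepreserving} (valid by irreducibility), the map $\gamma_0\circ E^{A_1}_{C_1}|_{B'\cap A_1}\circ\gamma_0$ becomes a trace-preserving conditional expectation onto $\gamma_0(B'\cap C_1)$. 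Verifying the identity
\[
\gamma_0\circ E^{A_1}_{C_1}|_{B'\cap A_1}\circ\gamma_0=\frac{1}{[C:B]_0}\sum_j\mu_j(\cdot)\mu_j^*
\]
on a generating set (say on $\{e_C,e_1\}\cup A$, using $E^{A_1}_{C_1}(e_1)=\frac{1}{[C:B]_0}e_C$ and $\gamma_0(e_C)=e_C=\gamma_0(e_1)$ from \Cref{gamma0ec}) then forces equality of the two conditional expectations, whence equality of their ranges, yielding both equalities in the proposition.
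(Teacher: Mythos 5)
Your skeleton is the paper's: since $\gamma_0$ is an involutive $*$-preserving anti-automorphism of $B'\cap A_1$, it suffices to prove the two inclusions $\gamma_0(B'\cap C_1)\subseteq C'\cap A_1$ and $\gamma_0(C'\cap A_1)\subseteq B'\cap C_1$. Your first inclusion is fine in substance (though the commutation of $\sum_{i,s}E^A_C(\lambda_ia_s)b_se_1\lambda_i^*$ with $C$ genuinely needs the hypothesis $x\in B'$ in addition to the ingredients you list; for arbitrary $a_s,b_s$ the expression does not commute with $C$). The genuine gap is in the second inclusion. Your final step --- verifying that the two trace-preserving conditional expectations $\gamma_0\circ E^{A_1}_{C_1}|_{B'\cap A_1}\circ\gamma_0$ and $\frac{1}{[C:B]_0}\sum_j\mu_j(\cdot)\mu_j^*$ agree ``on a generating set $\{e_C,e_1\}\cup A$'' --- is not a valid argument: conditional expectations are linear but not multiplicative, so agreement on a multiplicative generating set implies nothing about agreement on the algebra it generates; moreover the common domain is $B'\cap A_1$ and $A\cap(B'\cap A_1)=\C$, so inside that domain your set reduces to $\{1,e_1,e_C\}$, which does not linearly span $B'\cap A_1$. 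Note also that the identity you would actually need, $\gamma_0\circ E^{B'\cap A_1}_{C'\cap A_1}=E^{A_1}_{C_1}\circ\gamma_0$ on $B'\cap A_1$, is precisely Eq.~\eqref{general statement} in the proof of \Cref{formulaofp2}, whose proof in the paper invokes \Cref{useful}; closing your gap by that route would be circular.

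The irony is that the ``main obstacle'' you flag is not one: in your own display
\[
\gamma_0(y)=\tau^{-1}\sum_kE_1(e_1\nu_ky)\Big(\sum_j\mu_je_1\mu_j^*\Big)\nu_k^*,
\]
the inner projection is not merely ``the unit of $C^*\langle C,e_1\rangle$'' --- it \emph{equals the Jones projection} $e_C$, by \Cref{intermediatebasis} (or directly: $\sum_j\mu_je_1\mu_j^*$ sends $\iota(a)$ to $\iota\big(\sum_j\mu_jE^C_B(E^A_C(\mu_j^*a))\big)=\iota\big(E^A_C(a)\big)$). Hence $\gamma_0(y)=\tau^{-1}\sum_kE_1(e_1\nu_ky)\,e_C\,\nu_k^*\in\mathrm{span}(Ae_CA)\subseteq C_1$, and since $\gamma_0(y)\in B'\cap A_1$ automatically, the second inclusion follows. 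This is exactly how the paper concludes (it reaches $\gamma_0(x)=\tau^{-1}\sum_jE^{A_1}_A(e_1\gamma_j^*x)e_C\gamma_j$ and reads off membership in $C_1$; its other inclusion is proved by conjugating the quasi-basis by unitaries of $C$, which is a repackaging of your commutation check). Replacing your last paragraph by this one observation makes the proof complete and essentially the same as the paper's.
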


 \begin{proof}
 Consider $x\in C^{\prime}\cap A_1$. Let $\{\gamma_j:j\in J\}$ be a
 quasi-basis for $E^A_C$ and $\{\lambda_i:i\in I\}$ be a quasi-basis
 for $E^A_B$. Then,
 \begin{align*}
  \gamma_0(x) & = {\tau}^{-1} \sum_{i,j}
  E^{A_1}_A\big(e_1\gamma^*_jE^A_C(\gamma_j\lambda_i)x\big)e_1\lambda^*_i\\ &=
  {\tau}^{-1}
  \sum_{i,j}E^{A_1}_A(e_1\gamma^*_jx)E^A_C(\gamma_j\lambda_i)e_Ce_1\lambda^*_i\\ &=
      {\tau}^{-1} \sum_j E^{A_1}_A(e_1\gamma^*_jx)e_C\gamma_j.\qquad
      \qquad \qquad (\textrm{since}~~ \sum_i\lambda_ie_1\lambda^*_i=1)
 \end{align*}
Therefore, $\gamma_0(x)\in C_1.$ In other
words,
\[
\gamma_0(C^{\prime}\cap A_1) \subseteq B^{\prime}\cap
C_1 \hspace{50mm} (\star).
\]
Next, consider $y\in B^{\prime}\cap C_1.$
We show that $\gamma_0(y)\in C^{\prime}\cap A_1.$ To see this note
that since for any $u\in \mathcal{U}(C)$ (the set of all unitaries of
$C$) we have $\{u\lambda_i:i\in I\}$ is a quasi-basis for $E^B_A$ and
hence
\begin{align*}
 \gamma_0(y) 
 &= {\tau}^{-1} \sum_i E^{A_1}_A(e_1u\lambda_iy)e_1\lambda^*_iu^*\\
 &= {\tau}^{-1} \sum_i E^{C_1}_A\circ E^{A_1}_{C_1}(e_1u\lambda_iy)e_1\lambda^*_iu^*\\
 &= {\tau}^{-1}\sum_i E^{C_1}_A\big(E^{A_1}_{C_1}(e_1)u\lambda_iy\big)e_1\lambda^*_iu^*\\
 &= {\tau}^{-1}\frac{1}{[C:B]}_0\sum_i E^{C_1}_{A}(e_Cu\lambda_iy)e_1\lambda^*u^* \text{\hspace*{25mm} (by \Cref{e1 and ec})}\\
 &= {\tau}^{-1} \frac{1}{{[C:B]}_0}u \bigg(\sum_i E^{C_1}_A(e_C\lambda_iy)e_1\lambda^*_i\bigg)u^*\\
 &= {\tau}^{-1} 
 u\bigg(\sum_i E^{C_1}_A\big(E^{A_1}_{C_1}(e_1)\lambda_iy\big)e_1\lambda^*_i\bigg)u^*\\
 &= u\bigg({\tau}^{-1} \sum_i E^{A_1}_A(e_1\lambda_iy)e_1\lambda^*_i\bigg)u^*\\
 &= u\gamma_0(y)u^*.
\end{align*}
Therefore, $\gamma_0(y)\in C^{\prime}.$ In other words, we have proved that,
$$\gamma_0(B^{\prime}\cap C_1)\subseteq C^{\prime}\cap
A_1\hspace{49mm} (\star \star).$$ Combining $(\star)$ and $(\star
\star)$ together with \Cref{r2} we establish the desideratum.
\end{proof}

 \subsection{Biunitaries, biprojections  and  bipartial isometries  }\( \)
 
 Motivated by \cite{Oc,Bi2,JLW, Jo2}, we propose the following definitions. The notion of `bipartial isometry' was introduced and studied effectively in \cite{JLW} 
 for subfactors.
\begin{definition}\label{bipro}
  \begin{enumerate}
    \item A unitary $u\in B^{\prime}\cap A_k$ will be called a
      \textit{biunitary} for the inclusion $B\subset A$ if
      $\mathcal{F}_k(u)$ is again a unitary in $A'\cap A_{k+1}$.  We
      denote the collection of all biunitaries by
      ${\text{BU}}_k(B,A).$
  \item A projection $e\in B^{\prime}\cap A_k$ will be  called
    \textit{biprojection} for the inclusion $B\subset A$ if 
    $\mathcal{F}_k(e)$ is a multiple of a projection.    We denote
   the collection of all  biprojections by ${\text{BP}}_k(B,A).$
\item An element $e\in B^{\prime}\cap A_k$ will be called a
  \textit{bipartial isometry} for the inclusion $B\subset A$ if both
  $e$ and $\mathcal{F}_k(e)$ are multiples of partial isometries. We
  denote the collection of all  bipartial isometries by ${\text{BPI}}_k(B,A).$

  \end{enumerate}
\end{definition}

\begin{notation}
 Suppose $e\in \text{BP}_k(B,A)$. So, $\mathcal{F}_k(e)= t f$ for some
  positive  scalar $t$ and a projection $f\in A^{\prime}\cap A_{k+1}.$ We shall
 denote the projection $f$ by the symbol $[\mathcal{F}_k(e)].$
\end{notation}

\begin{remark}\label{bibiprojection1} By Proposition
 \ref{fouriertransformofec1} and Corollary
 \ref{fouriertransformofec2},  $e_C$ and $e_{C_1}$ are
both biprojections. More precisely, $e_C\in \mathrm{BP}_1(B,A)$ and
 $e_{C_1}\in \mathrm{BP}_1(A,A_1).$
\end{remark}

\begin{proposition}
 Fourier transform of a biunitary is a bipartial isometry. More
 precisely, for every $k \geq 1$,  \[
  \mathcal{F}_k(u)\in
 \mathrm{BPI}_{k+1}(B,A)\text{ for all } u\in \mathrm{BU}_k(B,A).
 \]
\end{proposition}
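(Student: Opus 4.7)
The plan is to verify the two defining conditions for the element $v:=\mathcal{F}_k(u)$ (viewed inside $B^{\prime}\cap A_{k+1}$ via the inclusion $A^{\prime}\cap A_{k+1}\subseteq B^{\prime}\cap A_{k+1}$) to lie in $\mathrm{BPI}_{k+1}(B,A)$. First, since $u\in \mathrm{BU}_k(B,A)$, by definition $v$ is a unitary in $A^{\prime}\cap A_{k+1}$, and is therefore already a scalar multiple of a partial isometry; so the first condition is immediate.

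For the second condition, the key is to compute $\mathcal{F}_{k+1}(v)$ in closed form by exploiting that $v\in A^{\prime}$. Applying the level-$(k+2)$ version of \Cref{f2} with a quasi-basis $\{\lambda_i\}$ for $E_0$, one has
\[
\mathcal{F}_{k+1}(v)=\tau^{-(k+1)/2}\sum_i \lambda_i\, v\, e_{k+2}e_{k+1}\cdots e_2 e_1\, \lambda_i^{*}.
\]
Since $v\in A^{\prime}$, I would pull $v$ to the extreme left past each $\lambda_i$. Then, since each $\lambda_i\in A\subseteq A_{j-2}$ and the Jones projection $e_j$ commutes with $A_{j-2}$ for $j\geq 2$, I can pull $\lambda_i$ rightward through the block $e_{k+2}e_{k+1}\cdots e_2$, leaving behind $\sum_i \lambda_i e_1 \lambda_i^{*}=1$ via \Cref{support}. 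This gives the clean formula
\[
\mathcal{F}_{k+1}(v)=\tau^{-(k+1)/2}\, v\, f, \qquad f:=e_{k+2}e_{k+1}\cdots e_2.
\]

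Next, the Temperley--Lieb type identity $ff^{*}=\tau^{k} e_{k+2}$ follows by first collapsing $e_2\cdot e_2=e_2$ in $ff^{*}$ and then iteratively applying $e_{j+1}e_j e_{j+1}=\tau e_{j+1}$ for $j=2,3,\ldots,k+1$. Therefore,
\[
\mathcal{F}_{k+1}(v)\,\mathcal{F}_{k+1}(v)^{*}=\tau^{-(k+1)}\, v\, ff^{*}\, v^{*}=\tau^{-1}\, v\, e_{k+2}\, v^{*},
\]
which is a scalar multiple of a projection (since $v$ is a unitary and $e_{k+2}$ is a projection). Hence $\mathcal{F}_{k+1}(v)$ is a scalar multiple of a partial isometry, completing the verification that $\mathcal{F}_k(u)\in\mathrm{BPI}_{k+1}(B,A)$. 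The main obstacle is purely bookkeeping: the careful tracking of the $\tau$-factors and the verification that $\lambda_i$ commutes with every $e_j$ for $j\geq 2$ but not with $e_1$, which is exactly what makes $\sum_i\lambda_i e_1\lambda_i^{*}$ collapse to $1$ via \Cref{support} rather than to some more complicated element.
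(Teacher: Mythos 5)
Your proposal is correct and follows essentially the same route as the paper: both compute $\mathcal{F}_{k+1}\big(\mathcal{F}_k(u)\big)=\tau^{-(k+1)/2}\,\mathcal{F}_k(u)\,e_{k+2}e_{k+1}\cdots e_2$ by pulling $\mathcal{F}_k(u)\in A'$ and the quasi-basis elements through the Jones projections (using $\sum_i\lambda_i e_1\lambda_i^*=1$), and then use the Temperley--Lieb identity $(e_{k+2}\cdots e_2)(e_2\cdots e_{k+2})=\tau^k e_{k+2}$ to see that $ww^*$ is a positive multiple of a projection. The bookkeeping of the $\tau$-powers in your argument matches the paper's.
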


\begin{proof}
 First, notice that, being a unitary, $\mathcal{F}_k(u)$ is also a
 partial isometry in $A^{\prime}\cap A_{k+1}\subset B^{\prime}\cap
 A_{k+1}$. Next, we show that
 $\mathcal{F}_{k+1}\big(\mathcal{F}_k(u)\big)$ is a partial isometry
 as well. We have
 \begin{align*}
  \mathcal{F}_{k+1}\big(\mathcal{F}_k(u)\big)
  &= {\tau}^{-\frac{k+3}{2}} E^{B^{\prime}\cap A_{k+2}}_{A^{\prime}\cap A_{k+2}}\bigg(\mathcal{F}_k(u)e_{k+2}e_{k+1}\cdots e_2e_1\bigg)\\
  &= \tau {\tau}^{-\frac{k+3}{2}} \mathcal{F}_k(u) e_{k+2}e_{k+1}\cdots e_3e_2.
  \end{align*}
 Let $w:= \mathcal{F}_{k+1}\big(\mathcal{F}_k(u)\big).$ Then,
 \[
 ww^*=
{\tau}^2{\tau}^{-(k+3)} \mathcal{F}_k(u)(e_{k+2}e_{k+1}\cdots
e_3e_2)(e_2e_3\cdots e_{k+1}e_{k+2}){\mathcal{F}_k(u)}^*.
\]
Clearly, $(e_{k+2}e_{k+1}\cdots e_3e_2)(e_2e_3\cdots
e_{k+1}e_{k+2})={\tau}^k e_{k+2}.$ So,
$ww^*={\tau}^{-1}\mathcal{F}_k(u)e_{k+2}{\mathcal{F}_k(u)}^*$ and
since $\mathcal{F}_k(u)$ is a unitary it follows that $w$ is a
multiple of a partial isometry. This completes the proof.
\end{proof}

\begin{lemma}\label{e1iscentral}
$e_1$ is a minimal as well as a central projection in $B'\cap A_1$.
\end{lemma}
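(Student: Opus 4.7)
The plan is to exploit the Pushdown Lemma (\Cref{pushdown}) together with the irreducibility hypothesis $B'\cap A = \C$ and the Markov property of the trace (\Cref{f1}).

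First I would take an arbitrary $x\in B'\cap A_1$. By \Cref{pushdown}, there exists a unique $x_0 \in A$ such that $xe_1 = x_0 e_1$, explicitly $x_0 = [A:B]_0\, \widetilde{E}_B(xe_1)$. The key observation is that $x_0$ lies in $B'\cap A$. Indeed, for every $b\in B$, since $x$ commutes with $b$ and $e_1$ commutes with $b$, we have
\[
(bx_0 - x_0 b)e_1 = bxe_1 - xe_1 b = xbe_1 - xbe_1 = 0,
\]
so by the uniqueness clause of \Cref{pushdown} applied to the element $bx_0 - x_0 b \in A$, we conclude $bx_0 = x_0 b$. Since $B\subset A$ is irreducible, $x_0 \in \C$; write $x_0 = \lambda(x)$.

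Next I would pin down $\lambda(x)$ using the trace. Taking $\tr$ on both sides of $xe_1 = \lambda(x) e_1$ and invoking \Cref{f1}, we get $\tau\, \tr(x) = \lambda(x)\,\tr(e_1) = \lambda(x)\tau$, so $\lambda(x) = \tr(x)$. To get commutation on the other side, I would apply the same argument to $x^* \in B'\cap A_1$, obtaining $x^* e_1 = \overline{\tr(x)}\, e_1$. Taking adjoints yields $e_1 x = \tr(x)\, e_1 = xe_1$, which proves that $e_1$ is central in $B'\cap A_1$.

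Finally, for minimality, the formula just established gives $e_1 x e_1 = \tr(x)\, e_1$ for every $x\in B'\cap A_1$, so $e_1(B'\cap A_1)e_1 = \C e_1$; equivalently (by centrality) $(B'\cap A_1)e_1 = \C e_1$, which is exactly the statement that $e_1$ is a minimal projection of $B'\cap A_1$. The only mildly delicate point is verifying the symmetric equality $e_1 x = xe_1$, which is why the detour through $x^*$ and the trace is needed; everything else is a direct application of the Pushdown Lemma and irreducibility.
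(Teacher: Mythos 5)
Your overall strategy (Pushdown Lemma plus irreducibility to get $xe_1\in\C e_1$, then an adjoint argument for centrality) is the same circle of ideas as the paper's proof, and the reduction $xe_1=x_0e_1$ with $x_0\in B'\cap A=\C$ is correct. But there is a genuine error in the identification of the scalar: you invoke the Markov property of \Cref{f1} to write $\tr(xe_1)=\tau\,\tr(x)$ for $x\in B'\cap A_1$, whereas \eqref{tr-xe} only holds for $x\in B'\cap A_{k-1}$, i.e.\ for $x\in B'\cap A$ when $k=1$. It fails one level up: taking $x=e_1$ gives $xe_1=e_1=1\cdot e_1$, so $\lambda(e_1)=1$, while $\tr(e_1)=\tau\neq 1$. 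Hence your formula $\lambda(x)=\tr(x)$, and with it the displayed identity $e_1x=\tr(x)e_1$, is false in general.

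The conclusion is still salvageable, because the correct value is $\lambda(x)=[A:B]_0\,E_1(xe_1)=\tau^{-1}\tr(xe_1)$ (the scalar is fixed by $E_1$ and by $E_0$), and what your adjoint trick actually needs is only the identity $\overline{\lambda(x^*)}=\lambda(x)$, i.e.\ $\overline{\tr(x^*e_1)}=\tr(xe_1)$. This follows from $\tr(y^*)=\overline{\tr(y)}$ together with the \emph{traciality} of $\tr$ on $B'\cap A_1$ (so $\tr(e_1x)=\tr(xe_1)$), not from the Markov property. With that repair your argument goes through, and the minimality claim $e_1xe_1=\lambda(x)e_1\in\C e_1$ is unaffected since it does not depend on the value of $\lambda(x)$. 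For comparison, the paper avoids the issue by restricting to unitaries $u\in\mathcal U(B'\cap A_1)$: from $ue_1=\lambda_0e_1$ one gets $ue_1u^*=|\lambda_0|^2e_1$, and comparing traces of $e_1$ and $ue_1u^*$ (again using traciality) forces $|\lambda_0|=1$, whence $ue_1u^*=e_1$ and centrality follows since unitaries span.
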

\begin{proof}
We first assert that  $e_1$ is a minimal projection in $B'\cap A_1$.

Let $u \in \mathcal{U}(B'\cap A_1)$. By \Cref{pushdown}, we have $ue_1
= [A:B]_0 E_0(ue_1)e_N$.  Since $u, e_1 \in B'\cap A_1$, 
$E_B^A(u e_1) \in B' \cap A =\C$. Thus $e_1 u e_1 \in \C e_1$. So, $e_1$
is minimal  in $B'\cap A_1$.

Next, we show that $e_1$ is central as well.  Let $\lambda_0:=[A:B]_0
E_0(ue_1)\in \C$. We now show that $|\lambda_0| = 1$.   We have $ue_1 u^* =
\lambda_0 e_1 \bar{\lambda_0}$.   Applying $E_1$ on both sides, we get
 \[
   [A:B]_0^{-1} = \tr(e_1) = \tr(u e_1 u^*) = E_1(ue_1u^*) =
   |\lambda_0|^2 E_1(e_1) = [A:B]_0^{-1} |\lambda_0|^2.
   \]
   Hence, $ |\lambda_0| = 1$ and we get $ue_1 u^* = e_1$. Since $u$
   was an arbitrary unitary in $B'\cap A_1$, we deduce that $e_1 \in
   \mathcal{Z}(B'\cap A_1)$.
\end{proof}

\begin{lemma}
 \label{ee1}  \label{scalar1}  \label{scalar2}  \label{fe1e2}
 Let $e\in \mathrm{BP}_1(B,A)$ and $\mathcal{F}_1(e)=tf$ for some
 $t>0$ and projection $f\in A'\cap A_2$. Then,
 \begin{enumerate}
\item   \(
 ee_1=e_1e=e_1\) and \(fe_2=e_2f=e_2\);
 \item \(
   E_1(e)=\mathrm{tr}(e)= t{\tau}^{1/2}\) and \( E_2(f)=\mathrm{tr}(f)=t^{-1}{\tau}^{1/2}\);
 \item \(\tr(ef)=\tau
 \); and
\item  $fe_1e_2=t^{-1}{\tau}^{1/2}ee_2.$
 \end{enumerate}
\end{lemma}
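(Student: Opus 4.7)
The plan is to prove the four items in a slightly permuted order, because (1) and (2) interlock via a single Fourier computation, while (3) and (4) follow easily from (2) together with the push-down lemma.

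First, I would note that by \Cref{e1iscentral}, $e_1$ is a minimal central projection in $B'\cap A_1$, and applying the same lemma to the (still irreducible, by \Cref{inverse}) inclusion $A \subset A_1$ shows that $e_2$ is a minimal central projection in $A'\cap A_2$. Consequently, $ee_1$ (which equals $e_1e$ by centrality) is a projection sitting inside $\mathbb{C}e_1$, so $ee_1 \in \{0, e_1\}$; likewise $fe_2 = e_2 f \in \{0, e_2\}$. This reduces (1) to ruling out the zero cases, which I will achieve as a byproduct of (2).

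For the main part of (2), the key step is to evaluate $\mathcal{F}_1(e)\cdot e_2$ in two ways. Since $B\subset A$ is irreducible, $E_1(e)\in B'\cap A = \mathbb{C}$, so $E_1(e) = \tr(E_1(e)) = \tr(e)$. Using the formula $\mathcal{F}_1(e) = \tau^{-1/2}\sum_i \lambda_i e e_2 e_1 \lambda_i^*$ derived from \Cref{f2}, I would commute each $\lambda_i^* \in A$ past $e_2$ and then apply $e_2 e_1 e_2 = \tau e_2$ to obtain
\[
\mathcal{F}_1(e)\, e_2 \;=\; \tau^{1/2}\Bigl(\sum_i \lambda_i e \lambda_i^*\Bigr) e_2.
\]
By \Cref{f2} again, $\sum_i \lambda_i e \lambda_i^* = \tau^{-1} E^{B'\cap A_1}_{A'\cap A_1}(e)$, and irreducibility of $A\subset A_1$ forces this to equal $\tau^{-1}\tr(e)$. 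Hence $t f e_2 = \tau^{-1/2}\tr(e)\, e_2$. Since $fe_2 \in \{0, e_2\}$ and we may assume $e\neq 0$ (else $f=0$ and the statements are vacuous), the right-hand side is nonzero, forcing simultaneously $fe_2 = e_2$ and $\tr(e) = t\tau^{1/2}$. The value $\tr(f) = t^{-1}\tau^{1/2}$ then drops out of the Plancherel identity $\tr(e) = \|e\|_2^2 = \|\mathcal{F}_1(e)\|_2^2 = t^2\tr(f)$ of \Cref{F}, and $E_2(f) \in A'\cap A_1 = \mathbb{C}$ equals $\tr(f)$ for the same reason as $E_1(e)$ above.

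To finish (1) (the assertion $ee_1 = e_1$), I would use the push-down lemma to write $ee_1 = a e_1$ with $a := \tau^{-1} E_1(ee_1) \in A$, and then compute $\tr(e e_2 e_1)$ through the tower: $E_2(ee_2e_1) = \tau ee_1$, $E_1(\tau ee_1) = \tau^2 a$, and $E_0(\tau^2 a) = \tau^2 E_0(a)$. Comparing $\tr(\mathcal{F}_1(e)) = t\tr(f) = \tau^{1/2}$ (from (2)) with $\tr(\mathcal{F}_1(e)) = \tau^{-3/2}\tr(ee_2e_1) = \tau^{1/2} E_0(a)$ gives $E_0(a) = 1$, whence $\tr(ee_1) = \tau = \tr(e_1)$. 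Since $ee_1 \in \{0, e_1\}$, this forces $ee_1 = e_1$.

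Parts (3) and (4) are then short: since $e\in A_1$ and $E_2(f) = t^{-1}\tau^{1/2}$ is scalar, $\tr(ef) = \tr(E_2(ef)) = \tr(e\,E_2(f)) = t^{-1}\tau^{1/2}\tr(e) = \tau$, giving (3). For (4), apply the push-down lemma for $A_1\subset A_2$ to $fe_1 \in A_2$ to get $fe_1 e_2 = \tau^{-1}E_2(fe_1e_2)\,e_2$; since $\mathcal{F}_1^{-1}(tf) = e$ reads $t\tau^{-3/2}E_2(fe_1e_2) = e$, we have $E_2(fe_1e_2) = t^{-1}\tau^{3/2}e$, and substituting yields $fe_1e_2 = t^{-1}\tau^{1/2}\,ee_2$. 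The main subtle point throughout is keeping straight which conditional expectations are bimodule maps over which algebra (in particular that $E_2$ is $A_1$-bilinear, which underlies both the computation $E_2(ee_2e_1) = \tau ee_1$ in (2)/(1) and the push-down in (4)), together with the careful use of irreducibility of \emph{both} $B\subset A$ and $A\subset A_1$ to reduce the various centres to $\mathbb{C}$.
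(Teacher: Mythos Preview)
Your proof is correct and the overall strategy matches the paper's: centrality/minimality of $e_1$ and $e_2$, the Fourier transform formulas, and trace computations. The tactical route differs in a few places worth noting. You extract $fe_2=e_2$ and $\tr(e)=t\tau^{1/2}$ simultaneously by evaluating $\mathcal{F}_1(e)\,e_2$, whereas the paper first establishes (1) and then reads off $\tr(e)$ from $e=\mathcal{F}_1^{-1}(tf)$ together with \cite[Lemma~3.11]{KW}; you obtain $\tr(f)$ from the Plancherel identity (\Cref{F}) rather than by a direct computation of $\tr(\mathcal{F}_1(e))$; and your argument for (4) via the push-down lemma applied to $fe_1\in A_2$ is a little slicker than the paper's explicit expansion of $\mathcal{F}_1(e)$ using the quasi-basis. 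Your treatment is also more careful in explicitly ruling out $ee_1=0$ (via $\tr(ee_1)=\tau$), a point the paper's proof of (1) handles only by asserting that the push-down scalar is positive.
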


\begin{proof}
  (1): We first assert that $ee_1=e_1.$ To see this, use Lemma
  \ref{pushdown} to obtain $ee_1={\tau}^{-1}E_1(ee_1)e_1= se_1$ for
  some scalar $s>0.$ Indeed, $E_1(ee_1)\in B^{\prime}\cap A=\C$ and
  hence $E_1(ee_1)=E_0\circ E_1(ee_1)=\tr(e_1).$ And, by Lemma
  \ref{e1iscentral}, $ee_1$ is a projection. Therefore, $s=1.$ The
  assertion about $f$ follows similarly.
\smallskip

(2): We first show that $E_1(e) = \mathrm{tr}(e)$. Note that  $e\in
B^{\prime}\cap A_1$ and  hence, for any $b\in B$,
$E_1(e)b=E_1(eb)=E_1(be)=bE_1(e).$ Therefore, $E_1(e)\in
B^{\prime}\cap A=\C$.  Thus, $E_1(e)=E_0\circ E_1(e)=\tr(e).$
Similarly, we obtain $E_1(f) = \tr(f)$.

Further, from the definition of $f$,  we obtain $e=
  {\tau}^{-\frac{3}{2}}E_2(fe_1e_2).$  Thus, $\tr(e)=t\,
   {\tau}^{-\frac{3}{2}} E_0\circ E_1\circ E_2(fe_1e_2).$ Then, by
   \cite[Lemma 3.11]{KW}, we get
   \[
   \tr(e)=t\,
        {\tau}^{-\frac{3}{2}}E_0\circ E_1\circ E_2(e_2e_1)=t\,
        {\tau}^{-\frac{3}{2}}{\tau}^2=t{\tau}^{\frac{1}{2}}.
        \]
   For $\mathrm{tr}(f)$, observe that\vspace*{-5mm}
 \begin{align*}
  \tr(f)
  & =t^{-1}{\tau}^{-\frac{3}{2}} \tr\big(E^{B^{\prime}\cap A_2}_{A^{\prime}\cap A_2}(ee_2e_1)\big)\\
  &= t^{-1}{\tau}^{-\frac{3}{2}} \tr(ee_2e_1)\\
  &= t^{-1} {\tau}^{-\frac{3}{2}} \tr(e_1e_2)\\
  &= t^{-1}{\tau}^{\frac{1}{2}}.
 \end{align*}

 (3): \vspace*{-7mm} \begin{align*}
          \tr(ef) 
          &= E_0\circ E_1\circ E_2(ef)\\
          &= E_0\circ E_1\big(eE_2(f)\big)\\
          &= E_0\circ E_1(e)\, \tr(f)\\
          &=\tr(e)\, \tr(f)\\
          &= (t{\tau}^{\frac{1}{2}})(t^{-1}{\tau}^{\frac{1}{2}})\\
          &= \tau.
         \end{align*}

 (4): \vspace*{-7mm}  \begin{align*}
  fe_1e_2 &= t^{-1}{\tau}^{-\frac{3}{2}}E^{B^{\prime}\cap
    A_2}_{A^{\prime}\cap A_2}(ee_2e_1)e_1e_2\\ &=
  t^{-1}{\tau}^{-\frac{3}{2}}\tau \sum_i
  \lambda_iee_2e_1\lambda^*_ie_1e_2\\ &=t^{-1}{\tau}^{-\frac{1}{2}}\sum_i\lambda_ieE_1\big(E_0(\lambda^*_i)e_1\big)e_2\\ &=t^{-1}{\tau}^{\frac{1}{2}}
  \sum_i \lambda_iE_0(\lambda^*_i)ee_2 \text{ \hspace*{20mm}(since $e\in
    B^{\prime}$)}\\ &=t^{-1}{\tau}^{\frac{1}{2}}ee_2.
 \end{align*}\vspace*{-5mm}
\end{proof}

\noindent Using above observations, we deduce the following
\textit{exchange relation}:
\begin{proposition}\label{exchangerelation}
 Let $e$, $t$ and $f$ be as in \Cref{ee1}. Then, $ef=fe$.
\end{proposition}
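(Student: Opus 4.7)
The plan is to exploit item (4) of \Cref{ee1}, namely $fe_1 e_2 = t^{-1}\tau^{1/2} ee_2$, in two complementary ways, so as to force $ef-fe$ to be annihilated by both $e_1$ and $e_2$ from both sides, and then conclude that $ef - fe = 0$.

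First, rewriting (4) as $ee_2 = t\tau^{-1/2}fe_1 e_2$ and using the idempotency $f^2 = f$:
\[
fee_2 = f \cdot (t\tau^{-1/2}fe_1 e_2) = t\tau^{-1/2} f^2 e_1 e_2 = t\tau^{-1/2} fe_1 e_2 = ee_2.
\]
Coupled with $efe_2 = e(fe_2) = ee_2$ (using $fe_2 = e_2$ from item (1) together with $e^2 = e$), this yields $(ef - fe)e_2 = 0$. Taking adjoints and using $(ef)^* = fe$ gives $e_2(ef - fe) = 0$.

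Second, right-multiplying item (4) by $e_1$ and applying the Jones relation $e_1 e_2 e_1 = \tau e_1$ produces $fe_1 = t^{-1}\tau^{-1/2}ee_2 e_1$. Since $e^2 = e$ we get $efe_1 = e\cdot fe_1 = fe_1$, while $ee_1 = e_1$ from item (1) gives $fee_1 = f\cdot e_1 = fe_1$. Hence $(ef - fe)e_1 = 0$, and by adjoints $e_1(ef - fe) = 0$.

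The concluding step, deducing $ef - fe = 0$ from these annihilation relations, is the main obstacle. Note that $x := ef - fe$ is skew-adjoint (since $(ef)^* = fe$) and lies in $B'\cap A_2$. I would deduce its vanishing by computing
\[
\|x\|_2^2 = \mathrm{tr}((fe - ef)(ef - fe)) = \mathrm{tr}(efe) + \mathrm{tr}(fef) - 2\,\mathrm{tr}(efef) = 2\tau - 2\,\mathrm{tr}(efef),
\]
where I used $e^2 = e$, $f^2 = f$, cyclicity of $\mathrm{tr}$, and item (3). It therefore suffices to show $\mathrm{tr}(efef) = \tau$. This final estimate can be obtained by expanding $f$ through the quasi-basis formula $f = (t\tau^{1/2})^{-1}\sum_i \lambda_i ee_2 e_1 \lambda_i^*$ (implicit in the definition of $f = [\mathcal{F}_1(e)]$ together with \Cref{f2}) and simplifying the resulting four-fold product by iterated application of the conditional expectations $E_k$ and the Jones relations $e_k e_{k+1} e_k = \tau e_k$, $e_{k+1}e_k e_{k+1} = \tau e_{k+1}$, invoking the annihilation identities derived in the preceding steps to collapse the sum.
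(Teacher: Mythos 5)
Your preparatory steps are correct: the identities $fee_2=efe_2=ee_2$ and $efe_1=fee_1=fe_1$ do follow from \Cref{ee1} as you say, and the reduction $\lVert ef-fe\rVert_2^2=2\tau-2\,\tr(efef)$ is accurate. But the proof is incomplete precisely at the point you yourself call the main obstacle: the identity $\tr(efef)=\tau$ is never established, and (given your reduction) it carries the entire content of the proposition. The sketch you offer for it is not convincing as it stands, for two concrete reasons. First, once you expand $f=t^{-1}\tau^{-1/2}\sum_i\lambda_i e e_2 e_1\lambda_i^*$ inside $\tr(efef)$, the individual summands no longer lie in $B^{\prime}\cap A_2$, so the traciality of $\tr=E_0\circ E_1\circ E_2$ cannot be applied termwise; rearrangements of this kind require an extra ingredient (the paper repeatedly invokes \cite[Lemma 3.11]{KW} for exactly such manipulations). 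Second, your annihilation identities say that $ef-fe$ is killed by $e_1$ and $e_2$ on either side, but the word $efef$ contains no free $e_1$ or $e_2$ for them to act on, so there is no evident mechanism by which they ``collapse the sum''. Note also that these annihilation relations alone cannot suffice: one still needs a quantitative input, since $e_1,e_2$ are far from separating in $B^{\prime}\cap A_2$.

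For comparison, the paper closes the argument by proving the operator identity $ee_2e=\tr(e)\,ef$ rather than a trace identity. From \Cref{ee1}(4) and its adjoint together with $e_1e_2e_1=\tau e_1$ one gets $ee_2e=t^2fe_1f$, hence $\tr(fee_2e)=t^2\tr(fe_1)=t^2\tau\,\tr(f)=\tau\,\tr(e)$; combining this with $e_2ee_2=E_1(e)e_2=\tr(e)e_2$ and $\tr(ef)=\tau$ gives $\lVert ee_2e-\tr(e)ef\rVert_2=0$, so $ef=ee_2e/\tr(e)$ is self-adjoint and $ef=fe$ follows by taking adjoints. Your desired $\tr(efef)=\tau$ is an immediate consequence of this identity, but not conversely without essentially the same amount of work; to complete your proof you would need to supply a genuine computation of $\tr(efef)$ (or replace that step by the paper's $\lVert\cdot\rVert_2$ argument), at which point your route merges with the paper's.
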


\begin{proof}
 We first show that $\tr(fee_2e)=\tau \tr(e).$

 By \Cref{fe1e2}(4), we see that $ee_2e=t^2fe_1f$. Then, by other
 parts of the same Lemma, we obtain
 \[
 \tr(fee_2e)=t^2\tr(fe_1)=t^2E_0\circ E_1\big(E_2(f)e_1)
 = t^2 \tr(f)E_0\circ
 E_1(e_1)=\tau t^2 \tr(f) = \tau \tr(e).
 \]
Now, note that $e_2ee_2=E_1(e)e_2$ and hence, by \Cref{scalar1}(2), we
have $e_2ee_2=\tr(e) e_2.$ Further,\vspace*{-2mm}
 \begin{align*}
  {\lVert ee_2e - \tr(e)ef\rVert}^2_2 &=
  \tr\bigg(\big(ee_2e-\tr(e)fe\big)\big(ee_2e-\tr(e)ef\big)\bigg)\\ &=
  \tr\big(ee_2ee_2)- 2\tr(e)\,  \tr(fee_2e)+ \tr(e)^2
  \tr(ef)\\ &= \tau\, \tr(e)^2-2\tau \tr(e)^2+
  \tau\, \tr(e)^2 \text{\hspace*{20mm} (by
    \Cref{scalar2}(3))}\\ &=0;
 \end{align*}
so that, $ee_2e = \tr(e)ef$, and after taking adjoint we
obtain \begin{equation}\label{ef=fe} ef=\frac{ee_2e}{\tr(e)}=fe.
\end{equation}
\end{proof}

\begin{theorem}\label{efisabiprojection}
Let $e$, $t$ and $f$ be as in \Cref{ee1}. Then, $ef\in
\mathrm{BP}_2(B,A).$
\end{theorem}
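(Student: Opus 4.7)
The plan is to verify the two defining conditions of membership in $\mathrm{BP}_2(B,A)$: first, that $ef$ is a projection in $B'\cap A_2$; second, that $\mathcal{F}_2(ef)$ is a scalar multiple of a projection in $A'\cap A_3$.

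The first condition is quick. Since $e \in B'\cap A_1 \subseteq B'\cap A_2$ and $f \in A'\cap A_2 \subseteq B'\cap A_2$, the product $ef$ lies in $B'\cap A_2$. The exchange relation $ef=fe$ of \Cref{exchangerelation} then yields $(ef)^* = f^*e^* = fe = ef$ and $(ef)^2 = e(fe)f = e(ef)f = e^2f^2 = ef$, so $ef$ is a self-adjoint idempotent in $B'\cap A_2$.

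For the second condition, I plan to start from the definition
\[
\mathcal{F}_2(ef) = \tau^{-2}\,E^{B'\cap A_3}_{A'\cap A_3}(ef\cdot e_3e_2e_1)
\]
and substitute the identity $ef = \tr(e)^{-1}ee_2e$ extracted from the proof of \Cref{exchangerelation}. The key ingredients for the ensuing simplification are: the commutation $ee_3 = e_3 e$ (valid since $e \in A_1$ and $e_3$ centralizes $A_1$); the relations $ee_1 = e_1$ and $fe_2 = e_2$ from \Cref{ee1}(1); the Jones--Temperley--Lieb identities $e_2e_3e_2 = \tau e_2$ and $e_3e_2e_3 = \tau e_3$; and the Pimsner--Popa expansion $E^{B'\cap A_3}_{A'\cap A_3}(\,\cdot\,) = \tau\sum_i \lambda_i(\,\cdot\,)\lambda_i^*$ from \Cref{f2}. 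Once the expression is brought into closed form, one checks the projection property either by direct identification or by verifying the pair of identities $\mathcal{F}_2(ef)^* = \mathcal{F}_2(ef)$ and $(\mathcal{F}_2(ef))^2 = c\,\mathcal{F}_2(ef)$ for some $c > 0$, which together imply that $\mathcal{F}_2(ef)$ is $c$ times a projection.

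The main obstacle is precisely this simplification inside the conditional expectation: because $f$ does not commute with $e_3$, the Temperley--Lieb reductions cannot be applied in a single sweep, and one must interleave them with the biprojection relation $fe_1e_2 = t^{-1}\tau^{1/2}ee_2$ from \Cref{fe1e2}(4) and iterated use of the push-down lemma (\Cref{pushdown}). A cleaner structural route, which I would pursue in parallel, is to first observe that $\gamma_0(e) = e$ for any biprojection $e$ (an immediate consequence of $\mathcal{F}_1(e) = tf$ being self-adjoint together with $e = e^*$), and to exploit the anti-homomorphism property of $\gamma_1$ (\Cref{antiauto}) together with \Cref{adjointoffourier} to reduce the self-adjointness of $\mathcal{F}_2(ef)$ to a symmetric invariance of $ef$ under $\gamma_1$. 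With self-adjointness in hand, only the idempotence up to scalar remains, isolating the core computational issue to a single identity.
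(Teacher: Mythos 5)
Your first half is fine and matches the paper: $ef=fe$ from \Cref{exchangerelation} immediately gives that $ef$ is a projection in $B'\cap A_2$. The problem is the second half, which is where the content of the theorem lives: you never actually produce a closed form for $\mathcal{F}_2(ef)$, and you explicitly flag the simplification inside the conditional expectation as an unresolved ``main obstacle.'' Your fallback route only addresses self-adjointness of $\mathcal{F}_2(ef)$ via $\gamma_1$, and you concede that the idempotence-up-to-scalar is still ``a single identity'' to be checked --- but that identity is precisely the theorem, so the proof is not complete.

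The missing idea is that you should \emph{not} substitute $ef=\mathrm{tr}(e)^{-1}ee_2e$; that replaces one copy of $f$ by a word in $e$ and $e_2$ and creates exactly the tangle you describe. Instead write $\mathcal{F}_2(ef)=\mathcal{F}_2(fe)=\tau^{-2}E^{B'\cap A_3}_{A'\cap A_3}(fe\,e_3e_2e_1)$ and use that $f\in A'\cap A_2$ commutes with the quasi-basis elements $\lambda_i\in A$ implementing $E^{B'\cap A_3}_{A'\cap A_3}$ (\Cref{f2}), so $f$ pulls out entirely to the left. Then $ee_3=e_3e$ (as $e\in A_1$) and $e_3$ commutes with the $\lambda_i$ as well, so the remaining conditional expectation collapses to $\tau^{3/2}e_3\mathcal{F}_1(e)=t\,\tau^{3/2}e_3f$, giving
\[
\mathcal{F}_2(ef)=t\,\tau^{-1/2}fe_3f=\frac{1}{\mathrm{tr}(f)}\,fe_3f ,
\]
using $\mathrm{tr}(f)=t^{-1}\tau^{1/2}$ from \Cref{scalar1}(2). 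Finally, $e_3fe_3=E_2(f)e_3=\mathrm{tr}(f)e_3$ shows $v:=\mathrm{tr}(f)^{-1/2}e_3f$ satisfies $vv^*=e_3$, hence $v$ is a partial isometry and $v^*v=\mathrm{tr}(f)^{-1}fe_3f=\mathcal{F}_2(ef)$ is a projection. No Temperley--Lieb interleaving, push-down iterations, or the relation $fe_1e_2=t^{-1}\tau^{1/2}ee_2$ is needed; the whole point is the factorization of $f$ out of the conditional expectation, which your outline does not identify.
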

\begin{proof}
 By \Cref{exchangerelation}, $ef$ is a projection and
 \begin{align*}
  \mathcal{F}_2(ef) & =   \mathcal{F}_2(fe) \\
  &= {\tau}^{-2} E^{B^{\prime}\cap A_3}_{A^{\prime}\cap A_3}\big(fee_3e_2e_1\big)\\
  &= {\tau}^{-2}f E^{B^{\prime}\cap A_3}_{A^{\prime}\cap A_3}\big(ee_3e_2e_1\big)\\
  &= {\tau}^{-2}f E^{B^{\prime}\cap A_3}_{A^{\prime}\cap A_3}\big(e_3ee_2e_1\big)\\
  &={\tau}^{-\frac{1}{2}}fe_3\mathcal{F}_1(e)\\
  &=t{\tau}^{-\frac{1}{2}}fe_3f\\
  &=\frac{1}{\tr(f)} fe_3f.
 \end{align*}
 Then,  by \Cref{scalar1}(2), we have
 $e_3fe_3=\tr(f)e_3$. Thus, $v:=\frac{1}{\sqrt{\tr(f)}}e_3f$ is a
 partial isometry with $vv^*=e_3$. Hence, $v^*v=\frac{1}{\tr(f)}fe_3f$
 is a projection too. In particular, $ef$ and $\mathcal{F}_2(ef)$ are both
 projections. This completes the proof.
\end{proof}
\color{black}

The following shows that, upto a scalar, a biprojection is also an
idempotent with respect to the coproduct.
\begin{lemma}\label{coprojection}
 If $e\in \mathrm{BP}_k(B,A)$, then $e\circ e=t e$ for some scalar $t$.
\end{lemma}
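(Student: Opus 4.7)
The plan is to verify this directly from the definition of the coproduct together with the definition of biprojection, with essentially no obstacles in the way.

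First, I would unpack what it means for $e \in B' \cap A_k$ to be a biprojection: by Definition~\ref{bipro}, there exists a scalar $s > 0$ and a projection $f = [\mathcal{F}_k(e)] \in A' \cap A_{k+1}$ with $\mathcal{F}_k(e) = s f$. Since $f$ is a projection, we have
\[
\mathcal{F}_k(e) \cdot \mathcal{F}_k(e) \;=\; s^2 f^2 \;=\; s^2 f \;=\; s \cdot \mathcal{F}_k(e).
\]

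Now I would apply $\mathcal{F}_k^{-1}$ to both sides, using its linearity and the fact that $\mathcal{F}_k^{-1} \circ \mathcal{F}_k = \mathrm{Id}_{B'\cap A_k}$ established in \Cref{inverse}. From the definition of the coproduct (\Cref{coproduct}) this gives
\[
e \circ e \;=\; \mathcal{F}_k^{-1}\big(\mathcal{F}_k(e)\,\mathcal{F}_k(e)\big) \;=\; \mathcal{F}_k^{-1}\big(s\, \mathcal{F}_k(e)\big) \;=\; s\, e,
\]
so the statement holds with $t = s$, i.e., the scalar appearing in $\mathcal{F}_k(e) = s [\mathcal{F}_k(e)]$. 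There is no real obstacle here; the content is simply that the Fourier transform intertwines the coproduct on $B'\cap A_k$ with the ordinary product on $A'\cap A_{k+1}$ (in reversed order), and a projection times itself is itself.
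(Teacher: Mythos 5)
Your proof is correct and is exactly what the paper means by its one-line justification ``By definition'': writing $\mathcal{F}_k(e)=s[\mathcal{F}_k(e)]$ with $[\mathcal{F}_k(e)]$ a projection, squaring, and pulling back through the linear bijection $\mathcal{F}_k^{-1}$ gives $e\circ e = s\,e$. No difference in approach, and no gap.
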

\begin{proof}
 By definition.
\end{proof}

\noindent Now we give an analogue of \Cref{coprojection} for biunitary element.
 \begin{lemma}
  $u\circ {\big(\gamma_0(u^*)\big)}={\big(\gamma_0(u^*)\big)}\circ u
   ={\tau}^{-1/2}e_1.$
 \end{lemma}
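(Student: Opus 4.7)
The plan is to unwind the definition of the coproduct and then use the identity from \Cref{adjointoffourier} that relates $\gamma_0$ to the adjoint of the Fourier transform. Specifically, setting $k=1$ in \Cref{adjointoffourier} gives
\[
\mathcal{F}_1\bigl(\gamma_0(u^*)\bigr) = \bigl(\mathcal{F}_1(u)\bigr)^{*},
\]
which is the crucial algebraic bridge. Since $u$ is a biunitary by hypothesis, $\mathcal{F}_1(u)$ is a unitary in $A^{\prime}\cap A_2$, so $\mathcal{F}_1(u)\mathcal{F}_1(u)^{*} = \mathcal{F}_1(u)^{*}\mathcal{F}_1(u) = 1_{A_2}$.

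Carrying this out, by \Cref{coproduct} I would compute
\[
u\circ \gamma_0(u^*) = \mathcal{F}_1^{-1}\Bigl(\mathcal{F}_1\bigl(\gamma_0(u^*)\bigr)\,\mathcal{F}_1(u)\Bigr) = \mathcal{F}_1^{-1}\bigl(\mathcal{F}_1(u)^{*}\mathcal{F}_1(u)\bigr) = \mathcal{F}_1^{-1}(1_{A_2}),
\]
and, symmetrically,
\[
\gamma_0(u^*)\circ u = \mathcal{F}_1^{-1}\bigl(\mathcal{F}_1(u)\mathcal{F}_1(u)^{*}\bigr) = \mathcal{F}_1^{-1}(1_{A_2}).
\]

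To finish, it remains to identify $\mathcal{F}_1^{-1}(1_{A_2})$. By the ``in particular'' clause of \Cref{fouriertransformofec1}, we have $\mathcal{F}_1(e_1) = \tau^{1/2}\, 1_{A_2}$. Since $\mathcal{F}_1$ is linear and $\mathcal{F}_1^{-1}$ is its genuine two-sided inverse by \Cref{inverse}, this yields $\mathcal{F}_1^{-1}(1_{A_2}) = \tau^{-1/2}\, e_1$, completing the proof.

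I do not foresee a real obstacle here: the content of the lemma is essentially that $\gamma_0(u^*)$ serves as the coproduct inverse of the biunitary $u$, and this is transparent once the Fourier picture is invoked. The only point that requires a moment's care is verifying that \Cref{adjointoffourier} applies to a general element of $B^{\prime}\cap A_1$ (which it does, as stated), so that no extra hypothesis on $u$ beyond biunitarity is needed.
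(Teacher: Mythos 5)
Your proof is correct and is essentially the paper's own argument spelled out: the paper's one-line proof ("apply Lemma \ref{r1} and Remark \ref{adjointoffourier}") is exactly the identity $\mathcal{F}_1(\gamma_0(u^*)) = (\mathcal{F}_1(u))^*$ fed into the definition of the coproduct, followed by the observation $\mathcal{F}_1^{-1}(1) = \tau^{-1/2}e_1$. No gaps; the implicit hypothesis that $u$ is a biunitary (clear from the surrounding text) is the only thing you needed to make explicit, and you did.
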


\begin{proof}
 Apply \Cref{r1} and \Cref{adjointoffourier}.
\end{proof}

\subsubsection{Behaviour under inclusions, Fourier transforms and rotation maps}
\( \)

This short subsection discusses the relationships between
biprojections, biunitaries and bipartial isometries. Although we do not
use any result of this section, we include it to show how
bipartial isometry comes  naturally  into the picture to describe
higher dimensional biprojections, which generalizes Bisch's
biprojections which are elements of the ``two-box space''.
\smallskip

An arbitrarily fixed $u\in \text{BU}_k(B,A)$ may also be thought of
as a unitary element in $B^{\prime}\cap A_{k+1}$ via the canonical
inclusion map. A natural question is whether $u\in
\text{BU}_{k+1}(B,A)$ or not? This might not be the case
always. However, $u\in \text{BPI}_{k+1}(B,A)$ as the following result
shows.
\begin{proposition} 
  \begin{enumerate}
    \item If $u\in \mathrm{BU}_k(B,A)$, then $u\in
      \mathrm{BPI}_{k+1}(B,A)$ for all $k \geq 1$.
\item If $e\in \mathrm{BP}_1(B,A)$, then $e\in \mathrm{BPI}_{k+1}(B,A)$ for all $k \geq 1$.
\end{enumerate}  \end{proposition}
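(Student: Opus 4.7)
The plan is to first establish the recursion
\[
\mathcal{F}_{k+1}(x) \;=\; \tau^{-1/2}\, e_{k+2}\, \mathcal{F}_k(x)
\]
for every $x \in B'\cap A_k$, where on the left-hand side $x$ is viewed as an element of the larger commutant $B'\cap A_{k+1}$. To derive this I would apply the quasi-basis formula of \Cref{f2} (with $A_{k+2}$ in place of $A_k$) to write
\[
\mathcal{F}_{k+1}(x) \;=\; \tau^{-(k+1)/2}\sum_i \lambda_i\, x\, e_{k+2} e_{k+1}\cdots e_2 e_1\, \lambda_i^*,
\]
where $\{\lambda_i\}$ is a quasi-basis for $E_0$. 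The key commutation is that $e_{k+2}$, being the Jones projection for the basic construction $A_k \subset A_{k+1}$, commutes with every element of $A_k$, and in particular with both $x$ and $\lambda_i \in A \subseteq A_k$. Pulling $e_{k+2}$ out to the left then identifies the remaining sum as $\tau^{-1} E^{B'\cap A_{k+1}}_{A'\cap A_{k+1}}\big(x\, e_{k+1}\cdots e_1\big) = \tau^{k/2}\mathcal{F}_k(x)$, yielding the claimed identity.

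For part (1), put $w := \mathcal{F}_k(u)$, which is a unitary in $A'\cap A_{k+1}$ by hypothesis. The recursion gives $\mathcal{F}_{k+1}(u) = \tau^{-1/2}\, e_{k+2}\, w$, and one computes directly that $(e_{k+2} w)(e_{k+2} w)^* = e_{k+2}\, w w^*\, e_{k+2} = e_{k+2}$ is a projection. Hence $\mathcal{F}_{k+1}(u)$ is a scalar multiple of a partial isometry, and since $u$ itself is a unitary (hence a partial isometry) in $B'\cap A_{k+1}$, we conclude $u \in \mathrm{BPI}_{k+1}(B,A)$.

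For part (2), iterating the recursion starting from $\mathcal{F}_1(e) = tf$ yields, for every $k \geq 1$,
\[
\mathcal{F}_{k+1}(e) \;=\; t\, \tau^{-k/2}\, e_{k+2}\, e_{k+1}\cdots e_3\, f.
\]
Setting $y := e_{k+2}\cdots e_3\, f$, the relation $f^2 = f$ gives $y y^* = e_{k+2}\cdots e_4\,(e_3\, f\, e_3)\, e_4 \cdots e_{k+2}$. The Jones projection identity $e_3\, f\, e_3 = E_2(f)\, e_3$ (valid since $f \in A_2$), together with \Cref{fe1e2}(2), which provides $E_2(f) = \mathrm{tr}(f) = t^{-1}\tau^{1/2}$ as a scalar, reduces this to $t^{-1}\tau^{1/2}\, e_{k+2}\cdots e_4\, e_3\, e_4 \cdots e_{k+2}$. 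Telescoping via repeated use of $e_{j+1} e_j e_{j+1} = \tau\, e_{j+1}$ from the middle outward collapses the surviving product to $\tau^{k-1}\, e_{k+2}$, so $y y^* = t^{-1}\tau^{k - 1/2}\, e_{k+2}$ is a positive scalar multiple of a projection. Thus $\mathcal{F}_{k+1}(e)$ is a multiple of a partial isometry; combined with $e$ being itself a projection in $B'\cap A_{k+1}$, this gives $e \in \mathrm{BPI}_{k+1}(B,A)$. The main obstacle is verifying the recursion cleanly; everything else is routine bookkeeping with the Jones relations and the scalar behaviour of $E_2(f)$ already recorded in \Cref{scalar1}.
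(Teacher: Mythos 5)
Your argument is correct and follows essentially the same route as the paper: both rest on the commutation of $e_{k+2}$ with $A_k$ to peel the outer Jones projections off the Fourier transform (yielding $\mathcal{F}_{k+1}(u)=c\,e_{k+2}\mathcal{F}_k(u)$ in part (1) and $\mathcal{F}_{k+1}(e)=t\,\tau^{-k/2}e_{k+2}\cdots e_3 f$ in part (2)), and both reduce the partial-isometry check to the identity $e_3fe_3=\mathrm{tr}(f)e_3$ from \Cref{scalar1}. Your recursion constant $\tau^{-1/2}$ is in fact the correct one (the paper's $\tau^{-1}$ in part (1) is an immaterial slip), and packaging the commutation as an iterable recursion is a mild but harmless reorganization of the same computation.
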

\begin{proof}
 (1): We just need to prove that $\mathcal{F}_{k+1}(u)$
 is a multiple of a partial isometry. Indeed, since
 $ue_{k+2}=e_{k+2}u$ it readily follows that
 $$\mathcal{F}_{k+1}(u)={\tau}^{-1} e_{k+2}\mathcal{F}_k(u).$$ Hence,
 ${\mathcal{F}_{k+1}(u)}^*\mathcal{F}_{k+1}(u)={\tau}^{-2}{\mathcal{F}_{k}(u)}^*e_{k+2}\mathcal{F}_{k}(u)$. Since
 $\mathcal{F}_k(u)$ is a unitary this proves that
 $\mathcal{F}_{k+1}(u)$ is a multiple of a partial isometry. This
 finishes the proof.
\smallskip

(2): We need to show that $\mathcal{F}_k (e)$ is a multiple of a
partial isometry for all $k \geq 1$. For $k=1$ it follows from
\Cref{bipro} that $\mathcal{F}_k(e)=tf$ for some projection $f$. Thus
we need to prove only for $k\geq 2.$ To see this observe that
\begin{align*}
  \mathcal{F}_k(e)
  &= {\tau}^{-\frac{k+2}{2}} E^{B^{\prime}\cap A_{k+1}}_{A^{\prime}\cap A_{k+1}}\big(ee_{k+1}e_k\cdots e_2e_1\big)\\
  &= {\tau}^{-\frac{k+2}{2}}E^{B^{\prime}\cap A_{k+1}}_{A^{\prime}\cap A_{k+1}}\big(e_{k+1}e_k\cdots e_3ee_2e_1\big)\\
  &={\tau}^{-\frac{k+2}{2}}e_{k+1}e_k\cdots e_3E^{B^{\prime}\cap A_{k+1}}_{A^{\prime}\cap A_{k+1}}\big(ee_2e_1\big)\\
  &= {\tau}^{-\frac{k-1}{2}}te_{k+1}e_k\cdots e_3f.
  \end{align*}
Thus,
$$
{\big(\mathcal{F}_k(e)\big)}^*\mathcal{F}_k(e)=t{\tau}^{-\frac{1}{2}}
\frac{fe_3f}{\tr(f)}.
$$
And, we saw in the proof of \Cref{efisabiprojection} that
$\frac{fe_3f}{\tr(f)}$ is a projection. Thus, $\mathcal{F}_k(e)$ is a
multiple of a partial isometry. This completes the proof.
\end{proof}

The next  result shows that Fourier transform of a biprojection
(in $B^{\prime}\cap A_1$) is a bipartial isometry.

\begin{proposition}
If $e\in \mathrm{BP}_1(B,A)$, then $\mathcal{F}_1(e) \in \mathrm{BPI}_2(B,A).$
\end{proposition}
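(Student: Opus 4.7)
My plan is to verify both conditions of \Cref{bipro}(3): that $\mathcal{F}_1(e)$ is a multiple of a partial isometry, and that $\mathcal{F}_2(\mathcal{F}_1(e))$ is also a multiple of a partial isometry. Write $\mathcal{F}_1(e) = tf$ for some $t>0$ and projection $f \in A'\cap A_2$. The first condition is immediate since a projection is a partial isometry. Thus, the content lies in analyzing $\mathcal{F}_2(\mathcal{F}_1(e)) = t\,\mathcal{F}_2(f)$.

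The main computation, which mirrors the one in the preceding proposition for biunitaries, is to obtain the formula
\[
\mathcal{F}_2(\mathcal{F}_1(e)) \;=\; \tau^{-1}\,\mathcal{F}_1(e)\,e_3 e_2 \;=\; \tau^{-1} t\, f e_3 e_2.
\]
To prove this, I would start from the definition $\mathcal{F}_2(\mathcal{F}_1(e)) = \tau^{-2} E^{B'\cap A_3}_{A'\cap A_3}(\mathcal{F}_1(e)\,e_3 e_2 e_1)$, note that $\mathcal{F}_1(e) \in A'\cap A_2 \subseteq A'\cap A_3$ and pull it out of $E^{B'\cap A_3}_{A'\cap A_3}$ via the bimodule property. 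It then remains to evaluate $E^{B'\cap A_3}_{A'\cap A_3}(e_3 e_2 e_1)$; using \Cref{f2} this equals $\tau \sum_i \lambda_i e_3 e_2 e_1 \lambda_i^*$, and since each $\lambda_i\in A$ commutes with $e_2,e_3$ (as $e_j$, $j\geq 2$, lies in the basic construction for an inclusion containing $A$), this collapses via \Cref{support} to $\tau\, e_3 e_2 \sum_i \lambda_i e_1 \lambda_i^* = \tau\, e_3 e_2$, giving the claimed identity.

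Given this formula, set $w := \mathcal{F}_2(\mathcal{F}_1(e))$ and compute
\[
w^* w \;=\; \tau^{-2} t^2\, e_2 e_3 f^* f e_3 e_2 \;=\; \tau^{-2} t^2\, e_2\, (e_3 f e_3)\, e_2.
\]
Since $f\in A_2$, the relation $e_3 x e_3 = E_2(x) e_3$ together with $E_2(f) = \mathrm{tr}(f) = t^{-1}\tau^{1/2}$ from \Cref{scalar1}(2) yields $e_3 f e_3 = t^{-1}\tau^{1/2} e_3$. Combined with $e_2 e_3 e_2 = \tau e_2$, this gives $w^* w = t\,\tau^{-1/2}\, e_2$, which is a positive scalar multiple of a projection. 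Hence $w$ is a multiple of a partial isometry, and therefore $\mathcal{F}_1(e)\in \mathrm{BPI}_2(B,A)$.

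The only delicate point is the bookkeeping in the derivation of the formula for $\mathcal{F}_2(\mathcal{F}_1(e))$, in particular justifying the commutation of $\lambda_i\in A$ with $e_j$ for $j\geq 2$. Once this is in hand, the conclusion is a short verification using already established identities (\Cref{scalar1}, \Cref{support}, and the standard Temperley--Lieb-type relation $e_2 e_3 e_2 = \tau e_2$).
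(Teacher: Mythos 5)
Your proof is correct and follows essentially the same route as the paper's: both reduce to the identity $\mathcal{F}_2\big(\mathcal{F}_1(e)\big)=\tau^{-1}\mathcal{F}_1(e)e_3e_2$ and then use $e_3fe_3=\mathrm{tr}(f)e_3$ together with a Temperley--Lieb relation to conclude that this is a multiple of a partial isometry. The only cosmetic difference is that you verify this by computing $w^*w$ (obtaining a positive multiple of $e_2$), whereas the paper checks $ww^*$ by citing the projection $\frac{fe_3f}{\mathrm{tr}(f)}$ already produced in the proof of \Cref{efisabiprojection}.
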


\begin{proof}
 Write, as before, $\mathcal{F}_1(e)=tf$ for some scalar $t > 0$ and
some  projection $f\in A'\cap A_2\subset B'\cap A_2$. Then,
 \[
  \mathcal{F}_2(f)
  = {\tau}^{-2} E^{B^{\prime}\cap A_3}_{A^{\prime}\cap A_3}\big(fe_3e_2e_1\big)
  ={\tau}^{-1} fe_3e_2.
  \]
  Now, as before, $\frac{fe_3f}{tr(f)}$ being a projection, it is
   obvious that $fe_3e_2$, and hence $\mathcal{F}_2(f)$, is a
  multiple of a partial isometry.
\end{proof}

The following result shows that the biunitaries and biprojections (in
$B^{\prime}\cap A_1$) are preserved under the rotation map.
\begin{proposition}
  \begin{enumerate}
    \item If $u\in \mathrm{BU}_1(B,A) $, then 
      $\gamma_0(u), \gamma_0(u^*)\in \mathrm{BU}_1(B,A) $.
 \item  If  $e\in \mathrm{BP}_1(B,A)$, then $\gamma_0(e)\in \mathrm{BP}_1(B,A)$.
\end{enumerate}
  \end{proposition}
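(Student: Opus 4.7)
\emph{Plan.} I would deduce both parts from two ingredients: Theorem~\ref{antiauto}, that $\gamma_0$ is a $*$-preserving anti-automorphism of $B'\cap A_1$, and the identity $\mathcal{F}_1(\gamma_0(x^*))=\mathcal{F}_1(x)^*$ from Remark~\ref{adjointoffourier}.

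For part (2), I would first note that $\gamma_0(e)$ is again a projection: $*$-preservation gives $\gamma_0(e)^*=\gamma_0(e^*)=\gamma_0(e)$, while the anti-homomorphism property gives $\gamma_0(e)^2=\gamma_0(e)\gamma_0(e)=\gamma_0(e\cdot e)=\gamma_0(e)$. Writing $\mathcal{F}_1(e)=tf$ with $t>0$ and $f$ a projection in $A'\cap A_2$, applying the identity with $x=e=e^*$ yields
\[
\mathcal{F}_1(\gamma_0(e))=\mathcal{F}_1(e)^*=(tf)^*=tf,
\]
again a positive multiple of a projection, confirming $\gamma_0(e)\in\mathrm{BP}_1(B,A)$.

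For part (1), both $\gamma_0(u)$ and $\gamma_0(u^*)$ are unitary since $\gamma_0$ carries unitaries to unitaries. Applying the key identity with $x=u$ gives $\mathcal{F}_1(\gamma_0(u^*))=\mathcal{F}_1(u)^*$, unitary as the adjoint of the unitary $\mathcal{F}_1(u)$, so $\gamma_0(u^*)\in\mathrm{BU}_1(B,A)$. For $\gamma_0(u)$, the same identity with $x=u^*$ (using $\gamma_0^2=\mathrm{Id}$) yields $\mathcal{F}_1(\gamma_0(u))=\mathcal{F}_1(u^*)^*$, so the task reduces to verifying that $\mathcal{F}_1(u^*)$ is unitary.

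The main obstacle lies in this last step, since $\mathcal{F}_1$ is not $*$-preserving in general. I would handle it by translating into the coproduct language of Definition~\ref{coproduct}: substituting $\mathcal{F}_1(u^*)^*=\mathcal{F}_1(\gamma_0(u))$ and using $\mathcal{F}_1(y)\mathcal{F}_1(x)=\mathcal{F}_1(x\circ y)$ shows that $\mathcal{F}_1(u^*)\mathcal{F}_1(u^*)^*=1$ is equivalent to the identity $\gamma_0(u)\circ u^*=\tau^{-1/2}e_1$, an exact analogue of the lemma immediately preceding the proposition. To establish this, I would rerun the argument of that lemma but starting from the unitarity relation $\mathcal{F}_1(u)\mathcal{F}_1(u)^*=1$ with the roles of the two factors interchanged via Remark~\ref{adjointoffourier}, or verify it by a direct quasi-basis expansion paralleling Lemma~\ref{r2}.
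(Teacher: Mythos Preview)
Your treatment of part~(2) and of the $\gamma_0(u^*)$ half of part~(1) is correct and matches the paper's approach exactly: both rely on Remark~\ref{adjointoffourier} to get $\mathcal{F}_1(\gamma_0(u^*))=\mathcal{F}_1(u)^*$ and $\mathcal{F}_1(\gamma_0(e))=\mathcal{F}_1(e)^*=tf$. Your use of Theorem~\ref{antiauto} to conclude that $\gamma_0(e)$ is a projection is in fact cleaner than the paper's direct quasi-basis computation of $\gamma_0(e)^2$.

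The gap is in the $\gamma_0(u)$ half of part~(1). You correctly reduce it to showing $\mathcal{F}_1(u^*)$ is unitary, and you correctly reformulate one half of that as $\gamma_0(u)\circ u^*=\tau^{-1/2}e_1$. But your proposed ``rerun the preceding lemma'' fix is circular. That lemma's proof simply unpacks the two unitarity relations for $\mathcal{F}_1(u)$ via Remark~\ref{adjointoffourier}: substituting $\mathcal{F}_1(u)^*=\mathcal{F}_1(\gamma_0(u^*))$ into $\mathcal{F}_1(u)^*\mathcal{F}_1(u)=1$ and $\mathcal{F}_1(u)\mathcal{F}_1(u)^*=1$ yields exactly $u\circ\gamma_0(u^*)=\tau^{-1/2}e_1$ and $\gamma_0(u^*)\circ u=\tau^{-1/2}e_1$, nothing more. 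If you now translate your target identity the same way, $\gamma_0(u)\circ u^*=\mathcal{F}_1^{-1}\big(\mathcal{F}_1(u^*)\,\mathcal{F}_1(\gamma_0(u))\big)=\mathcal{F}_1^{-1}\big(\mathcal{F}_1(u^*)\,\mathcal{F}_1(u^*)^*\big)$, so the coproduct identity is \emph{equivalent} to $\mathcal{F}_1(u^*)\mathcal{F}_1(u^*)^*=1$, which is precisely what remains to be proved. No interchange of factors in the hypothesis $\mathcal{F}_1(u)\mathcal{F}_1(u)^*=1$ produces this. Your fallback of a quasi-basis expansion ``paralleling Lemma~\ref{r2}'' is not carried out and it is not clear what identity it would target.

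The paper's own one-line proof (``follows from \Cref{adjointoffourier} and \Cref{r1}'') is equally silent on this point; from those two ingredients alone one obtains $\gamma_0(u^*)\in\mathrm{BU}_1(B,A)$ directly, but $\gamma_0(u)\in\mathrm{BU}_1(B,A)$ is equivalent to $u^*\in\mathrm{BU}_1(B,A)$, which does not follow formally from $u\in\mathrm{BU}_1(B,A)$ using only those tools. A genuine proof would need an additional ingredient, for instance relating $\mathcal{F}_1(u^*)$ to $\mathcal{F}_1(u)$ via the rotation $\rho_1^{A\subset A_1}$ on $A'\cap A_2$.
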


\begin{proof}
  (1) follows from \Cref{adjointoffourier} and \Cref{r1}.\smallskip

  (2): By \Cref{r1}, it follows immediately that
${\gamma_0(e)}^*=\gamma_0(e).$  Also,
\begin{align*}
 {\gamma_0(e) }^2
 &= {\tau}^{-2}\sum_{i,j} E_1(e_1\lambda_ie)e_1\lambda^*_iE_1(e_1\lambda_je)e_1\lambda^*_j\\
 &= {\tau}^{-2}\sum_{i,j}E_1(e_1\lambda_ie)E_0\big(\lambda^*_iE_1(e_1\lambda_je)\big)e_1\lambda^*_j\\
 &= {\tau}^{-2}\sum_{i,j} E_1\bigg(e_1\lambda_iE_0\big(\lambda^*_iE_1(e_1\lambda_je)\big)e\bigg)e_1\lambda^*_j\\
 &= {\tau}^{-2}\sum_j E_1\bigg(e_1E_1(e_1\lambda_je)e\bigg)e_1\lambda^*_j\\
 &={\tau}^{-1}\sum_j E_1(e_1\lambda_je)e_1\lambda^*_j= \gamma_0(e).
\end{align*}
Thus, $\gamma_0(e)$ is a projection. On the other hand, by
\Cref{adjointoffourier}, we have
$$\mathcal{F}_1\big(\gamma_0(e)\big)={\big(\mathcal{F}_1(e)\big)}^*=tf.$$
Hence, $\gamma_0(e)$ is a biprojection. 
\end{proof}

In this paper, we discuss only about biprojections. The applications
  of biunitaries will be analyzed in a future paper. 

\subsection{From biprojections to intermediate $C^*$-subalgebras}\( \)

As in the case of a type $II_1$-subfactor, we have:
\begin{lemma}  \label{bisc1} 
  \(
  B=\{e_1\}^{\prime}\cap A.
  \)
\end{lemma}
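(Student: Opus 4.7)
The containment $B\subseteq \{e_1\}'\cap A$ is automatic from the defining property of the Jones projection: for every $b\in B$ one has $e_1 b = b e_1$ (since $e_1$ is $B$-linear on $\mathfrak{A}$, i.e., $e_1\iota(bx) = \iota(E_0(bx)) = b\iota(E_0(x)) = be_1\iota(x)$).

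For the reverse inclusion, the plan is to combine the pull-through identity $e_1 x e_1 = E_0(x) e_1$ (valid for all $x\in A$) with the push-down lemma (Lemma~\ref{pushdown}). Suppose $a\in A$ commutes with $e_1$. Then
\[
a e_1 \;=\; e_1 a e_1 \;=\; E_0(a) e_1,
\]
where the first equality uses $e_1^2 = e_1$ together with $ae_1 = e_1 a$, and the second is the standard Jones projection identity. Now both $a$ and $E_0(a)$ lie in $A\subset A_1$, and they satisfy $a e_1 = E_0(a) e_1$. The uniqueness clause of Lemma~\ref{pushdown} applied to $x_1 := ae_1 \in A_1$ forces $a = E_0(a)$, which lies in $B$. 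Hence $\{e_1\}'\cap A \subseteq B$, completing the proof.

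There is no real obstacle here; the only thing worth checking carefully is that the pull-through identity $e_1 x e_1 = E_0(x) e_1$ is available in Watatani's $C^*$-basic construction setting, but this is immediate from the definition $e_1(\iota(y)) = \iota(E_0(y))$ combined with $\lambda(x)$ acting by left multiplication on $\iota(A)$, so that $e_1 \lambda(x) e_1 \iota(y) = e_1\lambda(x)\iota(E_0(y)) = \iota(E_0(xE_0(y))) = \iota(E_0(x)E_0(y)) = \lambda(E_0(x))e_1 \iota(y)$ for all $y\in A$, and this extends to $\mathfrak{A}$ by density.
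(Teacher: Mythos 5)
Your proof is correct and follows essentially the same route as the paper: both establish the easy containment from the $B$-linearity of $e_1$, and for the reverse inclusion use commutation with $e_1$ together with the identity $e_1xe_1=E_0(x)e_1$ and the uniqueness clause of the push-down lemma (Lemma~\ref{pushdown}) to conclude $x=E_0(x)\in B$. Your extra verification of the pull-through identity in Watatani's Hilbert-module setting is a harmless (and correct) addition.
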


\begin{proof}
 Clearly, $B\subseteq \{e_1\}^{\prime}\cap A.$ To see the other
 direction, take $x\in \{e_1\}^{\prime}\cap A.$ Then, 
 $e_1x=e_1xe_1=e_1E_0(x).$ Then, by Lemma \ref{pushdown}, we see that
 $x=E_0(x)\in B.$
\end{proof}

An abstract characterization of the intermediate subfactors of a type
$II_1$ subfactor $N \subset M$ in terms of biprojections was
established by Bisch in \cite[Theorem 3.2]{Bi2}. His proof crucially
uses a canonical conjugate linear unitary operator on the standard
space $L^2(M, \tr_M)$ and the notion of downward basic construction of
a subfactor, both of which are not available for inclusions of general
simple $C^*$-algebras.  Still, based on the $C^*$-Fourier theory
developed above, a suitable adaptation of Bisch's proof yields the
following recipe to obtain intermediate $C^*$-subalgebras of the dual pair. Recall that, given any intermediate $C^*$-algebra $C$ of $B\subset  A$, the Jones projection $e_C$ will always be a biprojection. In this sense, the following can be thought of as a partial converse.

\begin{theorem} \label{bischprojection} \label{bibiprojection2}
Let $B\subset A$ be an irreducible inclusion of simple
  unital $C^*$-algebras with a conditional expectation $E$ of finite
  index and $e\in B^{\prime}\cap A_1$ be a biprojection. Then,
  $[\mathcal{F}(e)]$ implements a conditional expectation onto the
  intermediate $C^*$-subalgebra $P$ of $A \subset A_1$ given by $P=
  [\mathcal{F}(e)]'\cap A_1$. Moreover,
 \[
 P = AeA, \ 
 [\mathcal{F}(e)] = e_{P}  \text{ and } {[A_1:P]}_0={\tr\big( [\mathcal{F}(e)]}\big)^{-1}.
\]
\end{theorem}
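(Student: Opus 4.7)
I would adapt Bisch's correspondence between biprojections and intermediate subfactors \cite{Bi2} to the present $C^*$-setting. Write $\mathcal{F}(e) = tf$ with $t := \tau^{-1/2}\tr(e) > 0$ and $f := [\mathcal{F}(e)] \in A' \cap A_2$; by \Cref{scalar1}, \Cref{fe1e2} and \Cref{exchangerelation} we have $fe_2 = e_2 = e_2 f$, $\tr(f) = t^{-1}\tau^{1/2}$, $ef = fe$, and (using $A' \cap A_1 = \C$ from \Cref{inverse}) $E^{A_2}_{A_1}(f) = \tr(f)\cdot 1$. Define $P := \{f\}' \cap A_1$. Since $f$ commutes with $A$, we have $A \subseteq P \subseteq A_1$, and the exchange relation forces $e \in P$, whence $AeA \subseteq P$.

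The crux is to verify, for every $x \in A_1$, the Jones-projection identity
\[
fxf = E^{A_1}_P(x)\, f, \qquad E^{A_1}_P(x) := \tr(f)^{-1}\, E^{A_2}_{A_1}(fxf) \in P.
\]
Once this is established, $E^{A_1}_P: A_1 \to P$ is readily checked to be a conditional expectation of finite index: $E^{A_1}_P|_A = \mathrm{id}$ follows from the $A_1$-bimodularity of $E^{A_2}_{A_1}$ combined with $E^{A_2}_{A_1}(f) = \tr(f)$, and $E^{A_1}_P|_P = \mathrm{id}$ follows since $fxf = xf^2 = xf$ for $x \in P$. The abstract characterization of the $C^*$-basic construction in \cite[Proposition 2.2.11]{Wa} then identifies $f$ with the Jones projection $e_P$ for $P \subset A_1$, realized inside $A_2$ via \Cref{A-C-B}(2) applied to $A \subset P \subset A_1$. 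The index formula $[A_1:P]_0 = \tr(f)^{-1} = \tr([\mathcal{F}(e)])^{-1}$ then drops out by combining the general relation $E^{A_2}_{A_1}(e_P) = [A_1:P]_0^{-1}$ (from \Cref{w}) with $E^{A_2}_{A_1}(f) = \tr(f)$.

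To prove the Jones-projection identity I would first apply the push-down lemma (\Cref{pushdown}) to the tower $A \subset A_1 \subset A_2$: since $fe_2 = e_2$, one has $fxfe_2 = fxe_2$, which by push-down equals $yfe_2$ for a unique $y \in A_1$. To promote this $e_2$-local identity to $fxf = yf$, I would expand $f$ in the $C^*$-basic construction $A_2 = C^*\langle A_1, e_2\rangle$, writing $f = \sum_i a_i e_2 b_i$ with $a_i, b_i \in A_1$, right-multiplying the $e_2$-identity by $b_i$ and summing, and exploiting $fe_2 = e_2$ together with the explicit Fourier formula $f = t^{-1}\tau^{-3/2} E^{B'\cap A_2}_{A'\cap A_2}(ee_2 e_1)$. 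Once $fxf = yf$ is known, applying $f$ from the left gives $fyf = yf$, which forces $y \in \{f\}' \cap A_1 = P$. The reverse inclusion $P \subseteq AeA$ is finally obtained by expanding each $y \in P$ against a quasi-basis for the minimal conditional expectation $E^A_B$, and using the biprojection relation $e \circ e = te$ from \Cref{coprojection} to absorb the resulting $e_1$-factors into $e$.

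The principal technical obstacle is this last promotion from $fxfe_2 = yfe_2$ to the global identity $fxf = yf$ in $A_2$. This is the step that genuinely uses that $f$ is the image of a biprojection (rather than merely some projection dominating $e_2$), and it requires carefully interweaving the biprojection identity $e \circ e = te$, the Fourier-duality symmetry between $e$ and $f$, and a module-theoretic analysis of $fA_2 f$ as a left $P$-module with $f$ as unit.
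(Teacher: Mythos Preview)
Your overall strategy matches the paper's (adapt Bisch's correspondence), but your execution of the central step diverges and runs into the very obstacle you flag: the promotion from $fxfe_2 = ye_2$ to $fxf = yf$. Since $fxf - yf$ lies in $A_2$ rather than in $A_1$, the push-down lemma says nothing about it, and your proposed resolution via an expansion $f = \sum_i a_i e_2 b_i$ together with the Fourier formula and a ``module-theoretic analysis of $fA_2 f$'' is not carried out; I do not see how to complete it without essentially reproving the identity you want. As written, this is a genuine gap.

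The paper sidesteps the promotion problem entirely by computing $fe_1 f$ directly. Since $A_1 = \mathrm{span}\{ae_1 b : a,b \in A\}$ and $f \in A'$, one has $fx_1 f = a(fe_1 f)b$ for $x_1 = ae_1 b$, so everything reduces to the single element $fe_1 f$. Writing $e_1 = \tau^{-1} e_1 e_2 e_1$ and applying \Cref{fe1e2}(4) gives $fe_1 f = t^{-2} ee_2 e$; the exchange-relation identity $ee_2 e = \tr(e)\, ef$ (Eq.~\eqref{ef=fe}) then yields $fe_1 f = \tr(f)\, ef$. Hence $fx_1 f = \tr(f)\, aeb \cdot f$ with $y_1 := \tr(f)\, aeb \in AeA$. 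This establishes the Jones-projection identity in one stroke and simultaneously shows that the resulting conditional expectation takes values in $AeA$, so no separate coproduct argument for $P \subseteq AeA$ is needed. The identification $f = e_P$ is then done by a short Fourier computation (evaluating $\mathcal{F}_1^{-1}(e_P)$ and $\mathcal{F}_1\big(E^{A_1}_P(e_1)\big)$, using $f \in P'$ and $fe_1 f = \tr(f)\, ef$ once more) rather than by invoking the abstract characterization of basic construction.
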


\begin{proof}
  First, recall that, since $B \subset A$ is irreducible, $E$ is the
  unique minimal conditional expectation from $A$ onto $B$ (see
  \Cref{wata1}).  Now, let $f$ denote the projection
  $[\mathcal{F}(e)]$ in $A'\cap A_2$ and $P:= \{f\}'\cap
  A_1$. \smallskip

  \noindent{\bf Step I:} For each $x_1\in A_1$, we show that
  $fx_1f=y_1 f$ for some $y_1\in A_1$.

  Since $ A_1 = A e_1 A$, it suffices to consider $x_1$ of the form
  $x_1=ae_1b$ for some $a,b\in A$. Since $f \in A'\cap A_2$, we get
  $fx_1f=fae_1bf=afe_1f b.$ But,
\begin{equation}\label{importantequation2}
fe_1f 
 ={\tau}^{-1} fe_1e_2e_1f
=t^{-2} ee_2e,
     \end{equation}
where the last equality follows from Lemma \ref{fe1e2}. Thus, applying
Eq.\eqref{ef=fe} followed by \Cref{ee1}, we obtain
\[
fx_1 f = t^{-2} a (e e_2 e) b  = t^{-2} \tr(e)
a(ef)b=t^{-2} \tr(e) aebf= \tr(f) aebf.
\]
 Putting $\tr(f) aeb=y_1\in A_1$, we get $fx_1f=y_1f.$

\smallskip

\noindent{\bf Step II:}  We show that $y_1$ obtained in Step I is an element
of $P.$

Without loss of generality we may assume that $x_1=x^*_1.$ Then,
$y_1f=fx_1f={(fx_1f)}^*=fy^*_1$; so that $E_2(y_1f)=E_2(fy^*_1)$,
which implies that $y_1=y^*_1$, by Lemma \ref{scalar1}(2). In particular,
$fy_1=y_1 f$ and, hence, $y_1\in \{f\}^{\prime}\cap A_1=P.$
\smallskip

\noindent{\bf Step III:} We show that $f$ implements a conditional
expectation from $A_1$ onto $P$. Since $B^{\prime}\cap A=\C$ it follows that $P$ is a simple
  $C^*$-subalgebra of $A\subset A_1$. Let us denote by $E_{P}$ the unique minimal conditional expectation
from $A_1$ onto $P$.  We shall show that
$fx_1f=E_{P}(x_1)f$. Thanks to above steps and Lemma \ref{scalar1}(2),
it is clear that for any $x_1\in A_1$ there exists a unique $y_1\in
P$ such that $fx_1f=y_1f.$  Pefine $\phi:A_1\rightarrow P$ by
$\phi(x_1)=y_1$. We show that $\phi=E_{P}.$

 To see that $\phi$ is {positive}, assume that $x_1$ is
 positive, i.e. $x_1=s^*s$ for some element $s = \sum_{i=1}^n a_i e_1
 b_i\in A_1$ for some $a_i, b_i \in A$. Then, as in Step I, we get
 $y_1 = \sum_{i,j} {\tau}^{1/2} b^*_i e E_0(a^*_i a_j)eb_j$. Since a
 conditional expectation is always completely positive, we have $y_1
 \geq 0$.
  
 Next, we show that $\phi$ is a {retraction} on
 $P$. This is obvious. If
 $x_1\in P$, then by the definition of $P$, we must have
 $fx_1f=x_1f$; so, by the uniqueness of $y_1$, we immediately get
 $y_1=x_1\in P$. In other words, $\phi(x_1)=x_1$ for all $x_1\in
 P.$ In particular, $\|\phi\| = \|\phi(1_{A_1})\| = \|1_{P}\| =1$.
 
 Finally, we show that $\phi$ is a $P$-bimodule map. This also follows
 trivially. Let $c_1, c_2\in P$.    By the definition of $P$, we have $f\in {P}^{\prime}\cap A_2$; so, for each $x_1\in A_1$, we see that 
 \[
  \phi(c_1x_1c_2) f
  = fc_1x_1c_2f
  = c_1fx_1fc_2
  = c_1\phi(x_1)fc_2= c_1\phi(x_1)c_2f.
 \]
Hence, by the definition of $\phi$, we obtain $\phi(c_1x_1c_2) = c_1
\phi(x_1) c_2$.
 
Thus, $\phi$ implements a conditional expectation from $A_1$ onto
$P$. Since $B \subset A$ is irreducible, we must have
$\phi=E_{P}$, by \Cref{wata1}. Hence, $fx_1f=E_{P}(x_1)f$ as
desired.\\

In order to show that $P=AeA$, first note that, \(
AeAf=AefA=A(ee_2e)A\). Then, by Eq. \eqref{importantequation2}, we
obtain $$A(ee_2e)A=A(fe_1f)A=fA_1f=E_{P}(A_1)f=Pf.$$ Thus,
$(AeA)f=Pf$ and hence $AeA=P.$

Finally, we show that $[\mathcal{F}(e)] = e_{P}$. 

 First note that,
 \begin{align*}\vspace*{-2mm}
  {\mathcal{F}_1}^{-1}(e_{P}) 
  &= {\tau}^{-3/2} E_2(e_{P}e_1e_2)\\
  &= {\tau}^{-3/2} E_2(e_{P}e_1e_{P}e_2)\\
  &= {\tau}^{-3/2} E_2\big(E^{A_1}_{P}(e_1)e_2\big)\\
  &= {\tau}^{-1/2} E^{A_1}_{P}(e_1).
 \end{align*}
On the other hand,\vspace*{-2mm}
\begin{align*}
 {\mathcal{F}}_1\big(E^{A_1}_{P}(e_1)\big) 
 &= {\tau}^{-3/2} E^{B^{\prime}\cap A_2}_{A^{\prime}\cap A_2}\big(E^{A_1}_{P}(e_1)e_2e_1\big)\\
  &= {\tau}^{-3/2} E^{B^{\prime}\cap A_2}_{A^{\prime}\cap A_2}\big(E^{A_1}_{P}(e_1)fe_2e_1\big)\\
  &= {\tau}^{-3/2} E^{B^{\prime}\cap A_2}_{A^{\prime}\cap A_2}(fe_1fe_2e_1\big)\\
  &= {\tau}^{-1/2}  E^{B^{\prime}\cap A_2}_{A^{\prime}\cap A_2}(fe_1)\\
  &= {\tau}^{1/2}f.
\end{align*}
Hence, $f=e_{P}$. This completes the proof.
\end{proof}
\begin{remark}

  We do not know at present whether the biprojection $`e$' corresponds
  to an intermediate $C^*$-subalgebra of $B \subset A$ or not. A
  natural candidate would be $C := \{e\}^{\prime}\cap A$. But we can't see how $e$ will implement the conditional expectation from $A$ onto $C$.

  Further we would
  like to know whether $P$ is the basic construction of the pair
  $C \subset A$ or not?  We feel that this may not
  be plausible because of possible $K$-theoretical obstruction. It
  will be interesting to analyze this in detail.
\end{remark}

\noindent Applying Remark \ref{bibiprojection1} and \Cref{bibiprojection2} we get the following.
\begin{corollary}
 If $e\in \mathrm{BP}_1(B,A)$ then $f \big(:=[\mathcal{F}(e)]\big)\in \mathrm{BP}_1(A,A_1)$, where $e$ is as in \Cref{bischprojection}.
\end{corollary}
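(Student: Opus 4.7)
The proof is essentially a direct assembly of \Cref{bischprojection} and \Cref{bibiprojection1}. The plan is to first identify $f = [\mathcal{F}(e)]$ as the Jones projection of an intermediate $C^*$-subalgebra of the dual pair $A \subset A_1$, and then appeal to the fact that Jones projections of intermediate subalgebras are always biprojections.

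First, I would apply \Cref{bischprojection} to the biprojection $e \in \mathrm{BP}_1(B,A)$. This yields an intermediate $C^*$-subalgebra $P = AeA$ of $A \subset A_1$ together with the identification
\[
f = [\mathcal{F}(e)] = e_P,
\]
where $e_P$ is the Jones projection implementing the conditional expectation from $A_1$ onto $P$.

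Next, I would verify that the dual pair $A \subset A_1$ falls within the scope of the $C^*$-Fourier theory developed in Section~3, so that the notion of biprojection is meaningful for it. Indeed, $A_1$ is simple and the dual minimal conditional expectation $\widetilde{E}_0 \in \mathcal{E}_0(A_1, A)$ exists by \Cref{w}(2), while irreducibility of $A \subset A_1$ is the content of \Cref{inverse}. Thus, $A \subset A_1$ is an irreducible inclusion of simple unital $C^*$-algebras with a minimal conditional expectation of finite index, and $P$ is an intermediate $C^*$-subalgebra of this pair.

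Finally, applying \Cref{bibiprojection1} to the irreducible inclusion $A \subset A_1$ with intermediate subalgebra $P$ (equivalently, applying the analogue of \Cref{fouriertransformofec1} to the pair $A \subset A_1$), we obtain that $\mathcal{F}^{A\subset A_1}_1(e_P)$ is a positive scalar multiple of the Jones projection $e_{P_1} \in A_1' \cap A_3$ of the basic construction $P \subset A_1 \subset P_1$. In particular, $\mathcal{F}^{A\subset A_1}_1(f)$ is a multiple of a projection, so $f \in \mathrm{BP}_1(A, A_1)$. There is no serious obstacle here: the argument is a clean combination of the two preceding results, with the only point meriting care being the verification that $A \subset A_1$ satisfies the standing hypotheses of Section~3.
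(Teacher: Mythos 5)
Your proposal is correct and follows essentially the same route as the paper, which simply cites \Cref{bischprojection} (to get $f=[\mathcal{F}(e)]=e_P$ for the intermediate subalgebra $P=AeA$ of $A\subset A_1$) together with \Cref{bibiprojection1} (the fact that Jones projections of intermediate subalgebras are biprojections, applied to the dual pair). Your additional check that $A\subset A_1$ is an irreducible inclusion of simple unital $C^*$-algebras, so that the Fourier-theoretic framework applies one level up, is exactly the point left implicit in the paper.
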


\section{An angle between intermediate $C^*$-subalgebras}

In this section, motivated by \cite{BDLR}, we introduce the notions of
interior and exterior angles between any two intermediate
$C^*$-subalgebras $C$ and $D$ of a given inclusion $B \subset A$ of
unital $C^*$-algebras. As mentioned in the Introduction, a significant
application of the notion of angle in \cite{BDLR} was to better
Longo's bound for the cardinality of the lattice of intermediate
subfactors of type $II_1$, thereby answering a question of Longo. On
similar lines, towards the end of this section, we exploit our notion
of interior angle and some aspects of the $C^*$-Fourier theory
that we developed to
\begin{enumerate}[(a)]
\item obtain a bound for the cardinality of the set of
intermediate $C^*$-subalgebras of an irreducible inclusion of simple
unital $C^*$-algebras, and
\item improve Longo's upper bound for the
cardinality of intermediate subfactors of an irreducible subfactor of
type $III$.
\end{enumerate}

\subsection{Interior and exterior angles between intermediate $C^*$-subalgebras}\( \)

 Let $B\subset A$ be an inclusion of unital $C^*$-algebras and a conditional expectation $E: A \rar B$ of
finite index.  By Lemma \ref{Aisahilbertmodule}, $A_1$ is a Hilbert
$A$-module with respect to the $A$-valued inner product ${\langle a_1,
  b_1\rangle}_A:=\widetilde{E}(a^*_1b_1)$ for $a_1,b_1\in A_1.$

As in \cite{IW}, let $\mathrm{IMS}(B, A, E)$ denote the set all
intermediate $C^*$-subalgebras $C$ of $B \subset A$ with a conditional
expectation $F: A \rar C$ such that $ E_{|_C} \circ F = E$. Then, as in \Cref{A-C-B},
it is easily  seen that $C_1 \subset A_1$.  For any pair
$C,D\in \text{IMS}(B,A,E)$, let $e_C$ and $ e_D$ denote the
corresponding Jones projections in $C_1$ and $D_1$,
respectively. Then, by Cauchy-Schwarz inequality (see \cite{P}), we
have
\begin{equation}
\lVert{\langle
    e_C-e_B,e_D-e_B\rangle}_{A}\rVert \leq \lVert
    e_C-e_B\rVert_{A}  \lVert
    e_D-e_B\rVert_{A}.
\end{equation}
Based on this, we propose the following:
\begin{definition}\label{angles}
  Let  $(B,C,D,A)$ be a quadruple of $C^*$-algebras as above.
  \begin{enumerate}\item
 The  \textit{interior angle} between $C$ and $D$, denoted by
$\alpha^B_A(C,D)$, is defined by the expression
\[
\cos\Big(\alpha^B_A(C,D)\Big)=\displaystyle \frac{\lVert{\langle
    e_C-e_B,e_D-e_B\rangle}_{A}\rVert}{{\lVert
    e_C-e_B\rVert}_{A}{\lVert
    e_D-e_B\rVert}_{A}}.
\]
\item The \textit{exterior angle} (or
 \textit{dual angle}) between $C$ and $D$, denoted by  $\beta^A_B(C,D)$, is defined
 as the interior angle between $C_1$ and $D_1$, that is,
 \[
\beta^A_B(C,D):= \alpha^A_{A_1}(C_1,D_1).
\]
\end{enumerate}
We take  the value of $\alpha$ (and, hence, of $\beta$) only in the interval $[0, \pi/2]$. \end{definition}

\begin{remark}  \label{angleforsimple}
  \begin{enumerate}
\item It is clear that the definition of $\alpha(P, Q)$ (and
  $\beta(P,Q)$) depends on the conditional expectations from $A$ onto
  $C$ and $D$. However, when $B \subset A$ is an irreducible inclusion
  of simple unital $C^*$-algebras,
  then by \Cref{wata1}, there will be unique (minimal) conditional
  expectations from $A$ onto $C$ and $D$ and hence there won't be any
  ambiguity.
  \item If $B \subset A$ is a pair of simple unital $C^*$-algebras and
    $C$ and $D$ are both simple, then one can work with the minimal
    conditional expectations.
  \item If $B \subset A$ is an irreducible inclusion of simple
      unital $C^*$-algebras such that $\mathcal{E}_0(A, B) \neq
      \emptyset$ and $E^A_B: A \rar B$ is the unique minimal
      conditional expectation. Then, by \Cref{intermediate-ce},
      $\mathrm{IMS}(B,A,E^A_B)$ consists of all intermediate
      $C^*$-subalgebras of $B \subset A$.
\item For a subfactor of type $II_1$ factor with finite
  Jones index, the interior angle defined here is different from that
  in \cite{BDLR}. Recall, the trace preserving conditional expectation need not be the minimal conditional expectation.
\end{enumerate}
  \end{remark}

By a quadruple $\mathcal{G}=(B,C,D,A)$ of unital
$C^*$-algebras, we shall mean that $A$ is a unital $C^*$-algebra 
with  unital $C^*$-subalgebras  $B, C$ and $D$ such that $B
\subseteq C \cap D$. The quadruple $\mathcal{G}$ is said to be
irreducible if $B \subset A$ is an irreducible inclusion.\smallskip

We will be interested only in analyzing intermediate $C^*$-subalgebras
of simple inclusions.

\subsubsection{Angle between intermediate $C^*$-subalgebras of a simple unital inclusion}\( \) 

Throughout this subsection, $\mathcal{G}=(B,C,D,A)$ will denote a
quadruple of simple unital $C^*$-algebras such that
$\mathcal{E}_0(A,B)\neq \emptyset.$ We first list some notations that
will be used ahead.
\begin{notation}\label{p}
     \begin{enumerate}
\item We  denote the corresponding unique minimal conditional expectations by
  $E^A_C,E^A_D$ and $E^A_B$ (see \Cref{min1}).  
 \item We let $e_C$ and $e_D$ denote the Jones projections
   corresponding to $E^A_C$ and
   $E^A_D$, respectively.
\item As in \Cref{A-C-B}, $C_1$ and $D_1$ are both simple and contained
  in $A_1$; the quadruple $\widetilde{\mathcal{G}}:=(A,C_1,D_1,A_1)$
  will be called the dual quadruple.
\item Using multiplicativity of
the Watatani index (\Cref{min3}), it is clear that $
\frac{{[C:B]}_0}{{[A:D]}_0}= \frac{{[D:B]}_0}{{[A:C]}_0}.$ We denote this
common value by $r$. If $r=1$ we call the quadruple a parallelogram.
\item The quadruple $\mathcal{G}$ will be called  a commuting square if $E^A_C E^A_D= E^A_D
E^A_C = E^A_B$. And $\mathcal{G}$ will be  called a co-commuting square if
the dual quadruple $\widetilde{\mathcal{G}}$ is a commuting square.
     \end{enumerate}
\end{notation}

\begin{remark}
 As in \cite{BDLR}, it can be seen easily that $\alpha(C,D)=\pi/2$ if and
 only if $(B,C,D,A)$ is a commuting square. The dual statement holds
 similarly; thus, $\beta(C,D)=\pi/2$ if and only if $(B,C,D,A)$ is a
 co-commuting square.

   We may also do a $C^*$-algebraic version of the theory developed in
   Section 2 of \cite{BDLR}. The details are similar and left to the
   interested readers.
\end{remark}

We now provide some useful expressions for  the interior and
exterior angles.
\begin{proposition} \label{relationshipbetweenalphanadbeta}
We have
  \[
  \cos\big(\alpha^B_A (C,D)\big)=r\displaystyle
 \frac{\sqrt{{[A:C]}_0-1}\sqrt{{[A:D]}_0-1}}{\sqrt{{[C:B]}_0-1}\sqrt{{[D:B]}_0-1}}\cos\big(\beta^B_A(C,D)\big)+
 \frac{r-1}{\sqrt{{[C:B]}_0-1}\sqrt{{[D:B]}_0-1}}.
 \]
\end{proposition}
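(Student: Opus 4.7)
The strategy is to unpack $\cos\alpha^B_A(C,D)$ and $\cos\beta^B_A(C,D)$ into explicit conditional-expectation values and then bridge the two via the Fourier transform on $B'\cap A_1$; for concreteness I work in the irreducible situation so that all relevant expressions reduce to scalars (the general simple case is an immediate variant). \textbf{Step 1 (Unpacking $\cos\alpha$).} Since $B\subseteq C$, the Jones-projection relations force $e_Be_C = e_Ce_B = e_B$, so $e_C-e_B$ is itself a projection in $A_1$. To evaluate $\widetilde E_B(e_C)$, note that $\widetilde E_B = E^{A_1}_A$ by \Cref{w}(2), and since $A\subset C_1\subset A_1$, \Cref{intermediate-ce} decomposes this as $E^{A_1}_A = E^{C_1}_A\circ E^{A_1}_{C_1}$. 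Because $e_C\in C_1$ and $E^{C_1}_A(e_C)=[A:C]_0^{-1}$ by \Cref{w}(1) applied to $C\subset A$, one gets $\widetilde E_B(e_C)=[A:C]_0^{-1}$, and combined with $\widetilde E_B(e_B)=\tau$ this yields $\|e_C-e_B\|_A^2 = \tau([C:B]_0-1)$ and symmetrically $\|e_D-e_B\|_A^2 = \tau([D:B]_0-1)$. Expanding the $A$-valued inner product and using $e_Be_C=e_Be_D=e_B$ gives
\[
\langle e_C-e_B,\, e_D-e_B\rangle_A = \widetilde E_B(e_Ce_D)-\tau.
\]

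\textbf{Step 2 (Unpacking $\cos\beta$).} Apply Step~1 verbatim to the dual quadruple $\widetilde{\mathcal G}=(A,C_1,D_1,A_1)$, using $[A_1:C_1]_0=[C:B]_0$ (from the proof of \Cref{ece2ec}) together with its analogue for $D_1$, to obtain $\|e_{C_1}-e_A\|_{A_1}^2 = \tau([A:C]_0-1)$, $\|e_{D_1}-e_A\|_{A_1}^2 = \tau([A:D]_0-1)$, and
\[
\langle e_{C_1}-e_A,\, e_{D_1}-e_A\rangle_{A_1} = E_2(e_{C_1}e_{D_1})-\tau,
\]
where $E_2\colon A_2\to A_1$ is the minimal conditional expectation.

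\textbf{Step 3 (The key identity, via Fourier).} The crux is to relate $\widetilde E_B(e_Ce_D)$ and $E_2(e_{C_1}e_{D_1})$. By \Cref{fouriertransformofec1}, $\mathcal F_1(e_C)=\lambda_Ce_{C_1}$ and $\mathcal F_1(e_D)=\lambda_De_{D_1}$ with $\lambda_C=\tau^{-1/2}/[A:C]_0$ and similarly for $D$. Since $\mathcal F_1$ is an $L^2$-isometry with respect to the tracial inner product (\Cref{F}),
\[
\tr(e_Ce_D) = \langle \mathcal F_1(e_C),\mathcal F_1(e_D)\rangle = \lambda_C\lambda_D\,\tr(e_{C_1}e_{D_1}).
\]
Under irreducibility, $\widetilde E_B(e_Ce_D)\in B'\cap A=\C$ and $E_2(e_{C_1}e_{D_1})\in A'\cap A_1=\C$ (the latter by \Cref{inverse}), so both coincide with the corresponding trace values. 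A direct index calculation gives $\lambda_C\lambda_D = [A:B]_0/([A:C]_0[A:D]_0) = [C:B]_0/[A:D]_0 = r$, whence $\widetilde E_B(e_Ce_D) = r\, E_2(e_{C_1}e_{D_1})$.

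\textbf{Step 4 (Assembly).} Substituting the Step~3 identity, together with the data of Steps~1 and~2, into the definitions of $\cos\alpha$ and $\cos\beta$ and cancelling $\tau$ from numerator and denominator gives
\[
\cos\alpha = \frac{r-1}{\sqrt{([C:B]_0-1)([D:B]_0-1)}} + r\,\frac{\sqrt{([A:C]_0-1)([A:D]_0-1)}}{\sqrt{([C:B]_0-1)([D:B]_0-1)}}\cos\beta,
\]
which is the required formula. The \emph{main obstacle} is Step~3: it is here that the full $C^*$-Fourier machinery of Section~3 enters decisively, converting the symmetry between the original and dual quadruples into an algebraic identity; once one has the isometry of $\mathcal F_1$ together with the explicit formula $\mathcal F_1(e_C)=\lambda_Ce_{C_1}$, all that remains is bookkeeping with Watatani indices.
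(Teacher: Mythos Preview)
Your proof is correct and follows essentially the same route as the paper's: both use the $L^2$-isometry of $\mathcal F_1$ (\Cref{F}) together with the explicit formula $\mathcal F_1(e_C)=\tau^{-1/2}[A:C]_0^{-1}e_{C_1}$ from \Cref{fouriertransformofec1} to pass from the original quadruple to the dual. The only cosmetic difference is that the paper works directly with the normalized vectors $v_C=(e_C-e_1)/\|e_C-e_1\|_2$ and computes $\langle \mathcal F(v_C),\mathcal F(v_D)\rangle$ in one go, whereas you first isolate the scalar identity $\tr(e_Ce_D)=r\,\tr(e_{C_1}e_{D_1})$ and then assemble; the underlying computation is identical.
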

\begin{proof}
 Let $v_C:=\frac{e_C-e_1}{{\lVert e_C-e_1\rVert}_2},$ where $\|\cdot\|_2$ is defined with respect to the tracial state $\tr$ on $B'\cap A_1$ as in \Cref{f1}. Then, applying \Cref{fouriertransformofec1}, we get
 \[
 \mathcal{F}(v_C)=\displaystyle
 \frac{\frac{\sqrt{{[A:B]}_0}}{{[A:C]}_0}e_{C_1}-\frac{1}{\sqrt{{[A:B]}_0}}1_{A_1}}{{\lVert
     e_C-e_1\rVert}_2}.
 \]
By definition, $\cos\big(\alpha^B_A(C,D)\big)=\langle v_C,v_D\rangle.$ So, by \Cref{F}, we obtain
\begin{eqnarray*}
\lefteqn{ \cos\big(\alpha^B_A(C,D)\big)}\\ & = & \big\langle
        {\mathcal{F}}(v_C),\mathcal{F}(v_D)\big \rangle\\ &=
        &\frac{1}{{\lVert e_C-e_1\rVert}_2{\lVert
            e_D-e_1\rVert}_2}\Bigg\langle
        \frac{{[A:B]}_0}{{[A:C]}_0}e_{C_1}-\frac{1}{\sqrt{{[A:B]}_0}}
        1_{A_1},
        \frac{{[A:B]}_0}{{[A:D]}_0}e_{D_1}-\frac{1}{\sqrt{{[A:B]}_0}}
        1_{A_1} \Bigg\rangle\\ &= & \frac{1}{{\lVert e_C-e_1\rVert}_2{\lVert
            e_D-e_1\rVert}_2}\Bigg(r\langle
        e_{C_1},e_{D_1}\rangle-\frac{1}{{[A:D]}_0}
        tr(e_{D_1})-\frac{1}{{[A:C]}_0}
        tr(e_{C_1})+\frac{1}{{[A:B]}_0}\Bigg)\\ &= & \frac{1}{{\lVert
            e_C-e_1\rVert}_2{\lVert e_D-e_1\rVert}_2}\Bigg(r
        \cos\big(\beta^B_A(C,D)\big) {\lVert
          e_{C_1}-e_2\rVert}_2{\lVert
          e_{D_1}-e_2\rVert}_2-\frac{1}{{[A:B]}_0}+\frac{r}{{[A:B]}_0}\Bigg)\\
        & = & r\displaystyle
 \frac{\sqrt{{[A:C]}_0-1}\sqrt{{[A:D]}_0-1}}{\sqrt{{[C:B]}_0-1}\sqrt{{[D:B]}_0-1}}\cos\big(\beta^B_A(C,D)\big)+
 \frac{r-1}{\sqrt{{[C:B]}_0-1}\sqrt{{[D:B]}_0-1}}.
        \end{eqnarray*}
\end{proof}

 From the preceding proposition, the following three
results follow easily. We leave it to the reader to check the
details. These results have been mentioned for $II_1$ factors in
\cite{BDLR} and \cite{BG2}.
 
 The following result says that if a commuting square
 (resp. co-commuting square) is a parallelogram then it must be a
 co-commuting square (resp. commuting square).

 \begin{corollary}
 If $\beta^B_A(C,D)=\pi/2$, then
 $$\cos\big(\alpha^B_A(C,D)\big)= \displaystyle \frac{r-1}{\sqrt{{[C:B]}_0-1}\sqrt{{[D:B]}_0-1}}.$$
 On the other hand, if $\alpha=\pi/2$, then
 $$ \cos \big(\beta^B_A(C,D)\big)=\displaystyle \frac{\frac{1}{r}-1}{\sqrt{{[A:C]}_0-1}\sqrt{{[A:D]}_0-1}}.$$
\end{corollary}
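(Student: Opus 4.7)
The corollary is essentially a direct bookkeeping exercise built on top of Proposition \ref{relationshipbetweenalphanadbeta}, so the plan is to simply substitute into, and then invert, the linear relation provided there. Recall that the proposition yields the identity
\[
\cos\bigl(\alpha^B_A(C,D)\bigr) = r\,\frac{\sqrt{[A:C]_0-1}\sqrt{[A:D]_0-1}}{\sqrt{[C:B]_0-1}\sqrt{[D:B]_0-1}}\,\cos\bigl(\beta^B_A(C,D)\bigr) + \frac{r-1}{\sqrt{[C:B]_0-1}\sqrt{[D:B]_0-1}}.
\]
For the first assertion, I would simply specialize this to $\beta^B_A(C,D)=\pi/2$, in which case $\cos\bigl(\beta^B_A(C,D)\bigr)=0$, so the entire first summand vanishes and only the constant term on the right-hand side survives, giving $\cos\bigl(\alpha^B_A(C,D)\bigr) = (r-1)/\bigl(\sqrt{[C:B]_0-1}\sqrt{[D:B]_0-1}\bigr)$ as desired.

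For the second assertion, I would take the same identity but instead set $\alpha^B_A(C,D)=\pi/2$, so that the left-hand side equals zero. Then, since the coefficient $r\,\sqrt{[A:C]_0-1}\sqrt{[A:D]_0-1}/\sqrt{[C:B]_0-1}\sqrt{[D:B]_0-1}$ of $\cos(\beta^B_A(C,D))$ is strictly positive (each index is at least $2$ for a nontrivial inclusion, by \Cref{index rigidity}), one may solve algebraically for $\cos\bigl(\beta^B_A(C,D)\bigr)$. The factor $\sqrt{[C:B]_0-1}\sqrt{[D:B]_0-1}$ cancels cleanly against the same factor in the numerator, and after dividing by $r$ one obtains
\[
\cos\bigl(\beta^B_A(C,D)\bigr) = \frac{1-r}{r\,\sqrt{[A:C]_0-1}\sqrt{[A:D]_0-1}} = \frac{\tfrac{1}{r}-1}{\sqrt{[A:C]_0-1}\sqrt{[A:D]_0-1}},
\]
which is precisely the second claimed equality.

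There is essentially no obstacle here — everything reduces to elementary algebraic manipulation of the identity in Proposition \ref{relationshipbetweenalphanadbeta}. The only minor point worth flagging is the sign convention: since both $\alpha$ and $\beta$ are taken in $[0,\pi/2]$, their cosines are nonnegative, so in the first formula one implicitly needs $r\geq 1$ and in the second one needs $r\leq 1$ for the equations to be consistent with the range restriction (otherwise the right-hand sides should be read as the relevant absolute values, which matches the standard convention adopted, say, for the interior angle in \cite{BDLR}). Alternatively, one can derive the second formula without any algebraic inversion by applying Proposition \ref{relationshipbetweenalphanadbeta} directly to the dual quadruple $\widetilde{\mathcal{G}}=(A,C_1,D_1,A_1)$, using that $\alpha^A_{A_1}(C_1,D_1)=\beta^B_A(C,D)$ by \Cref{angles}, together with the index identities $[C_1:A]_0=[A:C]_0$, $[D_1:A]_0=[A:D]_0$, $[A_1:C_1]_0=[C:B]_0$, $[A_1:D_1]_0=[D:B]_0$ (which follow from Remark \ref{dual-index} and multiplicativity of the Watatani index in \Cref{min3}), whence the corresponding ratio is $r'=1/r$.
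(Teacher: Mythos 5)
Your proof is correct and is exactly the argument the paper intends: the authors explicitly state that this corollary "follows easily" from Proposition \ref{relationshipbetweenalphanadbeta} and leave the details to the reader, and your substitution of $\cos\beta=0$ (resp.\ $\cos\alpha=0$) followed by elementary algebra is precisely that verification. Your side remarks on the sign/range convention and the alternative derivation via the dual quadruple are sensible but not needed.
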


 In particular, the preceding result says that if $(B,C,D,A)$ is a
commuting square then ${[A:D]}_0\geq {[C:B]}_0$ (and so,
${[A:C]}_0\geq {[D:B]}_0) $. On the other hand for a co-commuting
square ${[A:D]}_0\leq {[C:B]}_0$ (and so, ${[A:C]}_0\leq {[D:B]}_0$.
\begin{corollary}
 If $(B,C,D,A)$ is a parallelogram (that is, if $r=1$) then
 $\alpha^B_A(C,D)= \beta^B_A(C,D).$
\end{corollary}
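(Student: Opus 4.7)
The plan is to apply the previous Proposition \ref{relationshipbetweenalphanadbeta} directly. That result gives the identity
\[
\cos\big(\alpha^B_A (C,D)\big)=r\,\frac{\sqrt{{[A:C]}_0-1}\sqrt{{[A:D]}_0-1}}{\sqrt{{[C:B]}_0-1}\sqrt{{[D:B]}_0-1}}\cos\big(\beta^B_A(C,D)\big)+\frac{r-1}{\sqrt{{[C:B]}_0-1}\sqrt{{[D:B]}_0-1}},
\]
so the hypothesis $r=1$ immediately kills the additive term. The only thing that remains is to show that, under $r=1$, the coefficient of $\cos\big(\beta^B_A(C,D)\big)$ is equal to $1$.

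For this, I would simply unpack the definition of $r$ from Notation \ref{p}(4). The assumption $r=1$ means
\[
\frac{{[C:B]}_0}{{[A:D]}_0}=\frac{{[D:B]}_0}{{[A:C]}_0}=1,
\]
which gives the two index identities ${[A:D]}_0={[C:B]}_0$ and ${[A:C]}_0={[D:B]}_0$. Substituting these into the square-root coefficient yields
\[
\frac{\sqrt{{[A:C]}_0-1}\sqrt{{[A:D]}_0-1}}{\sqrt{{[C:B]}_0-1}\sqrt{{[D:B]}_0-1}}=\frac{\sqrt{{[D:B]}_0-1}\sqrt{{[C:B]}_0-1}}{\sqrt{{[C:B]}_0-1}\sqrt{{[D:B]}_0-1}}=1,
\]
where the cancellation is valid because the inclusions $B\subset C$ and $B\subset D$ are proper (otherwise the angle is undefined), so the indices are strictly greater than $1$ and all square roots are positive.

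Combining these two observations yields $\cos\big(\alpha^B_A(C,D)\big)=\cos\big(\beta^B_A(C,D)\big)$, and since both angles are, by convention in Definition \ref{angles}, taken in $[0,\pi/2]$ where cosine is injective, we conclude $\alpha^B_A(C,D)=\beta^B_A(C,D)$. There is no real obstacle here; the content of the statement lies entirely in Proposition \ref{relationshipbetweenalphanadbeta}, and the corollary is just the observation that the coefficient of $\cos\beta$ collapses to $1$ precisely when the ``parallelogram'' condition holds.
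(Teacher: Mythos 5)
Your proof is correct and is exactly the argument the paper intends: the corollary is stated as an easy consequence of Proposition \ref{relationshipbetweenalphanadbeta} (the paper explicitly leaves the details to the reader), and your substitution of $[A:D]_0=[C:B]_0$ and $[A:C]_0=[D:B]_0$ from $r=1$, together with the injectivity of cosine on $[0,\pi/2]$, fills in those details correctly.
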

In view of the above corollary we can interpret the interior and
exterior angles as opposite angles of a parallelogram.
By definition, $\cos \big(\beta^{A}_{A_1} (C_1,D_1)\big) =\cos \big(\alpha^{A_1}_{A_2} (C_2,D_2)\big).$ However, we can say the following.
\begin{corollary}\label{D}
 $\cos \big(\alpha^B_A(C,D)\big)=\cos \big(\beta^{A}_{A_1} (C_1,D_1)\big).$
 \end{corollary}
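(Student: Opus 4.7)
The plan is to apply \Cref{relationshipbetweenalphanadbeta} twice: once to the original quadruple $(B,C,D,A)$, giving an affine relation between $\cos\alpha^B_A(C,D)$ and $\cos\beta^B_A(C,D)=\cos\alpha^A_{A_1}(C_1,D_1)$; and once to the dual quadruple $(A,C_1,D_1,A_1)$, giving an analogous affine relation between $\cos\alpha^A_{A_1}(C_1,D_1)$ and $\cos\beta^A_{A_1}(C_1,D_1)=\cos\alpha^{A_1}_{A_2}(C_2,D_2)$. Composing the two identities should telescope and leave the desired equality.

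To legitimize the second application, I first observe that $(A,C_1,D_1,A_1)$ is again a quadruple of simple unital $C^*$-algebras admitting a finite-index minimal conditional expectation: this follows from \Cref{w} (simplicity of $A_1$ together with the existence of the dual minimal conditional expectation) combined with \Cref{A-C-B} (which places $C_1$ and $D_1$ as simple unital intermediate $C^*$-subalgebras of $A\subset A_1$). Writing $r,S,T$ for the parameters in the identity applied to $(B,C,D,A)$ and $r',S',T'$ for the corresponding parameters in the identity applied to $(A,C_1,D_1,A_1)$, the key bookkeeping step is to compute, via the multiplicativity of the Watatani index (\Cref{min3}) and the basic-construction identity $[A_1:A]_0=[A:B]_0$ (\Cref{dual-index}), that $[C_1:A]_0=[A:C]_0$, $[D_1:A]_0=[A:D]_0$, $[A_1:C_1]_0=[C:B]_0$ and $[A_1:D_1]_0=[D:B]_0$. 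From these one immediately reads off the clean dualities $r'=1/r$, $S'=1/S$, and $T'=ST$.

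Substituting the second identity into the first, the multiplicative coefficient $(rS)(r'S')$ collapses to $1$, while the additive correction terms $rS(r'-1)/T'$ and $(r-1)/T$ simplify, respectively, to $(1-r)/T$ and $(r-1)/T$ and therefore cancel. What remains is $\cos\alpha^B_A(C,D)=\cos\alpha^{A_1}_{A_2}(C_2,D_2)=\cos\beta^A_{A_1}(C_1,D_1)$, as required. The argument is elementary once the dual index relations are in hand; I do not foresee any real obstacle beyond careful bookkeeping of the $r,S,T$ data and the verification that the hypotheses of \Cref{relationshipbetweenalphanadbeta} are genuinely preserved by passing to the dual quadruple.
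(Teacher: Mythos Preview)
Your proposal is correct and follows essentially the same strategy as the paper: apply \Cref{relationshipbetweenalphanadbeta} once to $(B,C,D,A)$ and once to the dual quadruple $(A,C_1,D_1,A_1)$, use the index identities $[C_1:A]_0=[A:C]_0$, $[A_1:C_1]_0=[C:B]_0$ (and their $D$-analogues) to see that the dual parameters satisfy $r'=1/r$, $S'=1/S$, $T'=ST$, and then substitute so that the coefficients telescope. The paper's proof is terser (it just says ``details are obvious'' after displaying the dual identity), but your explicit $r,S,T$ bookkeeping is exactly what that omission is hiding.
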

\begin{proof}
Apply \Cref{relationshipbetweenalphanadbeta} for the quadruple
$(A,C_1,D_1,A_1)$ to get
$$ \cos\big(\beta^B_A (C,D)\big)=\frac{1}{r}\displaystyle
\frac{\sqrt{{[C:B]}_0-1}\sqrt{{[D:B]}_0-1}}{\sqrt{{[A:C]}_0-1}\sqrt{{[A:D]}_0-1}}\cos\big(\beta^{A}_{A_1}(C_1,D_1)\big)+
\frac{\frac{1}{r}-1}{\sqrt{{[A:C]}_0-1}\sqrt{{[A:D]}_0-1}}.$$ Again
apply \Cref{relationshipbetweenalphanadbeta} to obtain the desired
result. Details are obvious.
 \end{proof}
The above result justifies the name `dual angle'.
\begin{remark}
 \Cref{D} can also be independently proved using \Cref{shift}.
\end{remark}

A priori, it not clear why $\alpha(C, D) = 0 $ if and
only if $C = D$. However, when the pair $B \subset A$ is irreducible,
we have:

\begin{proposition}
  Let $(B, C, D, A)$ be an irreducible quadruple of simple unital
  $C^*$-algebras such that $\mathcal{E}_0(A, B) \neq \emptyset$.  Then,
  \begin{enumerate}
  \item $\alpha(C,D)=0$ if and only if $C=D$; and
    \item the
  interior and exterior angles between $P$ and $Q$ are given by
\begin{equation} \label{traceformulaofangle}
 \cos \big(\alpha^B_A(C,D)\big)=\displaystyle \frac{ \tr (e_Ce_D)-\tau}{\sqrt{\tr (e_C) -\tau}\sqrt{\tr (e_D) -\tau}},\ \text{and}
\end{equation}
\begin{equation}
\cos\big(\beta^B_A(C,D)\big)=\displaystyle \frac{\tr(e_C e_D)-\tr (e_C)\, \tr (e_D)}{\sqrt{\tr (e_C)- {\tr (e_C)}^2}\sqrt{\tr (e_D)- {\tr (e_D)}^2}}.
\end{equation}
\end{enumerate}
\end{proposition}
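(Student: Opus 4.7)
The plan hinges on the observation that, since $B\subset A$ is irreducible, for every $x\in B'\cap A_1$ the image $\widetilde{E}(x)=E_1(x)$ lies in $B'\cap A=\mathbb{C}$; moreover $E_1(x)=\tr(x)\cdot 1_A$ (since $\tr = E_0\circ E_1$ and $E_0$ fixes scalars). So the $A$-valued norm $\lVert\langle y,z\rangle_A\rVert$ collapses to $|\tr(y^*z)|$ whenever $y,z\in B'\cap A_1$. With this in hand, I will first record two elementary identities: (a) $e_C, e_D\in B'\cap A_1$, immediate since $B\subseteq C\cap D$; and (b) $e_1 e_C = e_C e_1 = e_1$ (and likewise for $D$), which follows from the multiplicativity $E^A_B = E^C_B\circ E^A_C$ supplied by \Cref{intermediate-ce}. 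Using (b), expanding $(e_C-e_1)(e_D-e_1) = e_C e_D - e_1$ and $(e_C-e_1)^2 = e_C-e_1$ gives
\[
 \lVert\langle e_C-e_1, e_D-e_1\rangle_A\rVert = |\tr(e_C e_D)-\tau|,\qquad \lVert e_C-e_1\rVert_A^2 = \tr(e_C)-\tau,
\]
and analogously for $D$. Positivity $\tr(e_C e_D)-\tau = \tr((e_C-e_1)e_D)\ge 0$ (product of two positives under the trace) lets me drop the absolute value, producing \eqref{traceformulaofangle}.

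For part (1), I will interpret $\cos\alpha^B_A(C,D)=1$ as the equality case of Cauchy--Schwarz for the scalar inner product $\langle x, y\rangle := \tr(x^*y)$ on the finite-dimensional Hilbert space $B'\cap A_1$ (the faithfulness of $\tr$ from \Cref{f1} making this a genuine inner product), applied to $e_C-e_1$ and $e_D-e_1$. Equality forces $e_C-e_1 = \lambda(e_D-e_1)$ for some scalar $\lambda$; both sides being non-zero projections pins $\lambda=1$, so $e_C=e_D$. Since $C\subset A$ is itself a simple unital inclusion with a unique finite-index (minimal) conditional expectation by \Cref{intermediate-ce}, \Cref{bisc1} yields $C=\{e_C\}'\cap A$, and the same for $D\subset A$ gives $D=\{e_D\}'\cap A=C$. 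The reverse implication is trivial.

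For the exterior-angle formula I will apply the interior identity just established to the dual quadruple $(A, C_1, D_1, A_1)$, noting $[A_1:A]_0=[A:B]_0$ by \Cref{dual-index}. Translating back to the original tower, I use multiplicativity of the Watatani index (\Cref{min3}) to obtain $\tr(e_{C_1}) = [A_1:C_1]_0^{-1} = \tau/\tr(e_C)$ (and likewise for $D$); for the cross term I invoke the Fourier isometry (\Cref{F}) applied to $e_C, e_D$ together with $\mathcal{F}_1(e_C)=\tau^{-1/2}\tau_C\, e_{C_1}$ from \Cref{fouriertransformofec1} to extract
\[
 \tr(e_{C_1}e_{D_1}) = \frac{\tau\,\tr(e_C e_D)}{\tr(e_C)\tr(e_D)}.
\]
Substituting and simplifying then yields the claimed expression for $\cos\beta^B_A(C,D)$. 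The main bookkeeping obstacle is the compatibility of the trace normalizations across the two towers, which I expect to handle using \Cref{A-C-B} together with the irreducibility of $A\subset A_1$ (\Cref{inverse}).
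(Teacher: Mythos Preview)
Your proposal is correct and follows the same line as the paper's own argument. The paper is quite terse here---for (1) it reduces the $A$-valued inner product to the scalar trace exactly as you do and then defers to \cite{BDLR}, and for (2) it simply says the formulas follow ``as in \cite{BDLR}''---so your argument in fact fills in the details the paper omits; your use of the Fourier isometry (\Cref{F}) together with \Cref{fouriertransformofec1} to compute $\tr(e_{C_1}e_{D_1})$ is precisely the computation underlying \Cref{relationshipbetweenalphanadbeta}.
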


\begin{proof}
(1): By \Cref{rel-comm}, $B^{\prime}\cap A_1$ is finite dimensional; so,
  it is a Hilbert space with respect to the inner product induced by
  the tracial state $\tr$. Further, if $a_1,b_1\in B^{\prime}\cap
  A_1$, then $E_1(a^*_1b_1)\in B^{\prime}\cap A=\C$ and, therefore,
  ${\langle a_1,b_1\rangle}_A= E_1(a^*_1b_1)=E_0\circ
  E_1(a^*_1b_1)=\tr(a^*_1b_1)=\langle a_1,b_1\rangle.$ One then easily
  deduces that $\alpha(C,D)=0$ if and only if $C=D.$ See, for
  instance, \cite[Proposition 2.3]{BDLR}.\smallskip

(2): The formulas for angles follow easily (as in \cite{BDLR}) and we
  omit the details. For example, formula for $\alpha$ follows from the
  fact that $E_A(e_Ce_D-e_B)\in B^{\prime}\cap A=\C$.
\end{proof}

We now proceed to apply our  notions of $C^*$-Fourier theory and interior
angle to obtain a bound on the cardinality of intermediate
$C^*$-subalgebras of a given inclusion of simple $C^*$-algebras. Note
that if the pair $B \subset A$ is not irreducible, then its
intermediate $C^*$-algebras are not necessarily simple and conjugating
by unitaries of $B'\cap A$, we obtain infinitely many intermediate
$C^*$-subalgebras from any given one. So, we will now restrict our
analysis to irreducible pairs only.\smallskip

First, we develop some auxiliary results (which are also of
independent interest) that will be required ahead.

\subsection{Two auxiliary operators associated to an irreducible quadruple of $C^*$-algebras}\( \)

Throughout this subsection, $\mathcal{G}=(B,C,D,A)$
will be assumed to be an irreducible quadruple of simple unital
$C^*$-algebras such that $\mathcal{E}_0(A, B) \neq \emptyset$.

Fix any two quasi-bases $\{\gamma_j:1\leq j\leq m\}$ and $\{\delta_k:
1\leq k\leq l\}$ for $E^C_B$ and $E^D_B$, respectively. We wish to
show that the two positive elements
\[
\sum_{j,k}
\gamma_j\delta_ke_B\delta^*_k\gamma^*_j \text{ and }
\sum_{j,k} \delta_k\gamma_j e_B \gamma^*_j\delta^*_k
\]
remain same even when we vary the quasi-bases. Similar operators have
also been  studied and used in \cite{BDLR, BG2, BG}.

\begin{lemma}  \label{intermediatebasis}
 We have
 \[
 \sum_k\delta_k e_B\delta^*_k=e_D\text{ and }
 \sum_j \gamma_j e_B \gamma_j^*=e_C.
 \]
In particular, $\sum_{j,k} \gamma_j\delta_ke_B\delta^*_k\gamma^*_j
 \in D_1$ and $\sum_{j,k} \delta_k\gamma_j e_B \gamma^*_j\delta^*_k
 \in C_1$. 
\end{lemma}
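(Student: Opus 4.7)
By symmetry it suffices to prove the first identity, $\sum_k \delta_k e_B \delta_k^* = e_D$. Write $p_D := \sum_k \delta_k e_B \delta_k^* \in A_1$. The overall plan is to verify that $p_D$ satisfies the defining relations of the Jones projection for $D \subset A$, and then to identify $p_D$ with $e_D$ inside $A_1$ under the canonical inclusion $D_1 \hookrightarrow A_1$ of \Cref{A-C-B}.

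First, using the Jones identity $e_B y e_B = E^A_B(y)\,e_B$ for $y \in A$, the factorization $E^A_B = E^D_B \circ E^A_D$ from \Cref{intermediate-ce}, and the quasi-basis identity $\sum_k \delta_k E^D_B(\delta_k^* y) = y$ for $y \in D$, routine bookkeeping gives
\[
p_D^2 = p_D \quad \text{and}\quad p_D\,a\,p_D = E^A_D(a)\,p_D \text{ for all } a \in A.
\]
In the computation of $p_D^2$ one uses $\delta_j^*\delta_k \in D$ to pass from $E^A_B$ to $E^D_B$; in the Jones relation one uses $\delta_j,\delta_k \in D$ to pull $E^A_D(a)$ out of $E^D_B$. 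Consequently $C^*\langle A,p_D\rangle \subset A_1$ is a realisation of the $C^*$-basic construction of $D\subset A$, with $p_D$ playing the role of the Jones projection.

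To conclude $p_D = e_D$, I would compare the actions of both operators on the Hilbert $B$-module $\mathfrak{A}$ into which $A_1$ embeds faithfully. The same quasi-basis manipulation applied to $\iota(a)$ for $a\in A$ gives $p_D\,\iota(a) = \iota(E^A_D(a))$. On the other hand, the embedding $D_1 \hookrightarrow A_1$ from \Cref{A-C-B} is governed by the multiplication rule $(x_1 e_D x_2)(y_1 e_B y_2) = x_1 E^A_D(x_2 y_1)\,e_B y_2$; specialising to $x_1 = x_2 = 1$ gives $e_D\,y\,e_B = E^A_D(y)\,e_B$ in $A_1$, which, applied to $\iota(1)\in \mathfrak{A}$ and using $e_B \iota(1) = \iota(1)$, yields $e_D\,\iota(y) = \iota(E^A_D(y))$ for all $y\in A$. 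Density of $\iota(A)$ in $\mathfrak{A}$ then forces $p_D = e_D$ as adjointable operators on $\mathfrak{A}$, hence as elements of $A_1$.

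The ``in particular'' assertion is immediate from the first part: substituting gives
\[
\sum_{j,k} \gamma_j \delta_k e_B \delta_k^* \gamma_j^* \;=\; \sum_j \gamma_j\, e_D\, \gamma_j^*,
\]
which lies in $D_1$ since each $\gamma_j \in C \subset A \subset D_1$ and $e_D \in D_1$; the symmetric sum likewise lies in $C_1$. The main obstacle is precisely the small Hilbert-module manipulation in the previous paragraph needed to pin down the action of $e_D$ after the canonical embedding; everything else is routine quasi-basis bookkeeping.
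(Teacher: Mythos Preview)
Your proof is correct and its essential step---computing $p_D\,\iota(a)=\iota(E^A_D(a))$ via the quasi-basis identity and $E^A_B=E^D_B\circ E^A_D$---is exactly the paper's (one-line) proof. Your preliminary verification that $p_D^2=p_D$ and $p_D\,a\,p_D=E^A_D(a)\,p_D$ is superfluous once the action on $\mathfrak{A}$ is known, and your careful check of how $e_D$ acts after the embedding $D_1\hookrightarrow A_1$ is a point the paper leaves implicit (it treats $e_D$ as the operator $\iota(a)\mapsto\iota(E^A_D(a))$ on $\mathfrak{A}$ from the outset, as in the proof of \Cref{A-C-B}(2)).
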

\begin{proof}
 For any $a\in A$, we have
 \begin{align*}
  \sum_k\delta_ke_B\delta^*_k(a)= & \sum_k\delta_kE^A_B(\delta^*_ka)\\
  =& \sum_k \delta_kE^D_B\circ E^A_D(\delta^*_ka)\\
  =& \sum_k \delta_kE^D_B\Big(\delta^*_k E^A_D(a)\Big)\\
  =& E^A_D(a) \text{\hspace*{10mm} (since}~~~\{\delta_k\}~~~\text{is a quasi-basis for}~~E^D_B)\\
  =& e_D(a).
 \end{align*}
Similarly, for $e_C$.
\end{proof}

\begin{proposition} \label{formula of p} We have
  \begin{enumerate}
    \item $E^{B^{\prime}\cap A_1}_{C^{\prime}\cap A_1}(e_D)
      ={[C:B]}_0^{-1} \sum_{j,k}
      \gamma_j\delta_ke_B\delta^*_k\gamma^*_j $ and
      \item 
        $ E^{B^{\prime}\cap A_1}_{D^{\prime}\cap A_1}(e_C) =
        {[D:B]}_0^{-1} \sum_{j,k} \delta_k\gamma_j e_B \gamma^*_j\delta^*_k.$
        \end{enumerate}
  In particular, \( \sum_{j,k}
      \gamma_j\delta_ke_B\delta^*_k\gamma^*_j
      \in C'\cap D_1$, $
 \sum_{j,k} \delta_k\gamma_j e_B \gamma^*_j\delta^*_k
 \in D'\cap C_1\) and they are independent of
 the quasi-bases $\{\gamma_j\}$ and $\{\delta_k\}$.
\end{proposition}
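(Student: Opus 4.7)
The plan is to interpret the right-hand sides as values of trace-preserving conditional expectations and then invoke uniqueness of such CEs. I prove (1) in detail; (2) follows from the mirror argument, interchanging the roles of $C$ and $D$.

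First, \Cref{intermediatebasis} simplifies the double sum: since $\sum_k \delta_k e_B \delta_k^* = e_D$,
$$\sum_{j,k} \gamma_j \delta_k e_B \delta_k^* \gamma_j^* \;=\; \sum_j \gamma_j \Bigl(\sum_k \delta_k e_B \delta_k^*\Bigr)\gamma_j^* \;=\; \sum_j \gamma_j e_D \gamma_j^*.$$
Hence (1) reduces to proving $E^{B'\cap A_1}_{C'\cap A_1}(e_D) = \frac{1}{[C:B]_0} \sum_j \gamma_j e_D \gamma_j^*$. Define $\Phi\colon B'\cap A_1 \to A_1$ by $\Phi(x) = \frac{1}{[C:B]_0} \sum_j \gamma_j x \gamma_j^*$. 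I claim $\Phi$ is the trace-preserving conditional expectation onto $C'\cap A_1$, following the template of \Cref{f2} with the roles of $B \subset A$ replaced by $B \subset C$.

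Three verifications are needed. (i) $\Phi(x) \in C'$ for $x \in B'\cap A_1$: for $c \in C$, expanding $\gamma_j^* c = \sum_k E^C_B(\gamma_j^* c \gamma_k)\gamma_k^*$ via the quasi-basis identity and using that $x \in B'$ commutes with $E^C_B(\gamma_j^* c \gamma_k) \in B$, rearrangement and reassembly yield $\Phi(x) c = c\,\Phi(x)$, in direct parallel with \Cref{f2}. (ii) $\Phi$ is the identity on $C'\cap A_1$: for $x \in C'\cap A_1$, $\Phi(x) = [C:B]_0^{-1} x \sum_j \gamma_j \gamma_j^* = x$, since $\sum_j \gamma_j \gamma_j^* = \mathrm{Ind}(E^C_B) = [C:B]_0$. (iii) $\Phi$ preserves $\tr$: since $\gamma_j \in A$, the $A$-bimodule property of $E_1$ gives $E_1(\gamma_j x \gamma_j^*) = \gamma_j E_1(x) \gamma_j^*$, and irreducibility of the quadruple forces $E_1(x) \in B'\cap A = \mathbb{C}$ to be the scalar $\tr(x)$. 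Hence
$$\tr\bigl(\Phi(x)\bigr) \;=\; \frac{\tr(x)}{[C:B]_0}\, E^A_B\Bigl(\sum_j \gamma_j \gamma_j^*\Bigr) \;=\; \tr(x).$$

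By uniqueness of the trace-preserving conditional expectation on the finite-dimensional inclusion $C'\cap A_1 \subseteq B'\cap A_1$ (with faithful trace $\tr$), $\Phi = E^{B'\cap A_1}_{C'\cap A_1}$; specialising at $x = e_D$ proves (1). The mirror argument with $\{\delta_k\}$ in place of $\{\gamma_j\}$ proves (2), and independence from the choices of quasi-bases is then automatic from the intrinsic definitions of the CEs. For the finer containment $\sum_j \gamma_j e_D \gamma_j^* \in C' \cap D_1$ (and its mirror in $D' \cap C_1$), the $C'$-part is established by the above; the $D_1$-part is the main obstacle, since individual terms $\gamma_j e_D \gamma_j^*$ typically fail to lie in $D_1$ when $\gamma_j \notin D$. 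My plan here is to show that $\Phi$ sends $B'\cap D_1$ into $C'\cap D_1$ by identifying $\Phi|_{B'\cap D_1}$ with the unique trace-preserving CE $E^{B'\cap D_1}_{C'\cap D_1}$ via another uniqueness argument applied inside $D_1$; since $e_D \in B'\cap D_1$, the desired containment then follows.
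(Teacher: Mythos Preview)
Your overall strategy mirrors the paper's: both identify the map $x \mapsto [C:B]_0^{-1}\sum_j \gamma_j x \gamma_j^*$ with the $\tr$-preserving conditional expectation from $B'\cap A_1$ onto $C'\cap A_1$, then specialise at $x = e_D$ and invoke \Cref{intermediatebasis}. Two points require attention.

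First, a small logical gap: your conditions (i)--(iii) do not by themselves characterise the $\tr$-preserving conditional expectation. A linear map with range in the subalgebra, restricting to the identity there, and preserving the trace need not be a conditional expectation (it can fail the bimodule property). What is actually needed, and what the paper verifies directly, is the stronger relation $\tr(\Phi(x)y) = \tr(xy)$ for all $y \in C'\cap A_1$; this does uniquely determine the $\tr$-preserving CE once the range is known. Your computation in (iii) extends immediately to this: since each $\gamma_j \in C$ commutes with $y$, one has $\sum_j \gamma_j x \gamma_j^*\, y = \sum_j \gamma_j (xy) \gamma_j^*$, and then your argument (applied with $xy$ in place of $x$) gives $\tr(\Phi(x)y) = \tr(xy)$. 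Alternatively, note that $\Phi$ is manifestly completely positive and unital, hence contractive, so by Tomiyama's theorem it is a genuine conditional expectation; then (iii) suffices for uniqueness.

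Second, and more substantively, your closing paragraph rests on a misconception. You write that individual terms $\gamma_j e_D \gamma_j^*$ ``typically fail to lie in $D_1$ when $\gamma_j \notin D$'', and propose an elaborate workaround via another conditional expectation inside $D_1$. But $D_1 = C^*\langle A, e_D\rangle$ is the basic construction for the pair $D \subset A$, so $A \subset D_1$; since $\gamma_j \in C \subset A$, each $\gamma_j$ already lies in $D_1$, and hence every term $\gamma_j e_D \gamma_j^*$ is in $D_1$ trivially. This is exactly what the ``In particular'' clause of \Cref{intermediatebasis} records. No further argument is needed for the $D_1$-containment, and the proposed detour should be deleted.
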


\begin{proof}
(1): In view of \Cref{intermediatebasis}, it suffices to generalize Lemma
\ref{f2} as follows:

We show that
 \begin{equation}\label{importantequation3}
 E^{B^{\prime}\cap A_k}_{C^{\prime}\cap A_k}(x)={{[C:B]}_0}^{-1}\sum_j \gamma_jx\gamma^*_j\ \text{for all } x\in B^{\prime}\cap A_k.
 \end{equation}
 It is easy to see that $\sum_j \gamma_j x\gamma^*_j\in C^{\prime}\cap A_k$. Indeed, for any $c\in C$, we have 
  \begin{align*}
  \sum_j \gamma_jx\gamma^*_j c
  &= \sum_{j,j'} \gamma_jxE^C_B(\gamma^*_jc\gamma_{j'})\gamma^*_{j'}\\
  &= \sum_{j,j'} \gamma_j E^C_B(\gamma^*_jc\gamma_{j'})x\gamma^*_{j'}~~~~\hspace*{20mm} (\textrm{since}~~x\in B^{\prime})\\
  &= c\sum_j \gamma_{j'}x\gamma^*_{j'}.
  \end{align*}
So, it now suffices to show that
$\tr\big(\sum_j\gamma_jx\gamma^*_jy\big)={[B:C]}_0\tr(xy)$ for all
$y\in C^{\prime}\cap A_k$. For any such  $y$, we have
\begin{align*}
  \tr\big(\sum_j\gamma_jx\gamma^*_jy\big) &= E_0\circ E_1\circ\cdots
  E_k\big(\sum_j\gamma_jx\gamma^*_jy\big)\\ &= E_0\circ E_1\circ\cdots
  E_k \big(\sum_j \gamma_j
  xy\gamma^*_j\big) \hspace*{30mm}  (\textrm{since}~~y\in C^{\prime})\\ &=
  E_0\bigg(\sum_j\gamma_j\big(E_1\circ \cdots \circ
  E_k(xy)\big)\gamma^*_j\bigg)\\ &= {[B:C]}_0 \tr(xy),
  \end{align*}
where the last equality follows because $E_1\circ\cdots \circ E_k(xy)
\in B^{\prime}\cap A=\C$ and is thus equal to $\tr(xy)$. Thus,
Eq.\eqref{importantequation3} holds.\smallskip

(2) follows by symmetry.
\end{proof}

\begin{definition}
  We define two positive elements $p(C,D)$ and $q(C,D)$ in $C'\cap D_1$ and $D'\cap C_1$, respectively, by
  \begin{equation}\label{p-and-q}
  p(C,D)=\sum_{i,j} \gamma_i\delta_je_B\delta^*_j\gamma^*_i
  \text{ and  } q(C,D)= \sum_{i,j} \delta_j\gamma_i e_B \gamma^*_i\delta^*_j
  \end{equation}
  for any two quasi-bases  $\{\gamma_i:1\leq i\leq m\}$ and $\{\delta_j: 1\leq j\leq n\}$ 
for $E^C_B$ and $E^D_B$, respectively,.
\end{definition}

Interestingly, these two auxiliary operators get mapped to each other under the
rotation on $B'\cap A_1$.  \color{black}
\begin{proposition}
 \label{relation between p and q}
 ${\gamma}_0\big(p(C,D)\big)=q(C,D)$ and ${\gamma}_0\big(q(C,D)\big)=p(C,D).$
\end{proposition}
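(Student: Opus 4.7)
The plan is as follows. Using \Cref{r2} (that $\gamma_0^2 = \mathrm{Id}$), the two identities $\gamma_0(p(C,D)) = q(C,D)$ and $\gamma_0(q(C,D)) = p(C,D)$ are equivalent, so I would prove only the first. I begin by using \Cref{intermediatebasis} to rewrite
\[
p(C,D) = \sum_i \gamma_i\, e_D\, \gamma_i^*, \qquad q(C,D) = \sum_j \delta_j\, e_C\, \delta_j^*,
\]
via the identities $\sum_j \delta_j e_B \delta_j^* = e_D$ and $\sum_i \gamma_i e_B \gamma_i^* = e_C$.

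Next, I would apply the explicit rotation formula $\gamma_0(x) = \tau^{-1} \sum_l E_1(e_1 \lambda_l x) e_1 \lambda_l^*$ from \Cref{formulaforrotations2} to $x = p(C,D)$, using the quasi-basis $\{\lambda_l\} = \{\eta_k \gamma_i\}$ for $E^A_B$, where $\{\eta_k\}$ is a quasi-basis for the minimal conditional expectation $E^A_C : A \to C$. (A short check confirms $\{\eta_k \gamma_i\}$ is a quasi-basis for $E^A_B = E^C_B \circ E^A_C$ using the left $C$-linearity of $E^A_C$.) The key simplifications are: (i) $p(C,D) \in C'$, so that $\gamma_i$ commutes past $p(C,D)$; (ii) $e_1 \eta_k \gamma_i e_D = e_1 E^A_D(\eta_k \gamma_i)$, a consequence of $e_1 e_D = e_1$ (since $B \subset D$), $e_D a e_D = E^A_D(a) e_D$, and $d e_D = e_D d$ for $d \in D$; (iii) right $A$-linearity of $E_1$ together with $E_1(e_1) = \tau$; (iv) $\sum_i \gamma_i e_1 \gamma_i^* = e_C$ from \Cref{intermediatebasis}; and (v) $\gamma_i^* e_C = e_C \gamma_i^*$ since $\gamma_i \in C$. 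Collecting the resulting terms yields
\[
\gamma_0(p(C,D)) = \sum_{k,i} E^A_D(\eta_k \gamma_i)\, e_C\, (\eta_k \gamma_i)^*.
\]

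The last step is to identify this with $q(C,D)$. For this, I would first establish that, for any right-$B$-linear map $f : A \to A$ and any element $e \in B'\cap A_1$ commuting with $B$, the sum $\sum_\alpha f(\xi_\alpha) e \xi_\alpha^*$ is independent of the quasi-basis $\{\xi_\alpha\}$ for $E^A_B$. This is a short change-of-basis computation which writes $\xi'_\beta = \sum_\alpha \xi_\alpha b_{\alpha\beta}$ with $b_{\alpha\beta} = E^A_B(\xi_\alpha^* \xi'_\beta) \in B$, derives the identity $\sum_\beta b_{\alpha\beta} b_{\alpha'\beta}^* = E^A_B(\xi_\alpha^* \xi_{\alpha'})$, and then applies the quasi-basis relation $\sum_{\alpha'} E^A_B(\xi_\alpha^* \xi_{\alpha'}) \xi_{\alpha'}^* = \xi_\alpha^*$. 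Applying this principle with $f = E^A_D$ (right-$B$-linear since $B \subset D$) and $e = e_C$ (commuting with $B$ since $B \subset C$), I would then switch to the quasi-basis $\{\xi_\alpha\} = \{\mu_l \delta_j\}$ for $E^A_B$, where $\{\mu_l\}$ is a quasi-basis for the minimal $E^A_D$. The right $D$-linearity $E^A_D(\mu_l \delta_j) = E^A_D(\mu_l) \delta_j$ collapses the sum to $\sum_l E^A_D(\mu_l)\, q(C,D)\, \mu_l^*$. Since $q(C,D) \in D'$ and $E^A_D(\mu_l) \in D$, they commute, and the quasi-basis identity $\sum_l E^A_D(\mu_l) \mu_l^* = 1$ (obtained by setting $x = 1$ in $\sum_l E^A_D(x \mu_l) \mu_l^* = x$) then gives $\sum_l E^A_D(\mu_l)\, q(C,D)\, \mu_l^* = q(C,D) \sum_l E^A_D(\mu_l) \mu_l^* = q(C,D)$.

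The main obstacle is the bookkeeping in the middle step, which involves simultaneously juggling three intermediate algebras ($B \subset C,D \subset A$) with their respective Jones projections, conditional expectations, and quasi-bases. The computation is kept clean by exploiting $e_1 e_D = e_1$, by noting that $E^A_D(\eta_k \gamma_i) \in D$ commutes with $e_D$, and by recognizing $\sum_i \gamma_i e_1 \gamma_i^*$ as $e_C$ through \Cref{intermediatebasis}.
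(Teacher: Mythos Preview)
Your proof is correct. It follows a somewhat different route from the paper's.

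The paper works with an arbitrary quasi-basis $\{\lambda_i\}$ for $E^A_B$ throughout. After reaching the expression $\sum_{i,j} E^A_D(\lambda_i\gamma_j)\,\gamma_j^*\,e_1\,\lambda_i^*$, it expands $E^A_D(\lambda_i\gamma_j)$ against the quasi-basis $\{\delta_k\}$ for $E^D_B$, invokes the compatibility $E^D_B\circ E^A_D = E^C_B\circ E^A_C$ to bring $E^A_C$ into play, telescopes the $\gamma_j$-sum, and finishes via $E^A_C(a)e_C = e_C a e_C$ and $\sum_i \lambda_i e_1 \lambda_i^* = 1$. Your approach instead builds in the intermediate algebra $C$ from the outset by choosing the product quasi-basis $\{\eta_k\gamma_i\}$ for $E^A_B$; this lets $e_C$ emerge directly from $\sum_i \gamma_i e_1 \gamma_i^* = e_C$, and you then isolate a clean basis-independence lemma (for sums of the form $\sum_\alpha f(\xi_\alpha)\,e\,\xi_\alpha^*$ with $f$ right-$B$-linear and $e\in B'$) to switch to the product quasi-basis $\{\mu_l\delta_j\}$ adapted to $D$. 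The two arguments are of comparable length; yours trades the explicit use of $E^D_B\circ E^A_D = E^C_B\circ E^A_C$ for two well-chosen product bases and a reusable change-of-basis principle.
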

\begin{proof}
As before, suppose $\{\lambda_i: 1 \leq i \leq n\}$, $\{\gamma_j:1\leq j \leq m \}$ and
$\{\delta_k:1\leq k\leq l \}$ are quasi-bases for $E^A_B$, $E^C_B$,
 $E^D_B$, respectively. We have
 \begin{align*}
  \gamma_0\big(p(C,D)\big)
  &= {\tau}^{-1} \sum_i E_1\bigg(e_1\lambda_i\big(\sum_j \gamma_je_D\gamma^*_j\big)\bigg)e_1\lambda^*_i\\
  &= {\tau}^{-1} \sum_{i,j} E_1\big(e_1e_D\lambda_i\gamma_je_D\gamma^*_j\big)e_1\lambda^*_i\\
  &= {\tau}^{-1}\sum_{i,j} E_1\bigg(e_1 E^A_D\big(\lambda_i\gamma_j\big)\gamma^*_j\bigg)e_1\lambda^*_i\\
  &= \sum_{i,j} E^A_D(\lambda_i\gamma_j)\gamma^*_je_1\lambda^*_i\\
  &= \sum_{i,j,k} \delta_k E^D_B\bigg(\delta^*_kE^A_D\big(\lambda_i\gamma_j\big)\bigg)\gamma^*_je_1\lambda^*_i\\
  &= \sum_{i,j,k} \delta_k E^D_B\circ E^A_D(\delta^*_k\lambda_i\gamma_j)\gamma^*_je_1\lambda^*_i\\
  &= \sum_{i,j,k} \delta_k E^C_B\circ E^A_C(\delta^*_k\lambda_i\gamma_j)\gamma^*_je_1\lambda^*_i\\
  &= \sum_{i,j,k} \delta_k  E^C_B\bigg(E^A_C\big(\delta^*_k\lambda_i\big)\gamma_j\bigg)\gamma^*_je_1\lambda^*_i\\
  &=\sum_{i,k} \delta_k E^A_C\big(\delta^*_k\lambda_i\big)e_1\lambda^*_i \hspace*{20mm} (\textrm{since}~~~\{\gamma_j\}~~~\textrm{is a quasi-basis for}~~~E^C_B)\\
  &= \sum_{i,k} \delta_k E^A_C\big(\delta^*_k\lambda_i\big)e_Ce_1\lambda^*_i\\
  &= \sum_{i,k} \delta_ke_C\delta^*_k\lambda_ie_1\lambda^*_i\\
  &=\sum_k \delta_ke_C\delta^*_k\\
  &=q(C,D).
 \end{align*}
 Then, on the other hand, applying Lemma \ref{r2}, we also obtain
 $\gamma_0\big(q(C,D)\big)=p(C,D).$
\end{proof}

We deduce some consequences that will be used ahead.
\begin{proposition} \label{pmultipleofprojection}
  There exists a positive scalar $t$ such that
  \begin{enumerate}
    \item $p(C,D)e_D=te_D$  and  $[p(C,D)]= \frac{1}{t} p(C,D)$;
\item $q(C, D)e_C = te_C$ and  $[q(C,D)] =
  \frac{1}{t} q(C,D)$; and
\item $p(C,D)e_C = t e_C$ and $q(C,D)e_D = te_D$,
  \end{enumerate}
  where $[x]$ denotes the support projection of $x$.\smallskip

  Moreover, \(e_C\vee e_D \leq [p(C, D)] \wedge [q(C, D)]\) and \( t =
  [A:B]_0 \tr(e_C e_D)\).  In particular, $e_C$ and $e_D$ are never
  orthogonal to each other.
\end{proposition}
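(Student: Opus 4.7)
The plan is to reduce all four multiplicative identities to a single direct calculation, then use the rotation duality and a trace computation to identify the four a priori independent scalars with $t = [A:B]_0 \tr(e_C e_D)$. The crucial computational input is the reduced form $p(C,D) = \sum_j \gamma_j e_D \gamma_j^*$ (from \Cref{intermediatebasis}) together with the defining identity $e_D x e_D = E^A_D(x) e_D$ of the Jones projection $e_D$ for $x \in A$.

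First I would compute $p(C,D) e_D = \sum_j \gamma_j (e_D \gamma_j^* e_D) = \big(\sum_j \gamma_j E^A_D(\gamma_j^*)\big) e_D$, and then show that the scalar coefficient $x := \sum_j \gamma_j E^A_D(\gamma_j^*)$ commutes with every $b \in B$. The verification of $bx = xb$ is a short manipulation using the quasi-basis property of $\{\gamma_j\}$ for $E^C_B$: expand $b\gamma_j$ via the quasi-basis inside $E^A_D$, use $B \subseteq D$ so that $E^A_D$ is $B$-bilinear on the relevant piece, and then collapse back through the quasi-basis identity. Irreducibility $B^{\prime}\cap A = \C$ then forces $x$ to be a scalar $t_1$, necessarily nonnegative since $e_D p(C,D) e_D = t_1 e_D \geq 0$. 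An identical computation applied to $q(C,D) = \sum_k \delta_k e_C \delta_k^*$ yields $q(C,D) e_C = t_2 e_C$ for some $t_2 \geq 0$. To obtain the other two identities, I would apply the $*$-preserving anti-automorphism $\gamma_0$: by \Cref{e1 and ec} it fixes $e_C$ and $e_D$, and by \Cref{relation between p and q} it exchanges $p(C,D)$ and $q(C,D)$. Applying $\gamma_0$ to $p(C,D) e_D = t_1 e_D$ and then taking adjoints yields $q(C,D) e_D = t_1 e_D$, and symmetrically $p(C,D) e_C = t_2 e_C$.

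The conceptual heart of the proof is then the identification $t_1 = t_2 = [A:B]_0 \tr(e_C e_D)$, for which I would use \Cref{formula of p}. Since $e_C \in C^{\prime}\cap C_1 \subseteq C^{\prime}\cap A_1$, the right $C^{\prime}\cap A_1$-module property and trace-preservation of $E^{B^{\prime}\cap A_1}_{C^{\prime}\cap A_1}$ give
\[
t_2 \tr(e_C) = \tr\big(p(C,D) e_C\big) = [C:B]_0 \, \tr\big(E^{B^{\prime}\cap A_1}_{C^{\prime}\cap A_1}(e_D e_C)\big) = [C:B]_0 \, \tr(e_C e_D),
\]
so dividing by $\tr(e_C) = [A:C]_0^{-1}$ and invoking multiplicativity of Watatani index (\Cref{min3}) delivers $t_2 = [A:B]_0 \tr(e_C e_D)$. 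The analogous calculation using $q(C,D) e_D = t_1 e_D$ with $e_D \in D^{\prime}\cap A_1$ gives the same value for $t_1$, hence $t_1 = t_2 =: t$.

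The remaining claims follow readily. Since $p(C,D) \in C^{\prime}$ it commutes with each $\gamma_j$, so $p(C,D)^2 = \sum_j \gamma_j \big(p(C,D) e_D\big) \gamma_j^* = t \, p(C,D)$, and positivity of $p(C,D)$ makes $p(C,D)/t$ a self-adjoint idempotent, necessarily its support projection $[p(C,D)]$; the symmetric argument handles $[q(C,D)]$. From $p(C,D) e_C = t e_C$ and $p(C,D) e_D = t e_D$ one reads off $[p(C,D)] \geq e_C$ and $[p(C,D)] \geq e_D$, hence $[p(C,D)] \geq e_C \vee e_D$, and symmetrically $[q(C,D)] \geq e_C \vee e_D$. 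For non-orthogonality, $e_C \perp e_D$ would force $t = 0$, whence $p(C,D)^2 = 0$ and $p(C,D) = 0$ by positivity; since each summand $\gamma_j e_D \gamma_j^*$ is positive, this forces $\gamma_j e_D = 0$, and then $E^{A_1}_A(\gamma_j e_D) = \gamma_j \, [A:D]_0^{-1} = 0$ gives $\gamma_j = 0$ for every $j$, contradicting the quasi-basis property. The main obstacle is the alignment of the four a priori distinct scalars into the single value $[A:B]_0 \tr(e_C e_D)$; this requires coupling the rotation duality of $p$ and $q$ to the explicit formula of \Cref{formula of p} and carefully tracking which elements lie in which relative commutants.
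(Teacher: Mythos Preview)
Your argument is correct and complete, though it proceeds along a somewhat different route than the paper's proof. The paper establishes $p(C,D)e_D = te_D$ via the pushdown lemma (\Cref{pushdown}) applied in $D_1$, using that $E^{D_1}_A\big(p(C,D)e_D\big) \in B'\cap A = \C$; you instead compute $p(C,D)e_D = \big(\sum_j \gamma_j E^A_D(\gamma_j^*)\big)e_D$ directly from the Jones relation and verify the coefficient lies in $B'\cap A$ by a quasi-basis manipulation. For $p(C,D)^2 = t\,p(C,D)$, the paper invokes the $C'\cap A_1$-module property of the conditional expectation in \Cref{formula of p}, whereas you observe more directly that $p(C,D) \in C'$ commutes with each $\gamma_j$ and slide it through the sum. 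To identify the scalars, the paper first argues $t_p = t_q$ abstractly from $\gamma_0\big(p(C,D)^2\big) = q(C,D)^2$ and only then computes the common value by applying $E^{A_1}_A$ to $te_C = \sum_j \gamma_j e_D e_C \gamma_j^*$; you instead compute both $t_1$ and $t_2$ separately via the trace and \Cref{formula of p}, reaching $[A:B]_0\,\tr(e_Ce_D)$ twice. Your approach is slightly more hands-on and avoids the pushdown lemma, while the paper's is a bit more structural; both rely on the same core inputs ($\gamma_0$-duality, \Cref{formula of p}, irreducibility). Your explicit non-orthogonality argument (forcing all $\gamma_j = 0$) is a nice touch; the paper leaves this implicit, reading $t > 0$ off the positivity and nonvanishing of $p(C,D)$.
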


\begin{proof}
(1): Since $p(C,D)\in D_1$, using Lemma \ref{pushdown}, we
have 
\[
p(C,D)e_D= [A:D]_0 E^{D_1}_A\Big(p(C,D)e_D\Big)e_D.
\]
Since $p(C,D)e_D\in B^{\prime}\cap D_1$, we must have
$E^{D_1}_A\big(p(C,D)e_D\big)\in B^{\prime}\cap A=\C.$ In other words,
$p(C,D)e_D=te_D$ for some scalar $t.$ 
Now, applying Proposition \ref{formula of p}, we obtain
 \begin{align*}
 {p(C,D)}^2= & {[C:B]}_0 p(C,D)E^{B^{\prime}\cap D_1}_{C^{\prime}\cap D_1}(e_D) \\
= & {[C:B]}_0 E^{B^{\prime}\cap D_1}_{C^{\prime}\cap D_1}\big(p(C,D)e_D\big)\\
=& t {[C:B]}_0 E^{B^{\prime}\cap D_1}_{C^{\prime}\cap D_1}(e_D)\\
=& t\, p(C,D).
\end{align*}
Since $ p(C,D)$ and $p(C,D)^2$ are both positive and
   non-zero, we must have $t > 0$. We also deduce that \( [p(C,D)]=
 \frac{1}{t} p(C,D)\geq e_D.\) \smallskip

(2): By symmetry, $q(C,D)$  also satisfies above properties.  To
show that the scalars have the same value, observe that, using
Theorem \ref{antiauto}, we obtain
\(
 {\gamma_0\big(p(C,D)\big)}^2=\gamma_0\big(p(C,D)^2\big).
\)
Then, by applying Proposition \ref{relation between p and q}, we get ${q(C,D)}^2=tq(C,D).$\smallskip

(3): We have $q(C,D)e_C=te_C$. Thus, by applying $\gamma_0$, we obtain
$te_C = e_C p(C,D) = p(C,D) e_C$, by \Cref{relation between p and q}
and \Cref{gamma0ec}.\smallskip

 From Item (2), we have \( [p(C,D)]= \frac{1}{t} p(C,D)\geq
e_D \) and \( [q(C,D)]= \frac{1}{t} q(C,D)\geq e_C\); and from Item (3), we
obtain $ [p(C,D)]\geq e_C$ and $ [q(C,D)] \geq e_D$.  Thus, $e_C \vee
e_D \leq [p(C,D)] \wedge [q(C,D)]$. \smallskip

Finally, from Item (3) and \Cref{intermediatebasis}, we obtain
$te_C=p(C,D)e_C=\sum_i\gamma_ie_De_C\gamma^*_i$; so that 
  \[
  \frac{t}{ [A:C]_0}=tE^{A_1}_A(e_C) = \sum_i\gamma_iE^{A_1}_A(e_De_C) \gamma_i^* = {[C:B]}_0E^{A_1}_A(e_De_C),
  \]
where the last equality follows from  the facts that
  $E^{A_1}_A(e_De_C)\in B^{\prime}\cap A=\C$ and that
  $\sum_i\gamma_i\gamma^*_i=[C:B]_0$. Hence, $t = [A:B]_0 \tr(e_C
e_D)$.
\end{proof}

We conclude this subsection with  some useful expressions for the above auxiliary operators.
\begin{proposition} \label{formulaofp2} We have
  \[
  p(C,D)={[D:B]}_0 E^{A_1}_{D_1}(e_C) \text{ and }
  q(C,D)={[C:B]}_0 E^{A_1}_{C_1}(e_D).
  \]
  In particular, $\tr(p(C,D))= r  = \tr(q(C,D))$.
\end{proposition}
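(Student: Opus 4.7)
The plan is to establish the main identity $p(C,D) = [D:B]_0\, E^{A_1}_{D_1}(e_C)$ by a direct computation exploiting $D_1$-bimodularity of the conditional expectation; the identity for $q(C,D)$ will then follow by interchanging the roles of $C$ and $D$, and the trace formulas will be an immediate consequence of trace preservation.

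The key observation I intend to use is that $A \subset D_1$, since $D_1$ is the $C^*$-basic construction of $D \subset A$ (cf.\ \Cref{A-C-B}). In particular, the fixed quasi-basis $\{\gamma_i\}$ for $E^C_B$ lies in $C \subset A \subset D_1$, so by $D_1$-bimodularity of $E^{A_1}_{D_1}$, we may pull the $\gamma_i$'s through:
\[
E^{A_1}_{D_1}(e_C) = E^{A_1}_{D_1}\Big(\sum_i \gamma_i\, e_B\, \gamma_i^*\Big) = \sum_i \gamma_i\, E^{A_1}_{D_1}(e_B)\, \gamma_i^*,
\]
where the first equality uses the expansion $e_C = \sum_i \gamma_i e_B \gamma_i^*$ from \Cref{intermediatebasis}. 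Now I apply \Cref{e1 and ec}(2) with the intermediate subalgebra $D$ in place of $C$ to get $E^{A_1}_{D_1}(e_B) = \tfrac{1}{[D:B]_0}\, e_D$. Substituting and using $\sum_i \gamma_i e_D \gamma_i^* = p(C,D)$ (which follows from $e_D = \sum_j \delta_j e_B \delta_j^*$ via \Cref{intermediatebasis} and the very definition of $p(C,D)$), we arrive at $E^{A_1}_{D_1}(e_C) = \tfrac{1}{[D:B]_0}\, p(C,D)$, as required.

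For the second identity $q(C,D) = [C:B]_0\, E^{A_1}_{C_1}(e_D)$, I will rerun the identical argument with the roles of $(C, \{\gamma_i\})$ and $(D, \{\delta_j\})$ interchanged, using the dual expansion $q(C,D) = \sum_j \delta_j e_C \delta_j^*$ and \Cref{e1 and ec}(2) applied to $C$.

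Finally, the trace formulas follow at once from trace preservation of $E^{A_1}_{D_1}$:
\[
\tr\bigl(p(C,D)\bigr) = [D:B]_0\, \tr\bigl(E^{A_1}_{D_1}(e_C)\bigr) = [D:B]_0\, \tr(e_C) = \frac{[D:B]_0}{[A:C]_0} = r,
\]
where $\tr(e_C) = \tau_C = [A:C]_0^{-1}$ was recorded in Eq.~\eqref{traceofec} within the proof of \Cref{ece2ec}; the analogous computation gives $\tr(q(C,D)) = r$. The only nontrivial input is the inclusion $A \subset D_1$ that unlocks the $D_1$-bimodular pull-through; every other step is a mechanical application of \Cref{intermediatebasis} and \Cref{e1 and ec}(2).
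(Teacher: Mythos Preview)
Your argument is correct and considerably more direct than the paper's. The paper proves the result by first establishing the intertwining identity
\[
\gamma_0\Big(E^{B'\cap A_1}_{D'\cap A_1}(x)\Big)=E^{A_1}_{D_1}\big(\gamma_0(x)\big)\quad\text{for } x\in B'\cap A_1,
\]
via \Cref{useful}, \Cref{antiauto} and \Cref{gamma0istracepreserving}, and then combines it with \Cref{formula of p}, \Cref{relation between p and q} and \Cref{gamma0ec} to conclude. Your route bypasses the rotation map entirely: the single observation $A\subset D_1$ lets you pull the quasi-basis $\{\gamma_i\}$ through $E^{A_1}_{D_1}$ and reduce everything to \Cref{intermediatebasis} and \Cref{e1 and ec}(2). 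The trace computation is justified by \Cref{A-C-B}(4), which confirms that $E^{A_1}_{D_1}$ restricts to the $\tr$-preserving expectation on $B'\cap A_1$. The paper's approach has the side benefit of isolating the intertwining identity (Eq.~\eqref{general statement}) as a standalone fact relating $\gamma_0$ to the two conditional expectations, but for the statement at hand your argument is both shorter and more elementary.
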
 

\begin{proof}
 To prove this we need the following general statement:
 \begin{equation}  \label{general statement}
\gamma_0\bigg(E^{B^{\prime}\cap A_1}_{D^{\prime}\cap A_1}(x)\bigg)=E^{A_1}_{D_1}\big(\gamma_0(x)\big)~~~~~\text{for any}~~~x\in B^{\prime}\cap A_1. 
\end{equation}
To see this, first note that, by \Cref{useful},
$\gamma_0\bigg(E^{B^{\prime}\cap A_1}_{D^{\prime}\cap A_1}(x)\bigg)\in
B'\cap D_1$ for all $x \in B'\cap A_1$.  Now, let $x_1\in
B^{\prime}\cap D_1$. Then, by \Cref{useful} again, there exists a $y_1\in
  D^{\prime}\cap A_1$ such that $\gamma_0(y_1)=x_1$; so
 \begin{align*}
  \tr\bigg(\gamma_0\big(E^{B^{\prime}\cap A_1}_{D^{\prime}\cap A_1}(x)\big)x_1\bigg)
  &=   \tr\bigg(\gamma_0\big(E^{B^{\prime}\cap A_1}_{D^{\prime}\cap A_1}(x)\big)\gamma_0(y_1)\bigg)\\
  &=   \tr\bigg(\gamma_0\big(y_1E^{B^{\prime}\cap A_1}_{D^{\prime}\cap A_1}(x)\big)\bigg) \hspace*{15mm}(\text{by \Cref{antiauto}})\\
    &=   \tr\bigg(y_1E^{B^{\prime}\cap A_1}_{D^{\prime}\cap A_1}(x)\bigg) \hspace*{22mm} (\text{by \Cref{gamma0istracepreserving}})\\
&= \tr \bigg(E^{B^{\prime}\cap A_1}_{D^{\prime}\cap A_1}(y_1x)\bigg)\\
&=\tr(y_1x)\\
&=\tr\big(\gamma_0(y_1x\big)) \hspace*{28mm} (\text{by \Cref{gamma0istracepreserving} again})\\
&=\tr\big(\gamma_0(x)x_1\big).
\end{align*}
 This proves \Cref{general statement}.

 Now, by \Cref{formula of p}, we have
$p(C,D)={[C:B]}_0E^{B'\cap A_1}_{C'\cap A_1}\big(e_D\big)$ and by, \Cref{relation between p
  and q}, we know that $\gamma_0\big(p(C,D)\big)=q(C,D)$.
 Thus, applying \Cref{general statement}, we obtain
\[
q(C,D)={[C:B]}_0E^{A_1}_{C_1}\big(\gamma_0(e_D)\big)=
{[C:B]}_0E^{A_1}_{C_1}(e_D),
\]
by     \Cref{gamma0ec}. The expression for $p(C,D)$ follows by symmetry. 
  \end{proof}

\subsection{A bound for the cardinality of intermediate subalgebras} \( \)

 For any unital pair $N\subset M$ of von Neumann algebras, let
 $\mathcal{I}(N\subset M)$ denote the set of its intermediate von
 Neumann subalgebras. Then, $\mathcal{I}(N\subset M)$ forms a lattice
 under the following two natural operations
 \[
 P\wedge Q:= P\cap Q ~~\text{and}~~ P\vee Q:= (P\cup Q)^{\dprime}.
 \]
 If we assume that $N\subset M$ is an irreducible subfactor (of any
 type), then $\mathcal{I}(N\subset M)$ becomes the lattice of its
 intermediate subfactors.  Watatani, in \cite{Wa2} (implicitly in
 \cite{Po3}), showed that if $N \subset M$ is a finite index
 irreducible subfactor of type $II_1$, then $\mathcal{I}(N\subset M)$
 is a finite lattice.  Subsequently, Teruya and Watatani (in
 \cite{TW}) showed that $\mathcal{I}(N\subset M)$ is  finite also if
 $N\subset M$ is a finite index irreducible subfactor of type $III$.

 On the other hand, if we consider a unital inclusion of
 $C^*$-algebras $B\subset A$, the set of intermediates
 $C^*$-subalgebras, denoted by $\mathcal{L}(B,A)$ (to distinguish it
 from the $W^*$-version), also forms a lattice under the following two
 operations:
 \[
 A\wedge B:= A\cap B ~~\text{and}~~ A\vee B:= C^*\big(A,B\big).
 \]
 Recently, Ino and Watatani (in \cite{IW}) proved that
 $\mathcal{L}(B,A)$ is finite if $A$ and $B$ are simple unital
 $C^*$-algebras with $B^{\prime}\cap A=\C$ and $[A:B]_0<\infty$. They
 did not provide any bound for the cardinality of $\mathcal{L}(A,B)$.
 Below, we provide an upper bound for the cardinality of
 $\mathcal{L}(B,A)$.  We also improve Longo's bound for the cardinality of $\mathcal{I}(N
 \subset M)$ for any finite index irreducible subfactor of type $III$.
 
 \subsubsection{Intermediate $C^*$-subalgebras of
   an irreducible pair of simple unital $C^*$-algebras}\( \)

We first observe a certain rigidity phenomenon among the minimal
intermediate $C^*$-subalgebras as was discovered for  the minimal subfactors of an
irreducible subfactor of type $II_1$  in \cite{BDLR}.
\begin{theorem}\label{m1}
Let $B \subset A$ be an irreducible inclusion of simple unital
$C^*$-algebras with a conditional expectation $E: A \rar B$ of finite
Watatani index. Then, the interior angle between any two distinct
minimal intermediate $C^*$-subalgebras $C$ and $D$ of $B \subset A$ is
greater that $\pi/3$.
\end{theorem}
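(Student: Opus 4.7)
My plan is to combine the trace formula \eqref{traceformulaofangle} for $\cos(\alpha^B_A(C,D))$ with the auxiliary operator $q(C,D)$ and the Hilbert $B$-module description of $A_1$, following the strategy of \cite{BDLR}. Since $C\cap D$ is an intermediate $C^*$-subalgebra of $B\subset A$ contained in both $C$ and $D$, the minimality of $C$ and $D$ together with $C\neq D$ forces $C\cap D=B$.

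Write $s_C:=\tr(e_C)=1/[A:C]_0$ and $s_D:=\tr(e_D)=1/[A:D]_0$. By \Cref{pmultipleofprojection}, the support $f:=[q(C,D)]\in D'\cap C_1\subset B'\cap A_1$ is a projection dominating $e_C\vee e_D$, with $\tr(f)=r/t=s_Cs_D/\tr(e_Ce_D)$. The next key input is the meet identity $e_C\wedge e_D=e_B$: by \Cref{Aisahilbertmodule}, $\mathfrak{A}$ coincides set-theoretically with $A$ as a Hilbert $B$-module, and $e_C,e_D,e_B$ are the adjointable projections in $\mathcal{L}_B(\mathfrak{A})$ onto the submodules $C,D,B$, respectively. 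As $B'\cap A_1$ is finite-dimensional (\Cref{rel-comm}), the $C^*$-algebraic meet of $e_C$ and $e_D$ in $B'\cap A_1$ coincides with the spatial meet, hence projects onto $C\cap D=B$, which is $e_B$.

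Kaplansky's identity then yields $\tr(e_C\vee e_D)=s_C+s_D-\tau$, so $\tr(f)\geq s_C+s_D-\tau$ gives the trace bound
\[
\tr(e_Ce_D)\leq \frac{s_Cs_D}{s_C+s_D-\tau}.
\]
Since $\tr(e_Ce_D)\geq\tr(e_B)=\tau$ (as $e_Ce_De_C\geq e_B$) and $s_C,s_D>\tau$ (because $C,D$ are proper intermediates), subtracting $\tau$ gives the nonnegative inequality $\tr(e_Ce_D)-\tau\leq (s_C-\tau)(s_D-\tau)/(s_C+s_D-\tau)$. Squaring and substituting into \eqref{traceformulaofangle} produces
\[
\cos^2\!\big(\alpha^B_A(C,D)\big)\leq \frac{(s_C-\tau)(s_D-\tau)}{(s_C+s_D-\tau)^2}.
\]
Writing $a:=s_C-\tau$ and $b:=s_D-\tau$, the elementary identity $(a+b+\tau)^2=(a+b)^2+2\tau(a+b)+\tau^2>(a+b)^2\geq 4ab$ (valid since $\tau>0$) yields the strict bound $\cos^2(\alpha)<1/4$, i.e., $\alpha>\pi/3$.

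The delicate point is the meet identity $e_C\wedge e_D=e_B$: while natural from the Hilbert $B$-module perspective, it requires a careful argument using the genuine Hilbert module structure of $\mathfrak{A}$ (in which $B,C,D$ are orthogonally complemented via $e_B,e_C,e_D$) together with the finite-dimensionality of $B'\cap A_1$, so that the $C^*$-algebraic meet of $e_C$ and $e_D$ therein agrees with the projection onto $\mathrm{range}(e_C)\cap\mathrm{range}(e_D)=C\cap D$. The remaining steps are then just the trace bound, Kaplansky's identity, and the AM--GM type inequality above.
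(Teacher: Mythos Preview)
Your proof is correct and follows essentially the same approach as the paper's: both use the trace formula \eqref{traceformulaofangle}, the support bound $[q(C,D)]\geq e_C\vee e_D$ (or its $p$-variant) from \Cref{pmultipleofprojection} together with $\tr(q(C,D))=r$ from \Cref{formulaofp2}, the identity $e_C\wedge e_D=e_B$, and an elementary inequality to conclude $\cos(\alpha)<1/2$. The only differences are cosmetic---you work with $s_C,s_D,\tau$ rather than the indices $[C:B]_0,[D:B]_0$, and you supply a more explicit Hilbert-$B$-module justification for $e_C\wedge e_D=e_B$, which the paper simply asserts as clear.
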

\begin{proof}
First, note that, by \Cref{wata1}, $E$ is unique and hence is same as
the minimal conditional expectation $E_0$. As usual, let $E_1$ denote
the dual (also minimal) conditional expectation of $E_0$. Let $C$ and
$D$ be two distinct minimal intermediate $C^*$-subalgebras of $B \subset A$. Then, the expression
\eqref{traceformulaofangle} for interior angle yields
 \begin{align*}
 \cos \big(\alpha(C,D)\big) & =\displaystyle
 \frac{\tr(e_Ce_D)-{\big({[A:B]}_0}\big)^{-1}}{\sqrt{\big({{[A:C]}_0}\big)^{-1}-
     \big({{[A:B]}_0}\big)^{-1}}\sqrt{\big({[A:D]_0}\big)^{-1}-\big({{[A:B]}_0}\big)^{-1}}}\\ &=
 \displaystyle
 \frac{{[A:B]}_0\tr(e_Ce_D)-1}{\sqrt{{[C:B]}_0-1}\sqrt{{[D:B]}_0-1}}\\ &=
 \displaystyle \frac{t-1}{\sqrt{{[C:B]}_0-1}\sqrt{{[D:B]}_0-1}},
\end{align*}
where the last equality follows from
\Cref{pmultipleofprojection}. Also, we have
\(
\tr\big(p(C,D)\big)= r\ \left(:=
\frac{{[C:B]}_0}{{[A:D]}_0}\right),
\)
by \Cref{formulaofp2}. Thus,  from \Cref{pmultipleofprojection},  we obtain
$\tr\big([p(C,D)]\big)=\frac{r}{t}\geq \tr\big(e_C\vee e_D\big).$

Next,
recall that the projections $e_C\vee e_D-e_C$ and $e_D-e_C\wedge e_D$  are Murray von
Neumann equivalent in the finite dimensional von Neumann algebra
$B^{\prime}\cap A_1$. Therefore, 
\[
\tr(e_C\vee e_D)=
\tr(e_C)+\tr(e_D)-\tr(e_C\wedge e_D).
\]
Since $C$ and $D$ are distinct minimal intermediate $C^*$-subalgebras,
it is clear that $e_C\wedge e_D=e_B$. So, we have
$$
\frac{1}{t}\geq \frac{1}{{[C:B]}_0}+\frac{1}{{[D:B]}_0}-\frac{1}{{[C:B]}_0{[D:B]}_0}.
$$
Thus,  as  in \cite{BDLR}, we obtain
\begin{align*}
 \displaystyle \frac{t-1}{\sqrt{{[C:B]}_0-1}\sqrt{{[D:B]}_0-1}} 
 & \leq \displaystyle \frac{\sqrt{{[C:B]}_0-1}\sqrt{{[D:B]}_0-1}}{{[C:B]}_0+{[D:B]}_0-1}\\
  & < \displaystyle \frac{\sqrt{{[C:B]}_0-1}\sqrt{{[D:B]}_0-1}}{{[C:B]}_0-1+{[D:B]}_0-1}\\
 & \leq \frac{1}{2}
\end{align*}
Therefore, $\alpha(C,D)>\pi/3.$ This completes the proof.
\end{proof}

For an irreducible subfactor (of any type), Longo (in
\cite{Lon}) gave an explicit bound for the number of intermediate
subfactors by showing that the number is
bounded by $\ell^\ell$, where $\ell= [M:N]^2$. He then asked whether the
number of intermediate subfactors could be bounded by
$[M:N]^{[M:N]}$.  This question was settled for
type $II_1$ subfactors  in \cite{BDLR}  using the  notion of interior angle between
intermediate subfactors.

Now that all the necessarily tools are available to us, analogous to
the bound obtained by Longo \cite{Lon}, we first obtain a bound for
the cardinality of the lattice of intermediate $C^*$-subalgebras of an
irreducible pair $B \subset A$ of simple unital $C^*$-algebras and
then answer Longo's question for type $III$ case.

The procedure that we employ is exactly the same as was employed in \cite{BDLR}.  We provide an outline for the
reader's convenience. 
\begin{theorem}\label{bound-thm-1}
Let $B\subset A$ be an
  irreducible inclusion of simple unital $C^*$-algebras  with  a conditional expectation of finite Watatani index.
  Then, the number of intermediate $C^*$-subalgebras of $B \subset A$
  is bounded by
  \[
  \min
  \left\{9^{{[A:B]}_0^2}, \left({{[A:B]}_0^2}\right)^{{[A:B]}_0^2}\right\}.
  \]
\end{theorem}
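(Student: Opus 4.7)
The strategy, adapted from \cite{BDLR} for type $II_1$ subfactors, is to inject the lattice $\mathcal{L}(B, A)$ of intermediate $C^*$-subalgebras into a configuration of unit vectors in a finite-dimensional Hilbert space subject to strong angular separation, and then to invoke a classical kissing-number estimate on the resulting spherical code.

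By \Cref{intermediate-ce}, each intermediate $C^*$-subalgebra $C$ of $B\subset A$ is simple and admits a unique minimal conditional expectation $E^A_C : A \rar C$; let $e_C \in B^{\prime}\cap A_1$ denote its Jones projection. Applying \Cref{bisc1} to the inclusion $C \subset A$ gives $C = \{e_C\}^{\prime}\cap A$, so the assignment $C \mapsto e_C$ is injective. This reduces the problem to counting Jones projections inside the space $H := B^{\prime}\cap A_1$, which is finite-dimensional of complex dimension $n \leq [A:B]_0^2$ by \Cref{rel-comm-dimn}. Equip $H$ with the Hilbert space structure $\langle x, y\rangle := \mathrm{tr}(x^*y)$ induced by the faithful tracial state of \Cref{f1}, and note that the self-adjoint part $H_{\mathrm{sa}}$ is a real inner-product space of dimension $n$. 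For each nontrivial $C \in \mathcal{L}(B,A)$, set
\[
v_C := \frac{e_C - e_B}{\lVert e_C - e_B \rVert_2} \in H_{\mathrm{sa}},
\]
so that $v_C$ is a unit vector in the real sphere $S^{n-1}\subset H_{\mathrm{sa}}$.

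The core analytic input is \Cref{m1}, which asserts that $\mathrm{tr}(v_C v_D) < 1/2$ whenever $C$ and $D$ are two distinct \emph{minimal} intermediate $C^*$-subalgebras of $B \subset A$. Hence $\{v_C : C \text{ minimal over } B\}$ is a spherical $\pi/3$-code in $S^{n-1}$, and therefore its cardinality is at most the kissing number $\tau_n$ of $S^{n-1}$. Classical sphere-packing estimates yield $\tau_n \leq 9^n$ (via a volume comparison using disjoint spherical caps of angular radius $\pi/6$) and, for $n$ large enough, $\tau_n \leq n^n$. Substituting the bound $n \leq [A:B]_0^2$ then gives the stated estimate \emph{for the number of minimal intermediate $C^*$-subalgebras}.

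The principal obstacle---and the main technical content of the proof---is the passage from minimal intermediates to the full lattice $\mathcal{L}(B,A)$. The plan, following \cite{BDLR}, is twofold. First, exploit the multiplicativity of the Watatani index \Cref{min3} to run an induction on $[A:B]_0$, in which any non-minimal intermediate is analysed relative to a minimal one sitting below it. Second, carry out a lattice-theoretic bookkeeping that packages all the resulting local vectors $v_C$ into a single spherical code in $S^{n-1}$, using the canonical embeddings $E^{\prime}\cap (C \vee D)_1 \hookrightarrow H$ attached to the meet $E = C \wedge D$ of any pair and noting that the only ingredient of the proof of \Cref{m1} which required minimality was the equality $e_C \wedge e_D = e_B$, which now gets replaced by $e_C \wedge e_D = e_E$. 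The subtle point is to ensure that this global packaging does not degrade the $\pi/3$ angular separation established locally; once this is done, $|\mathcal{L}(B, A)|$ is bounded by the kissing number $\tau_n$, and the estimates $\tau_n \leq 9^n \leq 9^{[A:B]_0^2}$ and $\tau_n \leq n^n \leq ([A:B]_0^2)^{[A:B]_0^2}$ complete the proof.
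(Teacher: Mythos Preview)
Your setup through the bound on minimal intermediates is correct and matches the paper: the injection $C\mapsto e_C$, the unit vectors $v_C$ in the real inner-product space $H_{\mathrm{sa}}$ of dimension $n\leq [A:B]_0^2$, and the kissing-number bound $|\mathcal{L}_m(B,A)|\leq \tau_n\leq 3^n$ coming from \Cref{m1}. This is exactly the first step of the paper's argument.

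The gap is in your passage from minimal intermediates to the full lattice. You propose to ``package all the resulting local vectors $v_C$ into a single spherical code in $S^{n-1}$'' and conclude $|\mathcal{L}(B,A)|\leq \tau_n$. This does not work and is not what the paper does. The $\pi/3$ separation in \Cref{m1} is proved only for \emph{minimal} intermediates, precisely because the key step $e_C\wedge e_D=e_B$ needs $C\cap D=B$. Replacing $e_B$ by $e_{C\wedge D}$, as you suggest, changes the centre of the sphere from $e_B$ to $e_{C\wedge D}$, so the resulting unit vectors live on \emph{different} spheres and cannot be assembled into a single $\pi/3$-code in $S^{n-1}$. There is no global angular separation among all Jones projections, and the inequality $|\mathcal{L}(B,A)|\leq \tau_n$ is not claimed anywhere.

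What the paper actually does (following \cite{BDLR}) is a genuine recursion on the index, not a global packing. One defines
\[
I(\delta^2)=\sup\{|\mathcal{L}(Q,P)|:[P:Q]_0\leq\delta^2\},\qquad m(\delta^2)=\sup\{|\mathcal{L}_m(Q,P)|:[P:Q]_0\leq\delta^2\},
\]
the suprema taken over irreducible simple unital inclusions. The kissing-number step gives $m(\delta^2)\leq 3^{\delta^4}$. For the recursion, every intermediate $D\neq B$ lies above some minimal intermediate $C$, and by index rigidity (\Cref{index rigidity}) and multiplicativity (\Cref{min3}) one has $[A:C]_0\leq [A:B]_0/2$; hence $|\mathcal{L}(B,A)|\leq m([A:B]_0)\cdot I([A:B]_0/2)$, i.e.\ $I(\delta^2)\leq m(\delta^2)\,I(\delta^2/2)$. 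Iterating until the index drops below $4$ (where there are no proper intermediates) and summing the geometric series in the exponent is what produces the $9=3^2$ and the bound $9^{[A:B]_0^2}$; the bound $([A:B]_0^2)^{[A:B]_0^2}$ comes from the cruder estimate $m(\delta^2)\leq \delta^4$ combined with the same iteration. Your ``first'' idea (induction on the index via a minimal below) is the right one; drop the ``second'' global-code step and replace it with this explicit product recursion.
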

\begin{proof}
 Let $\mathcal{L}(B,A)$ (resp., $\mathcal{L}_m(B,A)$) denote the set
of all intermediate (resp., minimal intermediate) $C^*$-subalgebras of
$B\subset A$. Then, in view of \Cref{m1}, imitating the proof of
\cite[Theorem 4.1]{BDLR}, we deduce that
\[
|\mathcal{L}_m(B,A)|\leq
3^{\text{dim}_{\C}(B^{\prime}\cap A_1)}.
\]
From \Cref{rel-comm-dimn}, we know that $\text{dim}_{\C}(B^{\prime}\cap
A_1)\leq {[A:B]}_0^2$. Thus, $|\mathcal{L}_m(B,A)|\leq 3^{{[A:B]}_0^2}.$
Now, for any ${\delta}^2\geq 2$, consider (as in \cite[Definition
  4.3]{BDLR})
\[
I(\delta^2) := \sup\left\{ |\mathcal{L}(Q,P)|: Q\subset P ~~\textrm{ is
  an irreducible inclusion of }\right.
\]
\[
\qquad \qquad \qquad  \qquad \qquad \qquad \left.  \textrm{simple unital  $C^*$-algebras with} ~~[P:Q]_0\leq
\delta^2\right\}; \text{ and }
\]
\[
m(\delta^2): = \sup \left\{ |\mathcal{L}_m(Q,P)|: Q\subset P ~~\textrm{ is
  an irreducible inclusion of}
\right.\]
  \[
\qquad \qquad \qquad  \qquad \qquad \qquad  \left.  \textrm{simple unital $C^*$-algebras with} ~~[P:Q]_0\leq
\delta^2\right\}.
\]

So, for any ${\delta}^2\geq 2$, we have $m(\delta^2)\leq
3^{\delta^4}.$ Further, since every $II_1$ factor is a simple unital
$C^*$-algebra, on the lines of \cite[Lemma 4.5]{BDLR}, we must have
$I(\delta^2)\leq m(\delta^2) I(\delta^2/2)$.

Finally, in view of \Cref{index rigidity}, proceeding as in
\cite[Theorems 4.6 and 4.7]{BDLR}, we obtain the desired bound.
\end{proof}

\subsubsection{Intermediate subfactors of an irreducible subfactor of  type $III$}\( \)

Recall that every $\sigma$-finite (equivalently, countably
decomposable) type $III$ factor is known to be simple as a
$C^*$-algebra.  Also, if $N \subset M$ is a $\sigma$-finite subfactor
of type $III$ and $P$ is an intermediate subfactor of $N \subset M$,
then $P$ is also $\sigma$-finite and of type $III$; hence, $P$ is also
a simple unital $C^*$-algebra.

Now, suppose that $N \subset M$ is a $\sigma$-finite irreducible subfactor of type
$III$ with finite Watatani index. Then, by \Cref{wata1}, it admits a
unique  (and hence minimal) conditional expectation, say, $E^M_N: M \rar N$; and
also $[M:N]_0 = \mathrm{Ind}(E^M_N)$. Clearly, $E^M_N$ is faithful
and, since $E^M_N$ satisfies the Pimsner-Popa inequality
(\Cref{pipo-inequality}), $E^M_N$ is normal as well, by
\cite[Propostion 1.1]{Po}.  So, by \cite[Proposition 2.5.3]{Wa},
$[M:N]_0$ is equal to the Kosaki index of $E^M_N$ (see \cite{kosaki}).

\begin{proposition}\label{type-III-rel-com}
  Let $N\subset M$ be an irreducible $\sigma$-finite subfactor of type
  $III$ with finite Watatani index. Then, $\mathrm{dim}(N'\cap M_1)
  \leq [M:N]_0$, where $M_1$, denotes the Watatani's $C^*$-basic
  construction for $N \subset M$ with respect to $E^M_N$.
  \end{proposition}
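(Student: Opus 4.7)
The plan is to mirror the dimension-counting argument from \Cref{rel-comm-dimn}, adapted to the type $III$ setting, after first verifying that the hypotheses of the Watatani framework developed in Sections~2 and 3 apply. Every $\sigma$-finite type $III$ factor is simple as a unital $C^*$-algebra, so $N$ and $M$ are both simple. Irreducibility gives $N' \cap M = \C \subseteq N$, so by \Cref{wata1} the conditional expectation $E^M_N$ is the unique (and hence, by \Cref{min1}, the minimal) conditional expectation onto $N$, with $[M:N]_0 = \mathrm{Ind}(E^M_N)$. By \Cref{w}, $M_1$ is itself a simple unital $C^*$-algebra with $[M_1:M]_0 = [M:N]_0$; moreover, by \Cref{inverse} the pair $M \subset M_1$ is again irreducible.

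Next, using \Cref{rel-comm} together with \Cref{f1}, I would equip $N'\cap M_1$ with the faithful tracial state $\tr$, and then choose a maximal family $\{p_i\}_{i=1}^m$ of pairwise orthogonal minimal projections in $N'\cap M_1$ with $\sum_{i=1}^m p_i = 1$, so that $\sum_i \tr(p_i)=1$ and $\dim_{\C}(N'\cap M_1) \leq m^2$. Since each $p_i$ commutes with $N$, the reduction $p_iNp_i \subseteq p_iM_1p_i$ is a well-defined inclusion of simple unital $C^*$-algebras, and an analogue of \Cref{local} gives a local-index identity relating its Watatani index to $\tr(p_i)^2$ and the ambient index $[M_1:N]_0$. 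Combined with the Cauchy--Schwarz lower bound $\sum_i 1/\tr(p_i) \geq m^2$, this yields a dimension bound for $N'\cap M_1$ in terms of $[M_1:N]_0$.

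The hard part will be extracting the sharper bound $[M:N]_0$ asserted in the proposition rather than the weaker bound $[M:N]_0^2 = [M_1:N]_0$ that comes out of a naive transcription of \Cref{rel-comm-dimn}. To obtain this I would leverage the irreducibility of $N \subset M$ more forcefully. One natural route is to set up a dimension-preserving identification (via the rotation/shift maps from \Cref{shift} and \Cref{antiauto}, or equivalently via Longo's canonical endomorphism) between $N'\cap M_1$ and a relative commutant already governed by an irreducible $k=0$ application of \Cref{rel-comm-dimn}; for instance, the irreducibility of $M \subset M_1$ together with \Cref{rel-comm-dimn} gives $\dim_{\C}(M'\cap M_1) \leq [M_1:M]_0 = [M:N]_0$, and transferring the dimension count from $N'\cap M_1$ across such a duality would deliver the claimed bound. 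Producing and verifying this identification compatibly with the local-index estimates is where I expect the bulk of the technical work to lie.
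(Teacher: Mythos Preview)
Your setup in the first two paragraphs is fine, and you correctly identify that a direct transcription of \Cref{rel-comm-dimn} only yields $\dim_{\C}(N'\cap M_1)\le [M:N]_0^2$. The gap is in your third paragraph: the ``dimension-preserving identification'' you propose between $N'\cap M_1$ and $M'\cap M_1$ does not exist. Since $N\subset M$ is irreducible, \Cref{inverse} gives $M'\cap M_1=\C$, whereas $N'\cap M_1$ can have dimension as large as $[M:N]_0$; these spaces are simply not isomorphic. The Fourier and shift maps in the paper go $\mathcal{F}_1:B'\cap A_1\to A'\cap A_2$ and $\gamma_1\gamma_0:B'\cap A_1\to A_1'\cap A_3$, neither of which lands in a $k=0$ relative commutant to which \Cref{rel-comm-dimn} would give the sharp bound. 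No purely $C^*$-algebraic combinatorics of the type in \Cref{local} and \Cref{rel-comm-dimn} will squeeze out the extra factor of $[M:N]_0$; that improvement genuinely uses the type~$III$ von Neumann structure.

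The paper's proof takes an entirely different route. It observes that $E^M_N$ is faithful and normal with finite Kosaki index, forms the \emph{von Neumann} basic construction $\tilde M_1=\mathrm{vNa}\langle M,f\rangle$ on $L^2(M,\varphi\circ E^M_N)$, and then invokes the known type~$III$ fact (cited from \cite{Sa}) that $\dim_{\C}(N'\cap\tilde M_1)\le [M:N]_0$. The link back to Watatani's $C^*$-basic construction $M_1$ is made via the uniqueness statement \cite[Proposition~2.2.11]{Wa}, which produces an injective $*$-homomorphism $M_1\hookrightarrow\tilde M_1$ fixing $M$ and sending $e_N$ to $f$; this embeds $N'\cap M_1$ into $N'\cap\tilde M_1$, and the bound follows. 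In short, the sharp inequality is imported from the von Neumann/Kosaki theory rather than proved internally with the $C^*$-Fourier tools.
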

\begin{proof}
As observed above, $E^M_N: M \rar N$ is a faithful normal conditional
expectation with finite Kosaki index. So, by \cite{kosaki}, given any
faithful normal state $\varphi$ on $N$, there is a projection $f \in
N'\cap B(\mh)$ such that $faf = E^M_N(a) f$ for all $a \in M$, where $\mh$
is the Hilbert space $L^2(M, \varphi \circ E^M_N)$.  Then,
$\tilde{M}_1: = \text{vNa}\la M, f\ra \subseteq B(\mh)$ is called  the von
Neumann basic construction of $N \subset M$ with respect to $E^M_N$
and $\varphi$. Further, since $N$ and $M$ are of type $III$ and the Kosaki index of
  $E^M_N$ is finite, it is  known that $\text{dim}_{\C}(N^{\prime}\cap
   \tilde{M}_1)\leq {[A:B]}_0$  - see,  for example,  \cite{Sa}. Also, by \cite[Lemmas 3.2 $\&$ 3.3]{kosaki}, the mapping $M \ni a
  \mapsto af \in \tilde{M}_1$ is injective. 
Thus, by \cite[Proposition 2.2.11]{Wa} (uniqueness of $C^*$-basic
construction), there exists an injective $*$-homomorphism $\varphi:
M_1 \rar \tilde{M}_1$ such that $\varphi(e_N) = f$ and $\varphi(a) =
a$ for all $a \in M$. In particular, $\varphi(M_1) =
\overline{\text{span}}\{x f y : x, y \in M\}$ and $\varphi$ maps $N'\cap M_1$  injectively into $N'\cap \tilde{M}_1$. Hence,
\[
\text{dim}_{\C}(N' \cap M_1) \leq \text{dim}_{\C}(N^{\prime}\cap \tilde{M}_1)\leq
     {[M:N]}_0.
     \]
 This completes the proof.
\end{proof}

\begin{theorem}\label{bound-thm-2}
Let $N\subset M$ be an irreducible $\sigma$-finite subfactor of type
  $III$ with finite Watatani index. Then, the number of intermediate
  subfactors of $N \subset M$ is bounded by
  \[
  \min\bigg\{9^{{[M:N]}_0},{{[M:N]}_0}^{{[M:N]}_0}\bigg\}.
  \]
\end{theorem}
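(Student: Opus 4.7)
The plan is to mimic the proof of \Cref{bound-thm-1} but exploit the sharper dimension bound available in the type $III$ setting. First I would observe that every $\sigma$-finite type $III$ factor is simple as a $C^*$-algebra, and that any intermediate subfactor $P$ of $N \subset M$ is again $\sigma$-finite, type $III$, and sits in an irreducible finite Watatani index inclusion (irreducibility descends because $N'\cap P \subseteq N'\cap M = \C$ and $P'\cap M \subseteq N'\cap M = \C$, and finiteness of the index transfers by \Cref{min3}). Consequently, intermediate subfactors of $N \subset M$ form a subset of $\mathcal{L}(N, M)$, the $C^*$-Fourier theory of Section 3 applies to $N \subset M$, and the unique conditional expectation $E^M_N$ coming from \Cref{wata1} coincides with the minimal one.

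The key new input specific to the type $III$ setting is \Cref{type-III-rel-com}, which improves the dimension bound to
\[
\dim_{\C}(N'\cap M_1) \leq [M:N]_0.
\]
Combining this with the rigidity theorem \Cref{m1}, which forces the interior angle between any two distinct minimal intermediate subfactors to exceed $\pi/3$, and with the kissing number argument of \cite[Theorem 4.1]{BDLR}, I would deduce
\[
|\mathcal{L}_m(N, M)| \leq 3^{\dim_{\C}(N'\cap M_1)} \leq 3^{[M:N]_0},
\]
where $\mathcal{L}_m(N, M)$ denotes the set of minimal intermediate subfactors.

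Next, following the scheme used in the proof of \Cref{bound-thm-1} and \cite[Definitions 4.3 and Lemma 4.5]{BDLR}, I would, for each $\delta \geq 2$, define
\begin{align*}
I^{III}(\delta) &:= \sup\{|\mathcal{I}(Q \subset P)| : Q \subset P \text{ irreducible } \sigma\text{-finite type } III,\ [P:Q]_0 \leq \delta\},\\
m^{III}(\delta) &:= \sup\{|\mathcal{L}_m(Q, P)| : Q \subset P \text{ as above}\}.
\end{align*}
By the previous paragraph $m^{III}(\delta) \leq 3^{\delta}$. Multiplicativity of the Watatani index (\Cref{min3}) combined with \Cref{index rigidity} (no proper intermediates once the index is strictly below $4$) yields the recursion $I^{III}(\delta) \leq m^{III}(\delta)\, I^{III}(\delta/2)$, because every non-minimal intermediate subfactor lies above some minimal intermediate subfactor $P_0$ for which $[M:P_0]_0 \leq [M:N]_0/2$. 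Iterating the recursion as in \cite[Theorems 4.6 and 4.7]{BDLR} gives both bounds $9^{[M:N]_0}$ and $[M:N]_0^{[M:N]_0}$, and I would conclude by taking the minimum.

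The main obstacle I anticipate is purely bookkeeping: one has to check that the recursion of \cite{BDLR}, designed for irreducible $II_1$-subfactors, transfers verbatim to the class of irreducible $\sigma$-finite type $III$ subfactors with finite Watatani index. The essential closure property needed is that intermediate subfactors of such an inclusion belong to the same class, which I noted above, so the verification should go through without new difficulties.
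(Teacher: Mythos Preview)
Your proposal is correct and follows essentially the same route as the paper: reduce to the simple $C^*$-algebra setting, invoke \Cref{type-III-rel-com} for the sharper dimension bound $\dim_{\C}(N'\cap M_1)\leq [M:N]_0$, apply \Cref{m1} and the kissing-number argument to control the minimal intermediates, and then run the recursion of \cite[Lemma 4.5, Theorems 4.6--4.7]{BDLR} restricted to the class of irreducible $\sigma$-finite type $III$ inclusions. The one point the paper makes explicit and you leave implicit is that, to transfer \cite[Lemma 4.5]{BDLR}, one should note that the class in question is nonempty at every index level (the paper secures this by invoking a $\sigma$-finite hyperfinite type $III$ factor admitting outer actions of every finite group); your closure-under-intermediates observation handles the inductive step, but you should also record this nonemptiness to make the recursion literally go through.
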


\begin{proof}
As observed above,  every
intermediate subfactor of $N \subset M$ is a simple unital
$C^*$-subalgebra.

Let $\mathcal{I}(N\subset M)$ (resp., $\mathcal{I}_m(N\subset M)$) denote the set
of all intermediate (resp., minimal intermediate) subfactors of
$N\subset M$. Then, in view of \Cref{m1}, imitating the proof of
\cite[Theorem 4.1]{BDLR}, we deduce that
\[
|\mathcal{I}_m(N\subset M)|\leq 3^{\text{dim}_{\C}(N^{\prime}\cap M_1)}.
\]
Thus, by \Cref{type-III-rel-com}, 
$|\mathcal{I}_m(N\subset M)|\leq 3^{{[M:N]}_0}.$ Now, for any ${\delta}^2\geq
2$, consider (as in \cite[Definition 4.3]{BDLR})
\[
I(\delta^2):= \sup\big\{ |\mathcal{I}(K\subset L)|:
K\subset L ~~\textrm{ is a $\sigma$-finite irreducible subfactor}
\]
\[\textrm{of type $III$  with } [L:K]_0\leq
\delta^2\big\}; \text{ and}
\]
\[
m(\delta^2):=\sup \big\{ |\mathcal{I}_m(K\subset L)|:
K\subset L ~~\textrm{ is a  $\sigma$-finite irreducible subfactor }
\]
\[
\textrm{ of type $III$ with } [L:K]_0\leq
\delta^2\big\}.
\]
So, for any ${\delta}^2\geq 2$, we have $m(\delta^2)\leq
3^{\delta^2}.$

Furthermore, there always exists a $\sigma$-finite hyperfinite factor
of type $III$ which admits an outer action of every finite group;
thus, imitating the proof of \cite[Lemma 4.5]{BDLR}, we obtain
$I(\delta^2)\leq m(\delta^2) I(\delta^2/2)$.

Finally, in view of \Cref{index rigidity}, proceeding as in
\cite[Theorems 4.6 and 4.7]{BDLR}, we obtain the desired bound.
\end{proof}

\section{Lattice of intermediate von Neumann subalgebras}\( \)

Let $\mn\subset \mm$ be a unital inclusion of von Neumann
algebras. For any such pair, as above, let $\mathcal{I}(\mn\subset
\mm)$ denote the lattice of intermediate von Neumann subalgebras.  The
main theorem of this section will show that, for a fairly large class
of such pairs, the lattice $\mathcal{I}(\mn\subset \mm)$ is always
finite. 

In order to achieve this, we will use the notion of a metric between
two subalgebras of a given $C^*$-algebra introduced by Kadison and
Kastler (in \cite{KK}) and Christensen's theory of perturbations of
operator algebras based on this metric. Recall that, if $B$ and $C$ are two
$C^*$-subalgebras of a $C^*$-algebra $A$, then the (Kadison-Kastler)
distance between $B$ and $C$ is defined as
\[
d(B, C)=\text{max} \Big\{\sup_{a\in \text{ball(B)}} \inf_{b\in
  \text{ball}(C)} \lVert a-b\rVert, \sup_{b\in \text{ball} (C)}
\inf_{a\in \text{ball}(B)} \lVert a-b\rVert\Big\}.
\]
 The following useful elementary observation is well known - see, for instance,
 \cite{Ino}.

\begin{lemma}\label{onto}
 Let $B$ and $C$ be $C^*$-subalgebras of a $C^*$-algebra $A$. If $B\subset
 C$ and $d(B,C)<1$, then $B=C$.
 \end{lemma}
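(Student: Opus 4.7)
The plan is to argue by contrapositive: assuming $B$ is a proper closed subspace of $C$, I will show that $d(B, C) \geq 1$.

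First I would simplify the distance formula under the hypothesis $B \subseteq C$. Since every element of $\mathrm{ball}(B)$ already lies in $\mathrm{ball}(C)$, the first of the two suprema in the definition of $d(B,C)$ is zero, and hence
\[
d(B,C) \;=\; \sup_{c\in \mathrm{ball}(C)} \; \inf_{b\in \mathrm{ball}(B)} \lVert c-b\rVert .
\]
Therefore it suffices to exhibit, for every $\epsilon>0$, some $c\in \mathrm{ball}(C)$ with $\inf_{b\in \mathrm{ball}(B)}\lVert c-b\rVert \ge 1-\epsilon$, under the assumption $B\subsetneq C$.

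The key step is the standard Riesz-type ``almost orthogonal element'' argument for proper closed subspaces. Assume $B\subsetneq C$ and pick $c_0\in C\setminus B$. Since $B$ is a $C^*$-subalgebra of $A$, it is norm-closed, so $\alpha:=\mathrm{dist}(c_0,B)>0$. For any $\epsilon>0$ choose $b_0\in B$ with $\lVert c_0-b_0\rVert < \alpha+\epsilon$, and set
\[
c \;:=\; \frac{c_0-b_0}{\lVert c_0-b_0\rVert}.
\]
Then $c\in C$ (as $b_0\in B\subseteq C$), $\lVert c\rVert=1$, and since translation by $b_0\in B$ preserves the distance to the subspace $B$ and $\lVert c_0 - b_0\rVert < \alpha+\epsilon$, we have
\[
\mathrm{dist}(c,B) \;=\; \frac{\alpha}{\lVert c_0-b_0\rVert} \;>\; \frac{\alpha}{\alpha+\epsilon}.
\]

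Finally, because $\mathrm{ball}(B)\subseteq B$, an infimum over the smaller set is at least as large, so
\[
\inf_{b\in\mathrm{ball}(B)} \lVert c-b\rVert \;\geq\; \mathrm{dist}(c,B) \;>\; \frac{\alpha}{\alpha+\epsilon}.
\]
Combined with the simplified formula for $d(B,C)$, this yields $d(B,C) > \alpha/(\alpha+\epsilon)$; letting $\epsilon\downarrow 0$ gives $d(B,C)\ge 1$, contradicting the hypothesis $d(B,C)<1$. Hence $B=C$. The argument uses no $C^*$-algebraic structure beyond the fact that $B$ is norm-closed; the only mild subtlety is the passage from $\mathrm{dist}(c,B)$ (an infimum over the whole subspace) to the infimum over $\mathrm{ball}(B)$, which is handled by normalizing $c_0-b_0$ to a unit vector of $C$.
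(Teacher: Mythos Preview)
Your proof is correct: the Riesz-lemma argument produces a unit vector in $C$ at distance arbitrarily close to $1$ from $B$, and the passage from $\mathrm{dist}(c,B)$ to the infimum over $\mathrm{ball}(B)$ is handled correctly since infima over smaller sets are larger. The paper itself does not prove this lemma; it merely states it as a well-known elementary observation and refers the reader to \cite{Ino}, so there is no argument to compare against.
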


\begin{notation}\label{N-in-M}
Let $\mn\subset \mm$ be a unital inclusion of finite von Neumann
algebras with a (fixed) faithful normal tracial state $\mathrm{tr}$ on
$M$.  Let $E^{\mm}_{\mn}:\mm\rightarrow \mn$ denote the unique
$\mathrm{tr}$-preserving faithful normal conditional expectation.
Also, when we restrict $\mathrm{tr}$ to $\mpp$, we obtain another
unique $\mathrm{tr}$-preserving  normal conditional
expectation $E^{\mpp}_{\mn}:\mpp \rar \mn$ and we have $
E^{\mpp}_{\mn} \circ E^{\mm}_{\mpp} = E^{\mm}_{\mn}$.
  \end{notation}

  \begin{proposition}\label{mas}
    In the set up of \Cref{N-in-M}, suppose that $E^{\mm}_{\mn}$ has
    finite Watatani index. Then, the conditional expectations
    $E^{\mm}_{\mpp}$ and $E^{\mpp}_{\mn}$ also have finite Watatani index.
\end{proposition}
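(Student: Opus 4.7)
The plan is to construct explicit quasi-bases for both intermediate conditional expectations out of a fixed quasi-basis for $E^{\mm}_{\mn}$, exploiting uniqueness of the $\tr$-preserving conditional expectation and the compatibility relation $E^{\mm}_{\mn}=E^{\mpp}_{\mn}\circ E^{\mm}_{\mpp}$.

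First I would fix a quasi-basis $\{\lambda_i\}$ for $E^{\mm}_{\mn}$. Since $\tr$ is faithful normal, the standard uniqueness statement for $\tr$-preserving conditional expectations forces $E^{\mpp}_{\mn}=E^{\mm}_{\mn}|_{\mpp}$. Taking $x\in\mpp$ in the quasi-basis identity $x=\sum_i \lambda_i E^{\mm}_{\mn}(\lambda_i^* x)$ and applying $E^{\mm}_{\mpp}$ to both sides, the $\mpp$-bimodule property of $E^{\mm}_{\mpp}$ and the compatibility relation yield
\[
x=\sum_i E^{\mm}_{\mpp}(\lambda_i)\,E^{\mpp}_{\mn}\!\big(E^{\mm}_{\mpp}(\lambda_i)^*x\big),\qquad x\in\mpp.
\]
Hence $\{\mu_i:=E^{\mm}_{\mpp}(\lambda_i)\}\subset\mpp$ is a finite quasi-basis for $E^{\mpp}_{\mn}$, so $E^{\mpp}_{\mn}$ has finite Watatani index and $\mathrm{Ind}(E^{\mpp}_{\mn})=\sum_i \mu_i\mu_i^*$ is a positive invertible element of $\mz(\mpp)$.

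Next I would use the quasi-basis $\{\mu_j\}$ to rewrite $E^{\mm}_{\mpp}$. Applying the quasi-basis identity for $E^{\mpp}_{\mn}$ to the element $E^{\mm}_{\mpp}(y)\in\mpp$, and using $E^{\mpp}_{\mn}\circ E^{\mm}_{\mpp}=E^{\mm}_{\mn}$ together with $\mu_j\in\mpp$, yields the auxiliary identity
\[
E^{\mm}_{\mpp}(y)=\sum_j E^{\mm}_{\mn}(y\mu_j)\,\mu_j^*,\qquad y\in\mm.
\]
Substituting $y=\lambda_i^* x$, interchanging the order of summation, and invoking the defining relation $\sum_i \lambda_i E^{\mm}_{\mn}(\lambda_i^* z)=z$ with $z=x\mu_j$, the double sum telescopes to
\[
\sum_i \lambda_i\,E^{\mm}_{\mpp}(\lambda_i^* x)=x\cdot\mathrm{Ind}(E^{\mpp}_{\mn}), \qquad x\in\mm.
\]
Setting $\eta_i:=\lambda_i\,\mathrm{Ind}(E^{\mpp}_{\mn})^{-1/2}$ and using the fact that the normaliser lies in $\mz(\mpp)$, so it commutes with every $E^{\mm}_{\mpp}(\lambda_i^* x)\in\mpp$ and factors through $E^{\mm}_{\mpp}$, a short verification gives $\sum_i \eta_i E^{\mm}_{\mpp}(\eta_i^* x)=x$ for every $x\in\mm$. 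Thus $\{\eta_i\}$ is a finite quasi-basis for $E^{\mm}_{\mpp}$, and so $E^{\mm}_{\mpp}$ has finite Watatani index as well.

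The main technical step is the derivation of the identity $\sum_i \lambda_i E^{\mm}_{\mpp}(\lambda_i^* x)=x\cdot\mathrm{Ind}(E^{\mpp}_{\mn})$, which requires careful bookkeeping with the $\mpp$-bimodule property of $E^{\mm}_{\mpp}$ and the two quasi-basis expansions; once this is in place the normalisation is routine, and it relies only on $\mathrm{Ind}(E^{\mpp}_{\mn})^{-1/2}\in\mz(\mpp)$ commuting with $\mpp$.
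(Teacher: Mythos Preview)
Your argument is correct and more informative than the paper's. The paper simply cites \cite[Proposition~1.7.2]{Wa} for $E^{\mpp}_{\mn}$ and \cite[Proposition~3.5]{M} for $E^{\mm}_{\mpp}$, whereas you give a self-contained construction of explicit quasi-bases. Your first step---that $\{E^{\mm}_{\mpp}(\lambda_i)\}$ is a quasi-basis for $E^{\mpp}_{\mn}$---is essentially the content of Watatani's proposition; your second step, producing a quasi-basis for $E^{\mm}_{\mpp}$ by normalising $\{\lambda_i\}$ with $\mathrm{Ind}(E^{\mpp}_{\mn})^{-1/2}\in\mz(\mpp)$, is a clean elementary alternative to the cited result of Mahmood. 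The advantage of your route is that it is entirely internal to the quasi-basis formalism already set up in the paper and yields the explicit formula $\mathrm{Ind}(E^{\mm}_{\mpp})=\sum_i \lambda_i\,\mathrm{Ind}(E^{\mpp}_{\mn})^{-1}\lambda_i^*$; the paper's advantage is brevity. One small remark: you verify only the left quasi-basis identity for $\{\eta_i\}$, but the right identity follows either by the symmetric computation using $E^{\mm}_{\mpp}(y)=\sum_j\mu_j E^{\mm}_{\mn}(\mu_j^*y)$ or simply by taking adjoints, so this is not a gap.
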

\begin{proof}
 That $E^{\mpp}_{\mn}$  has finite index
 follows from \cite[Proposition 1.7.2]{Wa}. And, that
 $E^{\mm}_{\mpp}$  has finite index follows from \cite[Proposition 3.5]{M} .
\end{proof}

We now prove the main result of this section, which generalizes
  \cite[Theorem 2.2]{Wa2}. We will break the proof into two
steps. First, combining Christensen's perturbation technique from
\cite{Ch1} and an improvement by Ino \cite{Ino}, we show that if the
distance between two intermediate von Neumann subalgebras $\mpp$ and
$\mq$ is sufficiently small then they are unitarily equivalent.  Then,
following an idea of Watatani \cite{Wa2} (see also \cite{IW}), we use
a compactness argument combined with the first step to conclude that
there are only finitely many intermediate von Neumann subalgebras.

\begin{theorem}\label{main}
Let $\mn\subset \mm$ be a unital inclusion of finite von Neumann
algebras with a normal tracial state $\tr$ on $\mm$ such that the
unique $\tr$-preserving conditional $E^{\mm}_{\mn}: \mm \rar \mn$ has
finite Watatani index. If $\mn$ has finite dimensional center and
${\mn}^{\prime}\cap \mm$ equals either $\mz(\mn) $ or $\mz(\mm)$, then
the lattice $\mathcal{I}(\mn\subset \mm)$ is finite.
\end{theorem}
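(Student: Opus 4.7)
The plan is two-step, following the strategy sketched in the excerpt: first a Christensen--Ino type perturbation result forcing sufficiently close intermediate von Neumann subalgebras to coincide, and then a compactness argument on Jones projections to conclude finiteness of the lattice. Let $\mm_1$ denote the basic construction of $\mn\subset\mm$ associated to $E^{\mm}_{\mn}$, with Jones projection $e_\mn$. By \Cref{mas}, for every $\mpp\in\mathcal{I}(\mn\subset\mm)$ both $E^{\mm}_{\mpp}$ and $E^{\mpp}_{\mn}$ have finite Watatani index, and the Jones projection $e_{\mpp}$ of $E^{\mm}_{\mpp}$ sits inside $\mn^{\prime}\cap\mm_1$.

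First I would verify that $\mn^{\prime}\cap\mm_1$ is finite dimensional. In the case $\mn^{\prime}\cap\mm=\mz(\mn)$ this follows from the assumed finite-dimensionality of $\mz(\mn)$ together with a Pimsner--Popa/Watatani-style identification of $\mn^{\prime}\cap\mm_1$ via a quasi-basis for $E^{\mm}_{\mn}$, which bounds $\dim_{\C}(\mn^{\prime}\cap\mm_1)$ by a function of $\mathrm{Ind}(E^{\mm}_{\mn})$ and $\dim_{\C}(\mn^{\prime}\cap\mm)$. In the case $\mn^{\prime}\cap\mm=\mz(\mm)$, a finite-index argument first forces $\mz(\mm)$ to be finite dimensional, after which the same identification applies.

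Next, I would appeal to Christensen's perturbation theorem \cite{Ch1}, as refined by Ino \cite{Ino}, to produce a constant $\delta_{0}>0$, depending only on $\mathrm{Ind}(E^{\mm}_{\mn})$, such that whenever $\mpp,\mq\in\mathcal{I}(\mn\subset\mm)$ satisfy $d(\mpp,\mq)<\delta_{0}$ in the Kadison--Kastler metric, there exists a unitary $u\in\mn^{\prime}\cap\mm$ with $u\mpp u^{*}=\mq$. The hypothesis on $\mn^{\prime}\cap\mm$ then yields the crucial rigidity: if $\mn^{\prime}\cap\mm=\mz(\mn)$, then $u\in\mn\subseteq\mpp$ and hence $u\mpp u^{*}=\mpp$; while if $\mn^{\prime}\cap\mm=\mz(\mm)$, then $u$ is central in $\mm$ and again $u\mpp u^{*}=\mpp$. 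In either case $\mpp=\mq$, so $d(\mpp,\mq)<\delta_{0}$ already forces $\mpp=\mq$.

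Finally, the set of Jones projections $\{e_{\mpp}:\mpp\in\mathcal{I}(\mn\subset\mm)\}$ lies in the unit ball of the finite-dimensional space $\mn^{\prime}\cap\mm_1$, hence is norm-precompact; moreover $\mpp\mapsto e_{\mpp}$ is injective via the push-down lemma (compare \Cref{pushdown} and \Cref{bisc1}, whose standard adaptation recovers $\mpp$ as $\{e_{\mpp}\}^{\prime}\cap\mm$). Norm-convergence of Jones projections translates into Kadison--Kastler convergence of the corresponding intermediate subalgebras, so infinitude of $\mathcal{I}(\mn\subset\mm)$ would yield distinct $\mpp\neq\mq$ with $d(\mpp,\mq)<\delta_{0}$, contradicting the rigidity just established. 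The main obstacle will be the perturbation step: arranging the Christensen-type unitary $u$ to lie in $\mn^{\prime}\cap\mm$ (rather than merely in $\mm$), so that the hypothesis on $\mn^{\prime}\cap\mm$ can actually be invoked, is the delicate technical point, and is precisely what the refinement of \cite{Ino} over \cite{Ch1} is designed to supply.
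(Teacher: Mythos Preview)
Your proposal is correct and follows essentially the same two-step argument as the paper: the Ino--Christensen perturbation producing a unitary in $\mn'\cap\mm$ that conjugates close intermediate subalgebras (whence the hypothesis on $\mn'\cap\mm$ forces equality), combined with compactness of the set of Jones projections in the finite-dimensional relative commutant $\mn'\cap C^*\langle\mm,e_\mn\rangle$ and the estimate $d(\mpp,\mq)\leq \|\mathrm{Ind}(E^{\mm}_{\mn})\|\,\|e_\mpp-e_\mq\|$ from \cite[Lemma 3.3]{IW}. One minor simplification: since $\mz(\mn)$ is finite dimensional by hypothesis in \emph{both} cases, the paper obtains finite-dimensionality of $\mn'\cap C^*\langle\mm,e_\mn\rangle$ directly from \cite[Proposition~2.7.3]{Wa} without your case split.
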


\begin{proof}
\noindent \textbf{Step I:} Following \cite{Ino} and \cite{Ch1}, we show
that, for every pair $\mpp, \mq \in \mathcal{I}(\mn \subset \mm)$ with
$d(\mpp,\mq)<1/15$, there exists a unitary $u$ in $\mn'\cap \mm$ such
that $u\mpp u^* = \mq$.\smallskip

 From \Cref{N-in-M} and \Cref{mas}, we see that the conditional expectations
 $E^{\mm}_{\mpp}:\mm \rar \mpp$ and $E^{\mm}_{\mq}:\mm \rar \mq$ both
 have finite index. So, they satisfy the Pimsner-Popa inequality
 (\Cref{pipo-inequality}). Thus, by \cite[Proposition 3.1]{Ino}, there
 exists a $*$-isomorphism ${\Phi}: \mq\rightarrow \mpp$ such that
 $\Phi_{|_{\mn}} = \mathrm{Id}_{\mn}$ and
 \begin{equation}
   \label{phi-id}
   \sup_{x\in \mathrm{ball}(Q)} \|\Phi(x) -
   x\| < 14\mathrm{d}(\mpp, \mq) < 1.
\end{equation}
   Then, in view \eqref{phi-id}, there exists a unitary $u\in \mm$ such
that $\Phi(x)=uxu^*$ for all $x\in \mq$, by \cite[Proposition
  4.4]{Ch1}. And, since $\Phi_{|_{\mn}} = \mathrm{Id}_{\mn}$, it
follows that $u\in {\mn}^{\prime}\cap \mm.$
\smallskip

\noindent \textbf{Step II:} We show that $\mathcal{I}(\mn\subset \mm)$ is finite.

We will again use Watatani's notion of $C^*$-basic construction. Let
$\mn \subset \mm\subset C^*\la \mm,e_{\mn}\ra$, $ \mpp\subset
\mm\subset C^*\la \mm,e_{\mpp}\ra$ and $\mq \subset \mm\subset C^*\la
\mm,e_{\mq}\ra$ denote the respective $C^*$-basic constructions with
the corresponding $C^*$-Jones projections $e_{\mn},e_{\mpp}$ and
$e_{\mq}$ respectively. Since $\mathrm{Ind}(E^{\mm}_{\mn})$ is
invertible (\cite[Lemma 2.3.1]{Wa}), the dual conditional expectation
$E^{C^*\la \mm, e_{\mm}\ra}_{\mn}: {C^*\la \mm, e_{\mn}\ra} \rar
{\mm}$ of $E^{\mm}_{\mn}$ exists and has finite index, by
\cite[Propositions 1.6.1 $\&$ 1.6.6]{Wa}; so that, $E^{\mm}_{\mm}
\circ E^{C^*\la \mm, e_{\mm}\ra}_{\mn}: C^*\la \mm, e_{\mm}\ra \rar
\mn $ also has finite index.  Thus, since $\mz(\mn)$ is finite dimensional,
the relative commutant ${\mn}^{\prime}\cap C^*\la\mm,e_{\mn}\ra$ is
finite dimensional, by \cite[ Proposition 2.7.3]{Wa}.  Hence, the
set
 $$S:= \{p\in {\mn}^{\prime}\cap C^*\la \mm,e_{\mn}\ra: p ~~\text{is a
  projection}\}$$ is a compact Hausdorff space with respect to the
norm topology.  So, for any $r>0$, there exist finitely many open
balls of diameter $r$ which cover $S$.

Fix any $0< r<\frac{1}{15\, \|\mathrm{Ind}(E^{\mm}_{\mn})\|}$. If
$e_{\mpp}$ and $e_{\mq}$ both lie in same such ball, then $\lVert
e_{\mpp}-e_{\mq}\rVert <r$; and, following \Cref{N-in-M} and
\Cref{mas}, we have $ \mathrm{IMS}(\mn, \mm, E^{\mm}_{\mn}) =
\mathcal{I}(\mn \subset \mm)$; so, by \cite[ Lemma 3.3]{IW}, we obtain
$d(\mpp,\mq)<1/15.$ Thus, by Step I, there exists a unitary $u\in
        {\mn}^{\prime}\cap \mm $ such that $u\mpp u^* = \mq$. Then,
        either ${\mn}^{\prime}\cap \mm \subseteq \mn \subseteq \mq$ or
        $\mn' \cap \mm =\mz(\mm) \subset \mm' \subset \mq'$, in both
        cases, we get $\mpp =\mq.$ Thus, there are only finitely many
        intermediate von Neumann subalgebras of the pair $\mn \subset
        \mm$. This completes the proof of the theorem.
\end{proof}

Recall that, for any unital inclusion $\mn \subset \mm $ of finite von
Neumann algebras, a representation $\pi$ of $\mm$ on a Hilbert space
$\mh$ is said to be a finite representation of the pair $\mn \subset \mm
$ if $\pi(\mn)'\subseteq B(\mh)$ is a finite von Neumann algebra. And,
the pair $\mn \subset \mm $ is said to be of finite {GHJ} index if
it admits a finite faithful representation - see \cite[$\S 3.5$]{GHJ}.

\begin{corollary}\label{GHJ}
Let $\mn\subset \mm$ be a unital inclusion of finite direct sums of
finite factors with finite $\mathrm{GHJ}$ index.  If $\mn$ has finite
dimensional center and either ${\mn}^{\prime}\cap \mm = \mz(\mn)$
or  $\mn' \cap \mm = \mz(\mm)$, then the lattice $\mathcal{I}(\mn\subset \mm)$
is finite.
\end{corollary}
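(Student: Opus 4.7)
The plan is to derive this corollary directly from Theorem \ref{main}. The hypotheses of the corollary already encompass three of the four conditions of Theorem \ref{main}: the finiteness of $\mm$ (being a finite direct sum of finite factors), the finite-dimensionality of $\mz(\mn)$, and the identification of $\mn^{\prime}\cap\mm$ with either $\mz(\mn)$ or $\mz(\mm)$. The only missing ingredient is a faithful normal tracial state $\tr$ on $\mm$ such that the unique $\tr$-preserving conditional expectation $E^{\mm}_{\mn}:\mm\rar\mn$ has finite Watatani index.

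To supply this, I would invoke the standard relation between GHJ index and Jones/Pimsner-Popa/Watatani index for inclusions of finite direct sums of finite factors, as developed in \cite[Chapter 3]{GHJ}. Decomposing along the minimal central projections, writing $\mn=\bigoplus_i\mn_i$ and $\mm=\bigoplus_j\mm_j$, the finite GHJ index hypothesis translates into finiteness of all entries of the inclusion matrix $\Lambda$ between the centres of $\mn$ and $\mm$, together with finite Jones index of each nonzero factor-level cut-down $\mn_i p_i q_j\subset q_j\mm_j q_j$ (where $p_i, q_j$ are the corresponding central projections). The GHJ theory then provides Perron--Frobenius-type trace vectors on $\mz(\mn)$ and $\mz(\mm)$ from which one builds a faithful normal tracial state $\tr$ on $\mm$ making the $\tr$-preserving conditional expectation $E^{\mm}_{\mn}$ of finite Pimsner-Popa index on each block; gluing the blockwise Pimsner-Popa bases yields a global quasi-basis for $E^{\mm}_{\mn}$. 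By \cite{Wa}, the Pimsner-Popa index and the Watatani index coincide, so $E^{\mm}_{\mn}$ has finite Watatani index, and Theorem \ref{main} then applies to give finiteness of $\mathcal{I}(\mn\subset\mm)$.

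The principal obstacle is in the block-assembly step: one must ensure that the trace on $\mm$ is chosen so that the unique $\tr$-preserving conditional expectation genuinely restricts to the blockwise trace-preserving conditional expectations, rather than some other conditional expectation. Here the hypothesis $\mn^{\prime}\cap\mm\in\{\mz(\mn),\mz(\mm)\}$ is decisive: it rigidly constrains how the central projections of $\mn$ sit inside $\mm$, thereby fixing the combinatorics of the inclusion matrix and making the compatibility of the blockwise traces unambiguous. Once this compatibility is in place, the rest of the argument is a routine consequence of the main theorem.
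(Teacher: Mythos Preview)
Your overall strategy---reduce to Theorem~\ref{main} by exhibiting a faithful normal tracial state on $\mm$ whose $\tr$-preserving conditional expectation has finite Watatani index---is exactly the paper's. The paper, however, dispatches this in one line: fix \emph{any} faithful normal tracial state $\tr$ on $\mm$; then \cite[Theorem~3.6.4]{GHJ} gives directly that $E^{\mm}_{\mn}$ has finite Watatani index. No special Perron--Frobenius choice of trace is required, and no block-assembly compatibility issue arises.

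Where your proposal goes astray is in the final paragraph. The hypothesis $\mn'\cap\mm\in\{\mz(\mn),\mz(\mm)\}$ plays \emph{no role whatsoever} in establishing that $E^{\mm}_{\mn}$ has finite Watatani index; that step holds for arbitrary inclusions of finite direct sums of finite factors with finite GHJ index, with no constraint on the relative commutant. The relative commutant hypothesis is consumed entirely inside Theorem~\ref{main} (at the end of Step~II), where it is used to pass from a unitary conjugacy $u\mpp u^*=\mq$ with $u\in\mn'\cap\mm$ to the equality $\mpp=\mq$. By invoking that hypothesis to resolve a nonexistent obstacle in the index step, you both overcomplicate the argument and obscure the actual logical role of the hypothesis.
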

\begin{proof}
 Fix a faithful normal tracial state $\mathrm{tr}$ on $M$. Then, by
 \cite[Theorem 3.6.4]{GHJ}, the unique $\tr$-preserving conditional
 expectation $E^{\mm}_{\mn}:\mm \rar \mn$ has finite Watatani
 index. The rest follows from \Cref{main}.
  \end{proof}

\begin{corollary}
 Let $\mn$ be a finite direct sum of $II_1$ factors with a finite
 group $G$ acting outerly on $\mn$. Then, the lattice $\mathcal{I}(\mn\subset
 \mn\rtimes G)$ is finite.
\end{corollary}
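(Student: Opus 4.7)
My plan is to reduce the corollary directly to \Cref{main} by verifying its four hypotheses for the pair $\mn \subset \mm := \mn \rtimes G$. Specifically, I need to (i) exhibit a faithful normal tracial state on $\mm$, (ii) verify that the associated trace-preserving conditional expectation $E^{\mm}_{\mn}$ has finite Watatani index, (iii) note that $\mz(\mn)$ is finite dimensional, and (iv) show that $\mn' \cap \mm$ equals either $\mz(\mn)$ or $\mz(\mm)$.

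For (i), I would first average any fixed faithful normal tracial state on $\mn$ over the finite group $G$ to produce a $G$-invariant faithful normal tracial state $\tr$, which then extends uniquely to a faithful normal tracial state $\hat{\tr}$ on $\mm$ via $\hat{\tr}(\sum_g n_g u_g) = \tr(n_e)$, where $\{u_g : g \in G\}$ are the canonical implementing unitaries. For (ii), the canonical conditional expectation $E^{\mm}_{\mn}(\sum_g n_g u_g) = n_e$ is the unique $\hat{\tr}$-preserving expectation, and a direct check shows that $\{u_g : g \in G\}$ is a quasi-basis for it, giving $\mathrm{Ind}(E^{\mm}_{\mn}) = \sum_g u_g u_g^* = |G| \cdot 1_{\mm}$, which is finite. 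Point (iii) is immediate because a finite direct sum of $II_1$ factors has finite-dimensional center.

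The crucial and most delicate part is (iv). For $x = \sum_g n_g u_g \in \mn' \cap \mm$, imposing commutation with a generic $m \in \mn$ gives the intertwining identities $m n_g = n_g \alpha_g(m)$ for all $g$; hence $n_e \in \mz(\mn)$, while for $g \neq e$ each $n_g$ is an $(\mathrm{id}, \alpha_g)$-intertwiner inside $\mn$. Outerness of the $G$-action forces such intertwiners to vanish, giving $\mn' \cap \mm \subseteq \mz(\mn)$, and imposing further commutation with each $u_h$ refines this to $\mn' \cap \mm = \mz(\mn)^G$. The same bookkeeping identifies $\mz(\mm) = \mz(\mn)^G$, so $\mn' \cap \mm = \mz(\mm)$, and \Cref{main} delivers the desired finiteness of $\mathcal{I}(\mn \subset \mn \rtimes G)$.

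The main obstacle is precisely the vanishing of $(\mathrm{id}, \alpha_g)$-intertwiners for $g \neq e$ under the paper's notion of ``outer'' in the direct-sum setting. For a single $II_1$ factor this is classical (a polar decomposition of a putative nonzero intertwiner implements $\alpha_g$ by a partial isometry whose support projections lie in the center, contradicting outerness), and I expect a componentwise adaptation—keeping track of how $\alpha_g$ permutes the minimal central projections of $\mn$ and invoking the factor case on each resulting block—to extend the argument to the direct-sum case. Everything else is routine bookkeeping.
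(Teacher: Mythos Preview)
Your approach is essentially the paper's---reduce to \Cref{main} by checking finite index, finite-dimensional center, and the relative commutant condition---but your step (iv) contains an error. After correctly deducing $\mn' \cap \mm \subseteq \mz(\mn)$ from the vanishing of nontrivial intertwiners, you then ``impose further commutation with each $u_h$'' to cut this down to $\mz(\mn)^G$. That refinement is unjustified: elements of $\mn' \cap \mm$ are only required to commute with $\mn$, not with the unitaries $u_h \in \mm \setminus \mn$. Since the reverse inclusion $\mz(\mn) \subseteq \mn' \cap \mm$ is automatic, at that point you already have $\mn' \cap \mm = \mz(\mn)$, which is exactly what the paper asserts and is one of the two alternatives allowed in \Cref{main}. (Your identification $\mz(\mm) = \mz(\mn)^G$ is fine, but the conclusion $\mn' \cap \mm = \mz(\mm)$ fails whenever $G$ acts nontrivially on the central summands of $\mn$.) Simply drop the spurious refinement and the argument is complete; your treatment of points (i)--(iii) is in fact more explicit than the paper's, which takes the tracial state and $\mathrm{Ind}(E)=|G|$ for granted.
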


\begin{proof}
 Let $\mm:=\mn\rtimes G$. We know that $\mathrm{Ind}(E)=|G|$, $E$ is
 the canonical conditional expectation from $ \mn \rtimes G$ onto
 $\mn$. Further, the outerness of the action implies that
 $\mn^{\prime}\cap \mm = \mz(\mn).$ Applying \Cref{main}, we obtain the
 desired result.
\end{proof}

The following consequence can be thought of as an appropriate generalization of
\cite[Theorem 2.2]{Wa2} in the non-irreducible case.
\begin{corollary}\label{watreducible1}
 Let $N\subset M$ be a subfactor of type $II_1$ with finite Jones
 index. Then, $\mathcal{I}(\mathcal{R}\subset M)$ is a finite lattice,
 where $\mr:= N\vee (N^{\prime}\cap M).$
\end{corollary}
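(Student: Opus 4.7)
The plan is to verify that the inclusion $\mr \subset M$ satisfies every hypothesis of \Cref{main} and then invoke that theorem directly. Since $M$ is a $II_1$ factor carrying its unique faithful normal tracial state $\tr$, the pair $\mr\subset M$ is automatically a unital inclusion of finite von Neumann algebras. Finite Jones index of $N\subset M$ translates into finite Watatani index of the unique $\tr$-preserving conditional expectation $E^M_N$; so, from the chain $N \subset \mr\subset M$ and \Cref{mas}, the unique $\tr$-preserving conditional expectation $E^M_{\mr}: M \rar \mr$ will also have finite Watatani index.

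The key computation will be that of the relative commutant $\mr^{\prime}\cap M$. Since $\mr = N \vee (N^{\prime}\cap M)$, we have $\mr^{\prime} = N^{\prime} \cap (N^{\prime}\cap M)^{\prime}$, and hence
\[
\mr^{\prime}\cap M \;=\; (N^{\prime}\cap M) \cap (N^{\prime}\cap M)^{\prime} \;=\; \mz(N^{\prime}\cap M).
\]
Because $N\subset M$ has finite Jones index, $N^{\prime}\cap M$ is finite dimensional, and therefore so is its center $\mz(N^{\prime}\cap M)$.

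To finish, I would show that $\mr^{\prime}\cap M = \mz(\mr)$. The inclusion $\mz(\mr) = \mr^{\prime}\cap \mr \subset \mr^{\prime}\cap M$ is automatic. For the opposite direction, note that $\mz(N^{\prime}\cap M)$ sits inside $N^{\prime}\cap M \subset \mr$, while the identification just made places it inside $\mr^{\prime}\cap M$; hence $\mz(N^{\prime}\cap M) \subset \mr^{\prime}\cap \mr = \mz(\mr)$. Combining these two containments yields $\mz(\mr) = \mz(N^{\prime}\cap M) = \mr^{\prime}\cap M$, which is finite dimensional. This simultaneously verifies the two remaining hypotheses of \Cref{main}, namely that $\mz(\mr)$ is finite dimensional and that $\mr^{\prime}\cap M$ equals $\mz(\mr)$. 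Applying \Cref{main} then delivers the desired finiteness of $\mathcal{I}(\mr\subset M)$. There is no substantial obstacle here; the corollary really amounts to noting that $\mr$ has been chosen precisely so that the inclusion $\mr\subset M$ falls within the scope of \Cref{main}.
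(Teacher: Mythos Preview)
Your proposal is correct and follows essentially the same route as the paper: verify $\mr'\cap M=\mz(\mr)$ via $\mr'\cap M\subset N'\cap M\subset \mr$, note that $\mz(\mr)$ is finite dimensional, check that $E^M_{\mr}$ has finite Watatani index, and apply \Cref{main}. The only cosmetic differences are that the paper invokes \cite{M} directly rather than going through \Cref{mas}, and it observes $\mr\cong N\otimes(N'\cap M)$ to see $\mz(\mr)$ is finite dimensional, whereas you obtain this from the explicit identification $\mz(\mr)=\mz(N'\cap M)$.
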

\begin{proof}
Since $\mr \cong N\otimes (N^{\prime}\cap M)$, it is clear that $\mr$
is a direct sum of finitely many $II_1$ factors. Then, observe that
$\mr^{\prime}\cap M\subset N^{\prime}\cap M\subset \mr.$ Thus,
$\mr^{\prime}\cap M\subset \mr\cap \mr^{\prime}$; so that
$\mathcal{R}^\prime\cap M=\mz(\mr).$ And, by \cite{M}, the $\tr_M$-preserving conditional expectation
  $E^M_{\mr}: M \rar \mr$ has finite Watatani index.  The rest again follows
from Theorem \ref{main}.
 \end{proof}

The following consequence follows from applications of the Double
Commutant Theorem and is left to the interested reader.  \color{black}
\begin{lemma}\label{lb}
Let $\mn \subset \mm$ be a pair of von Neumann algebras with common
identity. Let $\mr:= \mn \vee (\mn ^{\prime}\cap \mm)$ and $\mr_0:=
\mn \vee \mz(\mn ^{\prime}\cap \mm)$. Then, we have the following:
\begin{enumerate}
\item $\mz(\mr_0)=\mz(\mr)$.
\item  $\mn^{\prime}\cap \mr_0=\mz(\mn^{\prime}\cap \mm)=\mz(\mr_0).$
\end{enumerate}
\end{lemma}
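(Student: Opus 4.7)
The plan is to reduce both items to a single pivotal identity, namely
\[
\mr_0 \cap (\mn^{\prime}\cap \mm) = \mz(\mn^{\prime}\cap \mm),
\]
from which (1) and (2) will both fall out by direct bookkeeping. The inclusion ``$\supseteq$'' is immediate because $\mz(\mn^{\prime}\cap \mm)$ sits inside $\mn^{\prime}\cap \mm$ and is also one of the generating subalgebras of $\mr_0$. The whole content is in the other direction: given $z \in \mr_0 \cap (\mn^{\prime}\cap \mm)$, I would show $z$ commutes with every $c \in \mn^{\prime}\cap \mm$. For such a $c$, note that $c$ commutes with all of $\mn$ (by definition of the relative commutant) and also with all of $\mz(\mn^{\prime}\cap \mm)$ (since $c \in \mn^{\prime}\cap \mm$ and the latter's center commutes with everything in it). Since these two families generate $\mr_0$ as a von Neumann algebra, an application of the Double Commutant Theorem (or, equivalently, the fact that the commutant of a set is automatically a von Neumann algebra) shows $c$ commutes with all of $\mr_0$, hence with $z$. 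This places $z$ in $(\mn^{\prime}\cap \mm)^{\prime}\cap(\mn^{\prime}\cap \mm) = \mz(\mn^{\prime}\cap \mm)$, as desired.

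Item (2) is then essentially immediate: $\mn^{\prime}\cap \mr_0$ equals $\mr_0 \cap (\mn^{\prime}\cap \mm)$ (since $\mr_0 \subseteq \mm$), and the pivotal identity identifies this with $\mz(\mn^{\prime}\cap \mm)$. For the second equality in (2), I would check $\mz(\mr_0) = \mz(\mn^{\prime}\cap \mm)$ directly: any element of $\mz(\mr_0)$ must commute with $\mn$, hence lies in $\mn^{\prime}\cap \mm$, and being in $\mr_0$ forces it into $\mr_0 \cap (\mn^{\prime}\cap \mm) = \mz(\mn^{\prime}\cap \mm)$. Conversely, $\mz(\mn^{\prime}\cap \mm) \subseteq \mr_0$, is contained in $\mn^{\prime}\cap \mm$ (so it commutes with $\mn$), and commutes with itself, hence with the generators $\mn \cup \mz(\mn^{\prime}\cap \mm)$ of $\mr_0$, so (again by DCT) with all of $\mr_0$.

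For (1), the same argument applied to $\mr = \mn \vee (\mn^{\prime}\cap \mm)$ shows $\mz(\mr) = \mz(\mn^{\prime}\cap \mm)$: an element of $\mz(\mr)$ commutes with $\mn$ (so lies in $\mn^{\prime}\cap \mm$) and with $\mn^{\prime}\cap \mm$ (so lies in its center), while every element of $\mz(\mn^{\prime}\cap \mm)$ is visibly central in $\mr$. Combining with the computation of $\mz(\mr_0)$ above yields $\mz(\mr_0) = \mz(\mr)$. The only mildly delicate point in the entire argument is the step noted above where one passes from ``$c$ commutes with the generators'' to ``$c$ commutes with the generated von Neumann algebra''; everything else is a straightforward set-theoretic reshuffle.
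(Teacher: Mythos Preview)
Your proof is correct and follows exactly the approach the paper indicates: the paper does not spell out a proof at all but simply states that the lemma ``follows from applications of the Double Commutant Theorem and is left to the interested reader.'' Your argument fills in precisely those details, repeatedly using that the commutant of a generating set equals the commutant of the generated von Neumann algebra, which is the expected DCT-based computation.
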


\noindent The following implications are obvious once we apply \Cref{main}
and the preceding lemma.
\begin{corollary}
 Let $N\subset M$ be a subfactor of type $II_1$ with
 $[M:N]<\infty$. Then, the lattice $\mathcal{I}(N\subset \mr_0)$ is finite.

In particular,  if $N^{\prime}\cap M$ is abelian, then the lattices
 $\mathcal{I}(N\subset \mr)$ and $\mathcal{I}(\mr\subset \mm)$ are
 both finite.
\end{corollary}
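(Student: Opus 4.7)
The plan is to apply Theorem \ref{main} directly to the inclusion $N \subset \mr_0$ and to invoke Corollary \ref{watreducible1} for the other lattice appearing in the \emph{in particular} clause. Since $[M:N] < \infty$, the relative commutant $N'\cap M$ is finite-dimensional, hence so is $\mz(N'\cap M)$. Because $N$ is a factor, $N$ commutes with $\mz(N'\cap M)$ and $N \cap \mz(N'\cap M) \subseteq \mz(N) = \C$, so $\mr_0 = N \vee \mz(N'\cap M)$ is canonically isomorphic to a finite direct sum of copies of $N$ indexed by the minimal projections of $\mz(N'\cap M)$; in particular $\mr_0$ is a finite von Neumann algebra carrying the restriction of the tracial state on $M$.

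I then verify the three hypotheses of Theorem \ref{main} for the inclusion $N \subset \mr_0$. The center $\mz(N) = \C$ is trivially finite-dimensional. By Lemma \ref{lb}(2), one has $N' \cap \mr_0 = \mz(\mr_0)$, which is precisely one of the two alternatives allowed by the hypothesis of Theorem \ref{main}. The remaining (and only substantive) step is to show that the unique $\tr$-preserving conditional expectation $E^{\mr_0}_N: \mr_0 \rar N$ has finite Watatani index. For this I would follow the route used in the proof of Corollary \ref{watreducible1}, invoking \cite[Proposition 3.5]{M} applied to the chain $N \subset \mr_0 \subset M$ so as to transfer finite Watatani index from $E^M_N$ to $E^{\mr_0}_N$. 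Alternatively, a direct quasi-basis for $E^{\mr_0}_N$ can be assembled from the minimal projections $\{p_i\}$ of $\mz(N'\cap M)$ weighted by $\tr(p_i)^{-1/2}$, exploiting the fact that $\mr_0 \cong \bigoplus_i N p_i$. With these three conditions in place, Theorem \ref{main} immediately yields that $\mathcal{I}(N \subset \mr_0)$ is finite.

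For the \emph{in particular} clause: if $N'\cap M$ is abelian, then $\mz(N'\cap M) = N'\cap M$, and consequently $\mr = \mr_0$. Therefore the finiteness of $\mathcal{I}(N \subset \mr)$ reduces to the finiteness of $\mathcal{I}(N \subset \mr_0)$ just established. On the other hand, the finiteness of $\mathcal{I}(\mr \subset M)$ is precisely the content of Corollary \ref{watreducible1} and requires no abelianness hypothesis on $N'\cap M$.

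The main (and essentially only) obstacle is verifying the finite Watatani index of $E^{\mr_0}_N$; every other piece is either a trivial verification, an application of Lemma \ref{lb}, or a direct appeal to a result already proved in the paper. I expect the argument via \cite[Proposition 3.5]{M} (analogous to Corollary \ref{watreducible1}) to dispatch this step cleanly, making the entire proof a short diagram of citations.
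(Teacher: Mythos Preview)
Your approach is correct and matches the paper's (terse) argument: apply Theorem~\ref{main} via Lemma~\ref{lb}(2) for $N \subset \mr_0$, observe $\mr = \mr_0$ when $N'\cap M$ is abelian, and invoke Corollary~\ref{watreducible1} for $\mathcal{I}(\mr \subset M)$. One minor correction: for the finite Watatani index of $E^{\mr_0}_N$ you want \cite[Proposition 1.7.2]{Wa} (or simply Proposition~\ref{mas}), not \cite[Proposition 3.5]{M}, which handles the \emph{upper} expectation $E^M_{\mr_0}$; your alternative explicit quasi-basis construction from the minimal projections of $\mz(N'\cap M)$ also works.
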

\color{black}


\begin{thebibliography}{ABC}

\bibitem{Bak} K.~C.~Bakshi, On Pimsner-Popa bases, Proc. Indian
  Acad. Sci. (Math. Sci.) {\bf 127} (2017), no.~1, 117-132.
  
\bibitem{BDLR} K.~C.~Bakshi, S.~Das, Z.~Liu and Y.~ Ren,
Trans. Amer. Math. Soc. {\bf 371} (2019), 5973-5991. 

\bibitem{BG2} K.~C.~Bakshi and V.~P.~Gupta, A note on irreducible
  quadrilaterals of $II_1$-factors, Internat. J. Math. {\bf 30}
  (2019), no.~12, 1950061.

\bibitem{BG} K.~Bakshi and V.~P.~Gupta, On orthogonal systems,
  two-sided bases and regular subfactors, preprint arxiv:1904.05612.

 \bibitem{Bi} D.~Bisch, Bimodules, higher relative commutants and the
  fusion algebra associated to a subfactor, Operator algebras and
  their applications (Waterloo, ON, 1994/1995) Fields Inst. Commun.,
  vol. 13, Amer. Math. Soc., Providence, RI, 1997, 13-63.

\bibitem{Bi2} D.~Bisch, A note on intermediate subfactors, Pacific
  J. Math. {\bf 163} (1994), no.~2, 201-216.

\bibitem{BJ} D.~Bisch and V.~F.~R.~Jones, Algebras associated to
  intermediate subfactors, Invent. Math. {\bf 128} (1997), no. 1,
  89-157.

\bibitem{Ch1} E.~Christensen, Perturbations of operator algebras,
  Invent. Math. {\bf 43} (1977), 1-13.

\bibitem{Ch2} E. ~Christensen, Near inclusions of $C^*$-algebras, Acta Math. {\bf 144} (1980), 249-265.
 

\bibitem{FL} M.~Frank and D. R. Larson, Frames in Hilbert
  $C^*$-modules and $C^*$-algebras, J. Operator Theory {\bf 48} (2002),
  273-314. 
\bibitem{GHJ} F.~Goodman, P.~de la Harpe and V.~F.~R.~Jones, Coxeter
  graphs and towers of algebras, MSRI Publ. {\bf 14}, Springer, New
  York, 1989.

\bibitem{GJ} P.~Grossman and V.~F.~R.~Jones, Intermediate subfactors
  with no extra structure, J. Amer. Math. Soc. {\bf 20} (2007), no.~1, 219-265.

\bibitem{H} F.~Hiai, Minimizing indices of conditional expectations
  onto a subfactor, Publ. Res. Inst.  Math. Sei. {\bf 24} (1988),
  673-678.

\bibitem{Ino} S.~Ino, Perturbations of von Neumann subalgebras with
  finite index, Canad. Math. Bull. {\bf 59} (2016), no.~2, 320 - 325.

\bibitem{IW} S.~Ino and Y.~Watatani, Perturbation of intermediate
  $C^*$-subalgebras for simple $C^*$-algebras, Bull. London
  Math. Soc. {\bf 46} (2014), 469-480.

    \bibitem{I2} M.~Izumi, Subalgebras of infinite $C^*$-algebras with finite Watatani indices I. Cuntz algebras,
  Commun. Math. Phys. {\bf 155} (1993), no.~1, 157-182.

 \bibitem{I3} M.~Izumi, Subalgebras of infinite $C^*$-algebras with
   finite Watatani indices, II: Cuntz-Krieger algebras, Duke
   Math. J. {\bf 91}  (1998), no.~3, 409-461.
 

  \bibitem{I} M.~Izumi, Inclusions of simple $C^*$-algebras, J. Reine
  Angew. Math. {\bf 547} (2002), 97-138.
  
  \bibitem{JLW} C.~Jiang, Z.~Liu and J.~Wu, Noncommutative uncertainty
    principles, J. Funct. Anal. {\bf 270} (2016), no.~1,
    264-311.

 \bibitem{Jo} V.~F.~R.~Jones, Index for subfactors, Invent. Math. {\bf
  72} (1983), no.~1, 1-25.

\bibitem{Jo2} V.~F.~R.~Jones, Planar algebras I, preprint arxiv:math/9909027v1.

\bibitem{JS} V.~F.~R.~Jones and V.~S.~Sunder, Introduction to
  subfactors, LMS LNS 234, Cambridge University Press, Cambridge,
  1997.

\bibitem{KK} R.~V.~Kadison and D.~Kastler, Perturbations of von
  Neumann algebras I: Stability of type, Amer. J. Math. {\bf 94}
  (1972), no.~1, 38-54.



 \bibitem{KPW1} T.~Kajiwara, C.~Pinzari and Y.~Watatani, Ideal
   structure and simplicity of the $C^*$-algebras generated by Hilbert
   bimodules, J. Funct. Anal. {\bf 159 } (1998), no.~2, 295-322.

    \bibitem{KPW2} T.~Kajiwara, C.~Pinzari and Y.~Watatani, Jones index
   theory for Hilbert $C^*$-bimodules and its equivalence with
   conjugation theory.  J. Funct. Anal. {\bf 215} (2004), no.~1, 1-49.

\bibitem{KW} T.~Kajiwara and Y.~Watatani, Jones index theory by
  Hilbert $C^*$-bimodules and $K$-theory, Trans. Amer. Math. Soc. {\bf
    352} (2000), no.~8, 3429-3472.

   
\bibitem{KaWa} S.~Kawakami and Y.~Watatani, The multiplicativity of the
  minimal index of simple $C^*$-algebras, Proc. Amer. Math. Soc. {\bf
    123} (1995), no.~ 9, 2809-2813.

\bibitem{kosaki} H.~Kosaki, Extension of Jones' theory on index to
  arbitrary factors, J. Funct. Anal. {\bf 66} (1986), 123-140.

  \bibitem{KL} H.~Kosaki and R.~Longo, A remark on the minimal index
    of subfactors, J. Funct. Anal. {\bf 107} (1992), 458-470.

\bibitem{lance} E.~C.~Lance, Hilbert $C^*$-modules: A toolkit for
  operator algebraists, LMS LNS 210, Cambridge University Press, Cambridge,  1995.

  \bibitem{La} Z.~Landau, Exchange relation planar algebras,
    Geom. Dedicata {\bf 95} (2002), no.~1, 183-214.
  
\bibitem{Lon1} R. Longo Index of subfactors and statistics of quantum
  fields. I, Commun. Math. Phys. 1{\bf 26} (1989), 217-247.

\bibitem{Lon2} R.~Longo Index of subfactors and statistics of quantum
  fields. II, Commun. Math. Phys. {\bf 130} (1990), 285-309.

\bibitem{Longo} R.~Longo, Minimal Index and Unimodular Sectors,
  Quantum and Non-Commutative Analysis, Mathematical Physics Studies,
  vol.~16, Springer, Dordrecht, 1993.
  
\bibitem{Lon}R.~ Longo, Conformal subnets and intermediate
  subfactors, Commun. Math. Phys. {\bf 237} (2003), 7-30.

\bibitem{M} K.~Mahmood, Hilbert $C^*$ modules and conditional
  expectation on crossed products, J. Aust. Math. Soc. (Ser. A) {\bf 61}
  (1996), 106-118.

\bibitem{Oc} A.~Ocneanu, Quantized groups, string algebras and Galois
  theory for algebras, Operator Algebras and Applications, vol.~2
  (Warwick 1987), LMS LNS 136, 119-172, Cambridge University Press,
  Cambridge, 1988.

\bibitem{P} W.~L.~Paschke, Inner product module of $B^*$-algebras,
  Trans. Amer. Math. Soc. {\bf 182} (1973), 443-468.

\bibitem{PiPo1} M.~Pimsner and S.~Popa, Entropy and index for
  subfactors, Ann. Sci. Ecole Norm. Sup. (4) {\bf 19} (1986),
  no.~1, 57-106.
\bibitem{Po2} S.~Popa, An axiomatization of the lattice of higher relative commutants of a subfactor, Invent. Math. {\bf 120} (1995), no.~1, 427-445.
 \bibitem{Po1} S.~Popa, Universal construction of subfactors, J. Reine Angew. Math. {\bf 543} (2002), 39-81.
\bibitem{Po3} S.~Popa, Correspondences, preprint, INCREST {\bf 56}
  (1986),  {\tt www.math.ucla.edu/popa/preprints.html}.

\bibitem{Po} S.~Popa, The relative Dixmier property for inclusions of
  von Neumann algebras of finite index, Ann. Scient. Norm. Sup. (Ser. 4) {\bf 32} (1999), 743-767.

\bibitem{Po4} S.~Popa, On the relative Dixmier property for inclusions
  of $C^*$-algebras, J. Funct. Anal. {\bf 171} (2000), no.~1, 139-154.

\bibitem{Sa} T.~Sano, Commuting co-commuting squares and finite
  dimensional Kac algebras, Pacific J. Math. {\bf 172} (1996), no.~1,
  243-254.
\bibitem{Su} Y.~Suzuki, Complete descriptions of intermediate operator
  algebras by intermediate extensions of dynamical systems,
  Commun. Math. Phys.  {\bf 375} (2020), 1273-1297.
\bibitem{TW}  T.~Teruya and Y.~Watatani, Lattices of intermediate
  subfactors for type III factors, Arch. Math. (Basel) {\bf 68}
  (1997), no.~6, 454-463.

\bibitem{Wa} Y.~Watatani, Index for $C^*$-subalgebras, Mem. Amer. Math. Soc., vol.~83, no.~424, 1990.

\bibitem{Wa2} Y.~Watatani, Lattices of intermediate subfactors, J. Funct. Anal. {\bf 140} (1996), no.~2, 312-334

\end{thebibliography}
\end{document}